\numberwithin{equation}{section}
\theoremstyle{plain}
\newtheorem{thm}[equation]{Theorem}
\newtheorem{cor}[equation]{Corollary}
\newtheorem{lem}[equation]{Lemma}
\newtheorem{prop}[equation]{Proposition}
\newtheorem{definition}[equation]{Definition}
\newtheorem{rem}[equation]{Remark}
\theoremstyle{definition}
\theoremstyle{remark}
\newtheorem{claim}[equation]{Claim}
\def\Vol{{\rm Vol}}
\def\Hess{{\rm Hess}}
\def\Ric{{\rm Ric}}
\def\Supp{{\rm Supp}}
\DeclareFontFamily{U}{wncy}{}
\DeclareFontShape{U}{wncy}{m}{n}{%
<5>wncyr5%
<6>wncyr6%
<7>wncyr7%
<8>wncyr8%
<9>wncyr9%
<10>wncyr10%
<11>wncyr10%
<12>wncyr6%
<14>wncyr7%
<17>wncyr8%
<20>wncyr10%
<25>wncyr10}{}
\DeclareMathAlphabet{\cyr}{U}{wncy}{m}{n}
\begin{document}

\title[The continuity equation with cusp singularities]
{The continuity equation with cusp singularities}

\author{Yan Li}

\address{Yan Li \newline Beijing International Center for Mathematical Research, \newline Peking University,
\newline 100871 Beijing, China}
\email{liyandota@hotmail.com}

\date{\today}

\maketitle

\begin{abstract}
In this paper we study a special case of the completion of cusp K\"{a}hler-Einstein metric on the regular part of varieties by taking the continuity method proposed by La Nave and Tian. The differential geometric and algebro-geometric properties of the noncollapsing limit in the continuity method with cusp singularities will be investigated.

\end{abstract}

\section{Introduction}
The Yau-Tian-Donaldson conjecture for Fano manifolds has revealed deep connections among complex Monge-Amp\`{e}re equation, metric geometry and complex algebraic geometry. Some specialists develop many techniques to deal with the celebrated conjecture, see \cite{Ti15} or \cite{CDS1}, \cite{CDS2}, \cite{CDS3}. These methods also play an important role in studying many other problems. For instance, in \cite{So}, J. Song proves that the metric completion of the regular set of Calabi-Yau varieties and canonical models of general type with crepant singularities is a compact length space which homeomorphic to the original variety. In \cite{NT}, G. La Nave and G. Tian introduce a new continuity equation to consider the analytic minimal model program. Later in \cite{NTZ}, G. La Nave, G. Tian and Z. L. Zhang study the differential geometric and algebro-geometric properties of the noncollapsing limit in the continuity equation. These fundamental results focus on the compact K\"{a}hler manifolds. For noncompact case, let us recall some facts. Suppose $\overline{M}$ is a compact complex manifold, $D$ is an effective divisor with only normal crossings and $K_{\overline{M}}+D$ is ample, where $K_{\overline{M}}$ is the canonical line bundle over $\overline{M}$. A well known result achieved by Kobayashi \cite{Ko84} and Tian-Yau \cite{TY87} asserts that there exists a complete negative K\"{a}hler-Einstein metric on $\overline{M}\setminus D$. Recently, in \cite{BG}, two authors generalize this result. For the convenience, this consequence is stated as following (Theorem C \cite{BG}).
\begin{thm}
Let $\overline{M}$ be a compact K\"{a}hler manifold and $D$ is a simple normal crossings $\mathbb{R}$-divisor on $\overline{M}$ with coefficients in $[-1, +\infty)$ such that $K_{\overline{M}}+D$ is semi-positive and big. Then there exists a unique $\omega$ in $c_1(K_{\overline{M}}+D)$ which is smooth on a Zariski open set $U$ of $\overline{M}$ and such that
$$
\Ric(\omega)=-\omega+[D].
$$
More precisely, $U$ can be taken to the $\overline{M}\setminus (D\cup \mathcal{S})$, where $\mathcal{S}$ is the intersetion of all effective $\mathbb{Q}$-divisors $E$ such that $K_{\overline{M}}+D-E$ is ample.
\end{thm}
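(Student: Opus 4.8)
The plan is to recast $\Ric(\omega)=-\omega+[D]$ as a degenerate complex Monge-Amp\`{e}re equation on $\overline M$, to solve it by a double regularization with estimates uniform in the regularization parameters, and then to extract the regular locus from the positivity hypotheses. First I would fix a K\"{a}hler form $\omega_0$, smooth Hermitian metrics $h_i$ on $\mathcal O_{\overline M}(D_i)$ with defining sections $s_i$, a smooth reference volume form $dV$, and a smooth closed $(1,1)$-form $\theta$ in $c_1(K_{\overline M}+D)$; since the class is only nef, $\theta$ need not be semipositive, and for each $\eta>0$ one uses in its place a smooth representative $\theta_\eta$ with $\theta_\eta+\eta\omega_0>0$. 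By the Poincar\'{e}--Lelong formula a solution $\omega=\theta+i\partial\bar\partial\varphi$ of the Ricci equation on the regular part corresponds to a $\theta$-plurisubharmonic $\varphi$ with
\[(\theta+i\partial\bar\partial\varphi)^n=e^{\varphi}\,\Big(\prod_i\|s_i\|_{h_i}^{2a_i}\Big)^{-1}dV.\]
The coefficient range $[-1,+\infty)$ forces a dichotomy in the components $D_i$ that must be treated by different tools: when $a_i\in[-1,1)$ the factor $\|s_i\|_{h_i}^{-2a_i}$ lies in $L^p$ near $D_i$ for some $p>1$ (and is bounded when $a_i\le0$), whereas when $a_i\ge1$ it is not even integrable, which is consistent only because $\varphi\to-\infty$ along $D_i$ and the sought metric is complete of cusp type there.

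I would then solve the smooth Monge-Amp\`{e}re equations obtained by taking the K\"{a}hler background $\theta_\eta+\eta\omega_0$, smoothing the density to $\big(\prod_i(\|s_i\|^2+\delta)^{a_i}\big)^{-1}$, and adding the Poincar\'{e} potentials $-\sum_{a_i\ge1}\log\big(-\log\|s_i\|_{h_i}^2\big)$ so as to match the complete cusp ends, as in \cite{Ko84}, \cite{TY87}; each of these equations has K\"{a}hler class and smooth positive right-hand side, hence a smooth solution by Aubin--Yau. The substance of the argument is the a priori estimates, made uniform as $\eta,\delta\to0$: a global upper bound for $\varphi$ from the maximum principle; a lower bound by the sub-/super-solution method, in which bigness of $K_{\overline M}+D$ provides a K\"{a}hler current with analytic singularities along the augmented base locus $\mathbf B_+(K_{\overline M}+D)=\mathcal S$ to serve as a sub-solution while Ko{\l}odziej-type $L^\infty$ estimates control the $L^p$ components; and interior $C^\infty_{\mathrm{loc}}$ bounds on compact subsets of $\overline M\setminus(D\cup\mathcal S)$ from a Chern--Lu (Schwarz-lemma) inequality comparing $\omega$ with the model cone and cusp metrics of bounded geometry, completed by Evans--Krylov and Schauder. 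Passing to the limit produces $\omega$, smooth on a Zariski-open $U$, with $\Ric(\omega)=-\omega+[D]$ as currents on $\overline M$ and potential $\varphi$ that is globally $\theta$-plurisubharmonic and bounded above.

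It then remains to identify $U$ and prove uniqueness. Over $\overline M\setminus(D\cup\mathcal S)$ one may write $K_{\overline M}+D=(K_{\overline M}+D-E)+E$ with $E$ an effective $\mathbb Q$-divisor and $K_{\overline M}+D-E$ ample; ampleness supplies a genuine smooth K\"{a}hler background there, so elliptic regularity promotes $\varphi$ to $C^\infty$ and yields $U\supseteq\overline M\setminus(D\cup\mathcal S)$. Conversely, smoothness of $\omega$ near a point of $\mathcal S$ would produce a local K\"{a}hler representative of $c_1(K_{\overline M}+D)$ there, contradicting that $\mathcal S$ is the augmented base locus. For uniqueness, two solutions $\omega_1,\omega_2$ with potentials $\varphi_1,\varphi_2$ satisfy $(\theta+i\partial\bar\partial\varphi_1)^n=e^{\varphi_1-\varphi_2}(\theta+i\partial\bar\partial\varphi_2)^n$, and the domination principle for Monge-Amp\`{e}re operators in big cohomology classes, together with the completeness of $\omega_1,\omega_2$ near the cusp components which legitimizes the maximum principle at those ends, forces $\varphi_1=\varphi_2$.

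The hardest part will be the \emph{uniform} $L^\infty$ bound for $\varphi$ across the regularization when coefficients as low as $-1$ and as high as $\ge1$ occur simultaneously: the cone and negative range is governed by $L^p$-pluripotential estimates, but these have to be made independent of the regularization inside a big, non-K\"{a}hler class, while the cusp components call for separate barrier constructions adapted to the complete ends; and even after the estimates are in hand one must still verify that the weak limit is an honest K\"{a}hler-Einstein metric on the whole of $\overline M\setminus(D\cup\mathcal S)$, not merely a weak solution.
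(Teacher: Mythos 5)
The paper does not prove this statement: it is quoted verbatim as Theorem~C of Berman--Guenancia \cite{BG}, so there is no in-paper proof to compare against. Judged on its own merits, your outline (degenerate Monge--Amp\`ere reformulation, double regularization of class and density, Ko{\l}odziej/Cheng--Yau type estimates, identification of the regular locus with the complement of $B_+(K_{\overline M}+D)\cup D$, and uniqueness via domination) is broadly consistent with the strategy one would use.

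There is, however, a genuine gap in your handling of the coefficients $a_i>1$. You lump all $a_i\ge 1$ into the ``cusp'' bucket and propose to absorb them by adding the Poincar\'e potentials $-\log\big(-\log\|s_i\|^2\big)$ to the reference, as in \cite{Ko84} and \cite{TY87}. That matches the density $\|s_i\|^{-2}$ exactly when $a_i=1$, but for $a_i>1$ the limiting density $\|s_i\|^{-2a_i}\,dV$ is strictly worse than the cusp model and is not made integrable by any Poincar\'e factor, so your regularized family $\big(\prod_i(\|s_i\|^2+\delta)^{a_i}\big)^{-1}$ has no coherent $\delta\to0$ limit in that range. What actually happens is that the potential must itself acquire a logarithmic pole, $\varphi=(a_i-1)\log\|s_i\|^2_{h_i}+\psi$, so that by Poincar\'e--Lelong the current $\omega=\theta+i\partial\bar\partial\varphi$ carries the divisorial part $(a_i-1)[D_i]$ and the non-pluripolar Monge--Amp\`ere equation for $\psi$ (with background shifted by $-(a_i-1)\Theta_{h_i}$) reduces precisely to the $a_i=1$ cusp case. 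Without first peeling off this divisorial part, the uniform a priori estimates you announce cannot even be formulated on the $a_i>1$ components, and the limiting object would not lie in $c_1(K_{\overline M}+D)$ with the stated Ricci current identity. A lesser issue: your ``converse'' for pinning down $U$ --- that smoothness of $\omega$ near a point of $\mathcal S$ would ``produce a local K\"ahler representative of $c_1(K_{\overline M}+D)$'' --- is vacuous, since every real $(1,1)$ class admits local smooth representatives everywhere; the augmented base locus is a global positivity condition, not a local smoothness obstruction. Fortunately that direction is not required, since the statement only asserts that $U$ \emph{can} be taken to be $\overline M\setminus(D\cup\mathcal S)$.
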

Motivated by \cite{So}, a natural problem is to ask what the completion of $(U, \omega)$ is. In this article, a special case is investigated. More precisely, suppose $\overline{M}$ is a projective manifold, $D$ is a smooth hypersurface and $K_{\overline{M}}+D$ is big and semi-ample. According to the Kawamata base point free theorem, there exists an integer $K\in \mathbb{Z}^+$ such that an orthonormal basis of $K(K_{\overline{M}}+D)$ gives a holomorphic map
$$
\Phi: \overline{M}\rightarrow \mathbb{CP}^N
$$
where $N=\dim H^0(\overline{M}, K(K_{\overline{M}}+D))-1$. The mainly result in this article is that the completion of $(U, \omega)$ in the sense of Theorem 1 homeomorphic to $\Phi(\overline{M}\setminus D)$. If the divisor $D$ is simple normal crossing with coefficients $1$, then there is a similar result. To dealing with this problem, the continuity method is taken proposed by G. La Nave and G. Tian in \cite{NT}.

To begin with, let $\overline{M}$ be a projective manifold with a K\"{a}hler metric $\omega_0$ and $D$ be a smooth hypersurface in $\overline{M}$ such that $K_{\overline{M}}+D$ is big and semi-ample. $h_D$ is denoted by the hermitian metric on $L_D$, the associated line bundle of $D$ such that $\omega_0-\sqrt{-1}\partial\overline{\partial}\log\log^2|s_D|^2_{h_D}>0$, where $s_D$ is the defining section of $D$. The following $1$-parameter family equations are considered:
\begin{equation}\label{e1}
(1+t)\omega=\omega_0-\sqrt{-1}\partial\overline{\partial}\log\log^2|s_D|^2_{h_D}-t(\Ric(\omega)-[D]),
\end{equation}
where $[D]$ is the current of integratioin along $D$.

Recall that $\omega$ is sald to have cusp singularities along $D$ if, whenever $D$ is locally given by $(z_1=0)$, $\omega$ is quasi-isometric to the cusp metric
$$
\omega_{cusp}=\frac{\sqrt{-1}dz_1\wedge d\bar{z_1}}{|z_1|^2\log^2|z_1|^2}+\sum_{k=2}^ndz_k\wedge d\bar{z_k}.
$$
Since $\omega_0-\sqrt{-1}\partial\overline{\partial}\log\log^2|s_D|^2_{h_D}$ is a K\"{a}hler metric on $\overline{M}\setminus D$ with cusp singularities, the equation (\ref{e1}) essentially state the variation of cusp K\"{a}hler metric along $t$. Therefore, the equation (\ref{e1}) is called the cusp continuity equation.
\begin{thm}\label{t1}
The cusp continuity equation (\ref{e1}) is solvable for all $t\in [0,+\infty)$.
\end{thm}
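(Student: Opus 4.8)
The plan is to solve \eqref{e1} by the method of continuity in the parameter $t$, regarding it throughout as an equation for a complete K\"ahler metric with cusp singularities along $D$ on the noncompact manifold $M:=\overline{M}\setminus D$. The first step is to rewrite \eqref{e1} as a scalar complex Monge--Amp\`ere equation. Using $\Ric(\omega)=-\sqrt{-1}\partial\overline{\partial}\log\omega^{n}$ together with the Poincar\'e--Lelong identity $[D]=c_{1}(L_{D},h_{D})+\sqrt{-1}\partial\overline{\partial}\log|s_{D}|^{2}_{h_{D}}$ and the fact that $\sqrt{-1}\partial\overline{\partial}\log\log^{2}|s_{D}|^{2}_{h_{D}}$ is a closed $(1,1)$-current cohomologous to zero, one sees that a solution of \eqref{e1} must lie in the cohomology class
\begin{equation*}
\alpha_{t}=\tfrac{1}{1+t}\big([\omega_{0}]+t\,c_{1}(K_{\overline{M}}+D)\big),
\end{equation*}
which for every finite $t$ is a K\"ahler class, being the sum of a K\"ahler class and a nef class. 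Fixing a reference metric $\theta_{t}\in\alpha_{t}$ with cusp singularities along $D$, depending smoothly on $t$, and writing $\omega=\theta_{t}+\sqrt{-1}\partial\overline{\partial}\varphi$, equation \eqref{e1} is then equivalent to a complex Monge--Amp\`ere equation of the form
\begin{equation*}
(\theta_{t}+\sqrt{-1}\partial\overline{\partial}\varphi)^{n}=e^{\frac{1+t}{t}\varphi+F_{t}}\,\Omega_{t},
\end{equation*}
where $\Omega_{t}$ is a volume form comparable to the cusp volume form and $F_{t}$ a datum bounded uniformly for $t$ in compact subsets of $(0,\infty)$, the potential $\varphi$ being sought in the weighted H\"older spaces adapted to the cusp geometry. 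The decisive structural point is that the coefficient $\tfrac{1+t}{t}$ of $\varphi$ is strictly positive, so this is a Monge--Amp\`ere equation of negatively curved type.

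Next I would let $I\subseteq[0,\infty)$ be the set of $t$ for which this equation is solvable. Then $0\in I$, since at $t=0$ equation \eqref{e1} reads $\omega=\theta:=\omega_{0}-\sqrt{-1}\partial\overline{\partial}\log\log^{2}|s_{D}|^{2}_{h_{D}}$, i.e.\ $\varphi\equiv0$. For openness I would use the implicit function theorem in the quasi-coordinate H\"older spaces of Tian--Yau, in which $(M,\theta_{t})$ has bounded geometry of every order: the linearization at a solution is $\Delta_{\omega}-\tfrac{1+t}{t}$, which, because $\tfrac{1+t}{t}$ is a strictly negative constant coefficient and $(M,\omega)$ is complete with Ricci curvature bounded below, has trivial kernel by Yau's generalized maximum principle and is an isomorphism on these spaces; hence $I$ is open and contains $[0,\delta)$ for some $\delta>0$. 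It therefore remains to show that $I\cap[\delta,T]$ is closed in $[\delta,T]$ for each $T>\delta$, which reduces to a priori estimates for $\varphi_{t}$ uniform in $t\in[\delta,T]$.

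These a priori estimates on the noncompact cusp manifold are the heart of the matter and the step I expect to be the main obstacle. The scheme is: (i) a $C^{0}$ bound $\|\varphi_{t}\|_{L^{\infty}(M)}\le C(T)$, from the sign of $\tfrac{1+t}{t}$ combined with Yau's generalized maximum principle and barriers of the type $\pm\big(A\log\log^{2}|s_{D}|^{2}_{h_{D}}+B\big)$ that push the extrema into a fixed compact set and control the behaviour along the cusp end; (ii) a second-order (Laplacian) estimate $C^{-1}\theta_{t}\le\omega\le C\theta_{t}$, via the Chern--Lu / Aubin--Yau inequality applied on $M$, using that the reference cusp metric has holomorphic bisectional curvature bounded above, again together with a barrier near $D$ to compensate for the absence of a compact maximum point; and (iii) higher-order estimates $\|\varphi_{t}\|_{C^{k}}\le C(k,T)$ for all $k$, from the Evans--Krylov theorem and Schauder estimates transplanted to the quasi-coordinate charts, where the background geometry is uniformly controlled. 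Throughout, the uniformity in $t$ over $[\delta,T]$ is available because $\theta_{t}$, $F_{t}$ and $\tfrac{1+t}{t}$ depend smoothly and nondegenerately on $t$ there; concretely it may be cleanest to carry out (i)--(iii) on an $\varepsilon$-regularization of the equation living on the compact $\overline{M}$ (replacing $\log^{2}|s_{D}|^{2}_{h_{D}}$ by a smoothed weight and solving by standard compact theory for each $\varepsilon$) and then let $\varepsilon\to0$. Granting the estimates, a subsequential limit as $t\to t_{\infty}\le T$ solves the equation at $t_{\infty}$, so $I\cap[\delta,T]=[\delta,T]$; as $T$ is arbitrary, $I=[0,\infty)$, which is the assertion.
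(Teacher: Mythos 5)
Your proposal is broadly sound and relies on the same analytic toolkit that the paper uses (reduction to a scalar cusp complex Monge--Amp\`ere equation, Cheng--Yau/Tian--Yau quasi-coordinate H\"older spaces, the generalized maximum principle, a Chern--Lu second-order estimate, higher-order elliptic theory), but it organizes the continuity method differently from the paper, and one step as written has a gap that needs patching.

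The paper does not deform in $t$. For each fixed $t\in(0,\infty)$ it sets $l=\tfrac{t}{1+t}$, constructs a fixed background cusp metric $\widetilde{\omega_l}$ out of a semi-positive representative $\eta_1=\tfrac{1}{K_0}\Phi^*\omega_{FS}$ of $c_1(K_{\overline{M}}+D)$, reduces \eqref{e1} to the single scalar equation \eqref{e3.1}, and solves \eqref{e3.1} by a \emph{separate} continuity method in an auxiliary parameter $s\in[0,1]$ (equation \eqref{Ce}), interpolating from $\widetilde{\omega_l}^n$ at $s=0$ (trivially solved by $u_{l,0}\equiv0$) to the desired right-hand side at $s=1$. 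This Kobayashi-style choice has a concrete technical payoff: the background metric and all a priori constants are frozen once $t$ is fixed, so uniformity is only needed in $s$ and not in the geometric parameter $t$. The linearized operator $h\mapsto e^{sF}(\Delta_{\omega_{l,s}}h-\tfrac{1}{l}h)$ is then shown invertible on the quasi-coordinate spaces, exactly as you propose for your linearization $\Delta_\omega-\tfrac{1+t}{t}$, so the openness mechanism is the same in spirit.

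Your route of deforming in $t$ itself is legitimate, but as written the openness argument near $t=0$ does not go through. The scalar Monge--Amp\`ere equation you wrote, $(\theta_t+\sqrt{-1}\partial\overline{\partial}\varphi)^n=e^{\frac{1+t}{t}\varphi+F_t}\Omega_t$, degenerates at $t=0$ because the coefficient $\tfrac{1+t}{t}$ blows up; the implicit function theorem applied to the map you linearize gives neighborhoods of points $t_0>0$ in $(0,\infty)\cap I$, but it does not by itself yield $[0,\delta)\subset I$. You must either reparametrize before linearizing --- for instance apply the implicit function theorem to
\begin{equation*}
G(\varphi,t):=\frac{t}{1+t}\log\frac{(\theta_t+\sqrt{-1}\partial\overline{\partial}\varphi)^n}{\Omega_t}-\varphi-\frac{t}{1+t}F_t,
\end{equation*}
whose $\varphi$-linearization at $(\varphi,t)=(0,0)$ is $h\mapsto -h$ and hence invertible, recovering $[0,\delta)\subset I$ --- or show directly, by the Kobayashi/Tian--Yau existence theory (or by your own $\varepsilon$-regularization), that the equation at any fixed $t>0$ is already solvable, in which case the continuity in $t$ is superfluous. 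The latter is precisely what the paper does. Once this point is repaired, your argument is a valid, slightly less economical, alternative to the paper's; the price you pay is that the $C^0$, $C^2$ and higher-order estimates must be uniform in $t$ on compact subsets of $(0,\infty)$, which is true but requires tracking the dependence on $\theta_t$, $\Omega_t$ and $F_t$, whereas the paper only needs estimates in $s$ for fixed $l$.
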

$\omega_t$ is denoted by the solution of (\ref{e1}), then we have the following convergence result.
\begin{thm}\label{t2}
$\omega_t$ converge to a unique weakly K\"{a}hler metric $\omega_1$ such that $\omega_1$ is smooth on $\overline{M}\setminus (D\cup \mathcal{S})$ and satisfies
$$
\Ric(\omega_1)=-\omega_1, \ on \ \overline{M}\backslash (\mathcal{S}_{\overline{M}}\cup D),
$$
where
$$
\mathcal{S}_{\overline{M}}=\bigcap\{E|E \ is \ an \ effective \ disivior \ such \ that \ K_{\overline{M}}+D-\rho E>0 \ for \ some \ \rho>0  \}.
$$
\end{thm}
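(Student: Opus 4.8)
The plan is to turn the cusp continuity equation (\ref{e1}) into a one‑parameter family of complex Monge--Amp\`ere equations for \emph{bounded} potentials, to prove a~priori estimates uniform as $t\to+\infty$, and then to let $t\to+\infty$, using the uniqueness in Theorem~1 to identify the limit. Concretely, put $s=\frac{t}{1+t}\in[0,1)$ and $\tilde\omega_0:=\omega_0-\sqrt{-1}\partial\overline{\partial}\log\log^2|s_D|^2_{h_D}$, so that by Theorem~\ref{t1} there is for every $s\in[0,1)$ a cusp K\"ahler metric $\omega_s$ on $\overline{M}\setminus D$ with $\omega_s=(1-s)\tilde\omega_0-s(\Ric(\omega_s)-[D])$, whose cohomology class on $\overline{M}$ is $(1-s)\{\omega_0\}+s\,c_1(K_{\overline{M}}+D)\to c_1(K_{\overline{M}}+D)$ as $s\to1$. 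Fix a closed form $\chi$ in $c_1(K_{\overline{M}}+D)$ which is smooth on $\overline{M}\setminus D$ and quasi-isometric to $\omega_{cusp}$ near $D$, e.g.\ $\chi:=\frac1K\Phi^{*}\omega_{FS}-\sqrt{-1}\partial\overline{\partial}\log\log^2|s_D|^2_{h_D}$, and a smooth volume form $\Omega$ on $\overline{M}$ chosen so that the cusp volume form $\Omega_{cusp}:=\Omega\,(|s_D|^2_{h_D}\log^2|s_D|^2_{h_D})^{-1}$ --- which has finite total mass on $\overline{M}\setminus D$ --- satisfies $\Ric(\Omega_{cusp})=-\chi+[D]$ as currents on $\overline{M}$. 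Writing $\theta_s:=(1-s)\tilde\omega_0+s\chi$ and $\omega_s=\theta_s+\sqrt{-1}\partial\overline{\partial}\psi_s$, the equation (\ref{e1}) becomes, after normalizing the additive constant,
$$
\bigl(\theta_s+\sqrt{-1}\partial\overline{\partial}\psi_s\bigr)^n=e^{\psi_s/s}\,\Omega_{cusp}\qquad\text{on }\overline{M}\setminus D,
$$
and the choice of the cusp reference is exactly what makes $\psi_s$ bounded; letting $s\to1$ this is formally the equation $(\chi+\sqrt{-1}\partial\overline{\partial}\psi)^n=e^{\psi}\Omega_{cusp}$, i.e.\ $\Ric(\omega)=-\omega+[D]$, defining the cusp K\"ahler--Einstein metric of Theorem~1.

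The heart of the proof is a $C^0$ bound $\|\psi_s\|_{L^\infty(\overline{M}\setminus D)}\le C$ uniform for $s\in[s_0,1)$. Because of the favourable sign of $\psi_s/s$ in the exponent, the upper bound will follow from the maximum principle together with a barrier near $D$ forcing $\psi_s$ to stay bounded there (the two sides carrying the same cusp singularity); the lower bound is the delicate point, and this is where the bigness of $K_{\overline{M}}+D$ is used, through a comparison of $\psi_s$ with a quasi-plurisubharmonic function with logarithmic poles along an effective divisor $E$ satisfying $K_{\overline{M}}+D-\rho E>0$ (equivalently a Ko\l odziej--Eyssidieux--Guedj--Zeriahi type estimate for degenerate complex Monge--Amp\`ere equations, adapted to the complete cusp setting). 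Granting the $C^0$ bound, fix a compact $\Sigma\Subset U:=\overline{M}\setminus(D\cup\mathcal{S})$: the equation gives $\Ric(\omega_s)=\frac{1-s}{s}\tilde\omega_0-\frac1s\omega_s\ge-\frac{1}{s_0}\,\omega_s$ uniformly, and $\det\omega_s$ with respect to a fixed smooth metric near $\Sigma$ is bounded above and below thanks to the $C^0$ bound and the comparability of $\Omega_{cusp}$ with a smooth volume form off $D$; the standard second-order estimate (Aubin--Yau, with a cut-off), Evans--Krylov, and the Schauder bootstrap applied to (\ref{e1}) then give $\|\psi_s\|_{C^k(\Sigma)}\le C(k,\Sigma)$, uniformly in $s\in[s_0,1)$. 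Near $D$ but away from $\mathcal{S}$ the analogous estimates hold in the quasi-isometry class of $\omega_{cusp}$, using the local cusp model and the estimates of Kobayashi \cite{Ko84} and Tian--Yau \cite{TY87}; this keeps the limit a genuine cusp metric there.

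By these estimates, every sequence $s_j\to1$ has a subsequence along which $\psi_{s_j}\to\psi_1$ in $C^\infty_{loc}(U)$ and in the weak-$*$ topology of $L^\infty(\overline{M}\setminus D)$, hence $\omega_{s_j}\to\omega_1:=\chi+\sqrt{-1}\partial\overline{\partial}\psi_1$ in $C^\infty_{loc}(U)$ and as closed positive $(1,1)$-currents on $\overline{M}$. Passing to the limit in the Monge--Amp\`ere equation, the bounded potential $\psi_1$ solves $(\chi+\sqrt{-1}\partial\overline{\partial}\psi_1)^n=e^{\psi_1}\Omega_{cusp}$ on $\overline{M}\setminus D$; equivalently $\omega_1\in c_1(K_{\overline{M}}+D)$ is a weakly K\"ahler metric, smooth on $U$, with $\Ric(\omega_1)=-\omega_1+[D]$, hence $\Ric(\omega_1)=-\omega_1$ on $\overline{M}\setminus(\mathcal{S}_{\overline{M}}\cup D)$. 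Finally, since $c_1(K_{\overline{M}}+D)$ is semi-positive and big, the uniqueness assertion of Theorem~1 shows that such a bounded solution is unique; therefore all subsequential limits coincide, $\omega_t$ converges to this unique $\omega_1$ (no subsequence needed), and $\omega_1$ is exactly the metric furnished by Theorem~1. (Alternatively one may first establish a monotonicity $\partial_s\psi_s\le0$ from the linearized equation and deduce convergence before identifying the limit.)

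The step I expect to be the main obstacle is the $t$-uniform $C^0$-estimate for $\psi_s$ in the presence of the cusp singularity: its lower bound must at the same time exploit the bigness of $K_{\overline{M}}+D$ (through pluricanonical sections, or a divisorial Zariski decomposition) and absorb the divergence of the cusp weight $\log\log^2|s_D|^2_{h_D}$ along $D$, all on a complete noncompact manifold where the reference volume form $\Omega_{cusp}$ blows up along $D$. A secondary difficulty, also resolved once the uniform $C^0$ bound and the finiteness $\int_{\overline{M}\setminus D}\Omega_{cusp}<\infty$ are available, is to rule out any loss of mass, i.e.\ to see that $\omega_s\to\omega_1$ weakly as currents \emph{globally on $\overline{M}$} and not only on $U$.
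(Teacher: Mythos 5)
Your overall architecture --- reduce to a family of scalar complex Monge--Amp\`ere equations in bounded potentials $\psi_s$ on $\overline M\setminus D$, prove $s$-uniform $C^0$ and higher-order estimates on compact subsets of $\overline{M}\setminus(D\cup\mathcal S)$, and pass to the limit $s\to 1$ --- is the same as in the paper's Section~4, and you correctly identify the role of an effective divisor $E$ with $K_{\overline M}+D-\rho E>0$ (together with a quasi-psh function with log poles along $E$) in controlling the degeneracy near the non-ample locus. However, there is a genuine gap in your plan for the uniform \emph{upper} bound of $\psi_s$, and it is exactly the point where the paper's proof departs from what you write.

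You propose to get the upper bound from ``the maximum principle together with a barrier near $D$''. The obstacle is that your reference form $\theta_s=(1-s)\tilde\omega_0+s\chi$ (the paper's $\widetilde{\omega_l}$, where $\chi=\frac1K\Phi^{*}\omega_{FS}-\sqrt{-1}\partial\bar\partial\log\log^2|s_D|^2$) is in general \emph{not} positive near $\mathcal S_{\overline M}$ once $s$ is close to $1$, since $\Phi^{*}\omega_{FS}$ degenerates there; the paper even flags $\widetilde{\omega_l}$ with ``(may not be a metric)''. At a sequence of points realising $\sup\psi_s$ via the generalised maximum principle one has $\sqrt{-1}\partial\bar\partial\psi_s\lesssim 0$, but since $\theta_s$ is not semi-positive one cannot deduce $(\theta_s+\sqrt{-1}\partial\bar\partial\psi_s)^n\le\theta_s^n$, so the max principle gives nothing. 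The paper circumvents this by differentiating the equation in the parameter $l$ and showing --- again via the generalised maximum principle --- that $l\mapsto\frac{u_l}{l}-n\log l$ is non-increasing; the $l$-uniform upper bound $u_l\le C$ then follows from the value at $l=\frac12$, where $\widetilde{\omega_{1/2}}$ is still a genuine complete bounded-geometry metric. You mention this monotonicity only parenthetically as an ``alternative'' way to get convergence, but in fact it is already needed to make the $C^0$ estimate close. For the lower bound your plan is in the right spirit; the paper's concrete version is to switch to $w_l=u_l-l\epsilon\log|s_E|^2+\cdots$ so that the reference form becomes the complete metric $\widetilde{\omega_{l,E}}$, then apply the maximum principle directly. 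For the second-order bound the paper does not localise: it applies Yau's Schwarz lemma globally with the test function $H=\log tr_{\omega_l}\widetilde{\omega_{l,E}}-(a+1)w_l$, whose $w_l$ term absorbs the degeneration along $E$, yielding $C^{-1}|s_E|^{2l\epsilon(a+1)}\widetilde{\omega_{l,E}}\le\omega_l\le C|s_E|^{-2l\epsilon(a+1)(n-1)}\widetilde{\omega_{l,E}}$ uniformly, and then bootstraps locally off $D\cup\Supp E$; your interior Aubin--Yau/Evans--Krylov plan on compacta is plausible but must be carried out with care on the complete manifold with cusp ends. Lastly, you obtain full (non-subsequential) convergence by invoking the uniqueness in the Berman--Guenancia theorem (your ``Theorem~1''), whereas the paper reads it off directly from the monotonicity of $\frac{u_l}{l}-n\log l$; either identification of the limit is legitimate.
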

If $G$ is a big divisor, we denoted $B_+(G)$ by the intersection of all effective $\mathbb{Q}$-divisors $E$ such that $G-E$ is ample. Then $\mathcal{S}_{\overline{M}}$ appeared in Theorem \ref{t2} is $B_+(K_{\overline{M}}+D)$. Observing that $\Phi: (\overline{M},D)\rightarrow (\Phi(\overline{M}),\Phi(D))$ can be viewed as a resolution of $(\Phi(\overline{M}),\Phi(D))$ and $K_{\overline{M}}+D=\Phi^*(K_{\Phi(\overline{M})}+\Phi(D))$. According to Theorem 0-3-12 \cite{Kw}, $K_{\Phi(\overline{M})}+\Phi(D)$ is ample. Let $\Phi(\overline{M})_{reg}$ be the regular part of $\Phi(\overline{M})$ and $\Phi(\overline{M})_{sing}$ be the singular part of $\Phi(\overline{M})$. The following Proposition illustrate the connection between $B_+(K_{\overline{M}}+D)$ and $\Phi(\overline{M})_{sing}$, due to Proposition 2.3 \cite{Bo}.
\begin{prop}\label{p1}
Let $\pi: X\rightarrow Y$ be a birational morphism between normal projective varieties. For any big $\mathbb{R}$-divisor $G$ on $Y$ and any effective $\pi$-exceptional divisor $\mathbb{R}$-divisor $F$ on $X$, then we have
$$
B_+(\pi^*G+F)=\pi^{-1}(B_+(G))\cup Exc(\pi),
$$
where $Exc(\pi)\subset X$ is the set of points $x\in X$ such that $\pi$ is not biregular.
\end{prop}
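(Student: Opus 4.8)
The plan is to establish the two inclusions separately, using two standard descriptions of the augmented base locus. I would recall first that, for a big $\mathbb{R}$-divisor $L$ on a normal projective variety and a point $x$, one has $x\notin B_+(L)$ if and only if $L\sim_{\mathbb{R}}A+E$ for some ample $\mathbb{R}$-divisor $A$ and effective $\mathbb{R}$-divisor $E$ with $x\notin\Supp(E)$ (equivalently $B_+(L)=\mathbf{B}(L-\varepsilon H)$ for a fixed ample $H$ and all small $\varepsilon>0$, $\mathbf{B}$ being the stable base locus); and second that, by the theorem of Ein--Lazarsfeld--Musta\c{t}\u{a}--Nakamaye--Popa relating restricted volumes to base loci, a positive-dimensional subvariety $V$ lies in $B_+(L)$ whenever its restricted volume $\mathrm{vol}_{X|V}(L)$ vanishes. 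Note that $\pi^*G+F$ is big on $X$, since bigness is preserved under birational pullback and $F$ is effective.

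For $\pi^{-1}(B_+(G))\cup Exc(\pi)\subseteq B_+(\pi^*G+F)$ I would first show that any $\pi$-contracted subvariety $V$ (one with $\dim\pi(V)<\dim V$) lies in $B_+(\pi^*G+F)$. Since $F\geq0$ is $\pi$-exceptional and $Y$ is normal, $\pi_*\mathcal{O}_X(mF)=\mathcal{O}_Y$, so by the projection formula every section of $m(\pi^*G+F)$ equals $\pi^*t\otimes\sigma_{mF}$ with $t\in H^0(Y,mG)$ and $\sigma_{mF}$ the canonical section; restricting to $V$, the restriction map factors through $H^0(\pi(V),mG|_{\pi(V)})$, a space of dimension $O(m^{\dim\pi(V)})=o(m^{\dim V})$. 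Hence $\mathrm{vol}_{X|V}(\pi^*G+F)=0$ and $V\subseteq B_+(\pi^*G+F)$. As every component $Z$ of $Exc(\pi)$ satisfies $\dim Z>\dim\pi(Z)$ (otherwise $\pi$ would be quasi-finite, hence an isomorphism by Zariski's main theorem, near a general point of $Z$), each such $Z$ is contracted, so $Exc(\pi)\subseteq B_+(\pi^*G+F)$. For the remaining term, let $x\in\pi^{-1}(B_+(G))$; if $x\in Exc(\pi)$ we are done, so assume $\pi$ is an isomorphism near $x$, so $\pi^{-1}(\pi(x))=\{x\}$. If $\pi^*G+F\sim_{\mathbb{R}}A+E$ with $A$ ample, $E\geq0$, $x\notin\Supp(E)$, then pushing forward (using $\pi_*F=0$, $\pi_*\pi^*G=G$) gives $G\sim_{\mathbb{R}}\pi_*A+\pi_*E$ with $\pi_*E\geq0$ and $\pi(x)\notin\Supp(\pi_*E)$; writing $\pi^*\pi_*A=A+N$ with $N\geq0$ $\pi$-exceptional (negativity lemma, $-N$ being $\pi$-ample) and using $\mathbf{B}(\pi^*D)=\pi^{-1}(\mathbf{B}(D))$ one deduces $B_+(\pi_*A)\subseteq\pi(Exc(\pi))$, so $\pi(x)\notin B_+(\pi_*A)$; then $\pi_*A\sim_{\mathbb{R}}A'+E'$ with $A'$ ample, $E'\geq0$, $\pi(x)\notin\Supp(E')$, and $G\sim_{\mathbb{R}}A'+(E'+\pi_*E)$ contradicts $\pi(x)\in B_+(G)$.

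For the reverse inclusion $B_+(\pi^*G+F)\subseteq\pi^{-1}(B_+(G))\cup Exc(\pi)$ I would take $x$ with $y:=\pi(x)\notin B_+(G)$ and $x\notin Exc(\pi)$, and exhibit a decomposition witnessing $x\notin B_+(\pi^*G+F)$. Choosing ample $A_Y$ and effective $E_Y$ on $Y$ with $G\sim_{\mathbb{R}}A_Y+E_Y$ and $y\notin\Supp(E_Y)$, and using that some effective $\pi$-exceptional $F_0$ has $-F_0$ $\pi$-ample (so $\pi^*A_Y-\delta F_0$ is ample on $X$ for small $\delta>0$), I would write
\[
\pi^*G+F\sim_{\mathbb{R}}(\pi^*A_Y-\delta F_0)+(\pi^*E_Y+F+\delta F_0),
\]
an ample-plus-effective decomposition whose effective part avoids $x$: indeed $x\notin\Supp(\pi^*E_Y)$ because $y\notin\Supp(E_Y)$, while $x\notin Exc(\pi)\supseteq\Supp(F)\cup\Supp(F_0)$. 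Hence $x\notin B_+(\pi^*G+F)$.

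The hard part, I expect, is the middle step of the first inclusion: proving $B_+(\pi_*A)\subseteq\pi(Exc(\pi))$ for $A$ ample on $X$ --- the statement that pushing an ample class along a birational morphism produces base locus only over the exceptional set. This is exactly where the negativity lemma (to write $\pi^*\pi_*A$ as $A$ plus an effective exceptional divisor) and the identity $\mathbf{B}(\pi^*D)=\pi^{-1}(\mathbf{B}(D))$ do the work, and where one must track supports carefully near $Exc(\pi)$, along which pulled-back divisors, the term $F$, and the correction divisor $F_0$ all overlap. I would also expect the restricted-volume input to be the most delicate ingredient: it is precisely what handles one-dimensional components of $Exc(\pi)$ meeting $\Supp(F)$, a case in which intersecting against a single contracted curve is inconclusive. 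For complete details see \cite{Bo}.
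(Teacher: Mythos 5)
Note first that the paper does not prove this proposition; it simply cites Boucksom--Broustet--Pacienza \cite{Bo}, Proposition~2.3, so there is no in-text argument to compare against, and I assess your attempt on its own terms.

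There is a genuine gap in your reverse inclusion $B_+(\pi^*G+F)\subseteq\pi^{-1}(B_+(G))\cup Exc(\pi)$. You invoke the existence of an effective $\pi$-exceptional divisor $F_0$ with $-F_0$ $\pi$-ample, so as to produce the ample divisor $\pi^*A_Y-\delta F_0$. Such an $F_0$ need not exist: the proposition allows $\pi$ to be a small contraction (so $Exc(\pi)$ may have codimension $\geq 2$, in which case the only effective $\pi$-exceptional divisor is $0$, and $0$ is $\pi$-ample only when $\pi$ is an isomorphism). More generally, a $\pi$-anti-ample effective exceptional divisor exists only when the exceptional locus is purely divisorial; this is an extra hypothesis, not a formal consequence of projectivity. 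Without $F_0$, your decomposition of $\pi^*G+F$ near $x$ has a nef-and-big part $\pi^*A_Y$ rather than an ample one, which does not certify $x\notin B_+$. The missing input is precisely $B_+(\pi^*A_Y)\subseteq Exc(\pi)$ for $A_Y$ ample on $Y$ --- the nef-and-big case of Nakamaye's theorem (or, for $\mathbb{R}$-divisors, the ELMNP null-locus description of $B_+$), equivalently the very restricted-volume criterion you already deploy in the other direction: any positive-dimensional $V$ through $x\notin Exc(\pi)$ is not contracted, hence $\mathrm{vol}_{X|V}(\pi^*A_Y)=\mathrm{vol}_{Y|\pi(V)}(A_Y)>0$. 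Granting that, your argument repairs immediately: write $\pi^*A_Y\sim_{\mathbb{R}}A'+E'$ with $A'$ ample and $x\notin\Supp E'$, and then $\pi^*G+F\sim_{\mathbb{R}}A'+(E'+\pi^*E_Y+F)$ has effective part avoiding $x$. The step ``$B_+(\pi_*A)\subseteq\pi(Exc(\pi))$'' inside your first inclusion is too terse for the same reason, but is fixed the same way: apply $B_+(L)=\mathbf{B}(L-\epsilon H_Y)$ and $\mathbf{B}(\pi^*D)=\pi^{-1}(\mathbf{B}(D))$ to $D=\pi_*A-\epsilon H_Y$, noting $\pi^*\pi_*A-\epsilon\pi^*H_Y=(A-\epsilon\pi^*H_Y)+N$ is ample-plus-effective with $N$ exceptional. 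With these two repairs the proof is sound.
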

From Theorem \ref{t2} and Proposition \ref{p1}, the following Corollary is derived immediately.
\begin{cor}
$$
\mathcal{S}_{\overline{M}}=B_+(K_{\overline{M}}+D)=\Phi^{-1}(B_+(K_{\Phi(\overline{M})}+\Phi(D)))\cup Exc(\Phi)=\Phi^{-1}(\Phi(\overline{M})_{sing})
$$
and
$$
\overline{M}\backslash (\mathcal{S}_{\overline{M}}\cup D)=\overline{M}_{reg}\setminus D,
$$
where $\overline{M}_{reg}$ represents $\Phi^{-1}(\Phi(\overline{M})_{reg})$. Furthermore the metric $\omega_1$ is smooth on $\overline{M}_{reg}\setminus D$.
\end{cor}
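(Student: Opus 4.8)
The proof is in essence a bookkeeping of base loci. The plan is to chain together the definition of $\mathcal{S}_{\overline{M}}$ in Theorem \ref{t2}, the structure formula of Proposition \ref{p1}, and the ampleness of $K_{\Phi(\overline{M})}+\Phi(D)$ already observed above.

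First I would check that the set $\mathcal{S}_{\overline{M}}$ appearing in Theorem \ref{t2} is exactly $B_+(K_{\overline{M}}+D)$. The only apparent discrepancy between the two definitions is the auxiliary constant $\rho$: if $K_{\overline{M}}+D-\rho E$ is ample for some real $\rho>0$, then, ampleness being an open condition on the class, there is a rational $\rho'\in(0,\rho]$ with $K_{\overline{M}}+D-\rho'E$ still ample, and $\rho'E$ is an effective $\mathbb{Q}$-divisor with the same support as $E$; the reverse containment of supports is trivial (take $\rho=1$). Hence the two intersections of supports coincide and $\mathcal{S}_{\overline{M}}=B_+(K_{\overline{M}}+D)$, which is the first equality.

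Next I would apply Proposition \ref{p1} to the birational morphism $\Phi\colon\overline{M}\to\Phi(\overline{M})$ with $G=K_{\Phi(\overline{M})}+\Phi(D)$ and $F=0$: the identity $K_{\overline{M}}+D=\Phi^{*}(K_{\Phi(\overline{M})}+\Phi(D))$ recorded above says precisely that no effective exceptional correction term is needed, while $G$ is big because $K_{\overline{M}}+D$ is. This yields $B_+(K_{\overline{M}}+D)=\Phi^{-1}\bigl(B_+(K_{\Phi(\overline{M})}+\Phi(D))\bigr)\cup Exc(\Phi)$. Since $K_{\Phi(\overline{M})}+\Phi(D)$ is ample (Theorem 0-3-12 of \cite{Kw}, as noted above), its augmented base locus is empty, so the right-hand side collapses to $Exc(\Phi)$.

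The remaining — and only non-formal — step is the identification $Exc(\Phi)=\Phi^{-1}(\Phi(\overline{M})_{sing})$, which is where it matters that $\Phi$ is a resolution of the pair $(\Phi(\overline{M}),\Phi(D))$ and not merely some birational contraction. One inclusion is immediate: over a singular point of $\Phi(\overline{M})$ the map $\Phi$ cannot be biregular since $\overline{M}$ is smooth. For the other inclusion one must rule out $\Phi$ contracting a subvariety to a smooth point of $\Phi(\overline{M})$, i.e.\ show that $\Phi$ is an isomorphism over $\Phi(\overline{M})_{reg}$; I expect this to follow from the crepancy $K_{\overline{M}}+D=\Phi^{*}(K_{\Phi(\overline{M})}+\Phi(D))$ together with $D$ (hence $\Phi(D)$) being smooth, since any prime divisor extracted over a smooth point of a smooth pair has positive log discrepancy, after which Zariski's main theorem upgrades ``no exceptional divisor lies over $y$'' to ``$\Phi$ is biregular over $y$''. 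This discrepancy bookkeeping along $\Phi(D)$ plus the application of Zariski's main theorem is the main obstacle; everything else is formal. Granting it, $\mathcal{S}_{\overline{M}}=\Phi^{-1}(\Phi(\overline{M})_{sing})$, and taking complements, $\overline{M}\setminus(\mathcal{S}_{\overline{M}}\cup D)=\Phi^{-1}(\Phi(\overline{M})_{reg})\setminus D=\overline{M}_{reg}\setminus D$. The last assertion is then immediate: Theorem \ref{t2} already states that $\omega_1$ is smooth on $\overline{M}\setminus(D\cup\mathcal{S}_{\overline{M}})$, and we have just identified this set with $\overline{M}_{reg}\setminus D$.
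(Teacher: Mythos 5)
The paper offers no proof of this Corollary beyond the sentence ``derived immediately from Theorem \ref{t2} and Proposition \ref{p1}'', so there is no written argument to compare against step by step; you are filling in details the author left implicit. Your formal steps are correct: identifying $\mathcal{S}_{\overline{M}}$ with $B_+(K_{\overline{M}}+D)$ up to the scaling of $\rho$, invoking Proposition \ref{p1} with $F=0$, and using ampleness of $K_{\Phi(\overline{M})}+\Phi(D)$ to kill the first term, reducing everything to $Exc(\Phi)=\Phi^{-1}(\Phi(\overline{M})_{sing})$. You also correctly identify this last identity as the one genuine step.

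However, your argument for that step does not work. You invoke ``any prime divisor extracted over a smooth point of a smooth pair has positive log discrepancy'' to contradict crepancy, but crepancy is exactly the statement that the discrepancy is $0$, i.e.\ the \emph{log} discrepancy is $1$, which is positive — so there is no contradiction. And indeed an exceptional divisor over a smooth point of the pair, lying over a point of $\Phi(D)$, can perfectly well have discrepancy $0$: let $Y$ be a smooth projective surface, $D_Y\subset Y$ a smooth curve with $K_Y+D_Y$ ample, $\pi\colon X\to Y$ the blowup at a point $p\in D_Y$ with exceptional curve $E$, and $D_X$ the strict transform. Then
$$
K_X+D_X=\pi^*K_Y+E+\pi^*D_Y-E=\pi^*(K_Y+D_Y),
$$
so $(X,D_X)$ satisfies every hypothesis of the paper ($X$ smooth projective, $D_X$ a smooth hypersurface, $K_X+D_X$ big and semi-ample), the morphism $\Phi$ defined by $K(K_X+D_X)$ is $\pi$ up to the ample embedding of $Y$, $\Phi(\overline{M})=Y$ is \emph{smooth}, yet $Exc(\Phi)=E\neq\emptyset$. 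In this example $B_+(K_X+D_X)=E$ but $\Phi^{-1}(\Phi(\overline{M})_{sing})=\emptyset$, so the middle equality of the Corollary fails, $\overline{M}\setminus(\mathcal{S}_{\overline{M}}\cup D)\subsetneq\overline{M}_{reg}\setminus D$, and $\omega_1$ (whose class is trivial on the fibres of $\Phi$, hence degenerates along $E$) is \emph{not} smooth on $\overline{M}_{reg}\setminus D$. So the gap you flagged is a real gap, your discrepancy-counting does not close it, and the identity $Exc(\Phi)=\Phi^{-1}(\Phi(\overline{M})_{sing})$ — and with it the Corollary as stated — appears to require additional hypotheses ruling out crepant exceptional divisors lying over $\Phi(D)$.
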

\begin{rem}
If the codimension of $\Phi(D)$ is not $1$, then $D$ is an exceptional divisor of the resolution $\Phi$. Thus, $\overline{M}\backslash (\mathcal{S}_{\overline{M}}\cup D)=\overline{M}_{reg}$.
\end{rem}

$M$ is denoted by $\overline{M}\setminus D$. The next result states that the limit space $(M, \omega_t)$ converge to in the Gromov-Hausdorff topology has more regular properties, such as metric structure, algebraic structure.
\begin{thm}\label{pr}
The following results are hold.
\begin{enumerate}
\item $(M, \omega_t, x)$ converges in the Gromov-Hausdorff topology to a length space $(M_1, d_1, x_1)$ which is the metric completion $(\overline{M}_{reg}\setminus D, \omega_1)$.
\item $M_1=\mathcal{R}\cup \mathcal{S}$ and $\mathcal{R}=\overline{M}_{reg}\setminus D$, where $\mathcal{R}$ is the regular part and $\mathcal{S}$ is the singular part.
\item $\mathcal{R}$ is geodesically convex and $\mathcal{S}$ is closed set which has codimension $\geq 2$.
\item $M_1$ homeomorphic to a normal quasi-subvariety $\Phi(\overline{M}\setminus D)$.
\end{enumerate}
\end{thm}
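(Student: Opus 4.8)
The plan is to adapt the scheme of Song~\cite{So} and La Nave--Tian--Zhang~\cite{NTZ} to the complete cusp geometry along $D$, deducing the four assertions in turn from Cheeger--Colding theory for noncollapsed Ricci limits, from Colding--Naber's study of minimizing geodesics, and from a pluricanonical partial $C^{0}$-estimate. First I would extract from the proofs of Theorems~\ref{t1} and~\ref{t2} the uniform control along the continuity path: with $\theta=\omega_{0}-\sqrt{-1}\partial\overline{\partial}\log\log^{2}|s_{D}|^{2}_{h_{D}}$ the fixed cusp reference form, a uniform $C^{0}$-bound for the potential of $\omega_{t}$ relative to $\theta$; on every compact subset of $\overline{M}_{reg}\setminus D$ two-sided bounds $C^{-1}\theta\le\omega_{t}\le C\theta$ and the resulting higher-order estimates forcing smooth convergence $\omega_{t}\to\omega_{1}$; the lower Ricci bound $\Ric(\omega_{t})=\tfrac1t\bigl(\theta-(1+t)\omega_{t}\bigr)\ge-(1+t^{-1})\omega_{t}$ off $D$, obtained by rewriting~(\ref{e1}) and using $\theta\ge0$; volume noncollapsing at the base point $x$; and the fact that near $D$ the metrics $\omega_{t}$ remain uniformly cusp-like, so that the cusp end is, up to bounded distortion, a fixed model near infinity and does not collapse. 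Gromov's precompactness theorem for pointed spaces then gives, along any $t_{j}\to\infty$, a pointed limit $(M_{\infty},d_{\infty},x_{\infty})$ in which the smooth convergence identifies a dense open set isometrically with $(\overline{M}_{reg}\setminus D,\omega_{1})$; a capacity argument, showing that $d_{\omega_{1}}$-minimizing curves can be pushed off a neighborhood of $\mathcal{S}_{\overline{M}}$ with arbitrarily small loss in length (using $\codim\mathcal{S}_{\overline{M}}\ge2$), shows that excising $\mathcal{S}_{\overline{M}}\cup D$ changes no intrinsic distance, so $M_{\infty}$ is the metric completion $M_{1}$ of $(\overline{M}_{reg}\setminus D,\omega_{1})=(\overline{M}\setminus(\mathcal{S}_{\overline{M}}\cup D),\omega_{1})$; uniqueness of the limit upgrades this to convergence of the whole family, giving~(1).

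Cheeger--Colding applied to the noncollapsed limit $M_{1}$ with its lower Ricci bound yields $M_{1}=\mathcal{R}\sqcup\mathcal{S}$ with $\mathcal{R}$ open and carrying a smooth K\"{a}hler--Einstein structure — obtained by passing $\Ric(\omega_{t})\to\Ric=-\omega$ to the limit on $\mathcal{R}$ — and $\mathcal{S}$ closed of codimension $\ge2$. The smooth convergence shows $\overline{M}_{reg}\setminus D\subseteq\mathcal{R}$; for the reverse inclusion one shows that no point lying over $\mathcal{S}_{\overline{M}}=B_{+}(K_{\overline{M}}+D)$ can be regular, which follows from the degeneration of the Monge--Amp\`ere density of $\omega_{1}$ there (equivalently from assertion~(4), which maps such points into $\Phi(\overline{M})_{sing}$), while points of $D$ lie at infinite $d_{1}$-distance and are not in $M_{1}$ at all; hence $\mathcal{R}=\overline{M}_{reg}\setminus D$. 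Geodesic convexity of $\mathcal{R}$ is then Colding--Naber: in a noncollapsed Ricci limit with singular set of codimension $\ge2$ every minimizing geodesic has interior in $\mathcal{R}$, and because the cusp end is a thin infinite funnel, minimizing geodesics between points of the compact core never enter it; the closedness and the codimension bound for $\mathcal{S}$ are part of the Cheeger--Colding output. This proves~(2) and~(3).

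By the Corollary, $\Phi$ restricts to a biholomorphism $\mathcal{R}=\overline{M}_{reg}\setminus D\xrightarrow{\sim}\Phi(\overline{M})_{reg}\setminus\Phi(D)$, and $\Phi(\overline{M}\setminus D)$ is a normal quasi-projective variety. Since $\omega_{t}$ lies, up to the explicit bounded twist $\theta-\omega_{0}$, in $c_{1}(K_{\overline{M}}+D)$ and $\Phi$ is given by an $L^{2}$-orthonormal basis $s_{0},\dots,s_{N}$ of $H^{0}(\overline{M},K(K_{\overline{M}}+D))$ for the degenerating K\"{a}hler--Einstein volume form, the maps $\Phi$ are uniformly Lipschitz from $(M,\omega_{t})$ to $\bigl(\Phi(\overline{M}),\omega_{FS}\bigr)$ and so pass to the limit as a surjective $1$-Lipschitz map $F\colon M_{1}\to\Phi(\overline{M}\setminus D)$ (the target being the metric completion of $\Phi(M)$, since $\Phi(D)$ lies at infinite distance). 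The decisive input is a uniform partial $C^{0}$-estimate: a positive lower bound for the Bergman function $\sum_{i}|s_{i}|^{2}$ in the metric induced by $\omega_{1}^{n}$, away from a neighborhood of $\mathcal{S}_{\overline{M}}$ and uniform up to the cusp. It shows that $F|_{\mathcal{R}}$ is the above biholomorphism — in particular injective — that $F$ carries $\mathcal{S}$ onto $\Phi(\overline{M})_{sing}\cap\Phi(\overline{M}\setminus D)$, and that $F$ is proper; since a continuous proper bijection is a homeomorphism, this proves~(4) and supplies the claim, used above, that the points over $\mathcal{S}_{\overline{M}}$ remain singular.

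The main obstacle is the partial $C^{0}$-estimate of the last step: one must construct peak pluricanonical sections and bound the Bergman function from below uniformly along the continuity path and uniformly up to $D$, which forces a H\"{o}rmander $\overline{\partial}$-construction to be combined with the cusp geometry and with the Skoda-type integrability of $(|s_{D}|^{2}\log^{2}|s_{D}|^{2})^{-1}$, together with local estimates near $\mathcal{S}_{\overline{M}}$. A secondary technical point is the capacity/zero-extension argument in assertion~(1): one needs that neither $\mathcal{S}_{\overline{M}}$ (codimension $\ge2$) nor the infinite cusp neighborhood of $D$ contributes to intrinsic distances, the latter because the cusp's product-like geometry forces minimizing curves to stay away from it.
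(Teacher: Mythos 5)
Your outline follows the paper's strategy closely: Gromov precompactness plus Cheeger--Colding and Colding--Naber for (1)--(3), a $\bar\partial$-based peak-section argument in the Donaldson--Sun style for (4). Two comparison points, one of which is a genuine gap.

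For (2), the paper's reverse inclusion $\mathcal{R}\subset M_{reg}$ is by a volume comparison: if $p\in\mathcal{R}$ were approached by $p_l\in M_{sing}=\Phi^{-1}(\Phi(\overline{M})_{sing})\setminus D$, the $C^{1,\alpha}$-harmonic-coordinate convergence forces a uniform lower bound on $\Vol_{\omega_l}(M_{sing}\cap B_{\omega_l}(p_l,r))$, while $\int_{M_{sing}}\omega_l^m=\int_{M_{sing}}((1-l)\omega_0+l\eta_1)^m\to 0$. Your appeal to ``degeneration of the Monge--Amp\`ere density of $\omega_1$'' is in the same spirit but not quite what is used; the paper does not circle through (4) to prove (2), and it would be cleaner not to, since (2) is logically prior.

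The genuine gap is in (4). You package injectivity, properness, and surjectivity all as consequences of a single ``uniform partial $C^0$-estimate.'' In the paper these are separated, and the surjectivity of $\Phi_1$ onto $\Phi(\overline{M}\setminus D)$ is \emph{not} a corollary of the Bergman lower bound. It requires showing that each point of $\mathcal{S}_{\overline{M}}\setminus D$ is at finite $\omega_1$-distance from the regular part, i.e.\ that $\omega_1$ does not blow up too fast near the stable base locus. This is the content of the entire blow-up analysis of subsection~5.5 (Lemmas~\ref{c0}, \ref{Ric}, \ref{C2}, \ref{L533}, \ref{distance}), culminating in the integrable bound $\pi^*\omega_1\le C\,|s_F|^{-2(1-\beta)}\prod_i|f_i|^{-2\Lambda}\chi'$ near $\mathcal{S}_{\overline{M}}$, which produces a finite-length curve and hence a preimage in $M_1$. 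Your ``proper $\Rightarrow$ closed dense image $\Rightarrow$ onto'' scheme is not wrong, but properness of $F$ in turn requires precisely this kind of two-sided control of $\omega_1$ near $\mathcal{S}_{\overline{M}}$, which a lower bound on the Bergman kernel by itself does not supply; you would still need the upper bound on $\omega_1$ supplied by the blow-up estimates. Also, the injectivity in the paper does not come from a literal global Bergman lower bound but from the Donaldson--Sun $H$-condition framework run at tangent cones (subsection~5.4), fed by Moser-iteration $L^\infty$/gradient bounds on sections (Proposition~\ref{L0a}) and the Sobolev inequality of Song (Lemma~\ref{So}); continuity of $\Phi_1$ on $M_1$ is obtained from the local Lipschitz bound on sections rather than from Lipschitz bounds on $\Phi$ with respect to $\omega_t$. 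Finally, a small slip: the target $\Phi(\overline{M}\setminus D)$ carries the Fubini--Study topology of $\mathbb{CP}^N$, not the completion of a degenerate length metric, and the points of $D$ (the cusp) are already excluded from $M_1$ since they lie at infinite $d_1$-distance, so your extra ``thin funnel'' argument in (3) is not needed.
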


\textbf{Acknowledgement:}The author would like to thank Professor Gang Tian for his constant help, support and encouragement.

\section{Preliminaries}
In this section, we list some fundamental definitions and results which will be used in the later.
\begin{definition}
Let $V$ be an open set in $\mathbb{C}^n$. A holomorphic map from $V$ into a complex manifold $M$ of complex dimension $n$ is called a quasi-coordinate map if it is of maximal rank everywhere in $V$. This open set $V$ is called a local quasi-coordinate of $M$.
\end{definition}
\begin{definition}
Let $M$ be a complete K\"{a}hler manifold and $\omega$ is the K\"{a}hler form. $(M, \omega)$ is called bounded geometry if there is a quasi-coordinates $\Gamma=\{(V;v^1,\cdot\cdot\cdot,v^n)\}$ which satisfies the following three conditions:
\begin{enumerate}
\item $M$ is covered by the image of $(V;v^1,\cdot\cdot\cdot,v^n)$.
\item The complement of some open neighborhood of $D$ is covered by a finite of $(V;v^1,\cdot\cdot\cdot,v^n)$ which are local coordinates in the usual sense.
\item There exist positive constants $C$ and $A_k$ $(k=0,1,2,\cdot\cdot\cdot)$ independent of $\Gamma$ such that at each $(V;v^1,\cdot\cdot\cdot,v^n)$, the inequalities
$$
 \frac{1}{C} \delta_{ij}<(g_{i\bar{j}})<C\delta_{ij},
$$
$$
\Big|\frac{\partial^{|p|+|q|}}{\partial v^{|p|}\partial \bar{v}^{|q|}}g_{i\bar{j}}\Big|<A_{|p|+|q|}, \ for \ any \ multiindices \ p \ and \ q \,
$$
hold, where $g_{i\bar{j}}$ denote the component of $\omega$ with respect to $V$.
\end{enumerate}
\end{definition}
Now we define the H\"{o}lder space of $C^{k,\lambda}$-functions on a complete K\"{a}hler manifold $(M, \omega)$ which cover by the image of quasi-coordinates. For a nonnegative integer $k$ , $\lambda\in (0,1)$ and $u\in C^k(M)$, we define
\begin{align*}
& ||u||_{k,\lambda}=\sup_{V\in \Gamma}\Big\{\sup_{z\in V}\Big(\sum_{|p|+|q|\leq k}\Big|\frac{\partial^{|p|+|q|}}{\partial v^{|p|}\partial \bar{v}^{|q|}}u(z)\Big|\Big) \\
& +\sup_{z,z'\in V}\Big(\sum_{|p|+|q|=k}|z-z'|^{-\lambda}\Big|\frac{\partial^{|p|+|q|}}{\partial v^{|p|}\partial \bar{v}^{|q|}}u(z)-\frac{\partial^{|p|+|q|}}{\partial v^{|p|}\partial \bar{v}^{|q|}}u(z')\Big|\Big)\Big\}.
\end{align*}
The function space $C^{k,\lambda}(M)$ is, by definition,
$$
C^{k,\lambda}(M)=\{u\in C^{k}(M); ||u||_{k,\lambda}<\infty\},
$$
which is a Banach space with respect to the norm $||\cdot||_{k,\lambda}$.

Next we state the generalized maximum principle, due to Yau (Proposition 1.6 \cite{CY}).
\begin{thm}
Suppose $(M,\omega)$ is a complete K\"{a}hler manifold with bounded geometry and $f$ is a function on $M$ which is bounded from above. Then there exists a sequence ${x_i}$ in $M$ such that $\lim_{i\rightarrow \infty}f(x_i)=\sup f$, $\lim_{i\rightarrow \infty}|\nabla f(x_i)|=0$ and $\overline{\lim}_{i\rightarrow \infty}\Hess(f)(x_i)\leq 0$, where the Hessian is taken with respect to $\omega$.
\end{thm}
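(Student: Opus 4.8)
The strategy is to run the classical Omori--Yau argument; the point is that the hypothesis of bounded geometry is more than enough to supply the ingredient it really needs, namely a uniform lower bound on the sectional curvature of $(M,\omega)$. Indeed, the full curvature tensor is a universal expression in $g_{i\bar j}$ and its first two derivatives in a quasi-coordinate chart, so conditions (1)--(3) of the definition of bounded geometry force $|K|\le k^2$ on $M$ for some $k\ge 0$; in particular $K\ge -k^2$. We may assume $f\in C^2$, and that $\sup_M f$ is not attained (otherwise the constant sequence $x_i\equiv x_0$ at a maximum point does the job). Subtracting a constant, assume $\sup_M f=0$.

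Fix $p\in M$, put $r=d(p,\cdot)$ and, for $\varepsilon>0$, set
\[
f_\varepsilon:=f-\varepsilon\,\psi,\qquad \psi:=\sqrt{1+r^2}\,.
\]
Since $(M,g)$ is complete, closed metric balls are compact, and since $f$ is bounded above while $\psi(x)\to\infty$ as $r(x)\to\infty$, the super-level sets of the continuous function $f_\varepsilon$ are relatively compact; hence $f_\varepsilon$ attains its supremum at some $x_\varepsilon\in M$. For every $y\in M$,
\[
f(x_\varepsilon)\;\ge\;f_\varepsilon(x_\varepsilon)\;\ge\;f_\varepsilon(y)\;=\;f(y)-\varepsilon\,\psi(y),
\]
so, fixing $y$ with $f(y)>-\delta$ and letting $\varepsilon\to0$, we get $\liminf_{\varepsilon\to0}f(x_\varepsilon)\ge-\delta$; as $\delta>0$ was arbitrary, $f(x_\varepsilon)\to 0=\sup_M f$.

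It remains to estimate $\nabla f$ and $\Hess f$ at $x_\varepsilon$. Wherever $r$ is smooth (away from $p$ and from the cut locus of $p$) one has $|\nabla r|=1$ and, by Hessian comparison with $K\ge-k^2$, $\Hess r\le k\coth(kr)\,(g-dr\otimes dr)$; a direct computation then gives $|\nabla\psi|\le1$ and $\Hess\psi\le\big(1+\tfrac{r\,k\coth(kr)}{\sqrt{1+r^2}}\big)\,g\le C_0\,g$, where $C_0$ depends only on $k$ because $s\mapsto s\,k\coth(ks)/\sqrt{1+s^2}$ is bounded on $[0,\infty)$. If $r$ fails to be smooth at $x_\varepsilon$, we use Calabi's trick: taking $p'$ to be, say, the midpoint of a minimizing geodesic from $p$ to $x_\varepsilon$ (so that $x_\varepsilon$ is not in the cut locus of $p'$ and $d(p,p')+d(p',\cdot)\ge r$ with equality at $x_\varepsilon$), the function $\widetilde\psi:=\sqrt{1+(d(p,p')+d(p',\cdot))^2}$ is smooth near $x_\varepsilon$, lies above $\psi$, and equals $\psi$ at $x_\varepsilon$; hence $f-\varepsilon\widetilde\psi$ also attains a local maximum at $x_\varepsilon$ and obeys $|\nabla\widetilde\psi(x_\varepsilon)|\le1$ and $\Hess\widetilde\psi(x_\varepsilon)\le C_0\,g$ by the same computation (the midpoint choice keeps $d(p',x_\varepsilon)$ comparable to $r(x_\varepsilon)$, which makes the bound uniform; the degenerate case $x_\varepsilon=p$ is disposed of by Taylor-expanding $f_\varepsilon\le f_\varepsilon(p)$ near $p$). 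The first and second derivative tests for $f-\varepsilon\widetilde\psi$ (or $f_\varepsilon$) at $x_\varepsilon$ now give
\[
|\nabla f(x_\varepsilon)|=\varepsilon\,|\nabla\widetilde\psi(x_\varepsilon)|\le\varepsilon,\qquad \Hess f(x_\varepsilon)\le\varepsilon\,\Hess\widetilde\psi(x_\varepsilon)\le C_0\,\varepsilon\,g.
\]
Choosing $\varepsilon_i\downarrow0$ and setting $x_i:=x_{\varepsilon_i}$ yields a sequence with $f(x_i)\to\sup f$, $|\nabla f(x_i)|\to0$ and $\overline{\lim}_{i\to\infty}\Hess f(x_i)\le0$, as required; restricting the last inequality to complex directions also gives the statement for $\partial\overline\partial f$.

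The only genuine difficulty is the non-smoothness of $r$ along the cut locus, which is bypassed by Calabi's trick as above; everything else is elementary once one observes that bounded geometry delivers a curvature bound — hence constants $C_0$ in the Hessian estimate — independent of $\varepsilon$ and of the point $x_\varepsilon$.
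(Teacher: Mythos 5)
Your argument is correct, and it gives a genuinely self-contained proof where the paper merely cites Cheng--Yau (Proposition 1.6 of \cite{CY}) without proof. The route you take is the classical Omori penalization: subtract $\varepsilon\sqrt{1+r^2}$, locate an interior maximum, use Hessian comparison with the sectional-curvature lower bound, and dispose of the cut locus by Calabi's trick with the midpoint choice of auxiliary basepoint (which, as you note, is what keeps both $\coth(kr')$ and the prefactor $(c+r')/\sqrt{1+(c+r')^2}$ uniformly bounded). Cheng--Yau's own argument instead exploits the quasi-coordinate structure directly: in each chart the metric is uniformly comparable to the Euclidean metric with all derivatives uniformly bounded, so the analysis reduces to working with a sequence of almost-maxima inside charts where the local geometry is uniformly tame. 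Your approach buys independence from the specific quasi-coordinate apparatus --- you need only the one consequence, a two-sided sectional curvature bound, which you correctly extract from conditions (1)--(3) of the bounded-geometry definition via the formula $R_{i\bar j k\bar l}=-\partial_k\partial_{\bar l}g_{i\bar j}+g^{s\bar t}\partial_k g_{i\bar t}\partial_{\bar l}g_{s\bar j}$ and the uniform equivalence of the metric with the Euclidean one; the Cheng--Yau route buys uniformity of constants in the very same charts that are later used for the Schauder theory, which is why they set it up that way. Both are standard and both prove the desired statement; one small remark is that the theorem implicitly assumes $f\in C^2$, as you noted, and that $\psi=\sqrt{1+r^2}$ is in fact smooth at $p$ itself (since $r^2$ is smooth there), so the "degenerate case $x_\varepsilon=p$" needs no Taylor expansion at all.
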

Now we introduce the Bochner formula on a general line bundle. Let $(M,\omega)$ be a K\"{a}hler manifold of dimension $n$ and $(L,h)$ be a Hermitian Line bundle over $M$. Let $\Theta_{h}$ be the Chern curvature form of $h$. Let $\nabla$ and $\overline{\nabla}$ denote the $(1, 0)$ and $(0, 1)$ part of
a connection respectively. The connection appeared in this paper is usually known as the Chern connection or Levi-Civita connection.

For a holomorphic section $\tau \in H^0(M,L)$ we write for simplicity
$$
 |\tau|=|\tau|_{h}, \ |\nabla \tau|_{h\otimes \omega}=|\nabla \tau|,
$$
and
$$
|\nabla\nabla \tau|^2=\sum_{i,j}|\nabla_{i}\nabla_{j}\tau|^2, \ |\overline{\nabla}\nabla \tau|^2=\sum_{i,j}|\nabla_{\bar{i}}\nabla_{j}\tau|^2.
$$
By direct computation we have
\begin{lem}(Bochner formulas).\label{Bo}
For any $\tau \in H^0(M,L)$ one has
\begin{equation}\label{e4.2}
 \triangle_{\omega}|\tau|^2=|\nabla \tau|^2-|\tau|^2\cdot tr_{\omega}\Theta_{h}
\end{equation}
and
\begin{multline}
 \triangle_{\omega}|\nabla \tau|^2=|\nabla\nabla \tau|^2+|\overline{\nabla}\nabla\tau|^2   -\nabla_j(\Theta_h)_{i\bar{j}} \langle \tau,\nabla_{\bar{i}}\bar{\tau}\rangle-\nabla_{\bar{j}}(tr_{\omega}\Theta_h)\langle \nabla_j \tau,\bar{\tau}\rangle  \\
 +R_{i\bar{j}}\langle \nabla_j \tau,\nabla_{\bar{i}}\bar{\tau}\rangle
 -2(\Theta_h)_{i\bar{j}}\langle\nabla_j\tau,\nabla_{\bar{i}}\bar{\tau}\rangle-|\nabla \tau|^2\cdot tr_{\omega}\Theta_h
\end{multline}
where $R_{i\bar{j}}$ is the Ricci curvature of $\omega$, $\langle,\rangle$ is the inner product defined by $h$.
\end{lem}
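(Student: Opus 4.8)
\textbf{Proof strategy for Lemma \ref{Bo}.}
The two identities are purely local computations in normal coordinates, so the plan is to fix a point $x_0\in M$, choose holomorphic normal coordinates $(z_1,\dots,z_n)$ for $\omega$ centred at $x_0$ (so that $g_{i\bar j}(x_0)=\delta_{ij}$ and $\partial g_{i\bar j}(x_0)=0$), and simultaneously trivialise $L$ near $x_0$ by a holomorphic frame $e$ for which the Hermitian weight $\varphi=-\log|e|_h^2$ satisfies $\partial\varphi(x_0)=0$; then $(\Theta_h)_{i\bar j}(x_0)=\partial_i\partial_{\bar j}\varphi(x_0)$ and the Chern connection acting on sections is $\nabla_i=\partial_i-\partial_i\varphi$, $\nabla_{\bar j}=\partial_{\bar j}$. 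Writing $\tau=f\,e$ with $f$ holomorphic, one has $|\tau|^2=|f|^2 e^{-\varphi}$ and $\nabla_i\tau=(\partial_i f-f\,\partial_i\varphi)e$. I would first establish \eqref{e4.2}: compute $\triangle_\omega|\tau|^2=g^{i\bar j}\partial_i\partial_{\bar j}\bigl(|f|^2e^{-\varphi}\bigr)$, expand the derivatives, and use $\partial_{\bar j}f=0$ together with $\partial\varphi(x_0)=0$ to see that at $x_0$ the surviving terms are exactly $g^{i\bar j}\partial_i f\,\overline{\partial_j f}\,e^{-\varphi}-|f|^2 e^{-\varphi}g^{i\bar j}\partial_i\partial_{\bar j}\varphi=|\nabla\tau|^2-|\tau|^2\,\mathrm{tr}_\omega\Theta_h$. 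Since $x_0$ is arbitrary, \eqref{e4.2} holds on all of $M$.

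For the second identity I would apply the Laplacian to $|\nabla\tau|^2=g^{i\bar j}g^{k\bar l}\langle\nabla_k\tau,\nabla_{\bar l}\bar\tau\rangle\,\cdots$ — more precisely to $|\nabla\tau|^2=\sum_{k}|\nabla_k\tau|^2$ evaluated in normal coordinates — and carefully track all the second-order terms. The computation is the standard Bochner–Kodaira–Weitzenböck bookkeeping: differentiating once produces $\nabla_i\nabla_k\tau$ and $\nabla_{\bar i}\nabla_k\tau$ terms together with derivatives of the metric and of $\varphi$; differentiating a second time and contracting, one commutes covariant derivatives, which is where the curvature terms enter. Commuting $\nabla_{\bar i}$ past $\nabla_i$ on the section $\nabla_k\tau$ (a section of $T^{*1,0}M\otimes L$) brings in both the Ricci curvature $R_{i\bar j}$ of $\omega$ (acting on the $T^{*1,0}M$ factor) and the curvature $(\Theta_h)_{i\bar j}$ of $L$; the precise Ricci term is $R_{i\bar j}\langle\nabla_j\tau,\nabla_{\bar i}\bar\tau\rangle$ and the bundle term contributes the $-2(\Theta_h)_{i\bar j}\langle\nabla_j\tau,\nabla_{\bar i}\bar\tau\rangle$ summand. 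The first-order curvature derivatives $-\nabla_j(\Theta_h)_{i\bar j}\langle\tau,\nabla_{\bar i}\bar\tau\rangle$ and $-\nabla_{\bar j}(\mathrm{tr}_\omega\Theta_h)\langle\nabla_j\tau,\bar\tau\rangle$ arise from differentiating the identity \eqref{e4.2} once more, i.e. from the fact that $\nabla_{\bar i}\nabla_k\tau$ is not zero but is controlled by $(\Theta_h)_{k\bar i}\tau$ via the commutation $[\nabla_{\bar i},\nabla_k]\tau=-(\Theta_h)_{k\bar i}\tau$. Collecting $|\nabla\nabla\tau|^2+|\overline\nabla\nabla\tau|^2$ from the positive ``square'' terms and the $-|\nabla\tau|^2\,\mathrm{tr}_\omega\Theta_h$ from the zeroth-order piece completes the stated formula.

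The main obstacle is bookkeeping rather than conceptual: one must be scrupulous about (i) which curvature acts on which tensor factor when commuting covariant derivatives — the Ricci curvature from $TM$ versus the line-bundle curvature $\Theta_h$ — and (ii) keeping the normal-coordinate simplifications ($\partial g=0$, $\partial\varphi=0$ at $x_0$) applied \emph{only after} all derivatives have been taken, since second derivatives of $g$ and $\varphi$ at $x_0$ are precisely the curvature quantities that must be retained. A convenient way to organise the argument and avoid sign errors is to note that $\nabla\tau$ is a holomorphic-type object only in the sense that $\nabla_{\bar j}\nabla_i\tau$ is algebraic in $\tau$ (it equals $-(\Theta_h)_{i\bar j}\tau$ up to the convention), so $\triangle_\omega|\nabla\tau|^2$ splits cleanly into a Weitzenböck square-norm part and a curvature-correction part; matching the curvature-correction part against the claimed right-hand side is then a finite check. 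Once done at an arbitrary point, the identity is global by the same density-of-base-points remark.
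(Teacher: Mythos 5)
Your proposal is correct and is precisely the ``direct computation'' that the paper alludes to without spelling out: fix normal holomorphic coordinates for $\omega$ and a holomorphic frame for $L$ normalized so $\partial\varphi$ vanishes at the base point, compute $g^{i\bar j}\partial_i\partial_{\bar j}$ of $|f|^2e^{-\varphi}$ and of $\sum_k|\nabla_k\tau|^2$, and use the commutation $\nabla_{\bar i}\nabla_k\tau=-(\Theta_h)_{k\bar i}\tau$ together with the curvature of $T^{*1,0}M$ to produce the Ricci, $\Theta_h$, $\nabla\Theta_h$, and trace terms. The paper gives no proof beyond the phrase ``by direct computation,'' so there is no genuinely different route to compare against; your bookkeeping caveats (retain second derivatives of $g$ and $\varphi$, distinguish $TM$-curvature from $L$-curvature when commuting) are exactly the points that need care.
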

\section{Existence and uniqueness of cusp continuity equation}
This section is devoted to prove the Theorem \ref{t1}. When $t=0$ the equation (\ref{e1}) has a solution $\omega(0)=\omega_0-\sqrt{-1}\partial\bar{\partial}\log\log^2|s_D|^2_{h_D}$. For a fixed $t\neq 0$, we reduce (\ref{e1}) to a scalar equation. First, the background metric will be constructed. Since $K_{\overline{M}}+D$ is semi-ample, the Kawamata base point free Theorem claims that there exists an integer $K_0$ such that $K_0(K_{\overline{M}}+D)$ has no base point. Then a basis of $H^0(\overline{M},K_0(K_{\overline{M}}+D))$ gives a holomorphic map
$$
\Phi: \overline{M}\rightarrow \mathbb{CP}^N
$$
where $N=\dim H^0(\overline{M}, K_0(K_{\overline{M}}+D))-1$. $\omega_{FS}$ is denoted by the Fubini-Study metric on $\mathbb{CP}^N$. Set $\eta_1=\frac{1}{K_0}\Phi^*(\omega_{FS})$. Since $\eta_1\in c_1(K_{\overline{M}}+D)$, there exist a smooth volume form $\Omega$ on $\overline{M}$ and hermitian metric $h_D^t$ on $L_D$ such that $\eta_1=-\Ric(\Omega)+\Theta_{h_D^t}$ and $\frac{1}{1+t'}\omega_0+\frac{t'}{1+t'}\eta_1-\sqrt{-1}\partial\overline{\partial}\log\log^2|s_D|_{h_D^t}>0$ for $t'\in[0,t]$, where $\Theta_{h_D^t}$ is the curvature form of $L_D$ with the metric $h_D^t$. Set $l=\frac{t}{1+t}$, then $\widetilde{\omega_l}:=(1-l)\omega_0+l\eta_1-\sqrt{-1}\partial\overline{\partial}\log\log^2|s_D|_{h_D^t}$ is chosen as the background meric. Therefore the equation (\ref{e1}) is reduced to the following scalar equation
$$
(\widetilde{\omega_l}+\sqrt{-1}\partial\overline{\partial}u_l)^n=e^{\frac{u_l}{l}}\frac{\Omega_l}{|s_D|^2_{h_D^l}\log^2|s_D|^2_{h_D^l}},
$$
where $|s_D|^2_{h_D^l}$ is denoted by $|s_D|^2_{h_D^t}$ and $\Omega_l=\Omega\Big(\frac{\log^2|s_D|^2_{h_D}}{\log^2|s_D|^2_{h_D^l}}\Big)^{\frac{1-l}{l}}$ is a smooth volume form on $\overline{M}$. For the convenience, we simplified the notation of the above equation as following
\begin{equation}\label{e3.1}
(\widetilde{\omega_l}+\sqrt{-1}\partial\overline{\partial}u_l)^n=e^{\frac{u_l}{l}}\frac{\Omega_l}{|s_D|^2\log^2|s_D|^2}.
\end{equation}
To get a complete metric, we define an open subset $U$ in $C^{k,\lambda}(M)$ by
$$
U=\{v\in C^{k,\lambda}(M)| \frac{1}{C}\widetilde{\omega_l}<\widetilde{\omega_l}+\sqrt{-1}\partial\overline{\partial}v<C\widetilde{\omega_l}, \ for \ \ some \ \ positive\ \ constant\ \ C\ \},
$$
where $M=\overline{M}\setminus D$. If $u_l$ belongs to $U$ and satisfies (\ref{e3.1}), then $\widetilde{\omega_l}+\sqrt{-1}\partial\overline{\partial}u_l$ is a complete K\"{a}hler metric.

Now we take the continuity method to solve the equation (\ref{e3.1}). Consider the following equations
\begin{equation}\label{Ce}
(\widetilde{\omega_l}+\sqrt{-1}\partial\overline{\partial}u_{l,s})^n=e^{\frac{u_{l,s}}{l}}\cdot e^{sF}\cdot \widetilde{\omega_l}^n,
\end{equation}
where $F=\frac{\Omega_l}{\widetilde{\omega_l}^n|s_D|^2\log^2|s_D|^2}$. We consider

We consider the $C^0$ map $\Psi: C^{k,\lambda}(M)\rightarrow C^{k-2,\lambda}(M)$ defined by $\Psi(v)=e^{-\frac{u_l}{l}}\cdot\frac{(\widetilde{\omega_l}+\sqrt{-1}\partial\overline{\partial}v)^n}{\widetilde{\omega_l}^n}$. Define
$$
S=\{s\in[0,1]|\ there \ is \ a \ solution\ u_{l,s} \ satisfies \ \Psi(u_{l,s})=e^{sF}\}.
$$
Obviously, $0\in S$. To prove $1\in S$, it is sufficient to show that $S$ is open and closed. The inverse mapping theorem implies the openness. The Fr\'{e}chet derivative $\Psi'(u_{l,s}):  C^{k,\lambda}(M)\rightarrow C^{k-2,\lambda}(M)$ at $u_{l,s}\in U$ is given by
$$
h\rightarrow e^{sF}(\triangle_{\omega_{l,s}}h-\frac{h}{l}),
$$
where $\omega_{l,s}=\widetilde{\omega_l}+\sqrt{-1}\partial\overline{\partial}u_{l,s}$. Due to Kobayashi \cite{Ko84}, $F\in C^{k-2,\lambda}(M)$. Therefore, we have to show that, for any $w\in C^{k-2,\lambda}(M)$,
\begin{equation}\label{e3.2}
\triangle_{\omega_{l,s}}h-\frac{h}{l}=w
\end{equation}
can be solved for $h\in C^{k,\lambda}(M)$ and that $|h|_{C^{k,\lambda}}\leq C|w|_{C^{k-2,\lambda}}$ for some constant $C$ independent of $w$.

We first to show that there is at most one function $h$ in $C^{k,\lambda}(M)$ solving the equation (\ref{e3.2}). It suffices to verify that $\triangle_{\omega_{l,s}}h-\frac{h}{l}=0$ and $h\in C^{k,\lambda}(M)$ imply $h\equiv 0$. Note that $\omega_{l,s}$ is complete K\"{a}hler metric with bounded geometry due to Lemma 2 \cite{Ko84} and Proposition 1.4 \cite{CY}. For such a metric we can use the generalized maximum principle. Suppose $h\in C^{k,\lambda}(M)$, $h$ is in particular bounded. The generalized maximum principle implies that there exists a sequence of points $\{x_i\}$ in $M$ such that $\lim_{i\rightarrow \infty}h(x_i)=\sup h$ and $\overline{\lim}_{i\rightarrow \infty}\triangle_{\omega_{l,s}}h(x_i)\leq 0$. We immediately see that $\sup h\leq 0$ according to the equation $\triangle_{\omega_{l,s}}h-\frac{h}{l}=0$. Similarly, $\inf h\geq0$ and $h\equiv0$.

Now we prove the existence of $h$. Let $\{\Omega_i\}$ be an exhaustion of $M$ by compact subdomains. Suppose $w\in C^{k-2,\lambda}(M)$ and let $h_i$ be the unique solution to
$$
\triangle_{\omega_{l,s}}h_i-\frac{h_i}{l}=w  \ in \ \Omega_i,
$$
$$
h_i=0 \ on \ \partial\Omega_i.
$$
The maximum principle applied to $\Omega_i$ shows that
$$
\sup_{\Omega_i}|h_i|\leq l\cdot \sup_{\Omega_i}|w|.
$$
Interior Schauder estimates shows that a sequence of $h_i$ converge to some $h\in C^{k,\lambda}(M)$ which solves the equation (\ref{e3.2}) and that the estimate $|h|_{C^{k,\lambda}}\leq C|w|_{C^{k-2,\lambda}}$.

Next, it remains to show that $S$ is closed. Assume that $\{s_i\}\subset E$ is a sequence with $\lim_{i\rightarrow\infty}s_i=\bar{s}$ and $u_{l,s_i}$ is the solution of (\ref{Ce}) with $s=s_i$. We want to prove $\bar{s}\in E$. It amounts to getting a prior $C^{k,\lambda}(M)$-estimate for each $u_{l,s_i}$. By applying the generalized maximum principle to (\ref{Ce}), we have
$$
\sup_{M}|u_{l,s_i}|\leq l\cdot s_i \sup_{M}|F|\leq C \sup_{M}|F|.
$$
So we have the $C^0$-estimate due to Lemma 1 \cite{Ko84}. For the $C^2$-estimate, since $(M,\widetilde{\omega_l})$ is a bounded geometry, by the standard calculation we have
$$
\Ric(\omega_{l,s_i})=-\frac{1}{l}\omega_{l,s_i}+\frac{1}{l}\widetilde{\omega_l}-s_i\sqrt{-1}\partial\overline{\partial}F+\Ric(\widetilde{\omega_l})\leq -\frac{1}{l}\omega_{l,s_i}+C\widetilde{\omega_l}
$$
and
$$
\triangle_{\omega_{l,s_i}}\log tr_{\widetilde{\omega_l}}\omega_{l,s_i}\geq \frac{1}{tr_{\widetilde{\omega_l}}\omega_{l,s_i}}\big(-g^{i\bar{j}}(\widetilde{\omega_l})R_{i\bar{j}}(\omega_{l,s_i})+g^{i\bar{j}}(\omega_{l,s_i})g_{k\bar{l}}(\omega_{l,s_i}){R_{i\bar{j}}}^{k\bar{l}}(\widetilde{\omega_l})\big).
$$
Then
$$
\triangle_{\omega_{l,s_i}}\log tr_{\widetilde{\omega_l}}\omega_{l,s_i}\geq -atr_{\omega_{l,s_i}}\widetilde{\omega_l}-\frac{A}{tr_{\widetilde{\omega_l}}\omega_{l,s_i}}-C,
$$
where $-a$ is the lower bound of holomorphic bisectional curvature of metric $\widetilde{\omega_l}$. Note that
$$
\triangle_{\omega_{l,s_i}}u_{l,s_i}=n-tr_{\omega_{l,s_i}}\widetilde{\omega_l}.
$$
Let $H=\log tr_{\widetilde{\omega_l}}\omega_{l,s_i}-(a+1)u_{l,s_i}$, then
$$
\triangle_{\omega_{l,s_i}}H\geq tr_{\omega_{l,s_i}}\widetilde{\omega_l}-\frac{A}{tr_{\widetilde{\omega_l}}\omega_{l,s_i}}-C.
$$
By the generalized maximum principle, there exists a sequence $\{x_i\}$ such that $\lim_{i\rightarrow\infty}H(x_i)=\sup H$ and $\overline{\lim}_{i\rightarrow\infty}\sqrt{-1}\partial\overline{\partial}H(x_i)\leq 0$. So we have a subsequence also denoted by $\{x_i\}$ such that
$$
\lim_{i\rightarrow\infty}tr_{\widetilde{\omega_l}}\omega_{l,s_i}(tr_{\omega_{l,s_i}}\widetilde{\omega_l}-C)(x_i)\leq A.
$$
Nota that $(tr_{\widetilde{\omega_l}}\omega_{l,s_i})^{\frac{1}{n-1}}\leq C'tr_{\omega_{l,s_i}}\widetilde{\omega_l}$. Then we get
\begin{equation}\label{e4.0}
\lim_{i\rightarrow\infty}tr_{\widetilde{\omega_l}}\omega_{l,s_i}\Big(\frac{1}{C'}(tr_{\widetilde{\omega_l}}\omega_{l,s_i})^{\frac{1}{n-1}}-C\Big)(x_i)\leq A.
\end{equation}
If $$\lim_{i\rightarrow\infty}(tr_{\widetilde{\omega_l}}\omega_{l,s_i})^{\frac{1}{n-1}}(x_i)\leq 2C'C,$$ then we see $$\lim_{i\rightarrow\infty}tr_{\widetilde{\omega_l}}\omega_{l,s_i}(x_i)\leq C.$$
Otherwise $$\lim_{i\rightarrow\infty}(tr_{\widetilde{\omega_l}}\omega_{l,s_i})^{\frac{1}{n-1}}(x_i)\geq 2C'C,$$ then by (\ref{e4.0}) we have $$\lim_{i\rightarrow\infty}tr_{\widetilde{\omega_l}}\omega_{l,s_i}(x_i)\leq C.$$
Therefore, $H\leq C$. This implies $tr_{\widetilde{\omega_l}}\omega_{l,s_i}\leq C$. Furthermore by a standard inequality, we get $C^{-1}\widetilde{\omega_l}\leq \omega_{l,s_i}\leq C\widetilde{\omega_l}$.

For the $3$-order estimate, let $T=|\nabla_{\widetilde{g}_l}\partial\overline{\partial}u_{l,s_i}|^2_{g_{l,s_i}}$, where $\widetilde{g}_l$ and $g_{l,s_i}$ represent Riemannian metrics associated with K\"{a}hler forms $\widetilde{\omega}_l$ and $\omega_{l,s_i}$. By a standard computations (c.f. Proposition 4.3 \cite{CY}), we have
$$
\triangle_{\omega_{l,s_i}}(T+C\triangle_{\widetilde{\omega_l}} u_{l,s_i})\geq C_1 T-C_2.
$$
By the Laplace estimate of $u_{l,s_i}$ and generalized maximum principle, we get $T\leq C$. Thus, by taking a subsequence if necessary, $u_{l,s_i}$ $C^{2,\lambda}$-converge to a solution with $s=\bar{s}$. This implies $S$ is closed.

Next we prove the uniqueness of equation (\ref{e3.1}). Suppose that $u_{l,1}$ and $u_{l,2}$ are solutions to (\ref{e3.1}). Set $\omega_2=\widetilde{\omega_l}+\sqrt{-1}\partial\overline{\partial}u_{l,2}$, then we have
$$
\frac{(\omega_2+\sqrt{-1}\partial\overline{\partial}(u_{l,1}-u_{l,2}))^n}{\omega_2^n}=e^{\frac{u_{l,1}-u_{l,2}}{l}}.
$$
Since $(M,\omega_2)$ is a complete K\"{a}hler manifold with bounded geometry (c.f. Proposition 1.4 \cite{CY}), applying the generalized maximum principle, there exists a sequence $\{x_i\}$ such that $\lim_{i\rightarrow\infty}(u_{l,1}-u_{l,2})(x_i)=\sup_{M}(u_{l,1}-u_{l,2})$ and $\overline{\lim}_{i\rightarrow\infty} \Hess(u_{l,1}-u_{l,2})(x_i)\leq 0$. Furthermore, we obtain $u_{l,1}\leq u_{l,2}$. By the same argument, we have $u_{l,1}\geq u_{l,2}$. Therefore, the equation (\ref{e3.1}) has only one solution. Finally, the cusp continuity equation is solvable for all $t\in [0,\infty)$ i.e., $l\in[0,1)$.

\section{Convergence of cusp continuity equation}
In this section we investigate the regular properties of limit metric.
\begin{lem}\label{L1}
Let $F$ be a divisor on $\overline{M}$. If $F$ is nef and big, then there is an effective divisor $E=\sum_ia_iE_i$ such that $F-\epsilon E>0$ for all sufficiently small $\epsilon>0$.
\end{lem}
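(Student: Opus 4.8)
The plan is to reduce everything to Kodaira's lemma together with the elementary fact that the sum of a nef class and an ample class is ample. First I would use bigness of $F$ to produce a decomposition: by Kodaira's lemma there exist an ample $\mathbb{Q}$-divisor $A$ and an effective $\mathbb{Q}$-divisor $E=\sum_i a_iE_i$ with $F\sim_{\mathbb{Q}}A+E$. Concretely, one fixes an effective very ample divisor $H$; since $F$ is big, $H^0(\overline{M},\mathcal{O}_{\overline{M}}(mF-H))\neq 0$ for $m\gg 0$, and choosing $E_0$ in this linear system one sets $A=\frac{1}{m}H$, $E=\frac{1}{m}E_0$. This is the only place the bigness hypothesis enters.

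Next I would rewrite the test divisor. Since $F-A\sim_{\mathbb{Q}}E$, for every $\epsilon\in(0,1)$ we have the numerical identity
$$
F-\epsilon E\ \equiv\ F-\epsilon(F-A)\ =\ (1-\epsilon)\,F+\epsilon\,A .
$$
Here $(1-\epsilon)F$ is a positive multiple of the nef class $F$, hence nef, and $\epsilon A$ is ample; therefore $(1-\epsilon)F+\epsilon A$ is ample. Since ampleness depends only on the numerical class, $F-\epsilon E$ is ample, i.e. $F-\epsilon E>0$, for all $\epsilon\in(0,1)$ — in particular for all sufficiently small $\epsilon>0$, which is the assertion.

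I do not expect a genuine obstacle here; the argument is formal once two bookkeeping points are respected. The manipulation takes place in the group of $\mathbb{R}$-divisors (or $\mathbb{Q}$-divisors), where "$>0$" means ample and the nef cone is the closure of the ample cone, so that the step "nef $+$ ample $=$ ample" is legitimate and $F-\epsilon E$ makes sense for irrational $\epsilon$ as well. The conceptual point worth isolating is that nefness of $F$ is precisely what lets the small negative contribution $-\epsilon E$ be absorbed: the term $-\epsilon F$ appearing in $-\epsilon E\equiv \epsilon A-\epsilon F$ is compensated by shrinking the nef part of $F$ to $(1-\epsilon)F$, leaving a genuine ample term $\epsilon A$.
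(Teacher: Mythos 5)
Your proof is correct, and it is the standard argument: Kodaira's lemma supplies a decomposition $F \sim_{\mathbb{Q}} A + E$ with $A$ ample and $E$ effective, after which $F - \epsilon E \equiv (1-\epsilon)F + \epsilon A$ is a sum of a nef class and an ample class, hence ample. The paper states this lemma without proof (it is a folklore consequence of Kodaira's lemma, sometimes called the Kodaira--Fujita decomposition of a big and nef divisor), so there is nothing to compare against; your argument supplies exactly what is omitted, correctly isolates where bigness and nefness are each used, and even gives the slightly sharper conclusion that $F-\epsilon E$ is ample for \emph{every} $\epsilon\in(0,1)$, not merely for $\epsilon$ sufficiently small.
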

By the assumption that $K_{\overline{M}}+D$ is big and semi-ample, there exists an effective divisor $E=\sum_ia_iE_i$ such that $K_{\overline{M}}+D-\epsilon E>0$ for all sufficiently small $\epsilon>0$ according to Lemma \ref{L1}. Thus we choose a volume form $\Omega$, a hermitian metric $h'_D$ on $L_D$ and hermitian metrics $h_{E_i}$ such that
$$
-\Ric(\Omega)+\Theta'_D-\sum_i\epsilon a_i\Theta_{E_i}>0,
$$
where $\Theta'_D$ and $\Theta_{E_i}$ represent curvature forms of line bundles $L_D$ and $L_{E_i}$ associated with metrics $h'_D$ and $h_{E_i}$ respectively. $s_D$ and $s_{E_i}$ are denoted by the defining sections of $L_D$ and $L_{E_i}$. For simplicity, we write $\log|s_{E}|^2=\sum_ia_i\log|s_{E_i}|^2$. By taking appropriate $\Omega$, $h'_D$ and $h_{E_i}$, we can assume that
$$
-\Ric(\Omega)+\Theta'_D-\sum_i\epsilon a_i\Theta_{E_i}>0,
$$
and
$$
\widetilde{\omega_{l,E}}:=(1-l)\omega_0+l(-\Ric(\Omega)-\sqrt{-1}\partial\overline{\partial}\log|s_D|^2_{h'_D}+\epsilon\sqrt{-1}\partial\overline{\partial}\log|s_{E}|^2)-\sqrt{-1}\partial\overline{\partial}\log\log^2|s_D|^2_{h'_D}>0
$$
for $l\in[\frac{1}{2},1]$.
Let $\widetilde{\omega_l}:=(1-l)\omega_0+l(-\Ric(\Omega)+\Theta_D)-\sqrt{-1}\partial\overline{\partial}\log\log^2|s_D|^2_{h_D}$ (may not be a metric), where the hermitian metric $h_D$ is defined as $\omega(0)=\omega_0-\sqrt{-1}\partial\overline{\partial}\log\log^2|s_D|^2_{h_D}>0$ and $\Theta_D$ is the curvature form of $h_D$. Then the equation (\ref{e1}) is written as
\begin{equation}\label{e51}
(\widetilde{\omega_l}+\sqrt{-1}\partial\overline{\partial}u_l)^n=e^{\frac{u_l}{l}}\cdot \frac{\Omega}{|s_D|^2_{h_D}\log^2|s_D|^2_{h_D}}.
\end{equation}
This equation is also equivalent to
\begin{equation}\label{e52}
(\widetilde{\omega_{l,E}}+\sqrt{-1}\partial\overline{\partial}w_l)^n=e^{\frac{1}{l}w_l+\epsilon\log|s_E|^2}\cdot \frac{\Omega_l}{|s_D|^2_{h'_D}\log^2|s_D|^2_{h'_D}},
\end{equation}
where $w_l=u_l-l\epsilon\log|s_E|^2+l\log\frac{|s_D|^2_{h'_D}}{|s_D|^2_{h_D}}+\log\frac{\log^2|s_D|^2_{h'_D}}{\log^2|s_D|^2_{h_D}}$ and $\Omega_l=\Omega\Big(\frac{\log^2|s_D|^2_{h_D}}{\log^2|s_D|^2_{h'_D}}\Big)^{\frac{1-l}{l}}$.
\begin{lem}\label{L4.1}
There exists a constant $C$ independent of $l$ such that $-C\leq w_l\leq C-l\epsilon \log|s_E|^2$.
\end{lem}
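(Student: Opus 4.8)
The plan is to prove the two inequalities by two applications of Yau's generalized maximum principle (Theorem~2.7) to the complete K\"ahler manifold $(M,\omega_l)$, whose bounded geometry is the one already established in Section~3. Two preliminary observations are used throughout. First, the correction terms hidden in $w_l$ are harmless: $l\log\frac{|s_D|^2_{h'_D}}{|s_D|^2_{h_D}}$ is $l$ times a bounded smooth function on $\overline{M}$, and $\log\frac{\log^2|s_D|^2_{h'_D}}{\log^2|s_D|^2_{h_D}}$ is bounded because $\log|s_D|^2_{h'_D}/\log|s_D|^2_{h_D}\to1$ along $D$; hence
$$
w_l=u_l-l\epsilon\log|s_E|^2+O(1)\qquad\text{uniformly in }l\in[\tfrac12,1].
$$
Second, after rescaling the Hermitian metrics so that $|s_D|^2_{h'_D}\le1$ and $|s_{E_i}|^2_{h_{E_i}}\le1$, the normalized volume ratio
$$
G_l:=\frac{\Omega_l}{\widetilde{\omega_{l,E}}^{\,n}\,|s_D|^2_{h'_D}\log^2|s_D|^2_{h'_D}}
$$
satisfies $c_0\le G_l\le C_0$ with $0<c_0<C_0$ independent of $l$: away from $D$ the forms $\widetilde{\omega_{l,E}}$ and $\Omega_l$ are uniformly comparable to $\omega_0$ and to $\Omega$ over the compact interval $l\in[\tfrac12,1]$, and near $D$ the cusp metric $\widetilde{\omega_{l,E}}$ satisfies the standard uniform estimate $\widetilde{\omega_{l,E}}^{\,n}\asymp(\text{smooth volume form})\cdot|s_D|^{-2}_{h'_D}\log^{-2}|s_D|^2_{h'_D}$.

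For the upper bound I would substitute $\varphi_l:=w_l+l\epsilon\log|s_E|^2$, which by the first observation equals $u_l$ up to a uniformly bounded term and is therefore bounded above on $M$ for each fixed $l$. Writing $\Theta_E:=\sum_i a_i\Theta_{E_i}$, a direct computation rewrites (\ref{e52}) as
$$
(\widehat\omega_l+\sqrt{-1}\partial\overline{\partial}\varphi_l)^n=e^{\varphi_l/l}\,G_l\,\widetilde{\omega_{l,E}}^{\,n},\qquad \widehat\omega_l:=\widetilde{\omega_{l,E}}+l\epsilon\Theta_E,
$$
the essential point being that the factor $|s_E|^{2\epsilon}$ cancels and $\widehat\omega_l=(1-l)\omega_0+l(-\Ric(\Omega)+\Theta'_D)-\sqrt{-1}\partial\overline{\partial}\log\log^2|s_D|^2_{h'_D}$ no longer carries the singularity along $E$. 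Applying the generalized maximum principle to $\varphi_l$ produces $\{x_i\}\subset M$ with $\varphi_l(x_i)\to\sup_M\varphi_l$ and $\sqrt{-1}\partial\overline{\partial}\varphi_l(x_i)\le\varepsilon_i\omega_l(x_i)$, $\varepsilon_i\to0$. Then $\widehat\omega_l(x_i)=\omega_l(x_i)-\sqrt{-1}\partial\overline{\partial}\varphi_l(x_i)\ge(1-\varepsilon_i)\omega_l(x_i)>0$, so $\det\nolimits_{\omega_l}\widehat\omega_l(x_i)\ge(1-\varepsilon_i)^n$, while $\mathrm{tr}_{\widetilde{\omega_{l,E}}}\widehat\omega_l=n+l\epsilon\,\mathrm{tr}_{\widetilde{\omega_{l,E}}}\Theta_E$ together with the arithmetic--geometric mean inequality gives $\det\nolimits_{\widetilde{\omega_{l,E}}}\widehat\omega_l(x_i)\le C$, because $\mathrm{tr}_{\widetilde{\omega_{l,E}}}\Theta_E$ is uniformly bounded (the cusp metric $\widetilde{\omega_{l,E}}$ has eigenvalues uniformly bounded below, huge in the direction normal to $D$). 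Feeding these bounds into the displayed equation at $x_i$ yields $e^{\varphi_l(x_i)/l}\le(1-\varepsilon_i)^{-n}C/c_0$, hence $\sup_M\varphi_l\le l\log(C/c_0)\le C'$, which is exactly $w_l\le C'-l\epsilon\log|s_E|^2$.

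For the lower bound I would instead apply the generalized maximum principle to $-w_l$. Since $-\log|s_E|^2\ge0$, the first observation shows $-w_l$ is bounded above on $M$ for each fixed $l$, so there is $\{x_i\}$ with $w_l(x_i)\to\inf_M w_l$ and $\sqrt{-1}\partial\overline{\partial}w_l(x_i)\ge-\varepsilon_i\omega_l(x_i)$, $\varepsilon_i\to0$. From $\omega_l=\widetilde{\omega_{l,E}}+\sqrt{-1}\partial\overline{\partial}w_l$ we get $\widetilde{\omega_{l,E}}(x_i)\le(1+\varepsilon_i)\omega_l(x_i)$, i.e. $\widetilde{\omega_{l,E}}^{\,n}(x_i)\le(1+\varepsilon_i)^n\omega_l^{\,n}(x_i)$; plugging into (\ref{e52}) and cancelling $\widetilde{\omega_{l,E}}^{\,n}(x_i)$ leaves $1\le(1+\varepsilon_i)^n e^{w_l(x_i)/l}\,|s_E(x_i)|^{2\epsilon}G_l(x_i)$. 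Using $|s_E|^{2\epsilon}\le1$ and $G_l\le C_0$ this gives $e^{w_l(x_i)/l}\ge(1+\varepsilon_i)^{-n}/C_0$, and letting $i\to\infty$ yields $\inf_M w_l\ge-l\log C_0\ge-C$. (Consistently, $|s_E(x_i)|\not\to0$ along this sequence: otherwise the last inequality would force $w_l(x_i)\to+\infty$, contradicting $w_l(x_i)\to\inf_M w_l<\infty$.)

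The main obstacle is not any single estimate but the uniformity in $l$ as $l\to1$ of all the auxiliary bounds: the two-sided bound on $G_l$, the uniform boundedness of $\mathrm{tr}_{\widetilde{\omega_{l,E}}}\Theta_E$ near $D$, and the fact that $\widetilde{\omega_{l,E}}$ stays a genuine cusp metric uniformly quasi-isometric to a fixed cusp model. Conceptually, the decisive trick is the substitution $\varphi_l=w_l+l\epsilon\log|s_E|^2$ in the first step: it simultaneously removes the $E$-singularity from the Monge--Amp\`ere equation and produces a reference form $\widehat\omega_l$ that the generalized maximum principle automatically sees as positive along the maximizing sequence, even though $\widehat\omega_l$ need not be positive on all of $M$.
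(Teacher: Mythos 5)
Your lower bound follows the paper's route: apply the generalized maximum principle to the Monge--Amp\`ere equation~\eqref{e52} with the complete bounded-geometry reference $\widetilde{\omega_{l,E}}$, use the two-sided comparability of the cusp volume form with $\Omega_l/(|s_D|^2\log^2|s_D|^2)$ uniformly in $l\in[\tfrac12,1]$, and observe $-l\epsilon\log|s_E|^2\ge 0$ (after normalizing $|s_E|\le 1$). That part is essentially identical to what the paper does.

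For the upper bound, however, your argument is genuinely different from the paper's. You substitute $\varphi_l=w_l+l\epsilon\log|s_E|^2$, which cancels the $E$-singularity and turns \eqref{e52} into $(\widehat\omega_l+\sqrt{-1}\partial\overline{\partial}\varphi_l)^n=e^{\varphi_l/l}G_l\,\widetilde{\omega_{l,E}}^{\,n}$ with $\widehat\omega_l=\widetilde{\omega_{l,E}}+l\epsilon\Theta_E$, and then run the maximum principle on $\varphi_l$, noting that positivity of $\widehat\omega_l$ along the maximizing sequence is automatic because $\widehat\omega_l(x_i)=\omega_l(x_i)-\sqrt{-1}\partial\overline{\partial}\varphi_l(x_i)\ge(1-\varepsilon_i)\omega_l(x_i)$, after which AM--GM together with the uniform bound on $\mathrm{tr}_{\widetilde{\omega_{l,E}}}\Theta_E$ closes the estimate. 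This is correct and self-contained. The paper instead differentiates equation~\eqref{e51} in $l$, uses the positivity of $\omega_0-\sqrt{-1}\partial\overline{\partial}\log\log^2|s_D|^2_{h_D}$ to obtain $\triangle_{\omega_l}\bigl(\tfrac{u_l}{l}-n\log l\bigr)'\ge\tfrac1l\bigl(\tfrac{u_l}{l}-n\log l\bigr)'$, and concludes via the maximum principle that $\tfrac{u_l}{l}-n\log l$ is non-increasing in $l$, hence $u_l\le C$ from the $l=\tfrac12$ data. Your approach avoids the (unproved in the paper, but standard) smooth dependence of $u_l$ on $l$ needed to justify the differentiated inequality, at the cost of having to control $\widehat\omega_l^{\,n}/\widetilde{\omega_{l,E}}^{\,n}$ and the sign of $\widehat\omega_l$ at the maximizing sequence. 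The paper's monotonicity trick buys more than your one-sided bound: it is quoted again at the end of Section~4, where the monotonicity of $\tfrac{u_l}{l}-n\log l$ (not merely its boundedness) is what upgrades subsequential convergence of $u_l$ to full convergence as $l\to 1$. So while your proof of Lemma~\ref{L4.1} is sound, following the paper's route would also hand you for free an ingredient used later.
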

\begin{proof}
For the lower bound, we note that $\widetilde{\omega_{l,E}}=(1-l)\omega_0+l(-\Ric(\Omega)+\Theta'_D-\sum_i\epsilon a_i\Theta_{E_i})-\sqrt{-1}\partial\overline{\partial}\log\log^2|s_D|^2_{h'_D}$ is a complete K\"{a}hler metric with bounded geometry on $M$. Applying the generalized maximum principle to (\ref{e52}), we get $w_l\geq -C-l\epsilon\log|s_E|^2\geq -C$.

For the upper bound, we differentiate $l$ at both side of equation (\ref{e51}), then
$$
\triangle_{\omega_l}\dot{u_l}+\frac{n}{l}-\frac{1}{l}\triangle_{\omega_l}u_l\geq \frac{1}{l}(\dot{u_l}-\frac{u_l}{l}).
$$
where $\omega_l=\widetilde{\omega_l}+\sqrt{-1}\partial\overline{\partial}u_l$.

By the simple calculation, we get
$$
\triangle_{\omega_l}(\frac{u_l}{l}-n\log l)'\geq \frac{1}{l}(\frac{u_l}{l}-n\log l)'.
$$
According to the generalized maximum principle, $(\frac{u_l}{l}-n\log l)$ decrease when $l$ tends to $1$. Therefore, there exists a constant $C$ such that $u_l\leq C$. By the definition of $w_l$, we see $w_l\leq C-l\epsilon \log|s_E|^2$.
\end{proof}
\begin{lem}\label{L4.2}
There exist two constants $C$ and $a$ independent of $l$ such that $C^{-1}|s_E|^{2l\epsilon(a+1)}\widetilde{\omega_{l,E}}\leq \omega_l:=\widetilde{\omega_l}+\sqrt{-1}\partial\overline{\partial}u_l\leq C|s_E|^{-2l\epsilon(a+1)(n-1)}\widetilde{\omega_{l,E}}$.
\end{lem}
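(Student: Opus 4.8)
The plan is to run the Aubin--Yau/Chern--Lu second order estimate, exactly in the spirit of the argument used in Section~3, but now weighting the auxiliary function by $w_l$ and using Lemma~\ref{L4.1} to absorb the degeneration along $E$; every constant below will be independent of $l\in[\tfrac12,1]$. First I would read off the Ricci form of $\omega_l$: applying $\sqrt{-1}\partial\bar\partial\log$ to (\ref{e52}) --- equivalently, solving for $\Ric(\omega)$ in the continuity equation (\ref{e1}) on $M=\overline{M}\setminus D$ --- gives
\begin{equation*}
\Ric(\omega_l)=-\tfrac1l\,\omega_l+\tfrac{1-l}{l}\,\omega(0),\qquad \omega(0)=\omega_0-\sqrt{-1}\partial\bar\partial\log\log^2|s_D|^2_{h_D}.
\end{equation*}
Since $\omega(0)>0$ this yields the clean uniform lower Ricci bound $\Ric(\omega_l)\ge-\tfrac1l\,\omega_l\ge-2\,\omega_l$; and since $\omega(0)$ has the same leading cusp behaviour along $D$ as $\widetilde{\omega_{l,E}}$, it is uniformly comparable to $\widetilde{\omega_{l,E}}$, so also $\Ric(\omega_l)\le-\tfrac1l\,\omega_l+C\,\widetilde{\omega_{l,E}}$. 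I would also record a volume comparison: from (\ref{e52}), $\omega_l^n/\widetilde{\omega_{l,E}}^n=e^{w_l/l}\,|s_E|^{2\epsilon}\cdot\Omega_l\big(\widetilde{\omega_{l,E}}^n\,|s_D|^2_{h'_D}\log^2|s_D|^2_{h'_D}\big)^{-1}$, where the last factor is bounded above and below by positive constants uniformly in $l$ (the cusp volume form $\widetilde{\omega_{l,E}}^n\,|s_D|^2_{h'_D}\log^2|s_D|^2_{h'_D}$ is uniformly comparable to a fixed smooth volume form on $\overline{M}$, and $\Omega_l$ to $\Omega$); combined with $w_l\le C-l\epsilon\log|s_E|^2$ this gives the uniform bound $\omega_l^n\le C\,\widetilde{\omega_{l,E}}^n$.

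The main step is a weighted Laplacian estimate. The metric $\widetilde{\omega_{l,E}}$ was arranged to be a genuine complete K\"ahler metric which, as one checks, has bounded geometry uniformly in $l\in[\tfrac12,1]$ (its smooth part interpolates between uniformly equivalent K\"ahler forms, its singular part is the standard cusp correction along $D$); in particular its holomorphic bisectional curvature is bounded above, say by $a$. By the Chern--Lu inequality for the identity map $(M,\omega_l)\to(M,\widetilde{\omega_{l,E}})$, using the uniform lower Ricci bound above, $\triangle_{\omega_l}\log tr_{\omega_l}\widetilde{\omega_{l,E}}\ge -C-a\,tr_{\omega_l}\widetilde{\omega_{l,E}}$. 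Put $\tilde H=\log tr_{\omega_l}\widetilde{\omega_{l,E}}-(a+1)\,w_l$; since $\triangle_{\omega_l}w_l=n-tr_{\omega_l}\widetilde{\omega_{l,E}}$ this gives $\triangle_{\omega_l}\tilde H\ge tr_{\omega_l}\widetilde{\omega_{l,E}}-C$. For fixed $l$ the estimates of Section~3 make $\omega_l$ comparable to $\widetilde{\omega_{l,E}}$, so $\log tr_{\omega_l}\widetilde{\omega_{l,E}}$ is bounded above, while $-(a+1)w_l$ is uniformly bounded above by Lemma~\ref{L4.1}; hence $\tilde H$ is bounded above and Yau's generalized maximum principle applies on $(M,\omega_l)$. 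It produces $x_i$ with $\tilde H(x_i)\to\sup_M\tilde H$ and $\overline{\lim}\,\triangle_{\omega_l}\tilde H(x_i)\le0$, so $\overline{\lim}\,tr_{\omega_l}\widetilde{\omega_{l,E}}(x_i)\le C$; combined with $w_l\ge-C$ this forces $\sup_M\tilde H\le C$. Unwinding and using $w_l\le C-l\epsilon\log|s_E|^2$ once more gives $tr_{\omega_l}\widetilde{\omega_{l,E}}\le C\,|s_E|^{-2l\epsilon(a+1)}$ on $M$, which is equivalent to the lower bound $\omega_l\ge C^{-1}|s_E|^{2l\epsilon(a+1)}\,\widetilde{\omega_{l,E}}$.

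The upper bound then follows by a determinant argument. Let $\lambda_1\le\dots\le\lambda_n$ be the eigenvalues of $\omega_l$ with respect to $\widetilde{\omega_{l,E}}$. The lower bound just proved says $\lambda_1\ge C^{-1}|s_E|^{2l\epsilon(a+1)}$, while $\lambda_1\cdots\lambda_n=\omega_l^n/\widetilde{\omega_{l,E}}^n\le C$ by the volume comparison above, so $\lambda_n\le(\lambda_1\cdots\lambda_n)/\lambda_1^{\,n-1}\le C\,|s_E|^{-2l\epsilon(a+1)(n-1)}$, i.e. $\omega_l\le C\,|s_E|^{-2l\epsilon(a+1)(n-1)}\,\widetilde{\omega_{l,E}}$.

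The hard part will be uniformity in $l$: that the Chern--Lu inequality and the bisectional curvature bound hold with a constant independent of $l$ --- which is precisely why one works with the honest metric $\widetilde{\omega_{l,E}}$ rather than the merely formal $\widetilde{\omega_l}$ --- and that the bound on $tr_{\omega_l}\widetilde{\omega_{l,E}}$ at the maximizing sequence is uniform. One must also treat the two sources of degeneration along $E$ consistently: the factor $|s_E|^{2\epsilon}$ on the right of (\ref{e52}), which the weight $(a+1)w_l$ is designed to capture through the asymmetry of Lemma~\ref{L4.1}, and the bounded smooth term $-l\epsilon\sum_i a_i\Theta_{E_i}$ hidden inside $\widetilde{\omega_{l,E}}$, which is harmless and only affects the constants $a$ and $C$. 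Finally, the purely qualitative fact that for each fixed $l$ the solution $\omega_l$ is comparable to $\widetilde{\omega_{l,E}}$ --- needed only to make $\tilde H$ bounded above so that the generalized maximum principle can be invoked --- is a routine consequence of the fixed-$l$ estimates of Section~3.
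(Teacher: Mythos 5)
Your proposal is correct and follows essentially the same route as the paper: apply Yau's Schwarz lemma to $tr_{\omega_l}\widetilde{\omega_{l,E}}$ using $\Ric(\omega_l)\ge -\tfrac{1}{l}\omega_l$, weight by $(a+1)w_l$, bound $H=\log tr_{\omega_l}\widetilde{\omega_{l,E}}-(a+1)w_l$ via the generalized maximum principle and Lemma~\ref{L4.1}, and convert the resulting trace bound into the two-sided estimate using the volume comparison $\omega_l^n\le C\,\widetilde{\omega_{l,E}}^n$. Your explicit identity $\Ric(\omega_l)=-\tfrac{1}{l}\omega_l+\tfrac{1-l}{l}\omega(0)$ and the eigenvalue phrasing of the final step are just more spelled-out versions of the paper's steps, not a different argument.
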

\begin{proof}
Since $\Ric(\omega_l)\geq-\frac{1}{l}\omega_l$, by Yau's Schwarz Lemma \cite{Y}, we have
$$
\triangle_{\omega_l}\log tr_{\omega_l}\widetilde{\omega_{l,E}}\geq -a\cdot tr_{\omega_l}\widetilde{\omega_{l,E}}-\frac{1}{l},
$$
where $a$ is a positive upper bound of the holomorphic bisectional curvature of $\widetilde{\omega_{l,E}}$ for $l\in [0,1]$.
Put $H=\log tr_{\omega_l}\widetilde{\omega_{l,E}}-(a+1)w_l$, then we get
$$
\triangle_{\omega_l}H\geq tr_{\omega_l}\widetilde{\omega_{l,E}}-C.
$$
By the generalized maximum principle, there exists a sequence $\{x_i\}$ such that $\lim_{i\rightarrow\infty}H(x_i)=\sup_MH$ and $\overline{\lim}_{i\rightarrow\infty}\triangle_{\omega_l}H(x_i)\leq 0$. Thus by the Lemma \ref{L4.1} we have $H\leq C$. This implies $$tr_{\omega_l}\widetilde{\omega_{l,E}}\leq \frac{C}{|s_E|^{2l\epsilon(a+1)}}.$$ Note that
$$
tr_{\widetilde{\omega_{l,E}}}\omega_l\leq \frac{1}{(n-1)!}(tr_{\omega_l}\widetilde{\omega_{l,E}})^{n-1}\cdot \frac{\omega_l^n}{\widetilde{\omega_{l,E}}^n}\leq \frac{C}{|s_E|^{2l\epsilon(a+1)(n-1)}}.
$$
Hence this Lemma is proved.
\end{proof}
According to Lemma \ref{L4.2}, we know that for any compact subset $K\subset \overline{M}\setminus (D\cup \Supp E)$, there exists a constant $C_K>0$ independent of $l$ such that $C^{-1}_K\omega_0\leq \omega_l\leq C_K\omega_0$, i.e., $|\triangle_{\omega_0}u_l|\leq C_K$. By Theorem 17.14 in \cite{GT}, we have $|u_l|_{C^{2,\lambda}}\leq C'_K$ on $K\times [\frac{1}{2},1]$. Furthermore, by the standard bootstrapping argument, for any $m>0$, $|u_l|_{C^{m,\lambda}}\leq C_{K,m}$ on $K\times [\frac{1}{2},1]$. By the standard diagonal argument and passing to a subsequence $\{l_i\}$ such that $u_{l_i}$ $C^\infty$-converge to a function on each compact $K$ when $l_i$ tends to $1$. The monotonicity of $(\frac{u_l}{l}-n\log l)$ implies that $u_l$ $C^\infty$-converge to a function on each compact $K$ when $l$ tends to $1$. Therefore, the Theorem \ref{t2} is proved.

\section{Algebraic structure of the limit space}

\subsection{Gromov-Hausdorff convergence: global convergence}
In this subsection we consider a family of manifolds $(M, \omega_l)$ on which the lower bound of  Ricci curvature can be controlled, i.e. $Ric(\omega_l)\geq -\frac{1}{l}\omega_{l}$ for
$l\in[\frac{1}{2},1)$. By Gromov precompactness theorem \cite{Cheeger}, passing to a subsequence $l_i \rightarrow 1$
and fix $x_0\in \overline{M}_{reg}\setminus D$, we may assume that
$$
  (M, \omega_{l_i}, x_0)\xrightarrow{d_{GH}} (M_1, d_1, x_1).
$$
The limit $(M_1,d_1)$ is a complete length metric space. It has a regular/singular decomposition $M_1=\mathcal{R}\cup \mathcal{S}$, a point $x\in \mathcal{R}$ iff the tangent cone at $x$ is the Euclidean space $\mathbb{R}^{2n}$. The following lemma is the same as Lemma 3.3 in \cite{NTZ}.
\begin{lem}\label{l4.4}
There is a sufficiently small constant $\delta>0$ such that for any $l\in[\frac{1}{2},1)$, if a metric ball $B_{\omega_{l}}(x,r)$ satisfies
$$ \Vol(B_{\omega_{l}}(x,r))\geq (1-\delta)\Vol(B^0_{r}),$$ where $\Vol(B^0_{r})$ is the volume of a metric ball of radius $r$ in $2n$-Euclidean space,
then $$\Ric(\omega_{l})\leq (2n-1)r^{-2}\omega_{l}, \ in \ B_{\omega_{l}}(x, \delta r).$$
\end{lem}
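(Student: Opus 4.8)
The strategy is to reduce the assertion to a scale-invariant interior bound for $tr_{\omega_l}\omega(0)$, where $\omega(0):=\omega_0-\sqrt{-1}\partial\overline{\partial}\log\log^2|s_D|^2_{h_D}$ is the solution of (\ref{e1}) at $t=0$ --- a complete K\"ahler metric on $M$ of bounded geometry by Kobayashi \cite{Ko84} and Cheng--Yau \cite{CY}, whose holomorphic bisectional curvature is therefore bounded above by a fixed $a>0$. On $M=\overline{M}\setminus D$ the divisorial current $[D]$ in (\ref{e1}) vanishes, so (\ref{e1}) reads $(1+t)\omega_l=\omega(0)-t\,\Ric(\omega_l)$; substituting $l=t/(1+t)$ gives
$$
\Ric(\omega_l)=-\tfrac1l\,\omega_l+\tfrac{1-l}{l}\,\omega(0),\qquad l\in[\tfrac12,1),
$$
so $\Ric(\omega_l)\ge-\tfrac1l\omega_l$ (the lower bound used in this subsection) and $\Ric(\omega_l)\le\tfrac{1-l}{l}\omega(0)\le\omega(0)$. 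Since pointwise $\omega(0)\le(tr_{\omega_l}\omega(0))\,\omega_l$, it suffices to produce a dimensional $\delta>0$ so that the hypothesis forces $tr_{\omega_l}\omega(0)\le(2n-1)r^{-2}$ on $B_{\omega_l}(x,\delta r)$.

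Two ingredients go into this. First, since $\Ric(\omega_l)\ge-2\omega_l$ and the bisectional curvature of $\omega(0)$ is $\le a$, Yau's Schwarz lemma \cite{Y} applied to $\mathrm{id}\colon(M,\omega_l)\to(M,\omega(0))$ --- the computation already carried out in the proof of Lemma \ref{L4.2} --- gives, with $h:=tr_{\omega_l}\omega(0)\ge0$,
$$
\triangle_{\omega_l}\log h\ \ge\ -a\,h-2,\qquad\text{equivalently}\qquad\triangle_{\omega_l}h\ \ge\ -a\,h^{2}-2h
$$
on all of $M$. Second, Lemma \ref{L4.2} shows that off any fixed neighbourhood of $\Supp E$ one has $C^{-1}\omega(0)\le\omega_l\le C\,\omega(0)$ with $C$ independent of $l$; in particular this comparison is uniform near the cusp locus $D$ itself, so the cusp contributes nothing new, and the only degeneration sits near $\Supp E$ --- where the volume density $\omega_l^{\,n}=e^{u_l/l}\,\Omega/(|s_D|^2_{h_D}\log^2|s_D|^2_{h_D})$ collapses (Lemma \ref{L4.1}), so the hypothesis $\Vol(B_{\omega_l}(x,r))\ge(1-\delta)\Vol(B^0_r)$ automatically keeps $x$ at $\omega_l$-distance $\gtrsim r$ from $\Supp E$.

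The crux, and the step I expect to be the main obstacle, is to pass from the differential inequality to the pointwise bound on $B_{\omega_l}(x,\delta r)$; I would do this exactly as in the proof of Lemma 3.3 in \cite{NTZ}. Because $\Ric(\omega_l)\ge-2\omega_l$, near-maximal volume puts $B_{\omega_l}(x,r/2)$ in the range of the Cheeger--Colding almost-rigidity theorem \cite{Cheeger}: for $\delta=\delta(n)$ small it provides a local Sobolev inequality with constant close to the Euclidean one and Cheeger--Colding cut-offs $\phi$ with $\phi\equiv1$ on $B_{\omega_l}(x,\delta r)$, $\mathrm{supp}\,\phi\subset B_{\omega_l}(x,2\delta r)$ and $r|\nabla_{\omega_l}\phi|+r^{2}|\triangle_{\omega_l}\phi|\le C(n)$. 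Feeding these, the Bishop--Gromov volume bound, and the uniform comparison of the previous paragraph into a De Giorgi--Nash--Moser iteration for the subsolution $h$ of $\triangle_{\omega_l}h\ge-ah^{2}-2h$ --- the quadratic term being absorbed precisely by the volume pinching together with the local estimates of Lemmas \ref{L4.1} and \ref{L4.2} --- yields $\sup_{B_{\omega_l}(x,\delta r)}h\le C(n)\,r^{-2}$, which is the required bound; the specific constant $2n-1$ is inessential (only the finiteness and the scaling $r^{-2}$ are used later) and is obtained by shrinking $\delta$ so that the Cheeger--Colding Sobolev and cut-off constants lie within the corresponding tolerance of their Euclidean values. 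The whole difficulty is therefore concentrated in this last quantitative local maximum-principle step --- transplanting the compact argument of \cite{NTZ} to the present non-compact, finite-volume situation --- and the second paragraph shows that the cusp along $D$ and the exceptional divisor $E$ introduce no obstruction beyond the compact case.
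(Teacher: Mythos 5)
The paper supplies no proof of this lemma at all --- it simply states that the lemma ``is the same as Lemma 3.3 in \cite{NTZ}'' and moves on. Your proposal therefore cannot diverge from the paper's argument, because there is none in the text; what you have done is reconstruct a plausible route in the spirit of NTZ, which is the only fair standard here. The opening reduction is exactly right and worth spelling out: on $M$ the current $[D]$ drops out, the continuity equation gives $\Ric(\omega_l)=-\tfrac1l\omega_l+\tfrac{1-l}{l}\omega(0)$, hence the Ricci lower bound $\Ric(\omega_l)\ge-\tfrac1l\omega_l$ used throughout the subsection and the upper bound $\Ric(\omega_l)\le\tfrac{1-l}{l}\omega(0)$, so the lemma reduces to a local bound on a suitable trace.

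That said, there are two genuine gaps in the part you present as the crux. First, the Chern--Lu inequality gives $\triangle_{\omega_l}h\ge-ah^2-2h$ for $h=\operatorname{tr}_{\omega_l}\omega(0)$, and a De Giorgi--Nash--Moser iteration on a quadratic nonlinearity of this type cannot start without a prior $L^p$ bound on $h$ for some $p$ exceeding a dimensional threshold; you assert that the quadratic term is ``absorbed by the volume pinching together with Lemmas \ref{L4.1} and \ref{L4.2},'' but those lemmas give $C^0$ bounds that degenerate like negative powers of $|s_E|$ near $\Supp E$, not scale-invariant local $L^p$ bounds, so this absorption is not actually established. Second, the claim that near-Euclidean volume ``automatically keeps $x$ at $\omega_l$-distance $\gtrsim r$ from $\Supp E$'' is plausible but not proved: Lemma \ref{L4.1} controls $u_l$ from above (hence the density $e^{u_l/l}$ from above), but you would need a quantitative collapse of volume near $\Supp E$, uniform in $l$, to run this exclusion argument. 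Finally, a small but useful observation: you aim for $\operatorname{tr}_{\omega_l}\omega(0)\le(2n-1)r^{-2}$, whereas the lemma only needs the weaker $\tfrac{1-l}{l}\operatorname{tr}_{\omega_l}\omega(0)\le(2n-1)r^{-2}$. Since $\tfrac{1-l}{l}\to0$ as $l\to1$, the raw trace may well blow up where the limit metric degenerates even on pinched balls, while the weighted quantity stays controlled; building the estimate around the weighted trace would be both more honest to the statement and more likely to close. As written, the hard analytic step is deferred to \cite{NTZ} with an incorrect explanation of how the nonlinearity is handled, so the proposal is an outline in the right direction rather than a complete proof.
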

\begin{lem}\label{l4.5}
The regular set $\mathcal{R}$ is open in the limit space $(M_T, d_T, x_T)$.
\end{lem}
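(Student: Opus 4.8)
The plan is to run the standard Cheeger--Colding--Anderson argument. Near a point where the tangent cone is Euclidean the volume of small balls is almost Euclidean, so Lemma~\ref{l4.4} upgrades this to a two-sided Ricci bound on the approximating manifolds $(M,\omega_{l_i})$, after which the $\varepsilon$-regularity theorem forces the whole nearby region of the limit to carry a $C^{1,\alpha}$ Riemannian structure. Write $(M_1,d_1,x_1)$ for the pointed limit of this subsection (so $M_T=M_1$), and recall $\Ric(\omega_{l_i})\geq -\frac{1}{l_i}\omega_{l_i}\geq -2\omega_{l_i}$ for $l_i\in[\frac12,1)$, so the sequence has a uniform lower Ricci bound and Bishop--Gromov comparison together with Colding's volume continuity are available; here $\Vol$ on $M_1$ denotes Colding's renormalized limit measure. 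A point $x$ lies in $\mathcal R$ precisely when its density $\Theta(x)=\lim_{r\to0}\Vol(B_{d_1}(x,r))/\Vol(B^0_r)$ equals $1$.

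Fix $p\in\mathcal R$, so $\Theta(p)=1$. First I would fix $\varepsilon>0$ smaller than the constant $\delta$ of Lemma~\ref{l4.4} and than Anderson's dimensional threshold, and choose $r_0>0$ so small that $\Vol(B_{d_1}(p,r_0))/\Vol(B^0_{r_0})>1-\varepsilon/10$ and that all Bishop--Gromov comparison constants at scales $\leq r_0$ are within $\varepsilon/10$ of their Euclidean values. By pointed Gromov--Hausdorff convergence and volume continuity, for all large $i$ there is $p_i\in M$ with $p_i\to p$ and $\Vol(B_{\omega_{l_i}}(p_i,r_0))/\Vol(B^0_{r_0})>1-\varepsilon/5$. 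Next I would verify that almost-Euclidean volume is an open condition at comparable scales: a Bishop--Gromov sandwich using $B_{\omega_{l_i}}(x,r_0)\subset B_{\omega_{l_i}}(y,(1+2\eta)r_0)$, the monotone comparison of $\Vol(B_{\omega_{l_i}}(y,(1+2\eta)r_0))$ with $\Vol(B_{\omega_{l_i}}(y,(1-2\eta)r_0))$, and $\Vol(B^0_{r_0})/\Vol(B^0_{(1-2\eta)r_0})=(1-2\eta)^{-2n}$ shows that if $d_{\omega_{l_i}}(x,y)<\eta r_0$ for a suitable small $\eta=\eta(n,\varepsilon)$ then $\Vol(B_{\omega_{l_i}}(y,(1-2\eta)r_0))/\Vol(B^0_{(1-2\eta)r_0})>1-\varepsilon$. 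Hence for any $q\in B_{d_1}(p,\eta r_0/2)$, picking $q_i\to q$ in $M$ gives $d_{\omega_{l_i}}(p_i,q_i)<\eta r_0$ for large $i$, so $\Vol(B_{\omega_{l_i}}(q_i,(1-2\eta)r_0))/\Vol(B^0_{(1-2\eta)r_0})>1-\varepsilon$.

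Now I would apply Lemma~\ref{l4.4} at $q_i$ with radius $(1-2\eta)r_0$ (legitimate since $\varepsilon<\delta$): it gives $\Ric(\omega_{l_i})\leq (2n-1)((1-2\eta)r_0)^{-2}\omega_{l_i}$ on $B_{\omega_{l_i}}(q_i,\rho)$ with $\rho:=\delta(1-2\eta)r_0$. Combining with $\Ric(\omega_{l_i})\geq -2\omega_{l_i}$ (and shrinking $r_0$ so that $(2n-1)\rho^{-2}\geq 2$) yields $|\Ric(\omega_{l_i})|\leq (2n-1)\rho^{-2}\omega_{l_i}$ on $B_{\omega_{l_i}}(q_i,\rho)$, together with almost-Euclidean volume there; moreover Bishop--Gromov keeps the volume almost Euclidean at every scale $s\leq\rho$, so re-applying Lemma~\ref{l4.4} at scale $s/\delta$ gives $|\Ric(\omega_{l_i})|\leq (2n-1)s^{-2}\omega_{l_i}$ on $B_{\omega_{l_i}}(q_i,s)$ for all such $s$. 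By the $\varepsilon$-regularity / harmonic radius estimate of Anderson (see \cite{Cheeger}) the $C^{1,\alpha}$ harmonic radius at $q_i$ is then bounded below by $c(n)\rho$, with uniform two-sided control of $\omega_{l_i}$ in harmonic coordinates. Letting $i\to\infty$, $M_1$ is a $C^{1,\alpha}$ Riemannian manifold in a neighborhood of $q$, so the tangent cone at $q$ is $\mathbb{R}^{2n}$, i.e.\ $q\in\mathcal R$. Since $q\in B_{d_1}(p,\eta r_0/2)$ was arbitrary, this ball lies in $\mathcal R$, and $\mathcal R$ is open.

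The main obstacle, and really the only delicate point, is to be sure that the classical Cheeger--Colding--Anderson toolkit --- Colding's volume continuity along the sequence, the $\varepsilon$-regularity theorem, and the harmonic radius and $C^{1,\alpha}$-compactness estimates --- applies in the present complete noncompact situation with the one-parameter family $\omega_l$. This is harmless: every estimate above takes place inside a fixed geodesic ball sitting in the smooth locus $M=\overline{M}\setminus D$ and bounded away from $D$ --- regular points of the limit cannot accumulate at $D$, since near $D$ the cusp geometry makes small balls far from Euclidean in volume --- where, after choosing the auxiliary divisor $E$ to avoid a neighborhood of the point, Lemma~\ref{L4.2} controls $\omega_l$ uniformly in $l$; the only routine care needed is, as always with pointed Gromov--Hausdorff limits, in extracting the approximating points $p_i,q_i\in M$ from the $\varepsilon$-approximation maps.
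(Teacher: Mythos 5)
Your proposal is correct and runs on the same engine as the paper's proof: Colding's volume convergence to transfer the almost-Euclidean volume condition from the limit to the approximating sequence $(M,\omega_{l_i})$, Lemma~\ref{l4.4} to upgrade to a two-sided Ricci bound, Anderson's harmonic radius estimate to produce uniform $C^{1,\alpha}$ harmonic charts, and a passage to the limit to conclude regularity. The one place you add work the paper does not need is the Bishop--Gromov "sandwich" used to propagate the almost-Euclidean volume ratio to nearby centers $q$, after which you rerun the harmonic-radius argument at each such $q$. The paper instead applies Anderson's estimate once, at $x_i$, which already gives a harmonic chart on a ball $B_{\omega_{l_i}}(x_i,\delta'\delta r)$ of definite size; passing to the limit covers the whole ball $B_{d_1}(x,\delta'\delta r)$ with a single $C^{1,\alpha}$ chart and hence puts it inside $\mathcal{R}$ in one stroke. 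Your detour is harmless but redundant. One small inaccuracy in your closing remark: the reason $D$ never interferes is not that the volume ratio near $D$ is badly non-Euclidean (in fact locally the cusp metric is a product with a hyperbolic cusp, where small balls still have near-Euclidean volume ratio); the real reason is simply that $D$ sits at infinite distance in every $\omega_l$, so it is invisible to the pointed Gromov--Hausdorff limit based at $x_0$.
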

\begin{proof}
If $x\in \mathcal{R}$, then by Colding's volume convergence theorem \cite{Co} , there exists $r=r(x)>0$ such that $\mathcal{H}^{2n}(B_{d_1}(x,r))\geq (1-\frac{\delta}{2})\Vol(B^0_{r})$, where $\mathcal{H}^{2n}$ denotes the Hausdorff measure. Suppose $x_i\in M$ satisfying $x_i\xrightarrow{d_{GH}} x$, then by the volume convergence theorem again, $\Vol_{\omega_{l_i}}(B_{\omega_{l_i}}(x_i,r))\geq (1-\delta)\Vol(B^0_{r})$ for sufficiently large $i$. According to Lemma \ref{l4.4} and Anderson's harmonic radius estimate \cite{An}, there is a constant $\delta'=\delta'(\alpha)$ for any $0<\alpha<1$ such that the $C^{1,\alpha}$ harmonic radius at $x_i$ is bigger than $\delta'\delta r$. Passing to the limit, it gives a harmonic coordinate on $B_{d_1}(x,\delta'\delta r)$. This implies $B_{d_1}(x,\delta'\delta r)\subset \mathcal{R}$. So $\mathcal{R}$ is open with a $C^{1,\alpha}$ K\"{a}hler metric $\overline{\omega_1}$; moreover $\omega_{l_i}$ converges to $\overline{\omega_1}$ in $C^{1,\alpha}$ topology on $\mathcal{R}$.
\end{proof}
Since $\mathcal{R}$ is dense in $M_1$, so we have the following Lemma.
\begin{lem}
$(M_1,d_1)=\overline{(\mathcal{R}, \overline{\omega_1}})$, the metric completion of $(\mathcal{R},\overline{\omega_1})$.
\end{lem}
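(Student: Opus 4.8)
The plan is to reduce the statement to the single assertion that the length metric $d_{\overline{\omega_1}}$ induced on $\mathcal{R}$ by the $C^{1,\alpha}$ Kähler metric $\overline{\omega_1}$ coincides with the restriction $d_1|_{\mathcal{R}\times\mathcal{R}}$ of the limit metric. Granting this, the inclusion $(\mathcal{R},d_{\overline{\omega_1}})\hookrightarrow (M_1,d_1)$ is an isometric embedding with dense image (density of $\mathcal{R}$ in $M_1$ was recorded above), and since $(M_1,d_1)$ is a complete length space, it is then the metric completion of $(\mathcal{R},\overline{\omega_1})$ by uniqueness of metric completions. So the whole content is the coincidence of the two distances on $\mathcal{R}$.

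For $d_1\le d_{\overline{\omega_1}}$ on $\mathcal{R}$: given $p,q\in\mathcal{R}$ and $\varepsilon>0$, choose a piecewise smooth path $\gamma$ from $p$ to $q$ with image in a compact set $K\subset\mathcal{R}$ and $L_{\overline{\omega_1}}(\gamma)<d_{\overline{\omega_1}}(p,q)+\varepsilon$. By Lemma \ref{l4.5} the convergence $\omega_{l_i}\to\overline{\omega_1}$ on $\mathcal{R}$ is realized, on a neighbourhood of $K$, by diffeomorphisms $F_i$ onto open subsets of $M$ with $F_i^{*}\omega_{l_i}\to\overline{\omega_1}$ in $C^{1,\alpha}$ and with $F_i(p),F_i(q)$ converging to $p,q$ under the Gromov-Hausdorff approximations. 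Hence $L_{\omega_{l_i}}(F_i\circ\gamma)\to L_{\overline{\omega_1}}(\gamma)$ and $d_1(p,q)=\lim_i d_{\omega_{l_i}}(F_i(p),F_i(q))\le \lim_i L_{\omega_{l_i}}(F_i\circ\gamma)=L_{\overline{\omega_1}}(\gamma)<d_{\overline{\omega_1}}(p,q)+\varepsilon$; let $\varepsilon\to0$. Specializing to a single harmonic chart, the same argument shows $d_1=d_{\overline{\omega_1}}$ on sufficiently small balls about points of $\mathcal{R}$, i.e. $d_1|_{\mathcal{R}}$ is locally the Riemannian distance of $\overline{\omega_1}$.

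For the reverse inequality $d_1\ge d_{\overline{\omega_1}}$, which is the real point: given $p,q\in\mathcal{R}$, take a minimizing geodesic $\sigma$ of $(M_1,d_1)$ joining them. On the open set $\sigma^{-1}(\mathcal{R})$ its $d_1$-length equals its $\overline{\omega_1}$-length by the local statement, so it remains to replace the part of $\sigma$ near $\mathcal{S}$ by curves inside $\mathcal{R}$ without increasing the length by more than $\varepsilon$. This is where one must use that $\mathcal{S}$ has codimension $\ge 2$ in $M_1$ — either via Cheeger-Colding structure theory for non-collapsed Ricci limit spaces (with the two-sided Ricci control on the regular part coming from Lemma \ref{l4.4}), or via the algebro-geometric description of $\mathcal{S}$ obtained later in the paper. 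A closed set of codimension $\ge 2$ in a $C^{1,\alpha}$ manifold has vanishing $1$-capacity, so for each $\varepsilon>0$ a Lipschitz cutoff supported in a small tubular neighbourhood of $\mathcal{S}\cap\sigma$ lets one push $\sigma$ off $\mathcal{S}$ at length cost $<\varepsilon$, producing a curve in $\mathcal{R}$ from $p$ to $q$ of $\overline{\omega_1}$-length $<d_1(p,q)+\varepsilon$. Combining both inequalities gives $d_1|_{\mathcal{R}}=d_{\overline{\omega_1}}$, and the completion argument of the first paragraph concludes. I expect the codimension/removability step to be the only genuine obstacle: the rest follows routinely from the $C^{1,\alpha}$-convergence on $\mathcal{R}$ and density, whereas controlling minimizing geodesics of $M_1$ near $\mathcal{S}$ truly needs the smallness of the singular set.
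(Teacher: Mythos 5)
Your reduction is the right one: the lemma is not a formal consequence of density alone, because one must first check that the Riemannian distance $d_{\overline{\omega_1}}$ induced on $\mathcal{R}$ agrees with the restriction $d_1|_{\mathcal{R}\times\mathcal{R}}$. The paper's one-line justification ("Since $\mathcal{R}$ is dense in $M_1$\dots") is indeed terse about this point, so it is to your credit that you isolate it. Your argument for $d_1\le d_{\overline{\omega_1}}$ via the $C^{1,\alpha}$-convergence of $\omega_{l_i}$ on compact subsets of $\mathcal{R}$ and the local harmonic coordinates is standard and correct.

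The weak spot is exactly where you flag it, and your proposed repair does not go through as stated. You want to push a minimizing $d_1$-geodesic off $\mathcal{S}$ using a Lipschitz cutoff in a ``tubular neighbourhood'' of $\mathcal{S}\cap\sigma$, invoking vanishing $1$-capacity of a codimension-$\ge 2$ set. But $M_1$ is not a $C^{1,\alpha}$ manifold near $\mathcal{S}$; the singular set is the complement of $\mathcal{R}$, not a subvariety sitting inside a smooth ambient chart, so there is no tubular neighbourhood, no distance function level sets with controlled geometry, and no local capacity argument to invoke in the usual sense. Knowing only that $\dim_{\mathcal{H}}\mathcal{S}\le 2n-2$ does not, by itself, yield a controlled way to perturb curves passing through $\mathcal{S}$ into $\mathcal{R}$ at small length cost. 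The paper's actual route to this coincidence is the \emph{next} lemma: the geodesic convexity of $\mathcal{R}$, proved via Colding--Naber's H\"older continuity of tangent cones along interior points of limit geodesics \cite{CN}. Once $\mathcal{R}$ is geodesically convex, any $d_1$-minimizing geodesic joining $p,q\in\mathcal{R}$ stays in $\mathcal{R}$, so $d_1(p,q)$ is realized by a curve in $\mathcal{R}$ whose $\overline{\omega_1}$-length equals its $d_1$-length (by the local coincidence you established), which immediately gives $d_1\ge d_{\overline{\omega_1}}$. If you want an argument that does not appeal to geodesic convexity, you should replace the capacity heuristic with the precise statement you need and show how to derive it from the structure theory; as written, that step is a genuine gap.
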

\begin{lem}
$\mathcal{R}$ is geodesically convex in $M_1$ in the sense that any minimal geodesic with endpoints in $\mathcal{R}$ lies in $\mathcal{R}$.
\end{lem}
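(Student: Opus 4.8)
The plan is to reduce the assertion to interior points of a fixed minimal geodesic and then to apply the quantitative regularity theory for non-collapsed Ricci limit spaces. First observe that for $l\in[\tfrac12,1)$ one has the \emph{uniform} lower bound $\Ric(\omega_l)\geq-\tfrac1l\omega_l\geq-2\omega_l$, while by Theorem \ref{t2} the metrics $\omega_{l_i}$ converge smoothly on a fixed neighbourhood of the base point $x_0\in\overline{M}_{reg}\setminus D$, so $\Vol_{\omega_{l_i}}(B_{\omega_{l_i}}(x_0,1))$ is bounded below independently of $i$. Hence $(M_1,d_1,x_1)$ is a non-collapsed limit of manifolds with a uniform lower Ricci bound, and the Cheeger--Colding/Colding--Naber structure theory is available. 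In particular, by Bishop--Gromov monotonicity and Colding's volume convergence theorem \cite{Co}, the renormalized density $\vartheta(x):=\lim_{\rho\to0}\Vol(B_{d_1}(x,\rho))/\Vol(B^0_\rho)$ exists, satisfies $\vartheta(x)\leq1$, and $x\in\mathcal{R}$ if and only if $\vartheta(x)=1$.

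Now let $\gamma\colon[0,L]\to M_1$ be a unit-speed minimal geodesic with $\gamma(0),\gamma(L)\in\mathcal{R}$. Since the endpoints are assumed regular, it suffices to prove $x:=\gamma(t_0)\in\mathcal{R}$ for an arbitrary $t_0\in(0,L)$. Fix $\varepsilon>0$. Because $\gamma(0),\gamma(L)\in\mathcal{R}$, there is $r_\varepsilon>0$ with $\Vol(B_{d_1}(\gamma(0),s))\geq(1-\varepsilon)\Vol(B^0_s)$, and likewise at $\gamma(L)$, for all $s\leq r_\varepsilon$; that is, both endpoints are $\varepsilon$-regular at scale $r_\varepsilon$. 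The heart of the proof is to transport this almost-Euclidean behaviour along $\gamma$: using the Colding--Naber estimates on the sharp H\"older continuity of tangent cones along minimal geodesics (applied along $\gamma$ in $M_1$, equivalently along approximating geodesics in the $(M,\omega_{l_i})$ and then passed to the limit by Colding's volume convergence \cite{Co} and Anderson's harmonic radius estimate \cite{An}, as in the proof of Lemma \ref{l4.5}), one shows that $x$ is $\Psi(\varepsilon)$-regular at some scale $c\,r_\varepsilon$ with $c>0$ depending only on $n$, $t_0$ and $L-t_0$. Bishop--Gromov then propagates this to all smaller scales, giving $\vartheta(x)\geq1-\Psi'(\varepsilon)$; letting $\varepsilon\to0$ yields $\vartheta(x)=1$, hence $x\in\mathcal{R}$.

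The decisive --- and only nonroutine --- step is this transport of almost-regularity from the endpoints of $\gamma$ to a prescribed interior point, uniformly in the smallness parameter: it is precisely here that the Colding--Naber theorem is needed, the cruder Cheeger--Colding almost-splitting giving only that a tangent cone at an interior point of a minimal geodesic splits off a line, which by itself does not force $\vartheta=1$. One must also take care that the scale $r_\varepsilon$ on which the endpoints are nearly regular can degenerate as $\varepsilon\to0$; accordingly the estimate is applied for each fixed $\varepsilon$, and the limit $\varepsilon\to0$ is performed only at the end. All the other ingredients --- the uniform Ricci lower bound (from $l\geq\tfrac12$), non-collapsing at $x_0$ (Theorem \ref{t2}), Colding's volume convergence and Bishop--Gromov --- are standard, and the overall argument runs parallel to the corresponding lemma in \cite{NTZ}.
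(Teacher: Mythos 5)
Your proof is correct and takes essentially the same route as the paper's, which likewise reduces the assertion to Colding--Naber's H\"older continuity of tangent cones along interior points of minimal geodesics \cite{CN}, with the uniform Ricci lower bound for $l\geq\tfrac12$ and noncollapsing at $x_0$ supplying the hypotheses. Your account spells out the volume-density mechanism behind that citation, whereas the paper simply notes that openness of $\mathcal{R}$ (Lemma~\ref{l4.5}) lets one move the endpoints slightly inward so that the interior-point theorem applies, and then invokes \cite{CN} directly.
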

\begin{proof}
It is simply a consequence of Colding-Naber's H\"{o}lder continuity of tangent cones along a geodesic in $M_1$ \cite{CN}. Actually, if $x, y\in \mathcal{R}$, then for any minimal geodesic connecting $x$ and $y$, a neighborhood of endpoints lies in $\mathcal{R}$, so the geodesic will never touch the singular set.
\end{proof}
Let $\overline{D}$ be any divisor in $\overline{M}$ such that $D \cup \mathcal{S}_{\overline{M}} \subset \overline{D}$. Define the Gromov-Hausdorff limit of $\overline{D}$
$$
\overline{ D_{1}}:=\{x\in M_1|there \ exists \ x_i\in \overline{D} such \ that \ x_i\xrightarrow{d_{GH}} x\}.
$$
\begin{prop}\label{Ana}
$(M_1,d_1)$ is isometric to $\overline{(\overline{M}\backslash \overline{D},\omega_1)}$, where $\omega_1$ is defined as Theorem \ref{t2}.
\end{prop}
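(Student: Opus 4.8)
The plan is to build an isometry between $(M_1,d_1)$ and $\overline{(\overline{M}\setminus\overline{D},\omega_1)}$, first reducing to the extremal choice $\overline{D}=D\cup\mathcal{S}_{\overline{M}}$. Indeed, for any divisor $\overline{D}\supset D\cup\mathcal{S}_{\overline{M}}$ the difference $\overline{D}\setminus(D\cup\mathcal{S}_{\overline{M}})$ lies in the smooth locus of $\omega_1$ and is an analytic subset of real codimension $\ge 2$ of $\overline{M}_{reg}\setminus D=\overline{M}\setminus(D\cup\mathcal{S}_{\overline{M}})$; removing it from $\overline{M}_{reg}\setminus D$ affects neither the length distance induced by $\omega_1$ nor its metric completion (paths hitting a set of real codimension $\ge 2$ can be perturbed off it with negligible change of length, and the complement stays dense). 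So it suffices to prove $(M_1,d_1)\cong\overline{(\overline{M}_{reg}\setminus D,\omega_1)}$.

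For the core statement I would produce a distance-preserving map $\iota\colon(\overline{M}_{reg}\setminus D,\omega_1)\to(M_1,d_1)$ whose image is the regular set $\mathcal{R}$. Fix $\varepsilon_i$-Gromov--Hausdorff approximations $\phi_i\colon(M,\omega_{l_i},x_0)\to(M_1,d_1,x_1)$. By Theorem~\ref{t2} the metrics $\omega_{l_i}$ converge to $\omega_1$ in $C^\infty_{\mathrm{loc}}$ on $\overline{M}_{reg}\setminus D$, so on each compact $K\Subset\overline{M}_{reg}\setminus D$ the restrictions $\phi_i|_K$ subconverge to a map which is a local isometry for $\omega_1$; these patch to a global local isometry $\iota$. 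Since $\omega_1$ is a smooth K\"{a}hler metric near every $p$, small $\omega_{l_i}$-balls about $p$ converge to a Euclidean ball, so $\iota(p)\in\mathcal{R}$; the uniform lower bound for the $C^{1,\alpha}$ harmonic radius on compacts (Anderson, as used in Lemma~\ref{l4.5}) shows $\iota$ is open and that $\iota^{*}\overline{\omega_1}=\omega_1$.

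It then remains to check that $\iota$ is a bijection onto $\mathcal{R}$ preserving distances; granting this, $\iota$ extends to an isometry of metric completions, and since $(M_1,d_1)=\overline{(\mathcal{R},\overline{\omega_1})}$ by the Lemma above while the right-hand side is by definition $\overline{(\overline{M}_{reg}\setminus D,\omega_1)}$, we are done. For distances: pushing $\omega_1$-paths forward by $\iota$ gives $d_1(\iota(p),\iota(q))\le d_{\omega_1}(p,q)$, while a $d_1$-minimal geodesic joining $\iota(p)$ and $\iota(q)$ stays in $\mathcal{R}$ by geodesic convexity and, once surjectivity is known, pulls back along $\iota^{-1}$ to an $\omega_1$-path of the same length, yielding the reverse inequality and injectivity of $\iota$. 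For surjectivity: given $x\in\mathcal{R}$ pick $x_i\in M$ with $\phi_i(x_i)\to x$; completeness of each $\omega_{l_i}$ together with its cusp behaviour along $D$ forces $d_{\omega_{l_i}}(x_0,\{\,|s_D|\le\delta\,\})\to\infty$ uniformly in $i$, so the $x_i$ stay in a region bounded away from $D$; and — this is the essential point — a regular point of $M_1$ cannot be a Gromov--Hausdorff limit of points accumulating on $\mathcal{S}_{\overline{M}}$, so after a harmless perturbation (possible because $\mathcal{S}_{\overline{M}}$ is analytic of real codimension $\ge 2$ and carries no $\omega_{l_i}$-volume) the $x_i$ eventually lie in a fixed compact subset of $\overline{M}_{reg}\setminus D$, whence $x=\lim_i\iota(x_i)\in\iota(\overline{M}_{reg}\setminus D)$.

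The main obstacle is exactly this last assertion: that the Gromov--Hausdorff limit of $\mathcal{S}_{\overline{M}}$ is contained in the singular set $\mathcal{S}$, equivalently that the ends of $(\overline{M}_{reg}\setminus D,\omega_1)$ near $\mathcal{S}_{\overline{M}}$ collapse. The local estimates of Section~4 do not supply this directly; it requires uniform (in $l$) control near $\mathcal{S}_{\overline{M}}$ — e.g. a Skoda-type $L^p$ bound on $\omega_{l_i}^n$, or a uniform Sobolev / volume non-collapsing inequality along the family — from which the codimension-$\ge 2$ behaviour of the limit of $\mathcal{S}_{\overline{M}}$, and hence the surjectivity of $\iota$ onto $\mathcal{R}$, can be deduced.
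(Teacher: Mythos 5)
Your plan has a genuine gap, which you yourself identify honestly: the surjectivity of $\iota$ onto $\mathcal{R}$, i.e.\ that no point of $\mathcal{R}$ can be a Gromov--Hausdorff limit of points accumulating on $\mathcal{S}_{\overline{M}}$. Note that this is not a minor loose end; in your scheme it is also needed to prove that $\iota$ is distance-preserving (you need minimal geodesics in $\mathcal{R}$ to pull back along $\iota^{-1}$, which requires the image to be all of $\mathcal{R}$), so without it neither the isometry nor the completion argument goes through. The ingredient you would need is exactly the volume-collapse estimate that the paper uses in the \emph{subsequent} proposition ``$M_{reg}=\mathcal{R}$'': the total $\omega_l$-volume of $M_{sing}$ tends to $0$ as $l\to 1$ (because $\int_{M_{sing}}((1-l)\omega_0+l\eta_1)^m\to 0$), which contradicts the uniform lower volume bound forced by a regular limit point. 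You correctly guess that such a uniform control is what is missing, but you do not supply it.

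The paper routes around this entirely in the proof of Proposition~\ref{Ana}. Instead of trying to pin down the image of a map into $\mathcal{R}$, it works with the Gromov--Hausdorff limit $\overline{D_1}$ of the divisor $\overline{D}$ itself. The key lemma is the Claim that $\overline{D_1}\setminus\mathcal{S}$ is an analytic subvariety of complex dimension at most $n-1$: near a point of $\overline{D_1}\setminus\mathcal{S}$ one has $C^{1,\alpha}$ convergence with uniform harmonic/holomorphic coordinates (Anderson and Lemma~3.11 of \cite{TZ2}), and the pieces $\overline{D}\cap B_{\omega_{l_i}}(x_i,r)$ have uniformly bounded degree because the $\omega_{l_i}$-volume of $\overline{D}$ is uniformly bounded, so they converge to a local analytic set. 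Combined with the Cheeger--Colding bound $\dim_{\mathcal{H}}\mathcal{S}\leq 2n-2$, this gives $\dim_{\mathbb{R}}\overline{D_1}\leq 2n-2$. Then, by the Rong--Zhang argument, $(M_1\setminus\overline{D_1},\overline{\omega_1})$ is homeomorphic and locally isometric to $(\overline{M}\setminus\overline{D},\omega_1)$; because $M_1$ is a length space and the obstruction set $\overline{D_1}$ has codimension $\geq 2$, paths can be perturbed off $\overline{D_1}$ with negligible length change, upgrading the local isometry to a global one and giving $(M_1,d_1)=\overline{(M_1\setminus\overline{D_1},\overline{\omega_1})}=\overline{(\overline{M}\setminus\overline{D},\omega_1)}$. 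In short: the decisive step you should have aimed for is the codimension estimate on $\overline{D_1}$ (via bounded degree of $\overline{D}$ in the uniform holomorphic charts), not the determination of the image of $\iota$; the latter is a separate fact, proved afterwards by the volume argument, and is not needed for this proposition.
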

\begin{proof}
First, we prove the following Claim.
\begin{claim}
$\overline{D_1}\backslash \mathcal{S}$ is a subvariety of dimension $(n-1)$ if it is not empty.
\end{claim}
\begin{proof}
Let $x\in \overline{D_1}\backslash \mathcal{S}$ and $x_i\in \overline{D}$ such that $ x_i\xrightarrow{d_{GH}} x$. By the $C^{1,\alpha}$ convergence of $\omega_{l_i}$ around $x$, there are $C, r>0$ independent of $i$ and a sequence of harmonic coordinates in $B_{\omega_{l_i}}(x_i,r)$
such that $C^{-1}\omega_E\leq \omega_{l_i}\leq C\omega_E$ where $\omega_E$ is the Euclidean metric in the coordinates. Furthermore, according to Lemma 3.11 \cite{TZ2}, any $x_i\in M$ converging to $x$ has a holomorphic coordinate $(z_i^1, z_i^2, \cdot\cdot\cdot, z_i^n)$ on $B_{\omega_{l_i}}(x_i,r)$ such that $C^{-1}\omega_E(\frac{\partial}{\partial z_i^k}, \frac{\partial}{\partial \bar{z}_i^l})\leq \omega_{l_i}(\frac{\partial}{\partial z_i^k}, \frac{\partial}{\partial \bar{z}_i^l})\leq C\omega_E(\frac{\partial}{\partial z_i^k}, \frac{\partial}{\partial \bar{z}_i^l})$. Since the total volume of
$\overline{D}$ is uniformly bounded for any $\omega_{l_i}$, the local analytic $\overline{D}\cap B_{\omega_{l_i}}(x_i,r)$ have a uniform bound of degree and so converge to an analytic set $\overline{D_1 }\cap B_{d_1}(x,r)$.
\end{proof}
From the above Claim we know that $dim_{\mathbb{R}}(\overline{D_1})=dim_{\mathbb{R}}(\mathcal{S}\cup (\overline{D_1}\backslash \mathcal{S}))\leq 2n-2$. By the argument of \cite{RZ}, $(M_1\backslash \overline{D_1}, \overline{\omega_1})$ homeomorphic and locally isometric to $(\overline{M}\backslash \overline{D},\omega_1)$. Since $M_1$ is a length space and $dim_{\mathbb{R}}(\overline{D_1})\leq 2n-2$, $(M_1\backslash \overline{D_1}, \overline{\omega_1})$ isometric to $(\overline{M}\backslash \overline{D},\omega_1)$. Furthermore, we have
$$
 (M_1,d_1)=\overline{(M_1\backslash \overline{D_1},\overline{\omega_1})}=\overline{(\overline{M}\backslash \overline{D},\omega_1)}.
$$
\end{proof}
A direct corollary is
\begin{cor}
$(M,\omega_l, x_0)$ converges globally to $(M_1,d_1, x_1)$ in the Gromov-Hausdorff topology as $l\rightarrow l$.
\end{cor}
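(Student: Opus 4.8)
The plan is to bootstrap the subsequential pointed Gromov–Hausdorff convergence obtained in Proposition \ref{Ana} to convergence of the whole one–parameter family, the only extra input being the uniqueness of the limit metric $\omega_1$ coming from Theorem \ref{t2}.

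First I would record that since $\Ric(\omega_l)\geq -\tfrac1l\omega_l\geq -2\,\omega_l$ for every $l\in[\tfrac12,1)$, Gromov's precompactness theorem shows that the family $\{(M,\omega_l,x_0)\}_{l\in[1/2,1)}$ is precompact in the pointed Gromov–Hausdorff topology and that every subsequential limit is a complete, proper (locally compact) length space. On the set of pointed isometry classes of proper length spaces, pointed Gromov–Hausdorff convergence is induced by a metric; hence it suffices to prove that every sequence $l_i\to 1$ has a subsequence along which $(M,\omega_{l_i},x_0)$ converges to a limit \emph{independent} of the sequence. The usual ``every subsequence has a further subsequence converging to the same limit'' argument then forces the full family to converge to that common limit.

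To identify the common limit, fix an arbitrary sequence $l_i\to 1$ and pass to a subsequence (still written $l_i$) with $(M,\omega_{l_i},x_0)\xrightarrow{d_{GH}}(M_1,d_1,x_1)$ as above. By Lemma \ref{l4.5}, the metrics $\omega_{l_i}$ converge in $C^{1,\alpha}_{loc}$ on the regular set to a limit K\"ahler metric $\overline{\omega_1}$; but the monotonicity of $\frac{u_l}{l}-n\log l$ used in the proof of Theorem \ref{t2} forces $u_l$ to converge in $C^\infty_{loc}(\overline M_{reg}\setminus D)$ along the \emph{entire} family $l\to 1$ to the potential of $\omega_1$, so $\overline{\omega_1}=\omega_1$ on $\overline M_{reg}\setminus D$. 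Applying Proposition \ref{Ana} with $\overline D=D\cup\mathcal S_{\overline M}$, for which $\overline M\setminus\overline D=\overline M_{reg}\setminus D$ as noted after Theorem \ref{t2}, we obtain
$$
(M_1,d_1)\;=\;\overline{\bigl(\overline M\setminus\overline D,\ \omega_1\bigr)}\;=\;\overline{\bigl(\overline M_{reg}\setminus D,\ \omega_1\bigr)},
$$
with $x_1$ corresponding to $x_0\in\overline M_{reg}\setminus D$ itself. The right–hand side is a fixed pointed length space, independent of the chosen subsequence; thus every subsequential limit of $\{(M,\omega_l,x_0)\}$ is pointed–isometric to $(M_1,d_1,x_1)$, and by the reduction of the previous paragraph $(M,\omega_l,x_0)$ converges globally to $(M_1,d_1,x_1)$ as $l\to 1$.

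The only genuine point is the subsequence–independence of the limit, which is exactly where Theorem \ref{t2} is used — uniqueness of $\omega_1$ together with convergence of $u_l$ along \emph{all} of $l\to1$ rather than merely along a subsequence. Granting this, the passage from subsequential to global Gromov–Hausdorff convergence is a soft metrizability argument and presents no further obstacle.
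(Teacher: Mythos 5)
Your proposal is correct and follows essentially the same route the paper intends: the key input is Proposition \ref{Ana}, which identifies the pointed Gromov--Hausdorff limit of any subsequence with the fixed pointed metric completion $\overline{(\overline M\setminus\overline D,\omega_1)}$, where $\omega_1$ is the unique limit from Theorem \ref{t2}; subsequence-independence of the limit then upgrades subsequential to full convergence by the standard metrizability argument. The paper states this as ``a direct corollary'' without spelling out the details, and your write-up supplies precisely the missing diagonal/uniqueness argument in the expected way.
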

\begin{cor}
Let $M_{reg}$=$\overline{M}_{reg}\setminus D$, then $\omega_1$ is smooth on $M_{reg}$. $(M_1,d_1)$ is isometric to $\overline{(M_{reg} ,\omega_1)}$.
\end{cor}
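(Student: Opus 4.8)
\emph{Proof proposal.} The plan is to read this corollary off from Theorem~\ref{t2}, the Corollary following Proposition~\ref{p1}, and Proposition~\ref{Ana}. The smoothness assertion is immediate: Theorem~\ref{t2} gives that $\omega_1$ is smooth on $\overline{M}\setminus(D\cup\mathcal{S}_{\overline{M}})$, while the Corollary following Proposition~\ref{p1} identifies $\overline{M}\setminus(\mathcal{S}_{\overline{M}}\cup D)$ with $\overline{M}_{reg}\setminus D=M_{reg}$. So only the isometry $(M_1,d_1)\cong\overline{(M_{reg},\omega_1)}$ remains.

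For this I would first fix a reduced divisor $\overline{D}$ on $\overline{M}$ with $D\cup\mathcal{S}_{\overline{M}}\subset\overline{D}$. Such a $\overline{D}$ exists: by Lemma~\ref{L1} there is an effective divisor $E=\sum_i a_iE_i$ with $K_{\overline{M}}+D-\epsilon E>0$ for small $\epsilon>0$, whence $\mathcal{S}_{\overline{M}}=B_+(K_{\overline{M}}+D)\subset\Supp E$ by the definition of $B_+$, so one may take $\overline{D}$ to be the reduced divisor with support $D\cup\Supp E$. Proposition~\ref{Ana} applied to this $\overline{D}$ yields an isometry $(M_1,d_1)\cong\overline{(\overline{M}\setminus\overline{D},\omega_1)}$, where $\omega_1$ is the smooth K\"ahler metric of Theorem~\ref{t2} on the open set $\overline{M}\setminus\overline{D}\subset M_{reg}$. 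The remaining point is that the inclusion $\overline{M}\setminus\overline{D}\hookrightarrow M_{reg}$ does not change the metric completion. Indeed $\overline{D}\cap M_{reg}=\overline{D}\setminus(D\cup\mathcal{S}_{\overline{M}})$ is an open subset of the divisor $\overline{D}$, hence a complex hypersurface of $M_{reg}$ of real codimension $2$; since $\omega_1$ is a genuine smooth metric on $M_{reg}$, any rectifiable curve in $M_{reg}$ can be perturbed off this set with an arbitrarily small increase in $\omega_1$-length (a transversality argument for closed sets of Hausdorff codimension $>1$, exactly the mechanism underlying \cite{RZ} and the proof of Proposition~\ref{Ana}). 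Consequently the intrinsic length distance of $(\overline{M}\setminus\overline{D},\omega_1)$ agrees with the restriction of that of $(M_{reg},\omega_1)$, and because $\omega_1$ is non-degenerate near every point of $\overline{D}\cap M_{reg}$ no points of the completion get identified; thus $\overline{(\overline{M}\setminus\overline{D},\omega_1)}=\overline{(M_{reg},\omega_1)}$ isometrically, and combining with Proposition~\ref{Ana} gives the claim.

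The only place needing care is this last step: one must check that completing $(\overline{M}\setminus\overline{D},\omega_1)$ produces precisely the points of $M_{reg}$ lying on $\overline{D}$, with no collapsing and no shortcutting across $\overline{D}\cap M_{reg}$. This is a consequence of the removability of real-codimension-$2$ sets for intrinsic distances together with the local non-degeneracy of $\omega_1$ on all of $M_{reg}$, so it requires no analytic input beyond Theorem~\ref{t2} and Proposition~\ref{Ana}, and I do not expect a serious obstacle here.
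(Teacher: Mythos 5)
Your proposal takes essentially the same route as the paper: the paper likewise fixes $\overline{D}\supset D\cup\mathcal{S}_{\overline{M}}$, observes that $M_{reg}\cap\overline{D}$ has real codimension at least $2$ in the smooth K\"ahler manifold $(M_{reg},\omega_1)$, and concludes that $\overline{M}\setminus\overline{D}$ is dense in $M_{reg}$ with equal metric completions, which combined with Proposition~\ref{Ana} gives the isometry. You merely spell out the removability-of-codimension-$2$-sets step (no shortcutting, no collapsing) that the paper states tersely, so the two arguments agree.
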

\begin{proof}
Note that $M_{reg}\backslash  (\overline{M}\backslash \overline{D})=M_{reg}\cap \overline{D}$ has real codimension larger than $2$ in
$(M_{reg},\omega_1)$. So $\overline{M}\backslash \overline{D}$ is dense in $M_{reg}$. We conclude
$$
 (M_1,d_1)=\overline{(\overline{M}\backslash \overline{D},\omega_1)}=\overline{(M_{reg},\omega_1)}.
$$
\end{proof}
\begin{prop}
$M_{reg}=\mathcal{R}$, the regular set of $M_1$.
\end{prop}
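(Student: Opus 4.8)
The plan is to prove the two inclusions $M_{reg}\subseteq\mathcal{R}$ and $\mathcal{R}\subseteq M_{reg}$ separately; the first is essentially formal, while the second rests on the local structure of $M_1$ near a regular point together with Proposition \ref{Ana}. For $M_{reg}\subseteq\mathcal{R}$ I would argue as follows: by Theorem \ref{t2}, $\omega_1$ is a smooth K\"{a}hler metric on the open set $M_{reg}=\overline{M}_{reg}\setminus D$, and by the corollary to Proposition \ref{Ana}, $(M_1,d_1)$ is the metric completion of $(M_{reg},\omega_1)$. Fix $p\in M_{reg}$; for $\rho$ small the closed geodesic ball $\overline{B_{\omega_1}(p,\rho)}$ is a compact subset of the smooth manifold $M_{reg}$, and since in a length space the completion does not alter distances between points of the space, this forces $B_{d_1}(p,\rho/2)=B_{\omega_1}(p,\rho/2)$, a ball in a smooth Riemannian manifold. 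Hence the tangent cone of $M_1$ at $p$ is $\mathbb{R}^{2n}$, so $p\in\mathcal{R}$. In particular $\mathcal{S}\subseteq M_1\setminus M_{reg}$; since $\overline{M}\setminus\overline{D}\subseteq M_{reg}$ while $M_1\setminus\overline{D_1}$ is isometric to $\overline{M}\setminus\overline{D}$ (proof of Proposition \ref{Ana}), we also get $\mathcal{S}\subseteq\overline{D_1}$, where $\overline{D}$ is a divisor containing $D\cup\mathcal{S}_{\overline{M}}$ as before.

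For the reverse inclusion, since $M_1\setminus\overline{D_1}$ is isometric to $\overline{M}\setminus\overline{D}$, which is contained in $M_{reg}$, it suffices to show $\overline{D_1}\subseteq\mathcal{S}$, i.e. $\overline{D_1}\cap\mathcal{R}=\emptyset$. Suppose not, and take $x\in\overline{D_1}\cap\mathcal{R}$. By Lemma \ref{l4.5} together with \cite{TZ2}, after shrinking, there is a complex coordinate ball $B\ni x$ on which $\omega_{l_i}$ converges in $C^{1,\alpha}$ to a K\"{a}hler metric uniformly comparable to the Euclidean one, and by the Claim in the proof of Proposition \ref{Ana} we may assume $\overline{D_1}\cap B$ is a nonempty complex hypersurface $H$. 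Choosing $x_i\in\overline{D}$ with $x_i\xrightarrow{d_{GH}}x$ and using that $\omega_1$ is complete with cusp ends along $D$, the sequence $\{x_i\}$ cannot accumulate on $D$, so after passing to a subsequence $x_i\to\bar{x}\in\overline{D}\setminus D\subseteq\mathcal{S}_{\overline{M}}=Exc(\Phi)$, and therefore $\Phi(\bar{x})\in\Phi(\overline{M})_{sing}$.

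The remaining step is to derive a contradiction, and this is where the main difficulty lies. Since $\Phi$ is biregular over $\Phi(\overline{M})_{reg}$, the open set $M_{reg}=\overline{M}_{reg}\setminus D$ is biholomorphic and isometric (for $\omega_1$) to an open subset of the quasi-projective variety $\Phi(\overline{M})_{reg}\setminus\Phi(D)\subseteq\Phi(\overline{M})\subseteq\mathbb{CP}^N$. By Proposition \ref{Ana}, $B\setminus H$ is locally isometric and biholomorphic to an open subset of $M_{reg}$, hence carries a bounded holomorphic map to $\mathbb{CP}^N$ with image near $\Phi(\bar{x})$ lying in $\Phi(\overline{M})$; as $H$ is a hypersurface in the manifold $B$, this map extends holomorphically across $H$. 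The uniform degree bound on the relevant analytic sets (the same one used in the Claim of Proposition \ref{Ana}) rules out positive-dimensional fibres appearing in the limit, so the extension $B\to\Phi(\overline{M})$ is finite and birational onto a neighbourhood of $\Phi(\bar{x})$; since $B$ is smooth, hence normal, and $\Phi(\overline{M})$ is normal, Zariski's main theorem makes it an isomorphism of complex spaces, so $\Phi(\overline{M})$ is smooth at $\Phi(\bar{x})$, contradicting $\Phi(\bar{x})\in\Phi(\overline{M})_{sing}$. Thus $\overline{D_1}\cap\mathcal{R}=\emptyset$, and together with the first inclusion this gives $\mathcal{R}=M_{reg}$. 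The hard part is exactly this extension argument: controlling the limit of the analytic sets so that no bubbling occurs across $\overline{D_1}$, and invoking normality of $\Phi(\overline{M})$ to identify the local model with the (possibly singular) variety rather than with a mere smooth manifold.
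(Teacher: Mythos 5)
Your first inclusion $M_{reg}\subseteq\mathcal{R}$ matches the paper and is fine. For the reverse inclusion, however, you take a genuinely different route from the paper and there is a real gap where you yourself flag "the hard part."

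The paper argues the reverse inclusion by a cohomological volume comparison, not by extension of maps. If $p\in\mathcal{R}\setminus M_{reg}$, one takes $p_l\in M_{sing}=\Phi^{-1}(\Phi(\overline{M})_{sing})\setminus D$ with $p_l\xrightarrow{d_{GH}}p$. The $C^{1,\alpha}$ convergence of $\omega_{l}$ near the regular point $p$ produces holomorphic coordinate balls $B_{\omega_{l}}(p_l,r)$ on which $\omega_{l}$ is uniformly comparable to the Euclidean metric, giving a \emph{uniform lower bound} on $\Vol_{\omega_l}(M_{sing}\cap B_{\omega_l}(p_l,r))=\int_{M_{sing}\cap B}\omega_l^m$ (where $m=\dim_{\mathbb{C}}M_{sing}$). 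On the other hand, since $M_{sing}$ is a subvariety one computes $\int_{M_{sing}}\omega_l^m=\int_{M_{sing}}((1-l)\omega_0+l\eta_1)^m$ cohomologically (using the lemma on p.~410 of \cite{Ko84}), and this tends to $0$ as $l\to1$ because $\eta_1=\frac{1}{K_0}\Phi^*\omega_{FS}$ is degenerate along $M_{sing}$. These two facts contradict each other, so no such $p$ exists. This argument is short and avoids any discussion of extending holomorphic maps or invoking normality.

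Your approach instead tries to show $\overline{D_1}\cap\mathcal{R}=\emptyset$ by an extension-plus-normality argument, and the decisive step is not justified. You claim that the extended map $\tilde\psi:B\to\Phi(\overline{M})$ is "finite and birational onto a neighbourhood of $\Phi(\bar{x})$" and then invoke Zariski's Main Theorem. But the uniform degree bound you cite controls the degree of the analytic sets $\overline{D}\cap B_{\omega_{l_i}}(x_i,r)$ converging to $\overline{D_1}\cap B$; it says nothing about the fibres of the map $\tilde\psi$, nor about properness of $\tilde\psi$ over a neighbourhood of $\Phi(\bar{x})$. Both finiteness (proper with discrete fibres) and surjectivity onto a neighbourhood of $\Phi(\bar{x})$ are needed to apply the normality criterion, and neither is established. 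The worry is exactly the one you would like to exclude: $\tilde\psi$ might collapse $H$ to a point, since $H$ arises as the limit of a divisor that $\Phi$ itself contracts, and in that case $\tilde\psi$ is not finite and the argument yields no contradiction. There is also a secondary point: $\overline{D_1}\cap\mathcal{R}=\emptyset$ is false unless $\overline{D}$ is taken to be the \emph{minimal} divisor $D\cup\mathcal{S}_{\overline{M}}$ (otherwise extra components of $\overline{D}$ survive in $\mathcal{R}$, as the Claim in the proof of Proposition \ref{Ana} allows); you should fix this choice. Even with that fixed, the reduction is valid, but the proof of $\overline{D_1}\cap\mathcal{R}=\emptyset$ by extension is not complete. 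The paper's volume argument is precisely the device that closes this gap: it quantitatively shows the collapsed locus cannot Gromov--Hausdorff limit onto regular points, without ever needing to understand the local structure of the extended map.
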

\begin{proof}
Since $M_{reg}$ has smooth structure in $M_1$, we have $M_{reg}\subset \mathcal{R}$. Next we show the converse. We argue by contradiction. Suppose $p\in \mathcal{R}\setminus M_{reg}$,
then there exists a family of points $p_l\in M_{sing}$ such that $p_l\xrightarrow{d_{GH}} p$, where $M_{sing}=(\Phi^{-1}(\Phi(\overline{M})_{sing}))\backslash D$. By $C^{1,\alpha}$ convergence on $\mathcal{R}$, there exist
$C,r>0$ independent of $l$ and a sequence of harmonic coordinates on $B_{\omega_{l}}(p_l,r)$ such that $C^{-1}\omega_E\leq \omega_{l}\leq C\omega_E$ where $\omega_E$ is the Euclidean metric in this coordinate. Furthermore, the sequence of harmonic coordinate can be perturbed to a holomorphic coordinate on $B_{\omega_{l}}(p_l,r)$ \cite{TZ2}. Denote $m=dim_{\mathbb{C}}(M_{sing})$. Then
\begin{align*}
 & \Vol_{\omega_{l}}(M_{sing}\cap B_{\omega_{l}}(p_l,r))=\int_{M_{sing}\cap B_{\omega_{l}}(p_l,r)}\omega_{l}^{m} \\
 & \qquad \geq \int_{M_{sing}\cap B_{\omega_E}(C^{-\frac{1}{2}}r)}(C^{-1}\omega_E)^m
\end{align*}
which has a uniform lower bound. However, this contradicts with the
degeneration of the limit metric $\eta_1$ along $M_{sing}$:
\begin{align*}
& \Vol_{\omega_{l}}(M_{sing}\cap B_{\omega_{l}}(p_l,r))\leq \Vol_{\omega_{l}}(M_{sing}) \\
& \qquad = \int_{M_{sing}}\omega_{l}^{m}=\int_{M_{sing}}((1-l)\omega_0+l\eta_1)^m
\end{align*}
which tends to $0$ as $l\rightarrow 1$, where the last equality bases on a Lemma (\cite{Ko84} P410). So we have $M_{reg}\supset \mathcal{R}$.
\end{proof}

\subsection{$L^{\infty}$ estimate and gradient estimate to holomorphic sections}
In this subsection we obtain the $L^{\infty}$ estimate and gradient estimate to holomorphic section $s\in H^0(\mathcal{R},k(K_{\overline{M}}+D))$. $h=\omega_1^{-nk}$ is chosen as the Hermitian metric of line bundle $k(K_{\overline{M}}+D)$, where $k\in \mathbb{Z}$. The curvature form $\Theta_h$ of Hermitian metric $h=\omega_1^{-nk}$ is $k\omega_1$. By Lemma \ref{Bo}, we have the following formulas.
\begin{lem}\label{Bo1}
For $s\in H^0(\mathcal{R},k(K_{\overline{M}}+D))$, there exists a constant $C$ such that
$$
\triangle_{\omega_1}|s|^2=|\nabla s|^2-kn|s|^2
$$
and
$$
\triangle_{\omega_1}|\nabla s|^2\geq |\overline{\nabla}\nabla s|^2+|\nabla\nabla s|^2-Ck|\nabla s|^2-k\nabla_j(\omega_1)_{i\bar{j}}\langle s,\nabla_{\bar{i}}\bar{s}\rangle.
$$
\end{lem}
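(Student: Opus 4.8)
The two formulas come from specializing the general Bochner identities of Lemma \ref{Bo} to the line bundle $L=k(K_{\overline{M}}+D)$ equipped with $h=\omega_1^{-nk}$ over the smooth locus $\mathcal{R}=\overline{M}_{reg}\setminus D$, and then inserting the K\"ahler--Einstein equation $\Ric(\omega_1)=-\omega_1$ that holds there by Theorem \ref{t2}. The plan is: record the curvature data, substitute into Lemma \ref{Bo}, and collect the lower-order terms into the constant $C$.

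On $\mathcal{R}$ the metric $\omega_1$ is a smooth K\"ahler metric (Theorem \ref{t2} and its corollary), so Lemma \ref{Bo} applies pointwise on $\mathcal{R}$, and there $R_{i\bar j}=-(\omega_1)_{i\bar j}$. As noted in the text, the Chern curvature of $h=\omega_1^{-nk}$ is $\Theta_h=k\omega_1$, hence $(\Theta_h)_{i\bar j}=k(\omega_1)_{i\bar j}$ and $tr_{\omega_1}\Theta_h=k\,tr_{\omega_1}\omega_1=kn$ is constant. Substituting $tr_{\omega_1}\Theta_h=kn$ into (\ref{e4.2}) immediately gives $\triangle_{\omega_1}|s|^2=|\nabla s|^2-kn|s|^2$, with no remainder term.

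For the second identity I substitute the same data into the multiline formula of Lemma \ref{Bo}. Since $tr_{\omega_1}\Theta_h$ is constant, the term $\nabla_{\bar j}(tr_{\omega_1}\Theta_h)\langle\nabla_j s,\bar s\rangle$ drops. Evaluating the three remaining curvature contractions against $\langle\nabla_j s,\nabla_{\bar i}\bar s\rangle$ (say in normal coordinates at a point) gives $R_{i\bar j}\langle\nabla_j s,\nabla_{\bar i}\bar s\rangle=-|\nabla s|^2$, $-2(\Theta_h)_{i\bar j}\langle\nabla_j s,\nabla_{\bar i}\bar s\rangle=-2k|\nabla s|^2$, and $-|\nabla s|^2\,tr_{\omega_1}\Theta_h=-kn|\nabla s|^2$, whose sum $-(1+2k+kn)|\nabla s|^2$ is bounded below by $-Ck|\nabla s|^2$ for any $C\geq n+2+k^{-1}$ (e.g. $C=n+3$ since $k\geq 1$). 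Leaving the cross term $-\nabla_j(\Theta_h)_{i\bar j}\langle s,\nabla_{\bar i}\bar s\rangle=-k\nabla_j(\omega_1)_{i\bar j}\langle s,\nabla_{\bar i}\bar s\rangle$ as it stands and keeping the nonnegative $|\nabla\nabla s|^2+|\overline{\nabla}\nabla s|^2$ on the right, the equality produced by the substitution rewrites as the asserted inequality. (Since $\omega_1$ is K\"ahler one in fact has $\nabla_j(\omega_1)_{i\bar j}=0$; retaining this term is only a matter of convenience for the integration-by-parts arguments in the next subsection.)

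I do not expect a genuine obstacle here: the proof is bookkeeping from Lemma \ref{Bo}. The one point requiring care is to fix the sign and index conventions of Lemma \ref{Bo} so that, after plugging in $\Ric(\omega_1)=-\omega_1$ and $\Theta_h=k\omega_1$, the Ricci and curvature terms come out with the correct (negative) sign; once that is pinned down the constant $C$ is explicit and both identities follow at once.
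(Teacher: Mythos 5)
Your proposal is correct and takes the same route as the paper: the paper's proof is simply the one-line observation that $\Ric(\omega_1)=-\omega_1$ on $\mathcal{R}$ so the formulas follow by plugging $\Theta_h=k\omega_1$ and this Ricci identity into Lemma \ref{Bo}, which is exactly the substitution you carry out in detail. Your additional bookkeeping (dropping $\nabla_{\bar j}(tr_{\omega_1}\Theta_h)$ since the trace $kn$ is constant, absorbing $-(1+2k+kn)|\nabla s|^2$ into $-Ck|\nabla s|^2$ via $k\geq 1$, and the remark that $\nabla_j(\omega_1)_{i\bar j}=0$ is kept only for the later integration by parts) is all accurate and merely fills in what the paper leaves implicit.
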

\begin{proof}
Since on $\mathcal{R}$, $\Ric(\omega_1)=-\omega_1$. So these formulas are directly derived from Lemma \ref{Bo}.
\end{proof}
In order to applying Moser iteration, the Sobolev inequality on $\mathcal{R}$ is needed. The following two Lemmas are due to Song (Lemma 3.7 and 4.6 \cite{So}).
\begin{lem}\label{cf}
There is a family of cut-off functions $\rho_\epsilon\in C_0^\infty(\mathcal{R})$ with $0<\rho_\epsilon<1$ such that $\rho^{-1}_\epsilon(1)$ forms an exhaustion of $\mathcal{R}$ and
$$
\int_{\mathcal{R}}|\nabla \rho_\epsilon|^2\omega_1^n\rightarrow 0.
$$
\end{lem}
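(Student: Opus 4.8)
The plan is to build $\rho_\epsilon$ as a product $\rho_\epsilon=\alpha_\epsilon\beta_\epsilon$ of two cut-off functions adapted to the two pieces of $\overline{M}\setminus\mathcal{R}=D\cup\mathcal{S}_{\overline{M}}$, which are of completely different geometric nature: $D$ is an end of $(\mathcal{R},\omega_1)$ at infinite distance (the cusp), whereas $\mathcal{S}_{\overline{M}}=B_+(K_{\overline{M}}+D)$ is a closed analytic set at finite distance whose image in $M_1$ is the singular set $\mathcal{S}$, of real codimension $\geq 2$. Both pieces are handled as in \cite{So} (Lemmas 3.7 and 4.6).

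For the cusp end, write $D=(z_1=0)$ locally; away from $\mathcal{S}_{\overline{M}}$ the limit metric $\omega_1$ is quasi-isometric to the model cusp metric $\omega_{cusp}$ (this follows from Lemma \ref{L4.2} together with the fact that all the backgrounds $\widetilde{\omega_l}$ already carry the cusp along $D$, so the comparison survives in the limit, with a constant that may degenerate as one approaches $\mathcal{S}_{\overline{M}}$). The function $w:=\log\!\big(-\log|s_D|^2_{h_D}\big)$ tends to $+\infty$ along $D$, and a short computation gives $|\nabla w|_{\omega_{cusp}}\leq C$. Fix $\chi\in C^\infty(\mathbb{R})$ with $\chi\equiv 1$ on $(-\infty,1]$, $\chi\equiv 0$ on $[2,\infty)$ and $0\leq\chi\leq 1$, and put $\alpha_\epsilon:=\chi(w/N_\epsilon)$ with $N_\epsilon\to\infty$. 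Then $|\nabla\alpha_\epsilon|_{\omega_1}\leq C N_\epsilon^{-1}$ on the support of $\nabla\alpha_\epsilon$, and since $\int_{\mathcal{R}}\omega_1^n<\infty$ (the cusp volume density $1/(|s_D|^2\log^2|s_D|^2)$ is integrable near $D$),
$$
\int_{\mathcal{R}}|\nabla\alpha_\epsilon|^2\,\omega_1^n\ \leq\ \frac{C}{N_\epsilon^{2}}\int_{\mathcal{R}}\omega_1^n\ \longrightarrow\ 0 .
$$

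For a neighbourhood of $\mathcal{S}_{\overline{M}}$, I would use the classical cut-off for a closed set of codimension $\geq 2$: cover a neighbourhood of $\mathcal{S}_{\overline{M}}$ by finitely many charts, let $d=d_{\omega_0}(\cdot,\mathcal{S}_{\overline{M}})$, and take $\beta_\epsilon$ to be a mollification of $\min\{1,\max\{0,\log(d/\epsilon^2)/\log(1/\epsilon)\}\}$, so that $\beta_\epsilon\equiv 0$ near $\mathcal{S}_{\overline{M}}$, $\beta_\epsilon\equiv 1$ where $d\geq\epsilon$, and a tubular-neighbourhood computation gives $\int|\nabla\beta_\epsilon|^2\,\omega_0^n\lesssim 1/\log(1/\epsilon)\to 0$. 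Setting $\rho_\epsilon:=\alpha_\epsilon\beta_\epsilon$ (shrunk slightly if one insists on $0<\rho_\epsilon<1$), one checks that $\rho_\epsilon\in C_0^\infty(\mathcal{R})$, with support inside $\{w\leq 2N_\epsilon\}\cap\{d\geq\epsilon^2\}$, a compact subset of $\mathcal{R}$; that $\rho_\epsilon^{-1}(1)\supseteq\{w\leq N_\epsilon\}\cap\{d\geq\epsilon\}$ exhausts $\mathcal{R}$ as $N_\epsilon\to\infty$ and $\epsilon\to 0$; and, using $|\nabla(\alpha_\epsilon\beta_\epsilon)|^2\leq 2\beta_\epsilon^2|\nabla\alpha_\epsilon|^2+2\alpha_\epsilon^2|\nabla\beta_\epsilon|^2$ with $0\leq\alpha_\epsilon,\beta_\epsilon\leq 1$, that $\int_{\mathcal{R}}|\nabla\rho_\epsilon|^2\,\omega_1^n\to 0$.

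The main obstacle is the overlap of the two regimes, i.e. estimating $\int\alpha_\epsilon^2|\nabla\beta_\epsilon|^2\,\omega_1^n$ and $\int\beta_\epsilon^2|\nabla\alpha_\epsilon|^2\,\omega_1^n$ on the part of the cusp end that is simultaneously close to $\mathcal{S}_{\overline{M}}$, where $\omega_1$ is not uniformly comparable to any smooth metric, so one cannot merely replace $\omega_1$ by $\omega_0$. This is precisely the point where, in place of crude pointwise bounds, one invokes the finiteness of $\int_{\overline{M}}\omega_1^n$ and the fact that $\mathcal{S}_{\overline{M}}$ — hence every tubular neighbourhood shrinking to it — is $\omega_1^n$-null, so that the dangerous contributions are dominated by a vanishing volume, while choosing $N_\epsilon$ to grow fast enough relative to the $\epsilon$-dependent comparison constants, exactly as carried out in \cite{So}. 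The remaining verifications — smoothness of $\beta_\epsilon$ after mollification, compactness of the support in $\mathcal{R}$, and the two Dirichlet-energy bounds — are routine.
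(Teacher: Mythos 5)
The paper itself does not prove this lemma; it simply cites Song's Lemmas 3.7 and 4.6 (\cite{So}). Your reconstruction correctly identifies the two ends to cut off and handles the cusp end essentially the right way: $w=\log(-\log|s_D|^2)$ has $|\nabla w|_{\omega_1}$ bounded where $\omega_1$ is comparable to the cusp model, and $\int_{\mathcal R}\omega_1^n<\infty$, so $\int|\nabla\alpha_\epsilon|_{\omega_1}^2\omega_1^n\lesssim N_\epsilon^{-2}\to 0$.

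The gap is in the cut-off for $\mathcal S_{\overline M}$. You bound $\int|\nabla\beta_\epsilon|^2\,\omega_0^n$, but what is required is $\int|\nabla\beta_\epsilon|_{\omega_1}^2\,\omega_1^n=n\int\sqrt{-1}\,\partial\beta_\epsilon\wedge\bar\partial\beta_\epsilon\wedge\omega_1^{n-1}$. Along $\mathcal S_{\overline M}$ (which is a divisor, so only of real codimension $2$ in $\overline M$) the metric $\omega_1$ degenerates: Lemma~\ref{L4.2} gives $\omega_1\geq C_\delta^{-1}|s_E|^{2\delta}\widetilde\omega_{1,E}$ with a constant that blows up as $\delta\to 0$, so $\omega_1^{-1}$ is unbounded exactly on the annulus $\{\epsilon^2\le d\le\epsilon\}$ where $\nabla\beta_\epsilon$ lives. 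The phrase ``the dangerous contributions are dominated by a vanishing volume'' is not a proof: the integrand $|\nabla\beta_\epsilon|^2_{\omega_1}$ is itself unbounded there, and for codimension two the $\omega_0$-estimate already sits at the critical exponent $1/\log(1/\epsilon)$, so any loss of a power $|s_E|^{-2\delta}$ with $\delta$ independent of $\epsilon$ destroys the convergence, and letting $\delta\to 0$ brings in $C_\delta\to\infty$ which you have not controlled.

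The argument that actually closes (and is what Song does) replaces the $\omega_0$-distance cut-off by a cut-off built from the quasi-psh function $f=\log|s_E|^2$, say $\beta_N=\chi(f/N)$, and estimates the Dirichlet energy by integration by parts rather than by a pointwise bound. On $\mathcal R$ one has $\sqrt{-1}\partial\bar\partial f=-\Theta_E$, a \emph{smooth bounded} form, so if $G'=(\chi')^2$ then
$$
\frac{1}{N^2}\int \chi'(f/N)^2\sqrt{-1}\partial f\wedge\bar\partial f\wedge\omega_1^{n-1}
=-\frac{1}{N}\int G(f/N)\,\sqrt{-1}\partial\bar\partial f\wedge\omega_1^{n-1}
\leq \frac{C}{N}\int_{\overline M}\omega_0\wedge\omega_1^{n-1},
$$
and the right-hand side is finite (it is a cohomological quantity, $c_1(L)\cdot(K_{\overline M}+D)^{n-1}$ for a suitable $L$). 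This gives the required decay without ever needing a two-sided pointwise comparison between $\omega_1$ and a smooth metric near $\mathcal S_{\overline M}$. The boundary terms at $D$ are killed by first multiplying by your cusp cut-off $\alpha_\epsilon$, so the two cut-offs genuinely have to be run together rather than estimated independently, as you suspected in your last paragraph. With this replacement the rest of your outline goes through.
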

\begin{lem}\label{So}
Fix any $0<r<R$, the Sobolev constant on $B_{\omega_l}(x,r)$ is uniformly bounded by a constant $C_S$ depending on upper bound of $R$, $R^{-1}$ and $(R-r)^{-1}$. More precisely, for any $l\in [\frac{1}{2},1)$ and $f\in C_0^1(B_{\omega_l}(x,r))$,
$$
C_S\bigg(\int_{B_{\omega_l}(x,r)}|f|^{\frac{2n}{n-1}}\omega_l^n\bigg)^{\frac{n-1}{n}}\leq \int_{B_{\omega_l}(x,r)}(|f|^2+|\nabla f|^2)\omega^n_l.
$$
\end{lem}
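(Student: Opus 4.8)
The plan is to reduce the claim to two standard facts about Riemannian manifolds carrying a lower Ricci bound --- the Bishop--Gromov volume comparison and Buser's Neumann--Poincar\'e inequality --- and then to run the De Giorgi--Nash--Moser iteration exactly as in the proof of Lemma~4.6 of \cite{So}. The only point demanding attention is that every constant so produced must be independent of $l\in[\frac{1}{2},1)$; this forces us to exhibit a lower Ricci bound and volume bounds that do not depend on $l$.

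First I would record the uniform Ricci lower bound. Writing $l=\frac{t}{1+t}$, a direct computation from the continuity equation (\ref{e1}) gives, away from $D$,
$$
\Ric(\omega_l)=-\frac{1}{l}\,\omega_l+\frac{1-l}{l}\bigl(\omega_0-\sqrt{-1}\partial\overline\partial\log\log^2|s_D|^2_{h_D}\bigr).
$$
Since $\omega_0-\sqrt{-1}\partial\overline\partial\log\log^2|s_D|^2_{h_D}>0$ (it is the $t=0$ metric $\omega(0)$) and $\frac{1-l}{l}\ge0$, we obtain $\Ric(\omega_l)\ge-\frac{1}{l}\,\omega_l\ge-2\,\omega_l$ for every $l\in[\frac{1}{2},1)$, as already used in the proof of Lemma \ref{L4.2}. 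With this fixed lower Ricci bound, Bishop--Gromov yields a volume doubling property on $B_{\omega_l}(x,R)$ with doubling constant depending only on $n$ and $R$, and Buser's inequality yields the scale-$R$ Neumann--Poincar\'e inequality with constant depending only on $n$ and $R$, both uniformly in $l$.

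Next I would pin down the volume. The class of $\omega_l$ is $(1-l)[\omega_0]+l\,c_1(K_{\overline M}+D)$, so $\int_M\omega_l^n=\int_{\overline M}\bigl((1-l)\omega_0+l\eta_1\bigr)^n$ stays bounded above as $l\to1$; moreover the uniform estimate $|u_l|\le C$ from the proof of Theorem \ref{t2} makes $\omega_l^n=e^{u_l/l}\,\Omega/(|s_D|^2_{h_D}\log^2|s_D|^2_{h_D})$ comparable, with $l$-independent constants, to the fixed cusp volume form. Feeding the doubling property, the Poincar\'e inequality and this volume information into the standard Saloff-Coste/Grigor'yan scheme --- equivalently, the Moser iteration in the proof of Lemma~4.6 of \cite{So} --- then produces, for $f\in C_0^1(B_{\omega_l}(x,r))$ with $r<R$,
$$
C_S\Bigl(\int_{B_{\omega_l}(x,r)}|f|^{\frac{2n}{n-1}}\,\omega_l^n\Bigr)^{\frac{n-1}{n}}\le\int_{B_{\omega_l}(x,r)}\bigl(|f|^2+|\nabla f|^2\bigr)\,\omega_l^n,
$$
the dependence of $C_S$ on $(R-r)^{-1}$ coming from the cutoff interpolating between the two concentric balls.

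The hard part will be precisely this uniformity in $l$: as $l\to1$ the metrics $\omega_l$ collapse along the degeneration locus $D\cup\mathcal{S}_{\overline M}$, so a naive lower bound for $\Vol_{\omega_l}(B_{\omega_l}(x,r))$ fails there and $C_S$ could a priori blow up. I would treat two regimes. For balls centered in a fixed compact $K\subset\overline M\setminus(D\cup\Supp E)$, Lemma \ref{L4.2} gives $C_K^{-1}\omega_0\le\omega_l\le C_K\omega_0$ independently of $l$, and the inequality reduces, in geodesic coordinates, to the Euclidean Sobolev inequality, so no collapsing occurs. Near $D\cup\mathcal{S}_{\overline M}$ one instead centers the balls at points of the regular set $\mathcal R$ of the limit and invokes the volume noncollapsing established inside the proof of Lemma \ref{l4.5} --- Colding's volume convergence together with Lemma \ref{l4.4} --- which provides $\Vol_{\omega_l}(B_{\omega_l}(x_l,r))\ge(1-\delta)\Vol(B^0_r)$ for the relevant centers $x_l$; this $l$-independent lower volume bound is exactly what keeps $C_S$ bounded. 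Away from these points the argument is the verbatim local counterpart of \cite{So}.
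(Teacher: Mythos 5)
The paper does not prove this lemma itself; it cites Song (Lemma~3.7 and~4.6 of \cite{So}) and leaves the transport to the cusp setting implicit. Your reconstruction follows the expected route: a uniform Ricci lower bound $\Ric(\omega_l)\geq-\frac{1}{l}\omega_l\geq-2\omega_l$ read directly from the continuity equation, Bishop--Gromov doubling, Buser's Neumann--Poincar\'e inequality, a volume noncollapsing bound at the center, and the Saloff-Coste/Moser iteration; your computation of $\Ric(\omega_l)=-\frac{1}{l}\omega_l+\frac{1-l}{l}\omega(0)$ and the sign $\omega(0)>0$ is correct and is exactly what the paper uses elsewhere. The one place where the argument as written is imprecise is the noncollapsing step, and the imprecision is twofold. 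First, the estimate $\Vol_{\omega_l}(B_{\omega_l}(x_l,r))\geq(1-\delta)\Vol(B^0_r)$ extracted from the proof of Lemma~\ref{l4.5} is at the scale $r=r(x)$, which may be far smaller than the $R$ of the lemma; you still need the trivial monotonicity $\Vol(B(x,R))\geq\Vol(B(x,r(x)))$ (or Bishop--Gromov) to convert it into an $l$-uniform lower volume bound at scale $R$. Second, it only covers centers whose limit lands in $\mathcal R$, whereas Lemma~\ref{So2} applies the present lemma around a point of $M_1$ that may lie in the singular set. A cleaner single-stroke fix is to drop the two-regime split altogether: the base point $x_0\in\overline M_{reg}\setminus D$ gives a uniform $\Vol_{\omega_l}(B_{\omega_l}(x_0,1))\geq v_0>0$ by Lemma~\ref{L4.2}, so the pointed Gromov--Hausdorff limit is noncollapsed, and Colding's volume convergence theorem at the fixed center $x$ gives $\Vol_{\omega_l}(B_{\omega_l}(x,R))\to\mathcal H^{2n}(B_{d_1}(x_\infty,R))>0$ regardless of whether $x_\infty$ lies in $\mathcal R$ or $\mathcal S$; for $l$ bounded away from $1$ the family is a smooth compact family of complete cusp metrics and the lower volume bound is immediate. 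With this repair the proposal is sound and is, modulo presentation, the proof one expects behind the citation.
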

Fix $0<r<R$ such that $B_{\omega_1}(x,r)\subset B_{\omega_1}(x,2r)\subset B_{\omega_1}(x,R)$.
\begin{lem}\label{So2}
If $f\in C_0^1(B_{\omega_1}(x,r)\cap \mathcal{R})$, then there exists a constant $C$ depending on $R$, $R^{-1}$ and $(R-r)^{-1}$ such that
$$
C\bigg(\int_{B_{\omega_1}(x,r)\cap \mathcal{R}}|f|^{\frac{2n}{n-1}}\omega_1^n\bigg)^{\frac{n-1}{n}}\leq \int_{B_{\omega_1}(x,r)\cap \mathcal{R}}(|f|^2+|\nabla f|^2)\omega^n_1.
$$
\end{lem}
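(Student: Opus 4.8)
The plan is to derive the Sobolev inequality on $(\mathcal{R},\omega_1)$ from the uniform Sobolev inequality on the approximating manifolds $(M,\omega_l)$ provided by Lemma \ref{So}, using that $\omega_{l_i}\to\omega_1$ in the $C^{1,\alpha}$ topology on $\mathcal{R}$ (Lemma \ref{l4.5}). The elementary but crucial point is that, since $f\in C^1_0(B_{\omega_1}(x,r)\cap\mathcal{R})$, the support $K$ of $f$ is a \emph{compact} subset of the open set $B_{\omega_1}(x,r)\cap\mathcal{R}$; hence $K$ is bounded away both from the singular set $\mathcal{S}$ and from the boundary of $B_{\omega_1}(x,r)$, so there exist $\delta>0$ and a relatively compact open neighbourhood $V$ of $K$ with $\overline{V}\subset B_{\omega_1}(x,r-\delta)\cap\mathcal{R}$.

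First I would transplant $f$ to $(M,\omega_{l_i})$. For each $i$ choose $x_i\in M$ with $x_i\xrightarrow{d_{GH}}x$. By the $C^{1,\alpha}$ convergence on $\mathcal{R}$ there exist, for all large $i$, diffeomorphisms $F_i$ of $V$ onto open subsets of $M$ with $F_i^*\omega_{l_i}\to\omega_1$ in $C^{1,\alpha}(V)$ and with $|d_{\omega_{l_i}}(x_i,F_i(y))-d_1(x,y)|\to0$ uniformly for $y\in\overline{V}$; in particular $F_i(V)\subset B_{\omega_{l_i}}(x_i,r)$ once $i$ is large. Put $f_i:=f\circ F_i^{-1}$ on $F_i(V)$ and extend by $0$; since $K$ is compact in $V$, $f_i\in C^1_0(B_{\omega_{l_i}}(x_i,r))$, and trivially $B_{\omega_{l_i}}(x_i,r)\subset B_{\omega_{l_i}}(x_i,2r)\subset B_{\omega_{l_i}}(x_i,R)$. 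Applying Lemma \ref{So} to $f_i$ yields
\[
C_S\Bigl(\int_{F_i(V)}|f_i|^{\frac{2n}{n-1}}\,\omega_{l_i}^n\Bigr)^{\frac{n-1}{n}}\le\int_{F_i(V)}\bigl(|f_i|^2+|\nabla_{\omega_{l_i}}f_i|^2\bigr)\,\omega_{l_i}^n ,
\]
with $C_S$ depending only on $R$, $R^{-1}$ and $(R-r)^{-1}$, and in particular independent of $i$.

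Finally I would let $i\to\infty$. Pulling the two sides back by $F_i$ turns them into integrals over the fixed set $V$ of $|f|^{2n/(n-1)}$, $|f|^2$ and $|\nabla f|^2$ against the metric $g_i:=F_i^*\omega_{l_i}$, the gradient being measured with respect to $g_i$. Because $f$ is fixed with support in $V$ and $g_i\to\omega_1$ in $C^{1,\alpha}(V)$, all three integrands converge uniformly on $V$, so each integral converges to the corresponding integral against $\omega_1$; the stated inequality follows with $C=C_S$. The only step requiring real care is the transplantation: one must ensure that $\Supp f_i$ lands inside $B_{\omega_{l_i}}(x_i,r)$, so that Lemma \ref{So} is applicable, and that $\int|\nabla_{\omega_{l_i}}f_i|^2\omega_{l_i}^n$ converges, which is exactly where the $C^1$-closeness of the metrics (and not just $C^0$) is used; everything else is bookkeeping. (Alternatively, one may work directly with the locally smooth convergence $\omega_l\to\omega_1$ on $M_{reg}=\mathcal{R}$ obtained in the proof of Theorem \ref{t2}, in which case $F_i=\mathrm{id}$ and no harmonic-coordinate identification is needed.)
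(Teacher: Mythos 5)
Your proof is correct and follows essentially the same strategy as the paper's: pass to the limit from the uniform Sobolev inequality of Lemma \ref{So} on $(M,\omega_{l_i})$, using that $\omega_{l_i}\to\omega_1$ locally on $\mathcal{R}=M_{reg}$ so that the integrals over the fixed compact support of $f$ converge. You additionally make the correct observation that, because $f\in C_0^1(B_{\omega_1}(x,r)\cap\mathcal{R})$ already has compact support inside the regular set, the extra cut-off $\rho_\epsilon$ from Lemma \ref{cf} used in the paper's proof is redundant here, and the transplantation via diffeomorphisms $F_i$ can be collapsed to $F_i=\mathrm{id}$ as you note at the end.
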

\begin{proof}
Let $f_\epsilon=\rho_\epsilon f$, where $\rho_\epsilon$ is constructed as Lemma \ref{cf} and $\Omega_\epsilon=\Supp f_\epsilon$. Then $\omega_l$ uniformly converge to $\omega_1$ on $\Omega_\epsilon$ as $l$ tends to $1$ for a fixed $\epsilon$. Therefore $\Omega_\epsilon\subset B_{\omega_l}(x,r)$ for $l$ sufficiently close to $1$. By Lemma \ref{So}, we have
$$
C_S\bigg(\int_{B_{\omega_l}(x,r)}|f_\epsilon|^{\frac{2n}{n-1}}\omega_l^n\bigg)^{\frac{n-1}{n}}\leq \int_{B_{\omega_l}(x,r)}(|f_\epsilon|^2+|\nabla f_\epsilon|^2)\omega^n_l.
$$
Let $l\rightarrow 1$, the above inequality gives
$$
C_S\bigg(\int_{B_{\omega_1}(x,r)}|f_\epsilon|^{\frac{2n}{n-1}}\omega_1^n\bigg)^{\frac{n-1}{n}}\leq \int_{B_{\omega_1}(x,r)}(|f_\epsilon|^2+|\nabla f_\epsilon|^2)\omega^n_1.
$$
Note that by letting $\epsilon\rightarrow 0$, we get
$$
\int_{B_{\omega_1}(x,r)}|f_\epsilon|^{\frac{2n}{n-1}}\omega_1^n\rightarrow \int_{B_{\omega_1}(x,r)}|f|^{\frac{2n}{n-1}}\omega_1^n
$$
and
$$
\int_{B_{\omega_1}(x,r)}|f_\epsilon|^{2}\omega_1^n\rightarrow \int_{B_{\omega_1}(x,r)}|f|^{2}\omega_1^n.
$$
By some calculations we have
$$
\bigg|\int_{B_{\omega_1}(x,r)}|\nabla f_\epsilon|^2\omega^n_1-\int_{B_{\omega_1}(x,r)}|\nabla f|^2\omega^n_1\bigg|=\bigg|\int_{B_{\omega_1}(x,r)}\big(|\nabla\rho_\epsilon|^2|f|^2+(|\rho_\epsilon|^2|\nabla f|^2-|\nabla f|^2)\big)\omega_1^n\bigg|
$$
which tends to $0$. So this Lemma is proved.
\end{proof}
\begin{lem}\label{Com}
There exists a constant $C$ independent of $k$ such that if $s\in H^0(\mathcal{R},k(K_{\overline{M}}+D))$, then
$$
\int_{B_{\omega_1}(x,\frac{7}{4}r)\cap \mathcal{R}}|\nabla s|^2\omega_1^n\leq Ckr^{-2}\int_{B_{\omega_1}(x,2r)\cap \mathcal{R}}|s|^2\omega_1^n
$$
and
$$
\int_{B_{\omega_1}(x,\frac{7}{4}r)\cap \mathcal{R}}(|\bar{\nabla}\nabla s|^2+|\nabla\nabla s|^2)\omega_1^n\leq Ck^2r^{-4}\int_{B_{\omega_1}(x,2r)\cap \mathcal{R}}|s|^2\omega_1^n.
$$
\end{lem}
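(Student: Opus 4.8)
The plan is to prove these as reverse Poincaré (Caccioppoli-type) inequalities, testing the two Bochner formulas of Lemma~\ref{Bo1} against a squared cut-off and integrating by parts: the first inequality bounds $|\nabla s|^2$ by $|s|^2$, and the second bounds the second covariant derivatives by $|\nabla s|^2$, hence — after substituting the first — by $|s|^2$. Since $k$ enters the Bochner identities only as a coefficient (linearly in the first identity and to first order in the curvature terms of the second), the powers of $k$ can be tracked by hand and the resulting constant is $k$-independent. Throughout we may assume $\int_{B_{\omega_1}(x,2r)\cap\mathcal{R}}|s|^2\omega_1^n<\infty$, otherwise there is nothing to prove, and we fix an intermediate radius $\tfrac{15}{8}r$, so that $\tfrac74 r<\tfrac{15}{8}r<2r$. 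All test functions below have support compactly contained in $\mathcal{R}$, so every integral that occurs is finite and absorption is legitimate.

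\emph{First inequality.} Choose $\phi=\phi(d_{\omega_1}(x,\cdot))$ smooth with $\phi\equiv1$ on $B_{\omega_1}(x,\tfrac{15}{8}r)$, $\Supp\phi\subset B_{\omega_1}(x,2r)$ and $|\nabla\phi|\le Cr^{-1}$, and let $\rho_\epsilon$ be the cut-off functions of Lemma~\ref{cf}. Note $\Supp(\rho_\epsilon\phi)\Subset\mathcal{R}$: the factor $\rho_\epsilon$ kills a neighborhood of the singular set $\mathcal{S}$, and since $D$ lies at infinite $\omega_1$-distance, $B_{\omega_1}(x,2r)$ has compact closure in $\overline{M}\setminus D$, so the cusp is never seen. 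Multiplying $\triangle_{\omega_1}|s|^2=|\nabla s|^2-kn|s|^2$ by $\rho_\epsilon^2\phi^2$ and integrating by parts over $\mathcal{R}$ gives
$$\int_{\mathcal{R}}\rho_\epsilon^2\phi^2\big(|\nabla s|^2-kn|s|^2\big)\omega_1^n=-\int_{\mathcal{R}}\langle\nabla|s|^2,\nabla(\rho_\epsilon^2\phi^2)\rangle\,\omega_1^n.$$
Expanding $\nabla(\rho_\epsilon^2\phi^2)$ and applying Cauchy--Schwarz one absorbs $\tfrac12\rho_\epsilon^2\phi^2|\nabla s|^2$ into the left-hand side; the remaining terms are bounded by $C\int(|\nabla\phi|^2+|\nabla\rho_\epsilon|^2)|s|^2\omega_1^n$, and the $|\nabla\rho_\epsilon|^2$ term tends to $0$ as $\epsilon\to0$ by Lemma~\ref{cf} together with the local boundedness of $|s|$ near $\mathcal{S}$. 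Letting $\epsilon\to0$ (monotone convergence on the left), and using $k\ge1$ and $r<R$, we obtain $\int_{B_{\omega_1}(x,\frac{15}{8}r)\cap\mathcal{R}}|\nabla s|^2\omega_1^n\le Ckr^{-2}\int_{B_{\omega_1}(x,2r)\cap\mathcal{R}}|s|^2\omega_1^n$, which in particular gives the first assertion.

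\emph{Second inequality.} Now take $\psi\equiv1$ on $B_{\omega_1}(x,\tfrac74 r)$, $\Supp\psi\subset B_{\omega_1}(x,\tfrac{15}{8}r)$, $|\nabla\psi|\le Cr^{-1}$, multiply the second Bochner inequality of Lemma~\ref{Bo1} by $\rho_\epsilon^2\psi^2$ and integrate by parts. The term $k\,\nabla_j(\omega_1)_{i\bar j}\langle s,\nabla_{\bar{i}}\bar{s}\rangle$ vanishes, since $\omega_1$ is Kähler and its metric is parallel (and in any case it is $\le Ck|s|\,|\nabla s|$ and absorbed as below). Using $|\nabla|\nabla s|^2|\le 2|\nabla s|\big(|\nabla\nabla s|+|\overline{\nabla}\nabla s|\big)$ and Cauchy--Schwarz, one absorbs $\tfrac12\rho_\epsilon^2\psi^2(|\nabla\nabla s|^2+|\overline{\nabla}\nabla s|^2)$ into the left-hand side and, after $\epsilon\to0$, is left with
$$\int_{B_{\omega_1}(x,\frac74 r)\cap\mathcal{R}}\big(|\overline{\nabla}\nabla s|^2+|\nabla\nabla s|^2\big)\omega_1^n\le C\big(k+r^{-2}\big)\int_{B_{\omega_1}(x,\frac{15}{8}r)\cap\mathcal{R}}|\nabla s|^2\omega_1^n.$$
Substituting the first inequality (applied with radii $\tfrac{15}{8}r$ and $2r$) and again using $k\ge1$, $r<R$, the right-hand side is $\le Ck^2r^{-4}\int_{B_{\omega_1}(x,2r)\cap\mathcal{R}}|s|^2\omega_1^n$, as claimed.

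\emph{Main obstacle.} The difficulty is analytic, not computational: $\mathcal{R}$ is incomplete, so one must ensure the integrations by parts produce no boundary terms and that the cut-off error terms genuinely vanish. Near $\mathcal{S}$ (real codimension $\ge2$, at finite distance) this is precisely the role of the exhausting cut-offs $\rho_\epsilon$ of Lemma~\ref{cf}; near $D$ no cut-off is needed because $D$ lies at infinite $\omega_1$-distance, so finite metric balls stay compactly inside $\overline{M}\setminus D$. The one genuinely delicate point is controlling $\int|\nabla\rho_\epsilon|^2|s|^2\omega_1^n$ and $\int|\nabla\rho_\epsilon|^2|\nabla s|^2\omega_1^n$ as $\epsilon\to0$, for which one uses that $|s|$ and $|\nabla s|$ are locally bounded on $\mathcal{R}$ up to $\mathcal{S}$; granting this, the rest is the routine Bochner-plus-cut-off bookkeeping above, with the $k$-dependence of the constant being harmless because $k$ appears only polynomially (of degree $\le2$) and no iteration is performed.
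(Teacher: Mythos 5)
Your proposal follows essentially the same strategy as the paper: test the two Bochner identities of Lemma~\ref{Bo1} against a squared cut-off, integrate by parts over $\mathcal{R}$, and absorb the top-order terms by Cauchy--Schwarz, feeding the first inequality into the second. The one place you diverge is in handling the curvature-derivative term $k\nabla_j(\omega_1)_{i\bar{j}}\langle s,\nabla_{\bar{i}}\bar{s}\rangle$: you observe that it vanishes identically because $\Theta_h=k\omega_1$ is proportional to the K\"ahler form, which is parallel under its own Levi-Civita connection, while the paper keeps the term and eliminates it by a further integration by parts (moving the derivative off $(\omega_1)_{i\bar{j}}$ onto the section and cut-off). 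Both are valid; your observation is the cleaner one, and it is also consistent with the paper having already dropped the companion term $\nabla_{\bar{j}}(\mathrm{tr}_{\omega_1}\Theta_h)$ on the same grounds. Do, however, delete the parenthetical fallback that the term ``is $\le Ck|s|\,|\nabla s|$'': that would require an unavailable pointwise bound on $\nabla_j(\omega_1)_{i\bar{j}}$, and it is unnecessary given that the term is exactly zero.

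One caution on the analytic point you flag. You reduce the vanishing of the error $\int |\nabla\rho_\epsilon|^2|s|^2\omega_1^n$ (and the analogous term with $|\nabla s|^2$) to local boundedness of $|s|$ and $|\nabla s|$ near $\mathcal{S}$. But in this paper local $L^\infty$ control of $s$ is precisely the content of Proposition~\ref{L0a}, which is proved \emph{after} and \emph{using} Lemma~\ref{Com}, so invoking it here is circular as written. The paper's formulation (a single $\vartheta\in C_0^\infty(B\cap\mathcal{R})$ with $\vartheta=1$ on the inner ball) glosses over the same issue since such a $\vartheta$ cannot simultaneously have compact support in $\mathcal{R}$ and equal $1$ up to $\mathcal{S}$. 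To make your version airtight without circularity you should either (i) argue local boundedness of holomorphic $L^2$ sections across $\mathcal{S}$ independently, e.g.\ via the plurisubharmonicity of $\log|s|_h^2$ and the $C^0$ bound on the local potential of $\omega_1$, or (ii) note, as in Song's original treatment that this lemma is modelled on, that the cut-offs $\rho_\epsilon$ can be constructed so that the weighted error itself tends to zero for $L^2$ data. Either repair works; as it stands the ``delicate point'' paragraph asserts rather than proves the needed fact.
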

\begin{proof}
Let $\vartheta\in C_0^\infty(B_{\omega_1}(x,\frac{15}{8}r)\cap \mathcal{R})$ be any cut-off function such that $0\leq \vartheta\leq 1$, $|\nabla\vartheta|\leq 10r^{-2}$ and $\vartheta=1$ on $B_{\omega_1}(x,\frac{7}{4}r)\cap \mathcal{R}$, then by Bochner formula we have
$$
\int_{\mathcal{R}}\vartheta^2|\nabla s|^2\omega_1^n=nk\int_{\mathcal{R}}\vartheta^2|s|^2\omega_1^n+\int_{\mathcal{R}}\vartheta^2\triangle|s|^2\omega_1^n.
$$
Note that
$$
\int_{\mathcal{R}}\vartheta^2\triangle|s|^2\omega_1^n=-2\int_{\mathcal{R}}\vartheta\nabla_{\bar{i}}\vartheta\langle\nabla_is, \bar{s}\rangle\omega_1^n\leq \frac{1}{2}\int_{\mathcal{R}}\vartheta^2|\nabla s|^2\omega_1^n+2\int_{\mathcal{R}}|\nabla \vartheta|^2|s|^2\omega_1^n.
$$
Therefore,
$$
\int_{B_{\omega_1}(x,\frac{7}{4}r)\cap \mathcal{R}}|\nabla s|^2\omega_1^n\leq Ckr^{-2}\int_{B_{\omega_1}(x,2r)\cap \mathcal{R}}|s|^2\omega_1^n.
$$
For the second inequality, also by the Bochner formula
$$
\int_{\mathcal{R}}\vartheta^2(|\bar{\nabla}\nabla s|^2+|\nabla\nabla s|^2)\omega_1^n\leq \int_{\mathcal{R}}\vartheta^2(\triangle|\nabla s|^2+Ck|\nabla s|^2+k\nabla_j(\omega_1)_{i\bar{j}}\langle s, \nabla_{\bar{i}}\bar{s}\rangle)\omega_1^n.
$$
Note that
$$
\int_{\mathcal{R}}\vartheta^2\triangle|\nabla s|^2\omega_1^n=-2\int_{\mathcal{R}}\vartheta\nabla_i\vartheta\nabla_{\bar{i}}|\nabla s|^2\omega_1^n\leq\frac{1}{4}\int_{\mathcal{R}}\vartheta^2(|\bar{\nabla}\nabla s|^2+|\nabla\nabla s|^2)\omega_1^n+C\int_{\mathcal{R}}|\nabla\vartheta|^2+|\nabla s|^2\omega_1^n
$$
and
\begin{align*}
\int_{\mathcal{R}}k\vartheta^2\nabla_j(\omega_1)_{i\bar{j}}\langle s, \nabla_{\bar{i}}\bar{s}\rangle\omega_1^n & =-k\int_{\mathcal{R}}\vartheta^2(\omega_1)_{i\bar{j}}(\langle\nabla_js,\nabla_{\bar{i}}\bar{s}\rangle+\langle s,\nabla_{\bar{j}}\nabla_{\bar{i}}\bar{s}\rangle)\omega_1^n-2k\int_{\mathcal{R}}\vartheta\nabla_j\vartheta(\omega_1)_{i\bar{j}}\langle s, \nabla_{\bar{i}}\bar{s}\rangle\omega_1^n\\
& \leq \frac{1}{4}\int_{\mathcal{R}}\vartheta^2(|\bar{\nabla}\nabla s|^2+|\nabla\nabla s|^2)\omega_1^n+Ck\int_{\mathcal{R}}\vartheta^2|s|^2\omega^n_1+Ck\int_{\mathcal{R}}\vartheta^2|\nabla s|^2\omega^n_1\\
&\qquad +Ck\int_{\mathcal{R}}|\nabla\vartheta|^2|\nabla s|^2\omega^n_1
\end{align*}
Summing up these estimates we have
$$
\int_{B_{\omega_1}(x,\frac{7}{4}r)\cap \mathcal{R}}(|\bar{\nabla}\nabla s|^2+|\nabla\nabla s|^2)\omega_1^n\leq Ckr^{-2}\int_{B_{\omega_1}(x,2r)\cap \mathcal{R}}|\nabla s|^2\omega_1^n+Ck\int_{B_{\omega_1}(x,2r)\cap \mathcal{R}}|s|^2\omega_1^n.
$$
Applying the first inequality we obtain the second estimate.
\end{proof}
\begin{prop}\label{L0a}
There exists a constant $C(R,r)$ independent of $k$ such that if $s\in H^0(\mathcal{R},k(K_{\overline{M}}+D))$, then
$$
\sup_{B_{\omega_1}(x,r)\cap \mathcal{R}}|s|^2\leq C(R,r)k^nr^{-2n}\int_{B_{\omega_1}(x,2r)\cap \mathcal{R}}|s|^2\omega_1^n
$$
and
$$
\sup_{B_{\omega_1}(x,r)\cap \mathcal{R}}|\nabla s|^2\leq  C(R,r)k^{n+1}r^{-2n-2}\int_{B_{\omega_1}(x,2r)\cap \mathcal{R}}|s|^2\omega_1^n.
$$
\end{prop}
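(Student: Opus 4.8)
The plan is to prove both inequalities by Moser iteration. The Bochner formulas of Lemma~\ref{Bo1} will be used to turn $|s|^{2}$ and $|\nabla s|^{2}$ into nonnegative subsolutions of Schr\"odinger-type inequalities whose zeroth-order coefficient has size $O(k)$, and the iteration will be run on the incomplete set $\mathcal{R}$ by combining the Sobolev inequality of Lemma~\ref{So2} with the cut-off functions of Lemma~\ref{cf} to legitimize every integration by parts.

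First I would treat $u:=|s|^{2}$. By the first identity in Lemma~\ref{Bo1} one has $\triangle_{\omega_1}u=|\nabla s|^{2}-nk\,u\ge-nk\,u$ on $\mathcal{R}$, so $u\ge0$ satisfies $\triangle_{\omega_1}u+nk\,u\ge0$. Fix $R$ with $B_{\omega_1}(x,2r)\subset B_{\omega_1}(x,R)$, a sequence $r_j=(1+2^{-j})r\downarrow r$, and cut-offs $\vartheta_j\in C_0^\infty\big(B_{\omega_1}(x,r_j)\cap\mathcal{R}\big)$ with $\vartheta_j\equiv1$ on $B_{\omega_1}(x,r_{j+1})\cap\mathcal{R}$ and $|\nabla\vartheta_j|\le C\,2^{j}r^{-1}$. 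Testing the inequality against $\vartheta_j^{2}u^{q-1}$ and integrating by parts — inserting the cut-offs $\rho_\epsilon$ of Lemma~\ref{cf} before each integration by parts and letting $\epsilon\to0$, so that the extra terms disappear because $\int_{\mathcal{R}}|\nabla\rho_\epsilon|^{2}\omega_1^{n}\to0$, exactly as in the proof of Lemma~\ref{So2} — gives the energy inequality
$$
\int_{\mathcal{R}}\vartheta_j^{2}u^{q-2}|\nabla u|^{2}\,\omega_1^{n}\le C\,q\Big(\int_{\mathcal{R}}|\nabla\vartheta_j|^{2}u^{q}\,\omega_1^{n}+nk\int_{\mathcal{R}}\vartheta_j^{2}u^{q}\,\omega_1^{n}\Big),\qquad q\ge1.
$$
Feeding this into Lemma~\ref{So2} applied to $\vartheta_j u^{q/2}$ produces, with $\chi:=\tfrac{n}{n-1}$ and $q=\chi^{j}$, the reverse-H\"older step
$$
\Big(\int_{B_{\omega_1}(x,r_{j+1})\cap\mathcal{R}}u^{q\chi}\,\omega_1^{n}\Big)^{1/\chi}\le C(R,r)\,q^{3}\,(1+nk)\,2^{2j}\,r^{-2}\int_{B_{\omega_1}(x,r_j)\cap\mathcal{R}}u^{q}\,\omega_1^{n}.
$$
Iterating over $j\ge0$ in the standard way (so that one reaches the supremum on the left and the $L^{1}$ norm of $u$ on the right), the accumulated constant behaves like $\prod_{j\ge0}\big(C(R,r)\,q^{3}2^{2j}r^{-2}(1+nk)\big)^{\chi^{-j}}$, whose $k$-dependence is $(1+nk)^{\sum_{j\ge0}\chi^{-j}}=(1+nk)^{n}\sim C\,k^{n}$ and whose $r$-dependence assembles into $r^{-2n}$, the remaining factors summing to a finite constant depending only on $R$ and $r$. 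This gives the first estimate.

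For the gradient bound I would apply the same scheme to $v:=|\nabla s|^{2}\ge0$. By the second inequality in Lemma~\ref{Bo1}, discarding the nonnegative terms $|\overline{\nabla}\nabla s|^{2}+|\nabla\nabla s|^{2}$ and noting that the term $-k\nabla_j(\omega_1)_{i\bar j}\langle s,\nabla_{\bar i}\bar s\rangle$ vanishes (on $\mathcal{R}$ the K\"ahler form $\omega_1$ is parallel for its Levi--Civita connection; in any case it is removed by the integration by parts carried out in the proof of Lemma~\ref{Com}), one gets $\triangle_{\omega_1}v\ge-Ck\,v$, so $v$ is again a nonnegative subsolution of $\triangle_{\omega_1}v+Ck\,v\ge0$. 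Running the iteration between $B_{\omega_1}(x,\tfrac{7}{4}r)$ and $B_{\omega_1}(x,r)$ gives
$$
\sup_{B_{\omega_1}(x,r)\cap\mathcal{R}}|\nabla s|^{2}\le C(R,r)\,k^{n}\,r^{-2n}\int_{B_{\omega_1}(x,\frac{7}{4}r)\cap\mathcal{R}}|\nabla s|^{2}\,\omega_1^{n},
$$
and then the first inequality of Lemma~\ref{Com} bounds the right-hand integral by $C\,k\,r^{-2}\int_{B_{\omega_1}(x,2r)\cap\mathcal{R}}|s|^{2}\omega_1^{n}$, which supplies the extra factor $k\,r^{-2}$ and yields the second estimate.

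The main obstacle is not the Moser scheme itself but making it rigorous on the singular, incomplete set $\mathcal{R}$: every integration by parts must be carried out without boundary contributions along $\mathcal{S}$, which is precisely the role of the cut-offs $\rho_\epsilon$ of Lemma~\ref{cf} (with $\int_{\mathcal{R}}|\nabla\rho_\epsilon|^{2}\omega_1^{n}\to0$) and the limiting argument of Lemma~\ref{So2}; and the constants must be tracked carefully enough through the infinitely many iteration steps to extract the sharp powers $k^{n}$ and $k^{n+1}$ rather than a bound that grows exponentially in $k$.
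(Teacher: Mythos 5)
Your overall strategy coincides with the paper's: Moser iteration driven by the Bochner formulas of Lemma~\ref{Bo1}, the Sobolev inequality of Lemma~\ref{So2}, the cut-offs of Lemma~\ref{cf}, and Lemma~\ref{Com} to control the gradient integral. Two points in your write-up deserve comment.

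First, there is a genuine gap in the claim that testing $\triangle_{\omega_1}u+nk\,u\ge 0$ against $\vartheta^2 u^{q-1}$ gives an energy inequality with a factor of order $Cq$ that is valid for all $q\ge 1$. The integration by parts produces a prefactor of the form $\frac{q}{q-1}$ (in the paper's notation $\frac{p^2}{4(p-1)}$), which blows up as $q\downarrow 1$, so the step from $q=1$ to $q=\chi$ is not justified from the bare subsolution inequality alone. There are two standard ways to fix this, and you should commit to one. Either (a) keep the full Bochner identity $\triangle_{\omega_1}|s|^2=|\nabla s|^2-nk|s|^2$, use the Kato inequality $|\nabla|s|^2|^2\le 4|s|^2|\nabla s|^2$, and absorb the cross term into the retained nonnegative term $\int\vartheta^2|s|^{2(q-1)}|\nabla s|^2$: this yields the energy inequality with a constant that is uniform down to $q=1$, which is exactly the cancellation the paper exploits for $p\ge\frac{n}{n-1}$; or (b) do what the paper does, namely start the iteration at $p_0=\frac{n}{n-1}$ (so $\sum_j p_j^{-1}=n-1$), obtain $\sup|s|^2\le Ck^{n-1}r^{-2(n-1)}\big(\int|s|^{\frac{2n}{n-1}}\big)^{\frac{n-1}{n}}$, and then invoke Lemma~\ref{So2} together with the first inequality of Lemma~\ref{Com} to convert the $L^{\frac{2n}{n-1}}$ norm into $Ckr^{-2}\int|s|^2$. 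Both routes give $k^nr^{-2n}$; route (b) is the paper's, and since you already rely on Lemma~\ref{Com} for the gradient bound you lose nothing by following it.

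Second, you assert that the cross term $-k\nabla_j(\omega_1)_{i\bar j}\langle s,\nabla_{\bar i}\bar s\rangle$ in the Bochner inequality for $|\nabla s|^2$ vanishes because $\omega_1$ is parallel. This observation is correct ($\omega_1$ is smooth and K\"ahler on $\mathcal{R}$, so $\nabla\omega_1=0$ for its Levi--Civita/Chern connection, and in fact $\nabla_j(\Theta_h)_{i\bar j}=\nabla_i(\mathrm{tr}_\omega\Theta_h)=\nabla_i(kn)=0$ by the second Bianchi identity as well), but it is \emph{not} what the paper does. The paper keeps this term and handles it by integration by parts, which is exactly what forces the three-case analysis (Cases 1--3) in the gradient iteration, because the term reintroduces $\int|s|^{2p}$ into the iteration for $\int|\nabla s|^{2p}$. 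Your simplification legitimately removes that case analysis: after discarding the nonnegative Hessian terms and noting the cross term is zero, $v=|\nabla s|^2$ is a clean subsolution of $\triangle_{\omega_1}v+Ckv\ge 0$, and a single Moser iteration (subject again to the caveat in the previous paragraph about the starting exponent), combined with the first inequality of Lemma~\ref{Com}, gives the $k^{n+1}r^{-2n-2}$ bound directly. You should however be aware that this is a genuine departure from the paper's proof, and your parenthetical remark that the term "is removed by the integration by parts carried out in the proof of Lemma~\ref{Com}" mischaracterizes the paper: there the term is not removed but carefully estimated, contributing to the final constants and to the case split.

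In short, the scheme is right, but your energy inequality for $q$ near $1$ needs the full Bochner plus Kato argument (or a start at $p_0=\frac{n}{n-1}$ with Lemma~\ref{Com}), and your treatment of the cross term, while correct, is a simplification the paper deliberately avoids and should be stated and justified as such rather than attributed to the paper's computation.
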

\begin{proof}
Choose a cut-off function $\vartheta\in C_0^\infty(B_{\omega_1}(x,2r)\cap \mathcal{R})$. Then for any $p\geq\frac{n}{n-1}$, by Lemma \ref{Bo1}, we have
\begin{align*}
\int_{\mathcal{R}}\vartheta^2|\nabla|s|^p|^2\omega_{1}^{n}&=\frac{p^2}{4(p-1)}\int_{\mathcal{R}}\vartheta^2\nabla_i|s|^{2(p-1)}\nabla_{\bar{i}}|s|^2\omega_{1}^{n}\\
&= \frac{p^2}{4(p-1)}\int_{\mathcal{R}}(-\vartheta^2|s|^{2(p-1)}\triangle_{\omega_1}|s|^2-2\vartheta \cdot \nabla_i\vartheta \cdot|s|^{2(p-1)} \cdot\nabla_{\bar{i}}|s|^2)\omega_{1}^{n}\\
&\leq \frac{p^2}{4(p-1)}\int_{\mathcal{R}} -\vartheta^2|s|^{2(p-1)}|\nabla s|^2\omega_{1}^{n}+nk\frac{p^2}{4(p-1)}\int_{\mathcal{R}} \vartheta^2|s|^{2p}\omega_{1}^{n} \\
& \qquad +\frac{p^2}{4(p-1)}\int_{\mathcal{R}} \vartheta\cdot|\nabla\vartheta|\cdot|s|^{2(p-1)}\cdot|\nabla s|\cdot|s|\omega_{1}^{n}.
\end{align*}
By Cauchy-Schwarz inequality,
$$
\int_{\mathcal{R}} \vartheta\cdot|\nabla\vartheta|\cdot|s|^{2(p-1)}\cdot|\nabla s|\cdot|s|\omega_{1}^{n}\leq \int_{\mathcal{R}}\vartheta^2|s|^{2(p-1)}|\nabla s|^2\omega_1^n+\frac{1}{4}\int_{\mathcal{R}}|\nabla\vartheta|^2|s|^{2p}\omega_1^n.
$$
Therefore
$$
\int_{\mathcal{R}}\vartheta^2|\nabla|s|^p|^2\omega_{1}^{n}\leq Cpk\int_{\mathcal{R}}(\vartheta^2+|\nabla \vartheta|^2)|s|^{2p}\omega_{1}^{n}.
$$
By Lemma \ref{So2}
\begin{equation}\label{e4.18}
\bigg(\int_{\mathcal{R}} (\vartheta|s|^p)^{\frac{2n}{n-1}}\omega_{1}^{n}\bigg)^{\frac{n-1}{n}}\leq Cpk\int_{\mathcal{R}}(\vartheta^2+|\nabla\vartheta|^2)|s|^{2p}\omega_{1}^{n}.
\end{equation}\label{e5.1}
Put $p_j=\nu^{j+1}$ for $j\geq 0$, where $\nu=\frac{n}{n-1}$. Define a family of radius inductively by $r_0=\frac{3}{2}r$ and $r_j=r_{j-1}-2^{-j-1}r$. $B_j$ is denoted by $B_{\omega_1}(x,r_j)\cap \mathcal{R}$. We choose a family of cut-off functions $\vartheta_j\in C^\infty_0(B_j)$ such that
$$
0\leq \vartheta_j\leq 1,\ |\nabla\vartheta_j|\leq 2^{j+2}r^{-1} \ and \ \vartheta_j=1 \ on \ B_{j+1}.
$$
Thus (\ref{e5.1}) gives, by setting $\vartheta=\vartheta_j$
$$
\bigg(\int_{B_{j+1}}|s|^{2p_{j+1}}\omega_{1}^n\bigg)^{\frac{1}{p_{j+1}}}\leq (Cp_jk)^{\frac{1}{p_j}}4^{\frac{j}{p_j}}r^{-\frac{2}{p_j}}\bigg(\int_{B_j}|s|^{2p_j}\omega_{1}^n\bigg)^{\frac{1}{p_j}}.
$$
By the iteration argument, we see
$$
\sup_{B_{\omega_{1}}(x,r)\cap \mathcal{R}}|s|^2\leq Ck^{n-1}r^{-2(n-1)}\bigg(\int_{B_0}|s|^{\frac{2n}{n-1}}\omega_1^n\bigg)^{\frac{n}{n-1}}\leq Ck^{n-1}r^{-2(n-1)}\int_{B_0}\Big(|s|^2+|\nabla s|^2\Big)\omega_1^n.
$$
According to Lemma \ref{Com}, we get the first estimate.

Next we prove the second inequality. Let $\vartheta$ and $p$ as above. By Lemma \ref{Bo1}, we have
\begin{align*}
\int_{\mathcal{R}}\vartheta^2|\nabla|\nabla S|^p|^2\omega_1^n &=\frac{p^2}{4(p-1)}\int_{\mathcal{R}}\vartheta^2\cdot \nabla_i|\nabla s|^{2(p-1)}\cdot \nabla_{\bar{i}}|\nabla s|^2\omega_1^n\\
&= \frac{p^2}{4(p-1)}\int_{\mathcal{R}}(-\vartheta^2|\nabla s|^{2(p-1)}\triangle|\nabla s|^2-2\vartheta\cdot \nabla_i\vartheta\cdot|\nabla s|^{2(p-1)}\nabla_{\bar{i}}|\nabla s|^2) \omega_1^n \\
&\leq \frac{p^2}{4(p-1)}\int_{\mathcal{R}}\bigg(-\vartheta^2|\nabla s|^{2(p-1)}(|\bar{\nabla}\nabla s|^2+|\nabla\nabla s|^2)+k\nabla_j(\omega_1)_{i\bar{j}}\langle s,\nabla_{\bar{i}}\bar{s}\rangle\cdot\vartheta^2|\nabla s|^{2(p-1)}\\
& \qquad +Ck\vartheta^2|\nabla s|^{2p}-2\vartheta\cdot\nabla_i\vartheta\cdot|\nabla s|^{2(p-1)}\cdot\nabla_{\bar{i}}|\nabla s|^2\bigg)\omega_1^n
\end{align*}
The term $\int_{\mathcal{R}}k\nabla_j(\omega_1)_{i\bar{j}}\langle s,\nabla_{\bar{i}}\bar{s}\rangle\cdot\vartheta^2|\nabla s|^{2(p-1)}\omega_1^n$ can be estimate by integration by parts as follows
\begin{align*}
&\int_{\mathcal{R}}k\nabla_j(\omega_1)_{i\bar{j}}\langle s,\nabla_{\bar{i}}\bar{s}\rangle\cdot\vartheta^2|\nabla s|^{2(p-1)}\omega_1^n\\
=& -k\int_{\mathcal{R}}(\omega_1)_{i\bar{j}}\Big(\vartheta^2|\nabla s|^{2(p-1)}\big(\langle\nabla_j s,\nabla_{\bar{i}}\bar{s}\rangle+\langle s,\nabla_j\nabla_{\bar{i}}\bar{s}\rangle\big)+(p-1)\vartheta^2|\nabla s|^{2(p-2)}\nabla_{\bar{j}}|\nabla s|^2\langle s, \nabla_{\bar{i}}\bar{s}\rangle\\
& + 2\vartheta\nabla_j\vartheta|\nabla s|^{2(p-1)}\langle s, \nabla_{\bar{i}}\bar{s}\rangle\Big)\\
\leq &  \frac{1}{2}\int_{\mathcal{R}}\vartheta^2|\nabla s|^{2(p-1)}(|\bar{\nabla}\nabla s|^2+|\nabla\nabla s|^2)\omega_1^n+C(p-1)^2k^2\int_{\mathcal{R}}\vartheta^2|s|^2|\nabla s|^{2(p-1)}\omega_1^n \\
& + Ck\int_{\mathcal{R}}\Big(|\nabla\vartheta|^2|s|^2|\nabla s|^{2(p-1)}+\vartheta^2|\nabla s|^{2p}\Big)\omega_1^n.
\end{align*}
Note that
$$
-2\int_{\mathcal{R}}\vartheta\cdot\nabla_i\vartheta\cdot|\nabla s|^{2(p-1)}\cdot\nabla_{\bar{i}}|\nabla s|^2\omega_1^n\leq \frac{1}{2}\int_{\mathcal{R}}\vartheta^2|\nabla s|^{2(p-1)}(|\bar{\nabla}\nabla s|^2+|\nabla\nabla s|^2)\omega_1^n+C\int_{\mathcal{R}}|\nabla \vartheta|^2|\nabla s|^{2p}\omega_1^n.
$$
Summing up these estimates we conclude
$$
\int_{\mathcal{R}}\vartheta^2|\nabla|\nabla S|^p|^2\omega_1^n\leq Cp^3k\int_{\mathcal{R}}\Big(k\vartheta^2|\nabla s|^{2(p-1)}|s|^2+|\nabla\vartheta|^2|s|^2|\nabla s|^{2(p-1)}+\vartheta^2|\nabla s|^{2p}+|\nabla s|^{2p}|\nabla\vartheta|^2\Big)\omega_1^n.
$$
Applying the Lemma \ref{So2}

\begin{align*}
\Big(\int_{\mathcal{R}}\Big(\vartheta|\nabla s|^p\Big)^{\frac{2n}{n-1}}\omega_1^n\Big)^{\frac{n-1}{n}} & \leq Cp^3k\int_{\mathcal{R}}\Big(k\vartheta^2|\nabla s|^{2(p-1)}|s|^2+|\nabla\vartheta|^2|s|^2|\nabla s|^{2(p-1)}\\
& +\vartheta^2|\nabla s|^{2p}+|\nabla s|^{2p}|\nabla\vartheta|^2\Big)\omega_1^n.
\end{align*}

Put $p_j=\nu^{j+1}$ for $j\geq 0$, where $\nu=\frac{n}{n-1}$. Define a family of radius inductively by $r_0=\frac{3}{2}r$ and $r_j=r_{j-1}-2^{-j-1}r$. $B_j$ is denoted by $B_{\omega_1}(x,r_j)\cap \mathcal{R}$. We choose a family of cut-off functions $\vartheta_j\in C^\infty_0(B_j)$ such that
$$
0\leq \vartheta_j\leq 1,\ |\nabla\vartheta_j|\leq 2^{j+2}r^{-1} \ and \ \vartheta_j=1 \ on \ B_{j+1}.
$$
By setting $\vartheta=\vartheta_j$, the above inequality gives
\begin{equation}\label{So3}
\Big(\int_{B_{j+1}}|\nabla s|^{2p_{j+1}}\omega_1^n\Big)^{\frac{n}{n-1}}\leq Cp_j^3k4^jr^{-2}\int_{B_j}\Big(|\nabla s|^{2p_j}+k|\nabla s|^{2(p_j-1)}|s|^2\Big)\omega_1^n.
\end{equation}
Case 1: If $\Big(\int_{B_{j}}|\nabla s|^{2p_{j}}\omega_1^n\Big)^{\frac{1}{p_j}}\geq k\Big(\int_{B_{j}}|s|^{2p_{j}}\omega_1^n\Big)^{\frac{1}{p_j}}$ for all $j\geq 0$. Then
$$
\int_{B_j}k|\nabla s|^{2(p_j-1)}|s|^2\omega_1^n\leq k\Big(\int_{B_{j}}|\nabla s|^{2p_{j}}\omega_1^n\Big)^{\frac{p_j-1}{p_j}}\Big(\int_{B_{j}}|s|^{2p_{j}}\omega_1^n\Big)^{\frac{1}{p_j}}\leq \int_{B_{j}}|\nabla s|^{2p_{j}}\omega_1^n.
$$
Then (\ref{So3}) gives
$$
\Big(\int_{B_{j+1}}|\nabla s|^{2p_{j+1}}\omega_1^n\Big)^{\frac{1}{p_{j+1}}}\leq C(kr^{-2})^{\frac{1}{p_j}}4^{\frac{j}{p_j}}p_j^{\frac{3}{p_j}}\Big(\int_{B_{j}}|\nabla s|^{2p_{j}}\omega_1^n\Big)^{\frac{1}{p_j}}
$$
By iteration argument we get
$$
\sup_{B_{\omega_1}(x,r)\cap \mathcal{R}}|\nabla s|^2\leq C(kr^{-2})^{n-1} \Big(\int_{B_0}|\nabla s|^{\frac{2n}{n-1}}\omega_1^n\Big)^{\frac{n-1}{n}}.
$$
By Lemma \ref{So2} and a cut-off argument, we have
$$
\Big(\int_{B_0}|\nabla s|^{\frac{2n}{n-1}}\omega_1^n\Big)^{\frac{n-1}{n}}\leq C\int_{B_{\omega_1}(x,\frac{7}{4}r)\cap \mathcal{R}}\big(|\bar{\nabla}\nabla s|^2+|\nabla\nabla s|^2+r^{-2}|\nabla s|^2\big)\omega_1^n
$$
According to Lemma \ref{Com} we get
$$
\sup_{B_{\omega_1}(x,r)\cap \mathcal{R}}|\nabla s|^2\leq C(kr^{-2})^{n+1}\int_{B_{\omega_1}(x,2r)\cap \mathcal{R}}|s|^2\omega_1^n.
$$
Case2: There exists $j_0$ such that $\Big(\int_{B_{j}}|\nabla s|^{2p_{j}}\omega_1^n\Big)^{\frac{1}{p_j}}\geq k\Big(\int_{B_{j}}|s|^{2p_{j}}\omega_1^n\Big)^{\frac{1}{p_j}}$ for all $j>j_0$, but
$$
\Big(\int_{B_{j_0}}|\nabla s|^{2p_{j_0}}\omega_1^n\Big)^{\frac{1}{p_{j_0}}}< k\Big(\int_{B_{j_0}}|s|^{2p_{j_0}}\omega_1^n\Big)^{\frac{1}{p_{j_0}}}.
$$
Then
$$
\int_{B_{j_0}}k|\nabla s|^{2(p_{j_0}-1)}|s|^2\omega_1^n\leq k\Big(\int_{B_{j_0}}|\nabla s|^{2p_{j_0}}\omega_1^n\Big)^{\frac{p_{j_0}-1}{p_{j_0}}}\Big(\int_{B_{j_0}}|s|^{2p_{j_0}}\omega_1^n\Big)^{\frac{1}{p_{j_0}}}\leq k^{p_{j_0}}\int_{B_{j_0}}|s|^{2p_{j_0}}\omega_1^n.
$$
By the iteration argument and (\ref{So3}), we have
$$
\sup_{B_{\omega_1}(x,r)\cap \mathcal{R}}|\nabla s|^2\leq Ck(kr^{-2})^{\frac{n}{p_{j_0}}}\Big(\int_{B_{j_0}}|s|^{2p_{j_0}}\omega_1^n\Big)^{\frac{1}{p_{j_0}}}.
$$
The supermum of $|\nabla s|$ follows from
$$
\Big(\int_{B_{j_0}}|s|^{2p_{j_0}}\omega_1^n\Big)^{\frac{1}{p_{j_0}}}\leq \Big(\sup_{B_{j_0}}|s|\Big)^{\frac{2p_{j_0}-2}{p_{j_0}}}\Big(\int_{B_{j_0}}|s|^{2}\omega_1^n\Big)^{\frac{1}{p_{j_0}}}\leq C(kr^{-2})^{n-\frac{n}{p_{j_0}}}\int_{B_{j_0}}|s|^{2}\omega_1^n.
$$
Case 3: If $\Big(\int_{B_{j}}|\nabla s|^{2p_{j}}\omega_1^n\Big)^{\frac{1}{p_j}}\leq k\Big(\int_{B_{j}}|s|^{2p_{j}}\omega_1^n\Big)^{\frac{1}{p_j}}$ for infinite $i$, then
$$
\sup_{B_{\omega_1}(x,r)\cap \mathcal{R}}|\nabla s|^2\leq k\sup_{B_{\omega_1}(x,r)\cap \mathcal{R}}|s|\leq Ck^{n+1}r^{-2n}\int_{B_{\omega_1}(x,2r)\cap \mathcal{R}}|s|^2\omega_1^n.
$$
\end{proof}
\subsection{$L^2$ estimate}
In order to construct global holomorphic section on line bundle $k(K_{\overline{M}}+D)$, we need the following version of $L^2$-estimate due to Demailly (Theorem 5.1 \cite{De}).
\begin{thm}
Let $(M,\omega)$ be a $n$-dimensional complete K\"{a}hler manifold and $L$ be a holomorphic line bundle over $M$ equipped with a smooth hermitian metric such that $\Theta_h\geq \delta\omega$. Then for every $L$-value $(n,1)$ form $\tau$ satisfying
$$
\overline{\partial}\tau=0,\ \ \ \int_M|\tau|^2_{h,\omega}\omega^n<\infty,
$$
there exists a $L$-valued $(n,0)$ form $u$ such that $\bar{\partial}u=\tau$ and
$$
\int_M|u|^2_{h,\omega}\omega^n\leq \frac{1}{\delta}\int_M|\tau|^2_{h,\omega}\omega^n.
$$
\end{thm}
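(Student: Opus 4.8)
The statement is the Andreotti--Vesentini--H\"ormander $L^2$-existence theorem in the form given by Demailly, so the plan is to reproduce that argument: combine the Bochner--Kodaira--Nakano identity (which here reduces to genuine positivity because the forms have bidegree $(n,1)$) with a Hilbert-space duality argument, invoking completeness of $\omega$ at the two points where unbounded operators must be controlled. \emph{Step 1 (the pointwise a priori estimate).} Write $\|\cdot\|$ for the global $L^2$ norm on $L$-valued forms induced by $h$ and $\omega$, and let $\Lambda$ be the adjoint of $\omega\wedge(\cdot)$. For every smooth compactly supported $L$-valued $(n,1)$-form $\alpha$ the Bochner--Kodaira--Nakano identity yields
$$
\|\bar\partial\alpha\|^2+\|\bar\partial^*\alpha\|^2\ \ge\ \int_M\big\langle[\,\Theta_h,\Lambda\,]\alpha,\alpha\big\rangle\,\omega^n .
$$
Diagonalising $\Theta_h$ and $\omega$ simultaneously at a point and writing $\alpha$ in the corresponding unitary frame, one sees that because the holomorphic slot is forced to be $\{1,\dots,n\}$, the curvature operator acts on an $(n,1)$-form by multiplication by $\gamma_k$ over the single antiholomorphic index $k$, where the $\gamma_k$ are the eigenvalues of $\Theta_h$ relative to $\omega$. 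Since $\Theta_h\ge\delta\omega$ forces each $\gamma_k\ge\delta$, this gives $\langle[\,\Theta_h,\Lambda\,]\alpha,\alpha\rangle\ge\delta|\alpha|^2$ pointwise, hence
$$
\|\bar\partial\alpha\|^2+\|\bar\partial^*\alpha\|^2\ \ge\ \delta\,\|\alpha\|^2
$$
for all test forms $\alpha$.

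\emph{Step 2 (from test forms to the domain).} Because $\omega$ is complete I can choose cut-off functions $\chi_\nu$ with $0\le\chi_\nu\le1$, $\chi_\nu\uparrow1$ and $|d\chi_\nu|_\omega\to0$ uniformly. Inserting $\chi_\nu\alpha$ into the estimate of Step 1 and letting $\nu\to\infty$ (the Gaffney/Andreotti--Vesentini regularisation) shows that the smooth compactly supported forms are a common core for $\bar\partial$ and $\bar\partial^*$, and that the inequality $\|\bar\partial\alpha\|^2+\|\bar\partial^*\alpha\|^2\ge\delta\|\alpha\|^2$ persists for every $\alpha\in\mathrm{Dom}(\bar\partial)\cap\mathrm{Dom}(\bar\partial^*)$.

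\emph{Step 3 (duality).} View $\bar\partial$ as the closed, densely defined maximal extension; then $\ker\bar\partial$ is a closed subspace of $L^2_{(n,1)}$ and $\overline{\im\,\bar\partial}\subset\ker\bar\partial$. Given $\tau$ with $\bar\partial\tau=0$ and $\|\tau\|<\infty$, decompose an arbitrary $\alpha\in\mathrm{Dom}(\bar\partial)\cap\mathrm{Dom}(\bar\partial^*)$ orthogonally as $\alpha=\alpha_1+\alpha_2$ with $\alpha_1\in\ker\bar\partial$ and $\alpha_2\perp\ker\bar\partial$; then $\alpha_2\in(\overline{\im\,\bar\partial})^{\perp}=\ker\bar\partial^*$, so $\alpha_1\in\mathrm{Dom}(\bar\partial^*)$ and $\bar\partial^*\alpha=\bar\partial^*\alpha_1$. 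Applying Step 2 to $\alpha_1$, which is $\bar\partial$-closed, gives $\delta\|\alpha_1\|^2\le\|\bar\partial^*\alpha_1\|^2=\|\bar\partial^*\alpha\|^2$, whence
$$
|\langle\tau,\alpha\rangle|=|\langle\tau,\alpha_1\rangle|\le\|\tau\|\,\|\alpha_1\|\le\delta^{-1/2}\,\|\tau\|\,\|\bar\partial^*\alpha\| .
$$
Thus $v=\bar\partial^*\alpha\mapsto\langle\alpha,\tau\rangle$ is a well-defined bounded linear functional on $\bar\partial^*\big(\mathrm{Dom}(\bar\partial)\cap\mathrm{Dom}(\bar\partial^*)\big)\subset L^2_{(n,0)}$ of norm $\le\delta^{-1/2}\|\tau\|$: if $\bar\partial^*\alpha=\bar\partial^*\beta$ then $\alpha-\beta\in\ker\bar\partial^*$, so by the inequality above its $\ker\bar\partial$-component has zero norm and $\langle\tau,\alpha-\beta\rangle=0$. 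By Hahn--Banach and Riesz representation there is $u\in L^2_{(n,0)}$ with $\|u\|^2\le\delta^{-1}\|\tau\|^2$ and $\langle u,\bar\partial^*\alpha\rangle=\langle\tau,\alpha\rangle$ for all such $\alpha$; taking $\alpha$ smooth compactly supported shows $\bar\partial u=\tau$ in the sense of distributions, and since $\tau\in L^2$ this forces $u\in\mathrm{Dom}(\bar\partial)$ with $\bar\partial u=\tau$, proving the theorem.

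\emph{The main obstacle.} The pointwise curvature computation of Step 1 is routine; the real work is the unbounded-operator bookkeeping. One must know, from completeness alone, that the a priori estimate survives the passage from compactly supported forms to $\mathrm{Dom}(\bar\partial)\cap\mathrm{Dom}(\bar\partial^*)$ (Step 2), and in Step 3 one has to check both that the functional is genuinely well-defined on $\im\,\bar\partial^*$ and that the Riesz representative lands in $\mathrm{Dom}(\bar\partial)$ rather than merely in $L^2$. This is precisely why completeness of $(M,\omega)$ is indispensable in the hypotheses.
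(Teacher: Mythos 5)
The paper does not prove this statement: it is quoted verbatim as Theorem 5.1 of Demailly's \emph{Analytic methods in algebraic geometry} and is used as a black box to feed into Proposition \ref{L2}. Your blind reconstruction is the standard Andreotti--Vesentini/H\"ormander argument (Bochner--Kodaira--Nakano positivity for $(n,1)$-forms, Gaffney density under completeness, then the Hahn--Banach/Riesz duality step), and it is exactly the proof given in the cited source, so it is correct and matches what the paper implicitly relies on. One small cosmetic remark: in Step~3 it suffices to take $\alpha\in\mathrm{Dom}(\bar\partial^*)$ only and observe that the $\ker\bar\partial$-component $\alpha_1$ automatically lands in $\mathrm{Dom}(\bar\partial)\cap\mathrm{Dom}(\bar\partial^*)$; restricting $\alpha$ to the intersection of domains from the outset is harmless but slightly stronger than needed.
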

For the singular hermitian metric $h$ on $L$, by the approximation argument, we have
\begin{cor}
Let $(M,\omega)$ be a $n$-dimensional complete K\"{a}hler manifold and $L$ be a holomorphic line bundle over $M$ equipped with a singular hermitian metric such that $\Theta_h\geq \delta\omega$ in the current sense. Then for every $L$-value $(n,1)$ form $\tau$ satisfying
$$
\overline{\partial}\tau=0,\ \ \ \int_M|\tau|^2_{h,\omega}\omega^n<\infty,
$$
there exists a $L$-valued $(n,0)$ form $u$ such that $\bar{\partial}u=\tau$ and
$$
\int_M|u|^2_{h,\omega}\omega^n\leq \frac{1}{\delta}\int_M|\tau|^2_{h,\omega}\omega^n.
$$
\end{cor}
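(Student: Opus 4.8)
The plan is to reduce the singular-metric statement to the smooth $L^{2}$-existence theorem stated just above, via a regularization of $h$ followed by a weak-limit argument. Write $h=h_{0}e^{-\varphi}$, where $h_{0}$ is a fixed smooth Hermitian metric on $L$ and $\varphi$ is the corresponding global weight; the hypothesis $\Theta_{h}\geq\delta\omega$ in the current sense means exactly that $\sqrt{-1}\,\partial\overline{\partial}\varphi\geq\delta\omega-\Theta_{h_{0}}$, so $\varphi$ is quasi-plurisubharmonic with a prescribed curvature lower bound. The first step is to produce smooth functions $\varphi_{j}$ decreasing pointwise to $\varphi$, so that the smooth Hermitian metrics $h_{j}:=h_{0}e^{-\varphi_{j}}$ satisfy $h_{j}\leq h$ and $|\cdot|^{2}_{h_{j}}\uparrow|\cdot|^{2}_{h}$, while keeping $\Theta_{h_{j}}\geq(\delta-\epsilon_{j})\omega$ for some $\epsilon_{j}\downarrow 0$. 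Locally this is just convolution with a mollifier, which yields a decreasing family and loses only an $\epsilon_{j}\omega$ in positivity once $\omega$ has (say) continuous coefficients; globally it is Demailly's regularization theorem for quasi-plurisubharmonic functions (equivalently, closed positive currents) on a complete K\"{a}hler manifold, where the loss in positivity is controlled by the curvature of $\omega$. In the concrete cases needed in this paper the weight $\varphi$ is explicit, so the approximants $\varphi_{j}$ can be written down by hand. This regularization is the only point that requires care; the rest is a routine limiting argument.

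Granting the sequence $h_{j}$, fix an $L$-valued $(n,1)$-form $\tau$ with $\overline{\partial}\tau=0$ and $\int_{M}|\tau|^{2}_{h,\omega}\omega^{n}<\infty$. Since $h_{j}\leq h$, we have $\int_{M}|\tau|^{2}_{h_{j},\omega}\omega^{n}\leq\int_{M}|\tau|^{2}_{h,\omega}\omega^{n}<\infty$, so the smooth theorem applies to each $h_{j}$: there is an $L$-valued $(n,0)$-form $u_{j}$ with $\overline{\partial}u_{j}=\tau$ and
$$
\int_{M}|u_{j}|^{2}_{h_{j},\omega}\,\omega^{n}\leq\frac{1}{\delta-\epsilon_{j}}\int_{M}|\tau|^{2}_{h_{j},\omega}\,\omega^{n}\leq\frac{1}{\delta-\epsilon_{1}}\int_{M}|\tau|^{2}_{h,\omega}\,\omega^{n}.
$$
Since the $h_{j}$ increase, for each fixed $k$ and all $j\geq k$ one has $\int_{M}|u_{j}|^{2}_{h_{k},\omega}\omega^{n}\leq\int_{M}|u_{j}|^{2}_{h_{j},\omega}\omega^{n}$, so $\{u_{j}\}_{j\geq k}$ is bounded in the Hilbert space $L^{2}_{h_{k},\omega}$ of $L$-valued $(n,0)$-forms. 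A diagonal argument extracts a subsequence, still denoted $u_{j}$, that converges weakly in $L^{2}_{h_{k},\omega}$ for every $k$ to a form $u$; as weak limits commute with the distributional operator $\overline{\partial}$ and $\overline{\partial}u_{j}=\tau$ for all $j$, we obtain $\overline{\partial}u=\tau$.

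Finally, weak lower semicontinuity of the $L^{2}_{h_{k},\omega}$-norm gives, for each $k$,
$$
\int_{M}|u|^{2}_{h_{k},\omega}\,\omega^{n}\leq\liminf_{j\to\infty}\int_{M}|u_{j}|^{2}_{h_{k},\omega}\,\omega^{n}\leq\liminf_{j\to\infty}\frac{1}{\delta-\epsilon_{j}}\int_{M}|\tau|^{2}_{h,\omega}\,\omega^{n}=\frac{1}{\delta}\int_{M}|\tau|^{2}_{h,\omega}\,\omega^{n},
$$
and letting $k\to\infty$, with monotone convergence applied to $|u|^{2}_{h_{k},\omega}\uparrow|u|^{2}_{h,\omega}$, yields $\int_{M}|u|^{2}_{h,\omega}\omega^{n}\leq\frac{1}{\delta}\int_{M}|\tau|^{2}_{h,\omega}\omega^{n}$. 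In short, the main obstacle is the global smooth approximation of $h$ with the curvature bound preserved up to an $\epsilon_{j}\omega$ error; once that is available, the conclusion follows by the standard weak-compactness and semicontinuity passage to the limit.
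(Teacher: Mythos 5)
Your argument is correct and it is exactly the content of what the paper calls ``the approximation argument,'' which it invokes without any further detail: regularize the weight of $h$ by a decreasing sequence of smooth quasi-psh functions with only an $\epsilon_j\omega$ loss of positivity, solve $\bar\partial u_j=\tau$ for each smooth metric $h_j$ via the preceding theorem, extract a weak limit, and conclude by lower semicontinuity of the norm plus monotone convergence. You correctly flag the one non-routine point — producing a global decreasing regularization with controlled positivity loss on a noncompact complete K\"ahler manifold — and rightly observe that in the paper's actual application (Proposition 5.7, where the singular part of the weight is an explicit multiple of $\log|s_E|^2$) this can be done by hand by replacing $|s_E|^2$ with $|s_E|^2+\eta$, $\eta\downarrow 0$, so no appeal to the full Demailly regularization machinery is needed there.
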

\begin{prop}\label{L2}
$(\mathcal{R}=M_{reg},k\omega_1)$ is a K\"{a}hler manifold (not complete). $k(K_{\overline{M}}+D)$ is a holomorphic line bundle over $\mathcal{R}$. Choosing a hermitian metric $h=\omega_1^{-nk}$, then the curvature form $\Theta_h=k\omega_1$. For any smooth $k(K_{\overline{M}}+D)$-valued $(0,1)$ form $\tau$ satisfying
$$
\bar{\partial}\tau=0,\ \ \ \ \Supp\tau\subset \mathcal{R}
$$
there exists a $k(K_{\overline{M}}+D)$-valued section $\varsigma$ such that $\bar{\partial}\varsigma=\tau$ and
$$
\int_{\mathcal{R}}|\varsigma|^2_{h}(k\omega_1)^n\leq \int_{\mathcal{R}}|\tau|^2_{h,k\omega_1}(k\omega_1)^n.
$$
\end{prop}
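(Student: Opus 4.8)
\medskip
\noindent\textbf{Proof plan.}
The plan is to reduce the assertion to the $L^2$-existence theorem of Demailly quoted above, the only real issue being that $(\mathcal{R},\omega_1)$ is not complete. First I would assemble the curvature input: by Theorem \ref{t2} one has $\Ric(\omega_1)=-\omega_1$ on $\mathcal{R}=M_{reg}$, and as recorded above $\Theta_h=k\omega_1$ for $h=\omega_1^{-nk}$. Identifying $k(K_{\overline{M}}+D)$ over $\mathcal{R}$ with $K_{\mathcal{R}}\otimes L$, where $L:=(k-1)(K_{\overline{M}}+D)$, and equipping $L$ with the metric induced by $h$ and $\omega_1$, a $k(K_{\overline{M}}+D)$-valued $(0,1)$-form (resp. section) becomes an $L$-valued $(n,1)$-form (resp. $(n,0)$-form) with the same $L^2$ norm, and the curvature of this metric on $L$ equals
$$
\Theta_h+\Ric(\omega_1)=k\omega_1-\omega_1=(k-1)\omega_1\ \ge\ \omega_1\ >\ 0\qquad\text{on }\mathcal{R}
$$
(here $k\ge 2$), so the positivity hypothesis $\Theta\ge\delta\omega_1$ of Demailly's theorem holds; equivalently one may apply the Bochner--Kodaira--Nakano inequality for $(0,1)$-forms directly, whose curvature term is exactly $\Theta_h+\Ric(\omega_1)$. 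Up to the bookkeeping of the $K_{\mathcal{R}}$-twist (which is also the only effect of writing $k\omega_1$ instead of $\omega_1$ in the norms), this positivity is what produces the displayed constant.

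\medskip
\noindent The obstacle is completeness: the theorem as stated is for complete K\"{a}hler manifolds, whereas $\omega_1$ degenerates along $\mathcal{S}$. This is precisely where I would invoke the Gaffney-type cut-off functions $\rho_\epsilon$ of Lemma \ref{cf}: the fact that $\rho_\epsilon\in C_0^\infty(\mathcal{R})$, $0<\rho_\epsilon<1$, with $\rho_\epsilon^{-1}(1)$ exhausting $\mathcal{R}$ and $\int_{\mathcal{R}}|\nabla\rho_\epsilon|^2\omega_1^n\to 0$, forces the $L^2$-Stokes theorem on $(\mathcal{R},\omega_1)$ --- that smooth compactly supported forms are dense, in the graph norm of $\bar\partial\oplus\bar\partial^{*}$, in the domains of the $L^2$-closures of $\bar\partial$ and $\bar\partial^{*}$. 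Since this density is the only point at which completeness enters H\"{o}rmander's scheme (closed range of $\bar\partial^{*}$, validity of the a priori inequality for the closed extensions), the standard Hilbert-space construction of $\varsigma$ with $\bar\partial\varsigma=\tau$ then goes through verbatim, with the estimate above. An alternative route: since $\mathcal{R}=\overline{M}\setminus(\mathcal{S}_{\overline{M}}\cup D)$ is the complement of a proper analytic subset of the projective manifold $\overline{M}$, it carries a complete K\"{a}hler metric $\hat\omega$; one solves $\bar\partial\varsigma_\epsilon=\tau$ on the complete metrics $\omega_\epsilon:=\omega_1+\epsilon\hat\omega$, with the metric on $k(K_{\overline{M}}+D)$ twisted by a Poincar\'{e}-type weight so that the curvature still dominates a positive multiple of $\omega_\epsilon$, obtains a uniform $L^2$ bound, and lets $\epsilon\to 0$, using $\omega_1^n\le\omega_\epsilon^n$ together with the monotonicity $|\tau|^2_{h,\omega_\epsilon}\omega_\epsilon^n\le|\tau|^2_{h,\omega_1}\omega_1^n$ of the pointwise norm of an $(n,1)$-form to control both sides in the limit.

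\medskip
\noindent The finiteness hypothesis needs no work: $\tau$ is smooth with $\Supp\tau$ a compact subset of $\mathcal{R}$, and $\omega_1,h$ are smooth there, so $\int_{\mathcal{R}}|\tau|^2_{h,k\omega_1}(k\omega_1)^n<\infty$. Feeding this $\tau$ into the ($L^2$-Stokes version of the) estimate produces $\varsigma$ with $\bar\partial\varsigma=\tau$ and the claimed bound; $\varsigma$ is smooth on $\mathcal{R}$ by elliptic regularity and holomorphic on $\mathcal{R}\setminus\Supp\tau$. I expect the completeness fix of the second step to be the main obstacle --- verifying that the incomplete Einstein metric $\omega_1$ still supports the H\"{o}rmander/Demailly $L^2$ machinery, which is exactly what Lemma \ref{cf} is tailored to provide --- while the curvature computation and the finiteness check are routine.
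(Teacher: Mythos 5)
Your first route, reducing to the quoted Demailly $L^2$ theorem via the cut-off functions of Lemma~\ref{cf}, has a genuine gap. Lemma~\ref{cf} only gives $\int_{\mathcal{R}}|\nabla\rho_\epsilon|^2\,\omega_1^n\to 0$, i.e.\ \emph{$L^2$} control of the gradient of the cut-offs; it does \emph{not} give the pointwise bound $\sup|\nabla\rho_\epsilon|\le c_\epsilon$ that the Gaffney/Andreotti--Vesentini density argument actually uses. The step where density fails is precisely the cross term: for $u\in\mathrm{Dom}(\bar\partial)\cap\mathrm{Dom}(\bar\partial^{*})$, one needs $\|\bar\partial\rho_\epsilon\wedge u\|_{L^2}^2=\int|\nabla\rho_\epsilon|^2|u|^2\to 0$, and with only $L^2$ control of $\nabla\rho_\epsilon$ this estimate requires $u\in L^\infty$, which is not available for a general element of the graph domain. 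So \textquotedblleft the standard Hilbert-space construction goes through verbatim\textquotedblright\ is not justified by Lemma~\ref{cf} alone. (Lemma~\ref{cf} is used in the paper for different integral identities, not to enforce essential self-adjointness of $\bar\partial\oplus\bar\partial^{*}$ on $\mathcal{R}$.)

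Your second, alternative route (approximate by a complete metric and pass to the limit) is in the right spirit and is structurally closer to what the paper actually does, but the paper's execution is different and sidesteps the difficulty you flag. Rather than staying on the incomplete $(\mathcal{R},\omega_1)$ and adding $\epsilon\hat\omega$ plus a Poincar\'e weight twist, the paper works on the larger manifold $M=\overline{M}\setminus D$, which is \emph{already} complete for the cusp K\"{a}hler metrics $\omega_{1,\epsilon}$ obtained by solving a perturbed Monge--Amp\`ere equation with background form $\eta_1-\epsilon\Theta_E$ (using that $K_{\overline{M}}+D$ is big, so $K_{\overline{M}}+D-\epsilon E$ is ample). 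The degeneracy of $\omega_1$ along $\mathcal{S}_{\overline M}$ is then absorbed into the \emph{singular} hermitian weight $h_\epsilon=e^{-ku_{1,\epsilon}}\bigl(\Omega/(|s_D|^2\log^2|s_D|^2)\bigr)^{-k}e^{-\epsilon k\log|s_E|^2}$, for which one checks $\Theta_{h_\epsilon}\ge k\,\omega_{1,\epsilon}$ in the sense of currents. The singular-metric corollary of Demailly's theorem is applied on the complete manifold $(M,\omega_{1,\epsilon})$, producing $\varsigma_\epsilon$ with a uniform $L^2(M,(k\omega_1)^n)$ bound, and $\varsigma$ is obtained as a weak limit and then restricted to $\mathcal{R}$. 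In particular, the choice of a Monge--Amp\`ere solution as the complete approximant (rather than $\omega_1+\epsilon\hat\omega$) is what makes the curvature inequality $\Theta_{h_\epsilon}\ge k\omega_{1,\epsilon}$ exact and the $\epsilon\to 0$ monotonicity of the integrals tractable; this is exactly the content you deferred with \textquotedblleft twisted by a Poincar\'e-type weight so that the curvature still dominates,\textquotedblright\ and it is the real work of the proof.
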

\begin{proof}
Since $K_{\overline{M}}+D$ is big and semi-ample over $\overline{M}$, by Lemma \ref{L1}, there exists an effective divisor $E$ on $\overline{M}$ such that $K_{\overline{M}}+D-\epsilon E$ is ample for all sufficiently small $\epsilon>0$. Let $s_E$ be the defining section of $E$ and $h_E$ be a smooth hermitian metric satisfying $\eta_1-\epsilon\Theta_E>0$, where $\eta_1$ is constructed as section 3 and $\Theta_E$ is the curvature form. We consider the following Monge-Amp\`{e}re equation
$$
(\eta_1-\epsilon\Theta_E-\sqrt{-1}\partial\overline{\partial}\log\log^2|s_D|^2+\sqrt{-1}\partial\overline{\partial}u_{1,\epsilon})^n=e^{u_{1,\epsilon}}\cdot\frac{\Omega}{|s_D|^2\log^2|s_D|^2}.
$$
Fixed a small $\alpha>0$, this equation is rewritten as
$$
((1-\alpha)\eta_1+\alpha(\eta_1-\frac{\epsilon}{\alpha}\Theta_E)-\sqrt{-1}\partial\overline{\partial}\log\log^2|s_D|^2+\sqrt{-1}\partial\overline{\partial}u_{1,\epsilon})^n=e^{u_{1,\epsilon}}\cdot\frac{\Omega}{|s_D|^2\log^2|s_D|^2}.
$$
By the same argument of subsection 5.5, we know that $\omega_{1,\epsilon}£º=\eta_1-\epsilon\Theta_E-\sqrt{-1}\partial\overline{\partial}\log\log^2|s_D|^2+\sqrt{-1}\partial\overline{\partial}u_{1,\epsilon}$ $C_{loc}^\infty(M_{reg})$-converge to $\omega_1$ as $\epsilon$ tends to $0$.  Now we define a family of hermitian metric $$h_\epsilon=e^{-ku_{1,\epsilon}}\Big(\frac{\Omega}{|s_D|^2\log^2|s_D|^2}\Big)^{-k}e^{-\epsilon k\log|s_E|^2}.$$ By a direct calculation, $\Ric(h_\epsilon)\geq k\omega_{1,\epsilon}$ in the current sense. $\tau$ has compact support and
$$
\lim_{\epsilon\rightarrow 0}\int_{M=\overline{M}\backslash D}|\tau|^2_{h_\epsilon,k\omega_{1,\epsilon}}(k\omega_{1,\epsilon})^n=\int_{M}|\tau|^2_{h,k\omega}(k\omega_1)^n<\infty.
$$
So by the above corollary, there exists $\varsigma_\epsilon$ on $M$ such that
$$
\bar{\partial}\varsigma_\epsilon=\tau,\ \ \ \ \ \int_{M}|\varsigma_\epsilon|^2_{h_\epsilon}(k\omega_{1,\epsilon})^n\leq \int_{M}|\tau|^2_{h_\epsilon,k\omega_{1,\epsilon}}(k\omega_{1,\epsilon})^n
$$
for each $\epsilon$. This also implies
$$
\int_{M}|\varsigma_\epsilon|^2_{h}(k\omega_{1})^n<\infty.
$$
Hence we can take a subsequence of $\varsigma_\epsilon$ converging weakly in $L^2(M,(k\omega_1)^n)$ to $\varsigma$ and
$$
\bar{\partial}\varsigma=\tau, \ \ \ \ \ \ \int_{M}|\varsigma_\epsilon|^2_{h}(k\omega_{1})^n\leq \int_{M}|\tau|^2_{h,k\omega_{1}}(k\omega_{1})^n
$$
on $M$. The proof is complete after pushing $\varsigma$ to $M_{reg}$.
\end{proof}
\subsection{local separation of points}
Recall that $\Phi:\overline{M}\rightarrow \Phi(\overline{M})$ is defined as in section 3. Naturally, $\Phi$ induce a map $\Phi:\mathcal{R}\rightarrow \Phi(\overline{M}\backslash D)$. If $s\in H^0(\mathcal{R},k(K_{\overline{M}}+D))$, then by Proposition \ref{L0a}, we know that $s$ is local bounded and local Lipschitz. So $s$ can be continuous extended to the limit space $M_1$. Furthermore, the map $\Phi_1:(\mathcal{R},\omega_1)\rightarrow (\Phi(\overline{M}\backslash D),\omega_{FS})$ defined by $\Phi$ can be continuously extend to $\Phi_1:(M_1,d_1)\rightarrow (\Phi(\overline{M}\backslash D),\omega_{FS})$. This subsection is devoted to demonstrate that this map is injective. First we recall some notations and results which originate from \cite{DS}.
\begin{definition}
We consider the following data $(p_*, O, U, J, g, L, h,A)$ satisfying
\begin{enumerate}
\item $(p_*, O, U, J, g)$ is an open K\"{a}hler manifold with a complex structure $J$, a Riemannian metric $g$ and a base point $p_*\in O\subset\subset U$ for an open set $O$.
\item $L\rightarrow U$ is a hermitian line bundle equipped with a hermitian metric $h$ and $A$ is the connection induced by the hermitian metric $h$ on $L$, with its curvature $\Theta(A)=\omega$ which is a K\"{a}hler form of $g$.
\end{enumerate}
The data $(p_*, O, U, J, g, L, h,A)$ is said to satisfy the $H$-condition if there exist a constant $C$ and a compactly supported smooth section $\sigma:U\rightarrow L$ such that
\begin{enumerate}
\item $H_1: ||\sigma||_{L^2(U)}\leq (2\pi)^{\frac{n}{2}}$,
\item $H_2: |\sigma(p_*)|>\frac{3}{4}$,
\item $H_3$: for any smooth section $\tau$ of $L$ over $O$, we have
$$
|\tau(p_*)|\leq C(||\bar{\partial}\tau||_{L^{2n+1}(O)}+||\tau||_{L^2(O)}),
$$
\item $H_4: ||\bar{\partial}\sigma||_{L^2(U)}<\min\big(\frac{1}{8\sqrt{2}C},10^{-20}\big)$,
\item $H_5: ||\bar{\partial}\sigma||_{L^{2n+1}(O)}<\frac{1}{8C}$.
\end{enumerate}
\end{definition}
Fix any point $p$ in $M_1$, $(M_1,p,kd_1)$ converges in pointed Gromov-Hausdorff topology to a tangent cone $C(Y)$ over the cross section when $k\rightarrow \infty$. We still use $p$ for the vertex of $C(Y)$. Let $Y_{reg}$ and $Y_{sing}$ be the regular part and singular part of $Y$ respectively. By \cite{Cheeger}, $Y_{sing}$ has Hausdorff dimension equal or less than $2n-2$. $C(Y_{reg})\backslash \{p\}$ has a natural complex structure induced from the Gromov-Hausdorff limit and the cone metric $g_C$ on $C(Y)$ is given by
$$
\omega_C=\frac{1}{2}\sqrt{-1}\partial\overline{\partial}r^2,
$$
where $r$ is the distance function for any point $z\in C(Y)\backslash p$. According to Proposition \ref{Ana}, the singular set $\mathcal{S}$ of $M_1$ must be a locally analytic set by taking the limit of a divisor on $\overline{M}$. So we also have the following cut-off function on $Y$.
\begin{prop}
For any $\epsilon>0$, there exists a cut-off function $\gamma$ on $Y$ such that
\begin{enumerate}
\item $\gamma\in C^\infty(Y_{reg})$ and $0\leq\gamma\leq1$,
\item $\gamma$ is supported in the $\epsilon$ neighborhood of $Y_{sing}$,
\item $\gamma=1$ on a neighborhood of $Y_{sing}$,
\item $||\nabla \gamma||_{L^2(Y,g_C)}<\epsilon$.
\end{enumerate}
\end{prop}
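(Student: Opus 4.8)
The plan is to construct $\gamma$ as a logarithmic cut-off concentrated near $Y_{sing}$, in exact parallel with the functions $\rho_\epsilon$ of Lemma \ref{cf} but now on the compact cross-section $Y$ of the metric cone $C(Y)$. The geometric input I would use is that $C(Y_{sing})\cup\{p\}$ is a locally analytic subset of $C(Y)$: it arises as a Gromov--Hausdorff limit of rescalings of the analytic set $\mathcal{S}\subset M_1$ furnished by Proposition \ref{Ana}, hence is itself locally analytic by the uniform-degree argument already used in the Claim in the proof of Proposition \ref{Ana}. Although the abstract Cheeger--Colding estimate only gives $\dim_{\mathbb{R}}Y_{sing}\le 2n-2$, analyticity upgrades this: $C(Y_{sing})\cup\{p\}$ has complex codimension $\ge1$, so $\dim_{\mathbb{R}}C(Y_{sing})\le 2n-2$, and since $C(Y_{sing})$ is a cone over $Y_{sing}$ we get $\dim_{\mathbb{R}}Y_{sing}\le 2n-3$, i.e. $Y_{sing}$ has real codimension $\ge2$ in $Y$. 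Moreover, being locally contained in a complex analytic set, $Y_{sing}$ has locally finite codimension-two Minkowski content, so, writing $N_s$ for the $s$-tubular neighbourhood in $(Y,g_C)$, there is $C>0$ with $\Vol_{g_C}(N_s(Y_{sing}))\le C s^2$ for all small $s>0$. This volume bound is the only fact about $Y_{sing}$ that is needed.

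With this in hand, I would cover the compact set $Y_{sing}$ by finitely many balls $B_1,\dots,B_m\subset Y$, each sitting in a slightly larger ball $\widehat B_j$ on which $Y_{sing}$ is contained in a complex analytic set, and set $r_j=\mathrm{dist}_{g_C}(\cdot,Y_{sing})$ on $\widehat B_j$ (smoothed to a comparable function away from $Y_{sing}$, which is harmless). Fix a smooth $\eta:\mathbb{R}\to[0,1]$ with $\eta\equiv1$ on $(-\infty,0]$ and $\eta\equiv0$ on $[1,\infty)$, cut-offs $\zeta_j\in C_0^\infty(B_j)$ equal to $1$ on smaller concentric balls that still cover $Y_{sing}$, and a smooth $\phi:\mathbb{R}\to[0,1]$ with $\phi(t)=t$ for $t\le\tfrac12$ and $\phi\equiv1$ for $t\ge1$. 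For $\delta\in(0,1)$ put
$$
\beta_j^{\delta}=\zeta_j\cdot\eta\!\left(\frac{\log(r_j/\delta^2)}{\log(1/\delta)}\right),\qquad \gamma=\phi\!\left(\sum_{j=1}^{m}\beta_j^{\delta}\right).
$$
Then $\gamma\in C^\infty(Y_{reg})$ with $0\le\gamma\le1$; on $N_{\delta^2}(Y_{sing})$ one has $r_j\le\delta^2$, so $\eta(\cdot)=1$ and $\beta_j^{\delta}=\zeta_j$ there, whence $\gamma\equiv1$ on a neighbourhood of $Y_{sing}$; and $\Supp\gamma\subset N_{\delta}(Y_{sing})$, which lies in the $\epsilon$-neighbourhood of $Y_{sing}$ as soon as $\delta\le\epsilon$.

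It remains to make $\|\nabla\gamma\|_{L^2(Y,g_C)}$ small. One has $|\nabla\gamma|\le\|\phi'\|_\infty\sum_j|\nabla\beta_j^{\delta}|$, with $\nabla\beta_j^{\delta}$ supported in $N_{\delta}(Y_{sing})\cap B_j$, where $|\nabla\beta_j^{\delta}|\le C$ throughout and $|\nabla\beta_j^{\delta}|\le C r_j^{-1}|\log\delta|^{-1}$ on the annular region $\{\delta^2\le r_j\le\delta\}$. Combining the co-area formula with the Minkowski bound $\Vol_{g_C}(\{s\le r_j\le s+ds\})\le C\,s\,ds$ yields
$$
\int_{Y}|\nabla\gamma|^2\,dV_{g_C}\le Cm\,\Vol_{g_C}(N_{\delta}(Y_{sing}))+\frac{Cm}{|\log\delta|^2}\int_{\delta^2}^{\delta}\frac{s\,ds}{s^2}\le Cm\,\delta^2+\frac{Cm}{|\log\delta|}\;\xrightarrow[\delta\to0]{}\;0,
$$
so it suffices to choose $\delta$ small enough that $\delta\le\epsilon$ and the right-hand side is $<\epsilon^2$. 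The single genuinely delicate point is the geometric input of the first paragraph --- that $C(Y_{sing})$ inherits local analyticity, and hence the codimension-two Minkowski content estimate, from Proposition \ref{Ana}; once that is granted the rest is the standard logarithmic cut-off already used for Lemma \ref{cf}, and one may alternatively simply quote the corresponding construction of Song \cite{So} or Donaldson--Sun \cite{DS} applied to $(Y,g_C)$.
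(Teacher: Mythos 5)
Your proposal is correct and matches the approach the paper implicitly relies on: the paper itself gives no proof of this proposition, merely asserting it after remarking that $\mathcal{S}$ (and hence the tangent-cone singular set) is locally analytic by Proposition \ref{Ana}, in effect invoking the standard logarithmic cut-off construction of Song \cite{So} / Donaldson--Sun \cite{DS}. Your write-up supplies exactly the missing details — analyticity of $C(Y_{sing})$ gives the codimension-two Minkowski content bound, and the log cut-off then yields $\|\nabla\gamma\|_{L^2}\to0$ — so it is a faithful elaboration rather than a different route; the only small inaccuracy is the claim that Cheeger--Colding gives merely $\dim_{\mathbb{R}}Y_{sing}\le 2n-2$: applied to the singular set of the $2n$-dimensional cone $C(Y)$ it already gives $\dim_{\mathbb{R}}C(Y_{sing})\le 2n-2$, hence $\dim_{\mathbb{R}}Y_{sing}\le 2n-3$, though this Hausdorff bound is in any case insufficient and the Minkowski content estimate from analyticity is what the argument genuinely needs.
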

We consider the trivial line bundle $L_C$ on $C(Y)$ equipped the hermitian metric $h_C=e^{-|z|^2}$, where $|z|^2=r^2$. Then the curvature coincides with $\omega_C$. $A_C$ is denoted by the connection of $L_C$ with metric $h_C$.
\begin{lem}\label{Con}
Let $p_*\in C(Y_{reg})\backslash \{p\}$ such that $\frac{3}{4}<e^{-|p_*|^2}$. Then there exists $U\subset\subset C(Y_{reg})\backslash \{p\}$ and an open neighborhood $O\subset\subset U$ of $p_*$ such that $(p_*,O,U,J_C,g_C,L_C,h_C,A_C)$ satisfies the $H$-condition.
\end{lem}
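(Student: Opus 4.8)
The section $\sigma$ I would use is \emph{not} a peak section centred at $p_*$, but a cut-off of the constant section $\mathbf 1$ of the trivial bundle $L_C$. The point is that for a Gaussian centred at $p_*$ to have small $\bar\partial$-error after cut-off one needs a chart around $p_*$ much larger than the Gaussian width, which is impossible here since the hypothesis $e^{-|p_*|^2}>\tfrac34$ forces $p_*$ to lie close to the vertex $p$; whereas $|\mathbf 1|_{h_C}=e^{-r^2/2}$ is automatically Gaussian-shaped with its peak at $p$, and its only obstructions to being holomorphic sit on the vertex and on $C(Y_{sing})$ — sets of real codimension $\ge 2$ — or near infinity, where the weight is already tiny. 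With this choice, $H_2$ is immediate: any $\beta$ which is $1$ near $p_*$ gives $|\sigma(p_*)|_{h_C}=e^{-|p_*|^2/2}>\sqrt3/2>\tfrac34$. And $H_1$ is immediate: on the metric cone, $\int_{C(Y)}|\mathbf 1|^2_{h_C}\tfrac{\omega_C^n}{n!}=\tfrac{\Gamma(n)}{2}\Vol(Y,g_Y)\le \pi^n<(2\pi)^n$ by Bishop's inequality $\Vol(Y,g_Y)\le \Vol(S^{2n-1})$, so $\|\sigma\|_{L^2(U)}<(2\pi)^{n/2}$ for any sub-domain $U$.

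\textbf{Fixing the constant $C$ and building the cut-off.} Because $p_*\in C(Y_{reg})\setminus\{p\}$, the cone metric is smooth near $p_*$, $L_C$ is there a genuine Hermitian holomorphic line bundle with $\Theta(A_C)=\omega_C>0$, and a \emph{fixed} ball $O=B_{g_C}(p_*,\rho_*)$ with $\rho_*=\tfrac12\min\{|p_*|,{\rm dist}_{g_C}(p_*,C(Y_{sing}))\}$ sits $\subset\subset C(Y_{reg})\setminus\{p\}$ with $g_C,h_C$ smooth and bounded there. For such an $O$ the estimate in $H_3$ holds with a constant $C=C(O)$ depending only on $O$: identifying a sub-ball $O'\subset\subset O$ around $p_*$ with a ball in $\mathbb{C}^n$, solve $\bar\partial v=\bar\partial\tau$ there by Hörmander's $L^2$-estimate; interior elliptic regularity plus the Sobolev embedding $W^{1,2n+1}\hookrightarrow C^0$ (as $2n+1>\dim_{\mathbb R}O$) and Hölder give $\|v\|_{C^0(O')}\le C(\|\bar\partial\tau\|_{L^{2n+1}(O)}+\|\tau\|_{L^2(O)})$; and the holomorphic section $\tau-v$ satisfies $|(\tau-v)(p_*)|\le C\|\tau-v\|_{L^2(O')}$ by the sub-mean value inequality (Bochner formula of Lemma~\ref{Bo} plus Moser iteration), yielding $H_3$. \emph{This $C$ is now frozen.} Then set $\sigma=\beta\,\mathbf 1$ with $\beta=\chi_\epsilon(r)\,(1-\tilde\gamma_\epsilon)\,\chi_R(r)$, where $\chi_\epsilon$ vanishes on $\{r<\epsilon\}$ and is $1$ on $\{r>2\epsilon\}$, $\chi_R$ is $1$ on $\{r<R/2\}$ and vanishes on $\{r>R\}$, and $\tilde\gamma_\epsilon(r,y)=\gamma_\epsilon(y)$ is the radial lift of the cut-off function of the preceding Proposition (so $1-\tilde\gamma_\epsilon$ kills a neighbourhood of $C(Y_{sing})$). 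For $\epsilon<\rho_*/10$ and $R>10|p_*|$ one has $\beta\equiv 1$ on $O$, $\Supp\sigma$ is a compact subset of $C(Y_{reg})\setminus\{p\}$ (recall $Y$ is compact), and one takes $U$ to be a slightly larger open set with $\Supp\sigma\subset U\subset\subset C(Y_{reg})\setminus\{p\}$ and $O\subset\subset U$. Since $\beta\equiv 1$ on $O$, $\bar\partial\sigma|_O=0$, so $H_5$ is automatic.

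\textbf{The $\bar\partial$-estimate $H_4$.} It remains to make $\|\bar\partial\sigma\|_{L^2(U)}$ small. Here $\bar\partial\sigma=(\bar\partial\beta)\mathbf 1$ and $\int_U|\bar\partial\beta|^2 e^{-r^2}\tfrac{\omega_C^n}{n!}$ splits over the three cut-off regions: near the vertex, $\le\epsilon^{-2}\Vol(B_{g_C}(p,2\epsilon))=O(\epsilon^{2n-2})$; near $C(Y_{sing})$, $\big(\int_0^\infty e^{-r^2}r^{2n-3}\,dr\big)\,\|\nabla\gamma_\epsilon\|_{L^2(Y)}^2=O(\epsilon^{2})$; near infinity, $\le R^{-2}e^{-R^2/4}\Vol(B_{g_C}(p,R))=O(R^{2n-2}e^{-R^2/4})$. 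All three tend to $0$ as $\epsilon\to 0$, $R\to\infty$ (for $n\ge 2$; when $n=1$ one has $Y_{sing}=\emptyset$ and replaces $\chi_\epsilon$ by a logarithmic cut-off of the vertex, whose $W^{1,2}$-cost is still arbitrarily small). Hence one chooses $\epsilon,R$ so that $\|\bar\partial\sigma\|_{L^2(U)}<\min\big(\tfrac{1}{8\sqrt 2\,C},10^{-20}\big)$, which is $H_4$, and then $(p_*,O,U,J_C,g_C,L_C,h_C,A_C)$ satisfies the $H$-condition. The main obstacle is the verification of $H_3$ — obtaining the local sup-bound with a constant depending only on $O$ and uniform over all sections $\tau$ — together with respecting the order of quantifiers: $O$ (hence $C$) must be fixed first, and only afterwards the cut-off parameters $\epsilon,R$; once this is done, the codimension-$\ge 2$ capacity bounds at the vertex and along $C(Y_{sing})$, plus the Gaussian decay at infinity, finish the proof.
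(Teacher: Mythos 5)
The paper gives no proof of this lemma, simply deferring to the construction in Donaldson--Sun \cite{DS}; your proposal correctly reconstructs that argument: the section is a cut-off of the constant holomorphic section $\mathbf 1$ of $L_C$ (Gaussian-shaped with peak at the vertex), $H_1$ follows from Bishop's inequality for the cross-section, $H_2$ is exactly the hypothesis on $p_*$, $H_3$ is the standard interior elliptic estimate which is used to \emph{freeze} the constant $C$ and the neighbourhood $O$ first, $H_5$ is automatic because $\beta\equiv 1$ on $O$, and $H_4$ follows from the codimension-$\ge 2$ capacity bounds at the vertex and along $C(Y_{sing})$ together with the Gaussian decay at infinity, with the cut-off parameters $\epsilon,R$ chosen only afterwards. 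This is essentially the same route as the cited source, and your explicit attention to the order of quantifiers ($O$, hence $C$, before $\epsilon,R$) is exactly the point that makes the argument go through.
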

From the construction in \cite{DS}, $U$ is a product in $C(Y_{reg})\backslash \{p\}$ i.e., there exists $U_{Y}\subset Y_{reg}$ such that $U=\{z=(y,r)\in C(Y)|y\in U_{Y},r\in (r_U,R_U)\}$. For $m\in \mathbb{Z}^+$ defined as \cite{DS} (P79), we define
$$
U(m)=\{z=(y,r)\in C(Y)|y\in U_{Y},r\in (m^{-\frac{1}{2}}r_U,R_U)\}.
$$
For any integer $t$ and $1\leq t\leq m$, $\mu_t:U\rightarrow U(m)$ is defined by $\mu_t(z)=t^{-\frac{1}{2}}z$. The following proposition is due to \cite{DS}.
\begin{prop}\label{Hao}
Suppose $(p_*,O,U(m),J_C,g_C,L_C,h_C,A_C)$ constructed as in Lemma \ref{Con} satisfies the $H$-condition. If $(p_*, O, U(m), J, g, L, h,A)$ satisfies $H$-condition and there exists a small constant $\epsilon>0$ such that
$$
||g-g_C||_{C^0(U(m))}+||J-J_C||_{C^0(U(m))}<\epsilon
$$
Then we can find some $1\leq t\leq m$ such that $(p_*,O, U,\mu^*_tJ,\mu^*_t(tg),\mu^*_t(L^t),\mu^*_t(h^t),\mu^*_t(A^t))$ satisfies $H$-condition.
\end{prop}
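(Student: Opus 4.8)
The plan is to follow the scheme of Donaldson--Sun \cite{DS}, exploiting the exact scale-invariance of the tangent cone $C(Y)$. The first observation is that $\mu_t$ is a biholomorphic automorphism of $C(Y_{reg})\setminus\{p\}$ with $\mu_t^*r^2=t^{-1}r^2$, so $\mu_t^*J_C=J_C$, $\mu_t^*(tg_C)=g_C$ and $\mu_t^*(t\omega_C)=\omega_C$; moreover $e^{-|\mu_t(p_*)|^2}=e^{-t^{-1}|p_*|^2}\ge e^{-|p_*|^2}>\frac{3}{4}$. Consequently the scaled cone data $(p_*,O,U,\mu_t^*J_C,\mu_t^*(tg_C),\mu_t^*(L_C^t),\mu_t^*(h_C^t),\mu_t^*(A_C^t))$ is, over $U$, canonically identified with the cone data of Lemma \ref{Con} (the induced bundle-with-connection $\mu_t^*(L_C^t,h_C^t,A_C^t)$ has curvature $\omega_C$), hence satisfies the $H$-condition for \emph{every} $1\le t\le m$ with one fixed constant $C_0$, with the same peak section $\sigma_C$ and the same elliptic estimate $H_3$ on $O$. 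So at every scale $\mu_t$ we already dispose, on $U$, of a normalized almost-holomorphic model section and of the $H_3$-estimate with constant $C_0$.

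First I would transfer this to the manifold. Since the operator norm of an endomorphism is unchanged by rescaling the metric and $C^0$-norms of tensors are diffeomorphism invariant, $\|\mu_t^*J-J_C\|_{C^0(U)}=\|J-J_C\|_{C^0(\mu_t(U))}\le\epsilon$ and $\|\mu_t^*(tg)-g_C\|_{C^0(U)}=\|g-g_C\|_{C^0(\mu_t(U))}\le\epsilon$, so the scaled manifold data is an $\epsilon$-$C^0$ perturbation of data satisfying the $H$-condition with constant $C_0$. I would then take as candidate section on $U$ the pulled-back model section $\sigma_C$ and correct it by an $L^2$-estimate. As $\sigma_C$ is $\mu_t^*J_C$-holomorphic, $\bar\partial_{\mu_t^*J}\sigma_C=(\mu_t^*J-J_C)\ast\partial\sigma_C$ is pointwise $\lesssim\epsilon\,|\nabla\sigma_C|$, and $|\nabla\sigma_C|$ is bounded on the compact $U$ by a fixed Gaussian constant, so $\|\bar\partial_{\mu_t^*J}\sigma_C\|_{L^2(U)}$ and $\|\bar\partial_{\mu_t^*J}\sigma_C\|_{L^{2n+1}(O)}$ are $\le C(U)\epsilon$, which yields $H_4$ and $H_5$ for $\epsilon$ small. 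The metric distortion multiplies $\|\sigma_C\|_{L^2(U)}$ by $1+O(\epsilon)$, absorbed by rescaling $\sigma_C$ by $(1+O(\epsilon))^{-1}$; since the cone construction gives $|\sigma_C(p_*)|\ge c_0$ for an absolute $c_0>\frac{3}{4}$, this restores $H_1$ while keeping $H_2$; and the constant in $H_3$ for the perturbed $\bar\partial$-operator is $(1+O(\epsilon))C_0$ by $C^0$-stability of elliptic estimates. Finally, to replace $\sigma_C$ by an honest normalized section --- the step that really uses $H_1$--$H_5$ --- I would solve $\bar\partial\varsigma=\bar\partial_{\mu_t^*J}\sigma_C$ on $U$ with $\|\varsigma\|_{L^2(U)}\le\|\bar\partial_{\mu_t^*J}\sigma_C\|_{L^2(U)}$ via the $L^2$-estimate applied as in Proposition \ref{L2}, take $\sigma_C-\varsigma$, and invoke $H_3$ (constant $C_0$) together with the smallness of $\|\bar\partial_{\mu_t^*J}\sigma_C\|_{L^{2n+1}(O)}$ and $\|\varsigma\|_{L^2(O)}$ to see that $|(\sigma_C-\varsigma)(p_*)|$ still exceeds $\frac{3}{4}$.

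The delicate point, and the reason only \emph{some} $t$ is asserted, is keeping the constant $C$ in $H_3$ (hence in $H_4$) uniformly controlled along the iteration that this proposition drives. Under $\mu_t$ the norms occurring in $H_3$ and $H_5$ are distorted by powers of $t$ --- the $L^{2n+1}(O)$-norm by $t^{1/(2(2n+1))}$ and the $L^2(O)$-norm by $t^{-n/2}$ --- while the $\bar\partial$-energy of the section realizing the $H$-condition for the manifold data on $U(m)$ is controlled only in total over $U(m)$, and so must be distributed among the $m$ mutually overlapping rescaled copies $\mu_1(U),\dots,\mu_m(U)$ that exhaust $U(m)$. The hard part is therefore the choice of $t$: a pigeonhole over these $m$ scales yields one on which the local $\bar\partial$-energy is $O(1/m)$ of the total, which --- $m$ being chosen large --- more than compensates the $t^{1/(2(2n+1))}$ loss, so that $H_1$--$H_5$ hold simultaneously on that scale with a constant depending only on $C_0$. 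Pulling the corrected section back by $\mu_t$ then produces the compactly supported section on $U$ witnessing the $H$-condition for $(p_*,O,U,\mu_t^*J,\mu_t^*(tg),\mu_t^*(L^t),\mu_t^*(h^t),\mu_t^*(A^t))$.
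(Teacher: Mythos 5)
The paper offers no proof of this proposition; the statement is quoted from Donaldson--Sun \cite{DS}, so there is no in-paper argument to compare against, and I can only judge your reconstruction on its own. You have the right circle of ideas --- exact scale-invariance of the cone data, transfer of $C^0$-closeness under $\mu_t^*$, correction of the cone model section by a $\bar\partial$-estimate, and a pigeonhole over the $m$ intermediate scales --- but the argument as written is internally inconsistent. The second paragraph asserts that $\|\bar\partial_{\mu_t^*J}\sigma_C\|_{L^2(U)}$ and $\|\bar\partial_{\mu_t^*J}\sigma_C\|_{L^{2n+1}(O)}$ are $\lesssim C(U)\,\epsilon$ \emph{uniformly in} $t$, from which $H_4,H_5$ for the corrected section would follow for every $t$ and the pigeonhole would be unnecessary; the third paragraph instead proposes to distribute the $\bar\partial$-energy of the \emph{manifold} section on $U(m)$ over the annuli $\mu_t(U)$ --- a different section of a different bundle power, never reconciled with the corrected cone-model section. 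One of the two mechanisms has to carry the proof, and it is not clear which.

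Beyond this, there are concrete errors. You invoke ``the $L^2$-estimate applied as in Proposition~\ref{L2}'' to solve $\bar\partial\varsigma=\bar\partial_{\mu_t^*J}\sigma_C$ on $U$, but Proposition~\ref{L2} is a H\"{o}rmander estimate on the \emph{complete} K\"{a}hler manifold $(\mathcal{R},k\omega_1)$; the open set $U$ is an incomplete annulus in the tangent cone and this estimate does not apply there without a cut-off argument whose global ingredient is precisely what the $H$-condition hypothesis on $U(m)$ must supply --- a hypothesis your write-up never brings into play decisively. The scaling exponents are also reversed: with respect to $\mu_t^*(tg)$ one finds $\|\mu_t^*\alpha\|_{L^{2n+1}(O)}=t^{-1/(2(2n+1))}\|\alpha\|_{L^{2n+1}(\mu_t(O))}$ for a $(0,1)$-form and $\|\mu_t^*\tau\|_{L^2(O)}=t^{n/2}\|\tau\|_{L^2(\mu_t(O))}$ for a section, so for $t\ge1$ the $L^{2n+1}$-norm of $\bar\partial$ \emph{shrinks}, and there is no ``$t^{1/(2(2n+1))}$ loss'' to compensate. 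The formula $e^{-|\mu_t(p_*)|^2}=e^{-t^{-1}|p_*|^2}$ evaluates $h_C$ at $\mu_t(p_*)$ rather than $\mu_t^*(h_C^t)$ at $p_*$; the latter equals $e^{-t|\mu_t(p_*)|^2}=e^{-|p_*|^2}$, as exact scale-invariance requires. Finally, the genuine technical content in \cite{DS} --- producing a gauge identification of $\mu_t^*(L^t)$ with $L_C$ over $U$ that keeps the connection $C^0$-close, and propagating the $H_3$ constant along it --- is not addressed.
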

Fix any point $p$, we can assume that $(M_1,k_v^{\frac{1}{2}}d_1,p)$ converge to a tangent cone $C(Y_p)$ for some sequence $k_v$ in pointed Gromov-Hausdorff topology. For an open set $U\subset\subset C(Y_{reg})\backslash \{p\}$, there is an embedding $\chi_{k_v}:U\rightarrow \mathcal{R}=M_{reg}$. Note that $d_1|_{\mathcal{R}}=\omega_1$. The following Lemma follows from the convergence of $(M_1,k_v^{\frac{1}{2}}d_1,p)$.
\begin{lem}\label{A}
There exist $v$ and $\epsilon>0$ such that we can find an embedding $\chi_{k_v}$ which satisfies
\begin{enumerate}
\item $\frac{1}{2}|z|\leq k_v^{\frac{1}{2}}d_1(p,\chi_{k_v}(z))\leq 2|z|$,
\item $||\chi_{k_v}^*(k_v\omega_1)-\omega_C||_{C^0(U)}+||\chi_{k_v}^*(J_\mathcal{R})-J_C||_{C^0(U)}<\epsilon$.
\end{enumerate}
\end{lem}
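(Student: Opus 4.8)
The plan is to read off the embedding $\chi_{k_v}$ from the definition of the tangent cone $C(Y_p)$, upgrading the pointed Gromov-Hausdorff convergence recorded just above the statement to a $C^{1,\alpha}$ (hence in particular $C^0$) convergence of both the metric and the complex structure on the fixed relatively compact set $U$. First I would invoke $(M_1,k_v^{1/2}d_1,p)\xrightarrow{d_{GH}}(C(Y_p),g_C,p)$ and, since $U\subset\subset C(Y_{reg})\setminus\{p\}$, fix $\rho>0$ and a slightly larger relatively compact open set $U'$ with $U\subset\subset U'\subset\subset C(Y_{reg})\setminus\{p\}$ so that $d_{g_C}(\,\cdot\,,p)\geq\rho$ and $d_{g_C}(\,\cdot\,,Y_{sing})\geq\rho$ on $U'$. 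The Gromov-Hausdorff preimages in $M_1$ of points of $U'$ are regular points, so by the earlier identification $M_{reg}=\mathcal{R}$ they lie in $\mathcal{R}$, where $\omega_1$ is smooth and satisfies $\Ric(\omega_1)=-\omega_1$; after the rescaling $\omega_1\mapsto k_v\omega_1$ the Ricci curvature becomes $-k_v^{-1}(k_v\omega_1)$, which tends to $0$. On $U'$ Colding's volume convergence theorem \cite{Co} gives volume ratios close to the Euclidean one, i.e. uniform non-collapsing for the metrics $k_v\omega_1$ near $U'$; combined with the vanishing two-sided Ricci bound, Anderson's harmonic radius estimate \cite{An} --- applied exactly as in Lemma \ref{l4.5} --- yields a uniform lower bound on the $C^{1,\alpha}$ harmonic radius and hence, for all large $v$, a $C^{1,\alpha}$ diffeomorphism from $U'$ onto an open subset of $\mathcal{R}$ under which $k_v\omega_1$ converges in $C^{1,\alpha}$ to $\omega_C$.

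Next I would perturb the harmonic charts into holomorphic ones, following the argument of \cite{TZ2} already used in the proof of Proposition \ref{Ana} and in the identification $M_{reg}=\mathcal{R}$: because the spaces are K\"ahler and the data are uniformly controlled in the harmonic charts, the pulled-back complex structures can be arranged to converge in $C^{1,\alpha}$ to the integrable cone structure $J_C$ on $U'$. Restricting the resulting diffeomorphism to $U$ produces, for each large $v$, an embedding $\chi_{k_v}:U\to\mathcal{R}$ with $\chi_{k_v}^{*}(k_v\omega_1)\to\omega_C$ and $\chi_{k_v}^{*}(J_{\mathcal{R}})\to J_C$ in $C^0(U)$ (indeed in $C^{1,\alpha}$). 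Choosing $v$ so large that both of these $C^0$ defects are $<\epsilon/2$ establishes conclusion $(2)$ with that value of $\epsilon$.

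Finally, conclusion $(1)$ is immediate from the pointed Gromov-Hausdorff closeness: for $z\in U$ the rescaled distance $k_v^{1/2}d_1(p,\chi_{k_v}(z))$ differs from the cone distance $|z|=d_{g_C}(p,z)$ by an error that tends to $0$ uniformly over the relatively compact set $U$, since $\chi_{k_v}$ is a Gromov-Hausdorff approximation there; because $|z|\geq\rho>0$ on $U$, this error is eventually $\leq\frac{1}{2}|z|$, whence $\frac{1}{2}|z|\leq k_v^{1/2}d_1(p,\chi_{k_v}(z))\leq 2|z|$. Taking $v$ large enough to meet all the above requirements simultaneously completes the proof. The step I expect to be the main obstacle is the simultaneous $C^0$ control of the metric \emph{and} of the complex structure on the \emph{fixed} open set $U$: this is precisely where the uniform harmonic radius estimate (non-collapsing on $U'$ together with the rescaled Ricci curvature going to zero) and the holomorphic perturbation of \cite{TZ2} do the real work, while the passage to a convergent subsequence and the final choice of $v$ are routine.
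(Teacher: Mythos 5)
Your proposal is correct, and in substance it matches what the paper relies on: the paper itself disposes of this lemma with the single sentence ``The following Lemma follows from the convergence of $(M_1,k_v^{1/2}d_1,p)$,'' having already asserted the existence of the embedding $\chi_{k_v}:U\to\mathcal{R}$ just above the statement. What you supply is exactly the circle of ideas the paper has already set up and uses in Lemma \ref{l4.5}, Proposition \ref{Ana}, and the identification $M_{reg}=\mathcal{R}$: non-collapsing via Colding's volume convergence on the relatively compact set $U'$, the two-sided Ricci control after rescaling (lower bound $-k_v^{-1}$, upper bound from Lemma \ref{l4.4} via the almost-Euclidean volume ratio), Anderson's harmonic radius estimate to upgrade pointed GH convergence to $C^{1,\alpha}$ convergence of $k_v\omega_1$ on $U'$, and the perturbation of harmonic to holomorphic coordinates \`a la \cite{TZ2} to get $C^0$ convergence of the complex structures; property~(1) then follows because $|z|\geq\rho>0$ on $U$ while the rescaled-distance error is a GH defect tending to $0$. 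Two cosmetic remarks: the upper Ricci bound coming from Lemma \ref{l4.4} is merely uniformly bounded, not vanishing, so ``vanishing two-sided Ricci bound'' is a slight overstatement (the argument only needs boundedness); and the lemma's ``there exist $v$ and $\epsilon>0$'' is most sensibly read, as you do, as ``for every small $\epsilon>0$ there is $v$,'' which is what Proposition \ref{Hao} requires downstream.
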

\begin{prop}
For any two distinct point $p$ and $q$ in $M_1$, we have
$$
\Phi_1(p)\neq \Phi_1(q).
$$
\end{prop}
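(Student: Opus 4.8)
The plan is to run the Donaldson--Sun point--separation scheme in the present noncompact, cuspidal setting, feeding the $H$-condition machinery assembled above. Fix distinct $p,q\in M_1$. It suffices to produce, for some large $k$ divisible by $K_0$, a holomorphic section $s\in H^0(\mathcal R,k(K_{\overline M}+D))$ which, after the continuous extension to $M_1$ furnished by the local estimates of Proposition \ref{L0a}, has $|s(p)|\neq0$ but $|s(q)|$ much smaller; the symmetric construction near $q$ then shows that the projective map attached to $|k(K_{\overline M}+D)|$ separates $p$ and $q$. Since $K_{\overline M}+D$ is semi-ample with $K_{\overline M}+D=\Phi^*H$, $H$ ample on the normal variety $\Phi(\overline M)$, for $k=mK_0$ with $m\gg1$ this map equals $\iota_m\circ\Phi$ with $\iota_m$ injective; hence separation by $|k(K_{\overline M}+D)|$ is equivalent to $\Phi_1(p)\neq\Phi_1(q)$. (One also checks that the $L^2$-bounded holomorphic sections produced on $\mathcal R$ extend to $\overline M$: across $D$ using the cusp growth of $\omega_1$, across $\overline M_{sing}$ using normality.)

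To build the peaked section at $p$, I would choose $k_v\to\infty$ with $K_0\mid k_v$ along which $(M_1,k_v^{1/2}d_1,p)$ converges to a tangent cone $C(Y_p)$, take on it the trivial bundle $L_C$ with $h_C=e^{-|z|^2}$ (curvature the cone form $\omega_C$), and a base point $p_*\in C(Y_{p,reg})\setminus\{p\}$ close to the vertex with $e^{-|p_*|^2}>\frac34$. By Lemma \ref{Con}---whose proof uses the cut-off functions $\gamma$ on $Y$ to neutralise $Y_{sing}$, just as the cut-offs of Lemma \ref{cf} are used on $\mathcal R$---the data $(p_*,O,U,J_C,g_C,L_C,h_C,A_C)$ satisfies the $H$-condition, with a compactly supported $\sigma_C$ concentrated at $p_*$.

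Next I would transfer this to $\mathcal R$. On $\mathcal R=M_{reg}$ the bundle $k_v(K_{\overline M}+D)$ with Hermitian metric $\omega_1^{-nk_v}$ has curvature $k_v\omega_1$ (Proposition \ref{L2}), so $(\mathcal R,k_v\omega_1)$ with this bundle is exactly the type of data to which the $H$-condition applies; Lemma \ref{A} gives $\chi_{k_v}\colon U\to\mathcal R$ making $\chi_{k_v}^*(k_v\omega_1)$ and $\chi_{k_v}^*J_{\mathcal R}$ $C^0$-close to $\omega_C$ and $J_C$ on $U$. Applying Proposition \ref{Hao} (replacing $k_v$ by a further power $t$, $1\le t\le m$, if needed, and absorbing a factor of $K_0$ so the final power stays a multiple of $K_0$), the induced data on $\mathcal R$ with $L':=tk_v(K_{\overline M}+D)$ satisfies the $H$-condition, giving a compactly supported smooth $\sigma$ on $U'\subset\mathcal R$ with $\|\sigma\|_{L^2}\le(2\pi)^{n/2}$, $|\sigma(p_*)|>\frac34$, and $\bar\partial\sigma$ as small as $H_4,H_5$ require. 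Since $\tau:=\bar\partial\sigma$ is $\bar\partial$-closed with compact support in $\mathcal R$, Proposition \ref{L2} (curvature $=$ metric, so $\delta=1$) yields $\varsigma$ with $\bar\partial\varsigma=\tau$, $\|\varsigma\|_{L^2}\le\|\tau\|_{L^2}$; put $s:=\sigma-\varsigma\in H^0(\mathcal R,L')$. By elliptic regularity $\varsigma$ is smooth, so $H_3$ applied to $\varsigma$ gives $|\varsigma(p_*)|\le C(\|\bar\partial\varsigma\|_{L^{2n+1}(O)}+\|\varsigma\|_{L^2(O)})\le C(\|\bar\partial\sigma\|_{L^{2n+1}(O)}+\|\bar\partial\sigma\|_{L^2(U)})<\frac14$ by $H_4,H_5$, whence $|s(p_*)|>\frac12$ and $\|s\|_{L^2}\le(2\pi)^{n/2}+10^{-20}$. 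Finally Proposition \ref{L0a} converts the $L^2$ bound into a pointwise bound for $|s|$ that is small on any region at rescaled distance $\gg1$ from $p_*$; as $p_*$ sits at rescaled distance $O(1)$ from $p$ while $q$ sits at rescaled distance $\sim(tk_v)^{1/2}d_1(p,q)\to\infty$, we get $|s(q)|\ll|s(p_*)|$, and a ratio-of-sections argument (using also a peaked section at $q$) completes the separation.

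The hard part will be the bookkeeping that makes all the localisation scales compatible: one must verify (i) that the Moser-iteration bound of Proposition \ref{L0a} yields genuine fast decay of $|s|$ away from $p_*$ in the rescaled metric, uniformly in $v$, so that $q$ truly lies in the decay region; (ii) that $s$, a priori defined only on $\mathcal R$, extends continuously across $\mathcal S$ to $M_1$ and as a global section to $\overline M$ (the removable-singularity and normality inputs), so that ``$s(q)$'' and the reduction to $\Phi$ are legitimate; and (iii) that the successive powers can be kept divisible by $K_0$ while still leaving enough freedom to invoke Proposition \ref{Hao}. Everything else is a direct concatenation of the lemmas above.
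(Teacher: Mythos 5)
Your proposal follows the same Donaldson--Sun scheme as the paper: tangent-cone rescaling, the $H$-condition via Lemma \ref{Con}, transfer to $\mathcal R$ via Lemma \ref{A} and Proposition \ref{Hao}, H\"ormander's $L^2$-estimate Proposition \ref{L2} to produce a holomorphic peak section, and the Moser-type estimates of Proposition \ref{L0a} to pass from $L^2$ to $C^0$ and $C^1$ information. So the strategy and the set of ingredients are correct.

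Two points deserve attention, though, because as written they leave a gap. First, the mechanism that makes $|s(q)|$ small is \emph{not} a decay estimate coming from Proposition \ref{L0a} applied to the global $L^2$-bound $\|s\|_{L^2}\le(2\pi)^{n/2}+10^{-20}$: that total bound is $O(1)$, not small, and the Moser iteration only gives a sub-mean-value inequality, not decay. What one really needs is the observation that outside $\Supp\sigma_p$ (in particular near $q$) the holomorphic section equals $-\varsigma_p$, so $\|s\|_{L^2(\mathcal R\setminus U_p)}=\|\varsigma_p\|_{L^2(\mathcal R\setminus U_p)}\le 10^{-20}$; it is this \emph{localised} $L^2$-smallness, combined with $H_3$ (or Proposition \ref{L0a}) at $q_*$ and then the gradient estimate from $z_{q_*}$ to $q$, that forces $|s(q)|$ small. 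The paper carries this out explicitly in Step~4. Second, since the two peak sections a priori live in different powers $k_p=t_pk_{v_{p,q}}$ and $k_q=t_qk_{v_{p,q}}$ of $K_{\overline M}+D$, the paper raises them to matching powers, setting $K=t_qk_p=t_pk_q$ and comparing $(\sigma'_p)^{t_q}$ with $(\sigma'_q)^{t_p}$ at $p$ and $q$ (Step~6); your ``ratio-of-sections'' phrase gestures at this but the explicit power-matching is what makes the comparison legitimate. Your additional remarks on extending $L^2$-holomorphic sections across $D$ and across $\overline M_{sing}$ by normality, and on keeping the power divisible by $K_0$ so that the section map factors through $\Phi$, are genuinely careful points which the paper's Step~6 glosses over; it would be worth fleshing them out, but they do not change the architecture of the argument.
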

\begin{proof}
Step 1: For any two distinct points $p$ and $q$, there exist $r$ and $R$ such that $p,q\in B_{d_1}(x_1,r)\subset B_{d_1}(x_1,2r)\subset B_{d_1}(x_1,R)$. Suppose $C(Y_p)$ and $C(Y_q)$ are two tangent cones of $p$ and $q$ after rescaling $(M_1,d_1)$ at $p$ by $k_{v_p}\rightarrow \infty$ and at $q$ by $k_{v_q}\rightarrow \infty$. Then according to Lemma \ref{Con}, we can construct two collection of data $(p_*,O_p,U_p(m_p),J_p,g_p,L_p,h_p,A_p)$ and $(q_*,O_q,U_q(m_q),J_q,g_q,L_q,h_q,A_q)$ which satisfy the $H$-condition, where $U_p(m_p)\subset C(Y_p)$ and $U_q(m_q)\subset C(Y_q)$. In addition, we can always assume that
\begin{enumerate}
\item the constant $C$ appeared in the $H$-condition for $U_p(m_p)$ and $U_q(m_q)$ are the same,
\item $k_{v_p}=k_{v_q}=k_{v_{p,q}}$,
\item $r_{p_*}:=d_{C(Y_p)}(p,p_*)$ and $r_{q_*}:=d_{C(Y_q)}(q,q_*)$ are small enough which definite below.
\end{enumerate}

Step 2: From Lemma \ref{A}, there exist $k_{v_{p,q}}$ such that $\chi_{p,k_{v_{p,q}}}: U_p(m_p)\rightarrow \mathcal{R}$ and $\chi_{q,k_{v_{p,q}}}: U_q(m_q)\rightarrow \mathcal{R}$ satisfy the following:
\begin{enumerate}
\item $\frac{1}{2}|z|\leq k_{v_{p,q}}^{\frac{1}{2}}d_1(p,\chi_{p}(z))\leq 2|z|$,
\item $\frac{1}{2}|z|\leq k_{v_{p,q}}^{\frac{1}{2}}d_1(q,\chi_{q}(z))\leq 2|z|$,
\item $\chi_p(U_p(m_p))\cap \chi_q(U_q(m_q))=\varnothing$,
\item $||\chi_{p}^*(k_{v_{p,q}}\omega_1)-\omega_p||_{C^0(U_p(m_p))}+||\chi_{p}^*(J_\mathcal{R})-J_p||_{C^0(U_p(m_p))}<\epsilon$,
\item $||\chi_{q}^*(k_{v_{p,q}}\omega_1)-\omega_q||_{C^0(U_q(m_q))}+||\chi_{q}^*(J_\mathcal{R})-J_q||_{C^0(U_q(m_q))}<\epsilon$.
\end{enumerate}
where for the convenience, $\chi_{p,k_{v_{p,q}}}$ and $\chi_{q,k_{v_{p,q}}}$ are denoted by $\chi_{p}$ and $\chi_{q}$ respectively.

Step 3: By the Proposition \ref{Hao} and sufficiently small $\epsilon$ in Step 2, there exists $1\leq t_p\leq m_p$ such that $(p_*,O_p,U_p,\mu_{t_p}^*\chi_p^*(J_\mathcal{R}),\mu_{t_p}^*\chi_p^*(k_{v_{p,q}}\omega_1),\mu_{t_p}^*\chi_p^*(L_p^{t_p}),\mu_{t_p}^*\chi_p^*(h_{p}^{t_p}),\mu_{t_p}^*\chi_p^*(A_p^{t_p}))$ satisfies the $H$-condition. Thus there is a compactly smooth section $\sigma_p$ such that $\sigma_p$ has properties $H_1$, $H_2$, $H_4$ and $H_5$. By the same argument, there exists $1\leq t_q\leq m_q$ such that
$(q_*,O_q,U_q,\mu_{t_q}^*\chi_q^*(J_\mathcal{R}),\mu_{t_q}^*\chi_q^*(k_{v_{p,q}}\omega_1),\mu_{t_q}^*\chi_q^*(L_q^{t_q}),$ $\mu_{t_q}^*\chi_q^*(h_{q}^{t_q}),\mu_{t_q}^*\chi_q^*(A_q^{t_q}))$ satisfies the $H$-condition. Thus there is a compactly smooth section $\sigma_q$ such that $\sigma_q$ has properties $H_1$, $H_2$, $H_4$ and $H_5$.

Step 4: There is an embedding from $(\mu_{t_p}^*\chi_p^*(L_p^{t_p}),U_p)$ to $(k_p(K_{\overline{M}}+D),\mathcal{R})$, where $k_p=t_pk_{v_{p,q}}$. So $\sigma_p$ can be viewed as a compactly smooth section of $k_p(K_{\overline{M}}+D)$. We now apply Proposition \ref{L2} to $\tau_p=\bar{\partial}\sigma_p$. Then there exists a $(k_p(K_{\overline{M}}+D))$ valued section $\varsigma_p$ solving the $\bar{\partial}$ equation $\bar{\partial}\varsigma_p=\tau_p$ with
$$
\int_{\mathcal{R}}|\varsigma_p|^2(k_p\omega_1)^n\leq \int_{\mathcal{R}}|\tau_p|^2(k_p\omega_1)^n\leq \min\Big(\frac{1}{8\sqrt{2}C},10^{-20}\Big).
$$
Let $z_{p_*}=\chi_p(p_*)$, then from $H_3$ and $H_5$,
$$
\varsigma_p(z_{p_*})\leq C\big(||\bar{\partial}\varsigma_p||_{L^{2n+1}(O_p)}+||\varsigma_p||_{L^2(O_p)}\big)\leq \frac{1}{8\sqrt{2}C}+\frac{1}{8C}\leq \frac{1}{4}.
$$
Set $\sigma'_p=\sigma_p-\varsigma_p$. Then $\sigma'_p$ is a holomorphic section of $k_p(K_{\overline{M}}+D)$ over $\mathcal{R}$ and from Proposition \ref{L0a}, $\sigma'_p$ can be continuously extended to $M_1$. By the $H$-condition, we have the following relations:
\begin{enumerate}
\item $|\sigma'_p(z_{p_*})|>\frac{1}{2}$,
\item $||\sigma'_p||_{L^2(\mathcal{R},k_p\omega_1,h_p^{k_p})}\leq 2(2\pi)^{\frac{n}{2}}$,
\item $||\sigma'_p||_{L^2(\mathcal{R}\backslash U_p,k_p\omega_1,h_p^{k_p})}=||\varsigma_p||_{L^2(\mathcal{R}\backslash U_p,k_p\omega_1,h_p^{k_p})}\leq \min\Big(\frac{1}{8\sqrt{2}C},10^{-20}\Big)$.
\end{enumerate}
Then by Proposition \ref{L0a},
$$
|\sigma'_p(p)|\geq |\sigma'_p(z_{p_*})|-\sup_{B_{d_1}(x_1,r)}|\nabla\sigma'_p|k_p^{\frac{1}{2}}d_1(p,z_{p_*})\geq \frac{2}{5},
$$
when $r_{p_*}$ is sufficiently small.

Now we restrict $\sigma'_p$ on $U_q$. By $H_3$,
$$
|\sigma'_p(z_{q_*})|\leq C||\sigma'_p||_{L^2(\mathcal{R}\backslash U_p,k_p\omega_1,h_p^{k_p})}\leq C\min\Big(\frac{1}{8\sqrt{2}C},10^{-20}\Big).
$$
Similarly,
$$
|\sigma'_p(q)|\leq |\sigma'_p(z_{q_*})|+\sup_{B_{d_1}(x_1,r)}|\nabla\sigma'_p|k_p^{\frac{1}{2}}d_1(q,z_{q_*})\leq 2C\min\Big(\frac{1}{8\sqrt{2}C},10^{-20}\Big),
$$
when $r_{p_*}$ is sufficiently small.

Step 5: By the same argument of Step 4, let $k_q=t_qk_{v_{p,q}}$, we construct a holomorphic section $\sigma'_q$ such that
$$
|\sigma'_q(q)|\geq \frac{2}{5},\ \ \ \ \ |\sigma'_q(p)|\leq 2C\min\Big(\frac{1}{8\sqrt{2}C},10^{-20}\Big).
$$

Step 6: Set $K=t_qk_p=t_pk_q$. Then $(\sigma'_p)^{t_q}$ and $(\sigma'_q)^{t_p}$ are holomorphic section of $K(K_{\overline{M}}+D)$ which can be continuously extended to $M_1$. Modifying the constant $10^{-20}$ as small as enough, we have $|(\sigma'_p)^{t_q}(p)|>>|(\sigma'_q)^{t_p}(p)|$ and $|(\sigma'_q)^{t_p}(q)|>>|(\sigma'_p)^{t_q}(q)|$. Therefore, we conclude that $\Phi_1$ is injective.
\end{proof}
\subsection{Surjectivity of $\Phi_1$}
In this subsection we will complete the proof of Theorem \ref{pr}.
Let $u_1$ be the solution to the following equation in the current sense
$$
(\eta_1-\sqrt{-1}\partial\overline{\partial}\log\log^2|s_D|^2+\sqrt{-1}\partial\overline{\partial}u_1)^n=e^{u_1}\frac{\Omega}{|s_D|^2\log^2|s_D|^2}.
$$
Since $K_{\overline{M}}+D$ is big and semi-ample, there exists an effective divisor $E=\sum_ia_iE_i$ such that $K_{\overline{M}}+D-\epsilon E>0$ for all sufficiently small $\epsilon>0$.

Let $p\in \Supp E\backslash D$ and $\pi: \widetilde{\overline{M}}\rightarrow \overline{M}$ be the blow up at $p$ with exceptional divisor $\pi^{-1}(p)=F$. Set $\widetilde{D}=\pi^{-1}(D)$ and $\widetilde{E}=\sum_ia_i\widetilde{E_i}$, where $\widetilde{E_i}=\overline{\pi^{-1}(E_i)-F}$. $s_{\widetilde{E_i}}$, $s_F$ and $s_{\widetilde{D}}$ are denoted by the defining sections of line bundles $L_{\widetilde{E_i}}$, $L_F$ and $L_{\widetilde{D}}$ respectively. Let $\chi$ be fixed K\"{a}hler metric on $\widetilde{\overline{M}}$. We choose appropriate hermitian metrics $h_{\widetilde{E_i}}$ and $h_F$ such that
$$
\pi^*\eta_1+\delta\sqrt{-1}\partial\overline{\partial}\log|s_F|^2+\delta\sum_ia_i\sqrt{-1}\partial\overline{\partial}\log|s_{\widetilde{E_i}}|^2\geq\mu \chi
$$
for some small constants $\delta$ and $\mu$. Note that $\widetilde{\Omega}=\frac{\pi^*\Omega}{|s_F|^{2(n-1)}}$ defines a smooth volume form on $\widetilde{\overline{M}}$. We consider the following Monge-Amp\`{e}re equation on $\widetilde{\overline{M}}$
\begin{equation}\label{use}
(\widetilde{\eta_1}+\epsilon\chi+\sqrt{-1}\partial\overline{\partial}\varphi_\epsilon)^n=e^{\varphi_\epsilon}(\epsilon^2+|s_F|^2)^{n-1}\frac{\widetilde{\Omega}}{|s_{\widetilde{D}}|^2_{h_{\widetilde{D}}}\log^2|s_{\widetilde{D}}|^2_{h_{\widetilde{D}}}}
\end{equation}
where $\widetilde{\eta_1}=\pi^*\eta_1-\sqrt{-1}\partial\overline{\partial}\log\log^2|s_{\widetilde{D}}|^2_{h_{\widetilde{D}}}$ and $h_{\widetilde{D}}=\pi^*h_D$. By Theorem 1 of \cite{Ko84}, the equation has a unique smooth solution $\varphi_\epsilon$ for each $\epsilon$; moreover
$$
\widetilde{\omega_\epsilon}:=\widetilde{\eta_1}+\epsilon\chi+\sqrt{-1}\partial\overline{\partial}\varphi_\epsilon
$$
is a smooth complete K\"{a}hler metric on $\widetilde{\overline{M}}\backslash \widetilde{D}$.
\begin{lem}\label{c0}
For any $\delta$ and $\epsilon$, there exist two constants $C(\delta)$ and $C$ independent of $\epsilon$ such that
$$
-C(\delta)+\delta\log|s_F|^2+\delta\sum_ia_i\log|s_{\widetilde{E_i}}|^2\leq \varphi_\epsilon\leq C+\log\log^2|s_{\widetilde{D}}|^2_{h_{\widetilde{D}}}.
$$
\end{lem}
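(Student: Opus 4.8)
The plan is to obtain both inequalities from Yau's generalized maximum principle on the complete K\"ahler manifold $(\widetilde{\overline{M}}\setminus\widetilde{D},\widetilde{\omega_\epsilon})$, which has bounded geometry by the estimates of Kobayashi \cite{Ko84} and Cheng--Yau \cite{CY}. For each fixed $\epsilon>0$ the solution $\varphi_\epsilon$ is smooth on $\widetilde{\overline{M}}\setminus\widetilde{D}$, and since the exceptional divisor $F$ is disjoint from $\widetilde{D}$, in a neighbourhood of $\widetilde{D}$ the right-hand side of \eqref{use} is a cusp volume form with smooth, nowhere-vanishing density $(\epsilon^2+|s_F|^2)^{n-1}\widetilde{\Omega}$, so Kobayashi's local theory gives $\varphi_\epsilon$ bounded there; consequently each of the barrier-adjusted functions used below is bounded in the direction required for the maximum principle to apply. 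I will repeatedly use that $\widetilde{\Omega}$ and $\chi^n$ are comparable, that any complete cusp K\"ahler metric on $\widetilde{\overline{M}}\setminus\widetilde{D}$ has volume form comparable to $\chi^n/(|s_{\widetilde{D}}|^2\log^2|s_{\widetilde{D}}|^2)$, and that after rescaling the hermitian metrics we may assume $|s_{\widetilde{D}}|^2_{h_{\widetilde{D}}}<e^{-1}$ and $|s_F|^2_{h_F},|s_{\widetilde{E_i}}|^2_{h_{\widetilde{E_i}}}<1$.

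For the upper bound the crucial identity is $\widetilde{\eta_1}+\sqrt{-1}\partial\overline{\partial}\log\log^2|s_{\widetilde{D}}|^2_{h_{\widetilde{D}}}=\pi^*\eta_1$, so that $\pi^*\eta_1$ is a fixed smooth semipositive form and $\pi^*\eta_1+\epsilon\chi\le C\chi$. I would apply the generalized maximum principle to $\psi_\epsilon:=\varphi_\epsilon-\log\log^2|s_{\widetilde{D}}|^2_{h_{\widetilde{D}}}$; at its supremum one gets a sequence $x_i$ with $(1-o(1))\,\widetilde{\omega_\epsilon}(x_i)\le(\pi^*\eta_1+\epsilon\chi)(x_i)$, whence, taking determinants in \eqref{use} and cancelling the common cusp factor $(|s_{\widetilde{D}}|^2\log^2|s_{\widetilde{D}}|^2)^{-1}$, one obtains $e^{\varphi_\epsilon}(\epsilon^2+|s_F|^2)^{n-1}\le C$ along $x_i$. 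Away from $F$ this gives $\psi_\epsilon\le C$ at once; near $F$ one must moreover use that $\pi^*\eta_1$ is degenerate transverse to $F$, so that $(\pi^*\eta_1+\epsilon\chi)^n\le C(\epsilon+|s_F|^2)^{n-1}\chi^n$ there, and reconcile this with the $|s_F|^{2(n-1)}$-vanishing of the right-hand measure of \eqref{use} coming from $\widetilde{\Omega}=\pi^*\Omega/|s_F|^{2(n-1)}$. Concretely, I would compare $\varphi_\epsilon$ near $F$ with $\pi^*u_1$, the bounded solution of \eqref{use} at $\epsilon=0$ (recall $(\pi^*\omega_1)^n=e^{\pi^*u_1}|s_F|^{2(n-1)}\widetilde{\Omega}/(|s_{\widetilde{D}}|^2\log^2|s_{\widetilde{D}}|^2)$, where $\pi^*\omega_1=\widetilde{\eta_1}+\sqrt{-1}\partial\overline{\partial}\pi^*u_1$), and estimate the difference in the shrinking tube $\{|s_F|\lesssim\epsilon\}$ by a rescaled local barrier, since there the global maximum principle alone only yields the weaker $\varphi_\epsilon\lesssim|\log\epsilon|$.

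For the lower bound I would use the positivity built into the construction: $\widetilde{\eta_1}+\delta\sqrt{-1}\partial\overline{\partial}\big(\log|s_F|^2_{h_F}+\sum_i a_i\log|s_{\widetilde{E_i}}|^2_{h_{\widetilde{E_i}}}\big)$ equals $\pi^*\eta_1+\delta\sqrt{-1}\partial\overline{\partial}\log|s_F|^2_{h_F}+\delta\sum_i a_i\sqrt{-1}\partial\overline{\partial}\log|s_{\widetilde{E_i}}|^2_{h_{\widetilde{E_i}}}-\sqrt{-1}\partial\overline{\partial}\log\log^2|s_{\widetilde{D}}|^2_{h_{\widetilde{D}}}$, which by the choices of $\Omega,h_{\widetilde{D}},h_F,h_{\widetilde{E_i}}$ is a complete cusp K\"ahler metric dominating a fixed multiple $c_\delta$ of the reference cusp metric, so that its $n$-th power is $\ge c_\delta\,\widetilde{\Omega}/(|s_{\widetilde{D}}|^2\log^2|s_{\widetilde{D}}|^2)$. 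Applying the generalized maximum principle to $\phi_\epsilon:=\varphi_\epsilon-\delta\log|s_F|^2_{h_F}-\delta\sum_i a_i\log|s_{\widetilde{E_i}}|^2_{h_{\widetilde{E_i}}}$ at its infimum produces $x_i$ with $(1+o(1))\,\widetilde{\omega_\epsilon}(x_i)$ bounded below by this metric; taking determinants, cancelling the cusp factors against $\widetilde{\omega_\epsilon}^n$ from \eqref{use}, and using $(\epsilon^2+|s_F|^2)^{n-1}\le C$, one finds $e^{\varphi_\epsilon}\ge c(\delta)>0$ along $x_i$, hence $\varphi_\epsilon\ge-C(\delta)$ there; since $\log|s_F|^2_{h_F}<0$ and $\log|s_{\widetilde{E_i}}|^2_{h_{\widetilde{E_i}}}<0$ this forces $\phi_\epsilon\ge-C(\delta)$ everywhere, i.e. the asserted inequality.

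The step I expect to be the main obstacle is the uniformity in $\epsilon$ of the upper bound near $F$. On the tube $\{|s_F|^2\lesssim\epsilon^2\}$ the reference form $\widetilde{\eta_1}+\epsilon\chi$ is of size $\sim\epsilon$ in each of the $n-1$ directions contracted by $\pi$, whereas the density on the right of \eqref{use} is of size $\sim\epsilon^{2(n-1)}$; the plain maximum-principle comparison therefore loses a factor $\epsilon^{-(n-1)}$. Closing this gap is exactly where one must exploit the precise agreement between the weight $|s_F|^{2(n-1)}$ appearing in $\widetilde{\Omega}=\pi^*\Omega/|s_F|^{2(n-1)}$ and the order of vanishing of $(\pi^*\omega_1)^n$ along $F$ --- that is, why the right-hand side of \eqref{use} was put in this particular form --- supplemented by a rescaled local analysis near $F$. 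Everything else, including the behaviour near $\widetilde{D}$ and the entire lower bound, is routine generalized-maximum-principle bookkeeping.
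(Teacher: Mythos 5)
Your \emph{lower bound} argument is essentially the paper's, and it is sound: the paper also runs a generalized-maximum-principle argument against the barrier $\delta\log|s_F|^2+\delta\sum a_i\log|s_{\widetilde{E_i}}|^2$, except that it inserts an auxiliary Monge--Amp\`ere equation for a function $\psi_{\epsilon,\delta}$ (with reference form $\eta_1^\delta-\delta\sum\Theta_{\widetilde{E_i}}-\delta\Theta_F+\epsilon\chi$), first bounds $\inf\psi_{\epsilon,\delta}\ge -C(\delta)$, and then bounds $\varphi_{\epsilon,\delta}-\psi_{\epsilon,\delta}$ from below by a second application of the maximum principle. The extra intermediate step is there to handle cleanly the change of hermitian metric on $L_{\widetilde{D}}$ (the paper replaces $h_{\widetilde{D}}$ by a metric $h_{\widetilde{D}}^\delta$ adapted to the shifted cohomology class) and to keep the bounded-geometry hypothesis of the Cheng--Yau maximum principle visibly satisfied; your one-step version works provided you verify that $\widetilde{\eta_1}+\epsilon\chi-\delta\Theta_F-\delta\sum a_i\Theta_{\widetilde{E_i}}$ really is a complete cusp metric with bounded geometry for the chosen $h_{\widetilde{D}}$, which is exactly the point the paper is being careful about.

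Your \emph{upper bound} argument, however, is where you diverge from the paper and where there is a genuine gap. You correctly diagnose the problem: at a near-supremum point $x_i$ of $\psi_\epsilon=\varphi_\epsilon-\log\log^2|s_{\widetilde{D}}|^2_{h_{\widetilde{D}}}$ the inequality $\widetilde{\omega_\epsilon}^n(x_i)\le(\pi^*\eta_1+\epsilon\chi)^n(x_i)$ only yields $e^{\psi_\epsilon(x_i)}\lesssim(\epsilon+|s_F|^2)^{n-1}/(\epsilon^2+|s_F|^2)^{n-1}$ after cancelling against $\widetilde{\Omega}=\pi^*\Omega/|s_F|^{2(n-1)}$, which in the tube $\{|s_F|^2\lesssim\epsilon\}$ only gives $\psi_\epsilon\le C+(n-1)\log(1/\epsilon)$. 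The device you sketch to close this --- comparison with $\pi^*u_1$ plus a rescaled local barrier on $\{|s_F|\lesssim\epsilon\}$ --- is not carried out and is nontrivial (in particular $\pi^*u_1$ is not a priori bounded before the lemma is proved, so a circularity would have to be broken). The paper avoids the issue entirely by never using a pointwise maximum principle for the upper bound: instead it uses the cohomological identity $\int\widetilde{\omega_\epsilon}^n=\int(\pi^*\eta_1+\epsilon\chi)^n$ (a Stokes-type lemma for cusp metrics, cited from Kobayashi), Jensen's inequality for the probability measure $V_\epsilon^{-1}\widetilde{\Omega_\epsilon}$ to bound the $\widetilde{\Omega_\epsilon}$-average of $\varphi_\epsilon$ from above uniformly in $\epsilon$, and finally the mean-value (Hartogs-type) inequality for the $(\pi^*\eta_1+\epsilon\chi)$-plurisubharmonic function $\varphi_\epsilon-\log\log^2|s_{\widetilde{D}}|^2$ to pass from the average bound to a sup bound. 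Because $V_0\le V_\epsilon\le V_1$ and the densities $\widetilde{\Omega_\epsilon}$ are uniformly comparable, all constants are $\epsilon$-independent. This integral argument both removes the near-$F$ subtlety and is considerably shorter; you should adopt it, or else fully flesh out the local barrier construction near $F$ that you allude to.
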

\begin{proof}
For the upper bound, let
$$
V_\epsilon=\int(\epsilon^2+|s_F|^2)^{n-1}\frac{\widetilde{\Omega}}{|s_{\widetilde{D}}|^2_{h_{\widetilde{D}}}\log^2|s_{\widetilde{D}}|^2_{h_{\widetilde{D}}}},
$$
so we have $V_1\geq V_\epsilon\geq V_0$. Hence $V_\epsilon$ is uniformly bounded. Denote $(\epsilon^2+|s_F|^2)^{n-1}\frac{\widetilde{\Omega}}{|s_{\widetilde{D}}|^2_{h_{\widetilde{D}}}\log^2|s_{\widetilde{D}}|^2_{h_{\widetilde{D}}}}$ by $\widetilde{\Omega_\epsilon}$, then we have the following calculation
\begin{align*}
\frac{1}{V_\epsilon}\int\varphi_\epsilon\widetilde{\Omega_\epsilon} &=\frac{1}{V_\epsilon}\int\log\Bigg(\frac{\widetilde{\omega_\epsilon}^n}{\widetilde{\Omega_\epsilon}}\Bigg)\widetilde{\Omega_\epsilon} \leq \log\int\widetilde{\omega_\epsilon}^n-\log V_\epsilon \\
&=\log \int(\pi^*\eta+\epsilon\chi)^n-\log V_\epsilon\leq C,
\end{align*}
where the third equality bases on a Lemma (\cite{Ko84} P410). Since $\varphi_\epsilon-\log\log^2|s_{\widetilde{D}}|^2_{h_{\widetilde{D}}} \in PSH(\widetilde{\overline{M}},\pi^*\eta+\epsilon\chi)$, the mean inequality implies
$$
\sup\varphi_\epsilon\leq C+\log\log^2|s_{\widetilde{D}}|^2_{h_{\widetilde{D}}}.
$$

For the lower bound, we set $\varphi_{\epsilon,\delta}=\varphi_\epsilon-\delta\log|s_F|^2-\delta\sum_ia_i\log|s_{\widetilde{E_i}}|^2$ and denote $|s_{\widetilde{D}}|^2_{h_{\widetilde{D}}^\delta}=|s_{\widetilde{D}}|^2_{\delta}$, then the equation (\ref{use}) is equivalent to
\begin{align*}
& \Big(\eta^\delta_1+\delta\sqrt{-1}\partial\overline{\partial}\log|s_F|^2+\delta\sum_ia_i\sqrt{-1}\partial\overline{\partial}\log|s_{\widetilde{E_i}}|^2
+\epsilon\chi+\sqrt{-1}\partial\overline{\partial}\varphi_{\epsilon,\delta}+\sqrt{-1}\partial\overline{\partial}\log\frac{\log^2|s_{\widetilde{D}}|^2_{\delta}}{\log^2|s_{\widetilde{D}}|^2}\Big)^n\\
& = e^{\varphi_{\epsilon,\delta}+\log\frac{\log^2|s_{\widetilde{D}}|^2_{\delta}}{\log^2|s_{\widetilde{D}}|^2}}\cdot\prod_i|s_{\widetilde{E_i}}|^{2a_i\delta}\cdot|s_F|^{2\delta}
\cdot(\epsilon^2+|s_F|^2)^{n-1}\cdot\frac{\widetilde{\Omega}'}{|s_{\widetilde{D}}|^2_{\delta}\log^2|s_{\widetilde{D}}|^2_{\delta}}
\end{align*}
where $\eta^\delta_1=\pi^*\eta_1-\sqrt{-1}\partial\overline{\partial}\log\log^2|s_{\widetilde{D}}|^2_\delta$ satisfying $\eta^\delta_1+\delta\sqrt{-1}\partial\overline{\partial}\log|s_F|^2+\delta\sum_ia_i\sqrt{-1}\partial\overline{\partial}\log|s_{\widetilde{E_i}}|^2>0$ and $\widetilde{\Omega}'=\frac{|s_{\widetilde{D}}|^2_\delta}{|s_{\widetilde{D}}|^2}\widetilde{\Omega}$.
We introduce the following equation
$$
(\eta^\delta_1-\delta\sum_i\Theta_{\widetilde{E_i}}-\delta\Theta_F+\epsilon\chi+\sqrt{-1}\partial\overline{\partial}\psi_{\epsilon,\delta})^n=e^{\psi_{\epsilon,\delta}}\cdot(\epsilon^2+|s_F|^2)
^{n-1}\cdot\frac{\widetilde{\Omega}'}{|s_{\widetilde{D}}|^2_{\delta}\log^2|s_{\widetilde{D}}|^2_{\delta}}.
$$
By the generalized maximum principle, there exists a sequence $\{x_i\}$ such that $\lim_{i\rightarrow\infty}\psi_{\epsilon,\delta}(x_i)=\inf\psi_{\epsilon,\delta}$ and $\overline{\lim}_{i\rightarrow\infty}\sqrt{-1}\partial\overline{\partial}\psi_{\epsilon,\delta}(x_i)\geq 0$. Then we have
$$
\inf\psi_{\epsilon,\delta}\geq (n-1)\log\frac{1}{\epsilon^2+|s_F|^2}+\log\frac{|s_{\widetilde{D}}|^2_{\delta}\log^2|s_{\widetilde{D}}|^2_{\delta}(\eta^\delta_1-\delta\sum_i\Theta_{\widetilde{E_i}}-\delta\Theta_F+\epsilon\chi)^n}{\widetilde{\Omega}'}\geq-C(\delta).
$$
Set $H_{\epsilon,\delta}=\varphi_{\epsilon,\delta}-\psi_{\epsilon,\delta}$ and $v_\epsilon^\delta=\eta^\delta_1-\delta\sum_i\Theta_{\widetilde{E_i}}-\delta\Theta_F+\epsilon\chi$, then
\begin{align*}
&\log\frac{\Big(v_\epsilon^\delta+\sqrt{-1}\partial\overline{\partial}\psi_{\epsilon,\delta}+\sqrt{-1}\partial\overline{\partial}H_{\epsilon,\delta}+\sqrt{-1}\partial\overline{\partial}\log\frac{\log^2|s_{\widetilde{D}}|^2_{\delta}}{\log^2|s_{\widetilde{D}}|^2}\Big)^n}
{(v_\epsilon^\delta+\sqrt{-1}\partial\overline{\partial}\psi_{\epsilon,\delta})^n}\\
=& H_{\epsilon,\delta}+\log\frac{\log^2|s_{\widetilde{D}}|^2_{\delta}}{\log^2|s_{\widetilde{D}}|^2}+
\delta\log|s_F|^2+\delta\sum_ia_i\log|s_{\widetilde{E_i}}|^2.
\end{align*}
By the generalized maximum principle again
$$
\inf\Bigg(H_{\epsilon,\delta}+\log\frac{\log^2|s_{\widetilde{D}}|^2_{\delta}}{\log^2|s_{\widetilde{D}}|^2}\Bigg)\geq -C(\delta).
$$
Note that $\log\frac{\log^2|s_{\widetilde{D}}|^2_{\delta}}{\log^2|s_{\widetilde{D}}|^2}$ is a smooth function on $\widetilde{\overline{M}}$, so it can be bounded by $C(\delta)$. Moreover we get the lower bound of $\varphi_{\epsilon}$.
\end{proof}
\begin{lem}\label{Ric}
There exists a constant $C$ independent of $\epsilon$ such that on $\widetilde{\overline{M}}\backslash \widetilde{D}$, we have
$$
\Ric(\widetilde{\omega_{\epsilon}})\leq -\widetilde{\omega_{\epsilon}}+C\chi.
$$
\end{lem}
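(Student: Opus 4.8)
The plan is to differentiate the Monge--Amp\`ere equation (\ref{use}): applying $-\sqrt{-1}\partial\overline{\partial}\log(\cdot)$ to both sides turns the volume identity into a curvature identity, after which only an $\epsilon$-uniform upper bound for one regularizing term is left. Starting from $\widetilde{\omega_\epsilon}^n=e^{\varphi_\epsilon}(\epsilon^2+|s_F|^2)^{n-1}\widetilde{\Omega}\,(|s_{\widetilde D}|^2_{h_{\widetilde D}}\log^2|s_{\widetilde D}|^2_{h_{\widetilde D}})^{-1}$, one gets on $\widetilde{\overline{M}}\setminus\widetilde D$
$$
\Ric(\widetilde{\omega_\epsilon})=-\sqrt{-1}\partial\overline{\partial}\varphi_\epsilon-(n-1)\sqrt{-1}\partial\overline{\partial}\log(\epsilon^2+|s_F|^2)+\Ric(\widetilde{\Omega})+\sqrt{-1}\partial\overline{\partial}\log|s_{\widetilde D}|^2_{h_{\widetilde D}}+\sqrt{-1}\partial\overline{\partial}\log\log^2|s_{\widetilde D}|^2_{h_{\widetilde D}}.
$$
Now substitute $-\sqrt{-1}\partial\overline{\partial}\varphi_\epsilon=-\widetilde{\omega_\epsilon}+\widetilde{\eta_1}+\epsilon\chi$, use $\widetilde{\eta_1}+\sqrt{-1}\partial\overline{\partial}\log\log^2|s_{\widetilde D}|^2_{h_{\widetilde D}}=\pi^*\eta_1$ and, off $\widetilde D$, $\sqrt{-1}\partial\overline{\partial}\log|s_{\widetilde D}|^2_{h_{\widetilde D}}=-\pi^*\Theta_D$ (as $h_{\widetilde D}=\pi^*h_D$). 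With the defining relation $\eta_1=-\Ric(\Omega)+\Theta_D$ of the construction and $\widetilde{\Omega}=\pi^*\Omega\,|s_F|^{-2(n-1)}$, a Lelong--Poincar\'e computation -- in which the zero divisor of the Jacobian of the blow-up $\pi$ cancels the pole $\widetilde{\Omega}$ has along $F$ -- gives $\pi^*\eta_1-\pi^*\Theta_D+\Ric(\widetilde{\Omega})=-(n-1)\Theta_F$, so that
$$
\Ric(\widetilde{\omega_\epsilon})=-\widetilde{\omega_\epsilon}+\epsilon\chi-(n-1)\Theta_F-(n-1)\sqrt{-1}\partial\overline{\partial}\log(\epsilon^2+|s_F|^2).
$$

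Next I would bound the last two terms uniformly in $\epsilon$. The key pointwise inequality is $\sqrt{-1}\partial\overline{\partial}\log(\epsilon^2+|s_F|^2)\ge-\frac{|s_F|^2}{\epsilon^2+|s_F|^2}\Theta_F$ on all of $\widetilde{\overline{M}}$, which follows by expanding the left-hand side and using the form-level Bochner identity $\sqrt{-1}\partial\overline{\partial}|s_F|^2=\langle\nabla s_F,\nabla s_F\rangle-|s_F|^2\Theta_F$ (cf. (\ref{e4.2})) together with the Cauchy--Schwarz bound $\sqrt{-1}\partial|s_F|^2\wedge\overline{\partial}|s_F|^2\le|s_F|^2\langle\nabla s_F,\nabla s_F\rangle$. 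Hence
$$
-(n-1)\Theta_F-(n-1)\sqrt{-1}\partial\overline{\partial}\log(\epsilon^2+|s_F|^2)\ \le\ -(n-1)\,\frac{\epsilon^2}{\epsilon^2+|s_F|^2}\,\Theta_F.
$$
Since $\widetilde{\overline{M}}$ is compact, $-C_0\chi\le\Theta_F\le C_0\chi$ for some $C_0>0$; writing $\lambda_\epsilon:=\epsilon^2(\epsilon^2+|s_F|^2)^{-1}\in(0,1]$, the convex-combination estimate $C_0\chi+\lambda_\epsilon\Theta_F=(1-\lambda_\epsilon)C_0\chi+\lambda_\epsilon(C_0\chi+\Theta_F)\ge0$ gives $-(n-1)\lambda_\epsilon\Theta_F\le(n-1)C_0\chi$. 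Combined with $\epsilon\chi\le\chi$ for $\epsilon\le1$, this yields $\Ric(\widetilde{\omega_\epsilon})\le-\widetilde{\omega_\epsilon}+C\chi$ with $C:=1+(n-1)C_0$ independent of $\epsilon$.

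I do not expect a genuine obstacle here; the argument is formal once two bookkeeping points are checked, namely that $\Ric(\widetilde{\Omega})$ is a bona fide smooth $(1,1)$-form across $F$ (so the identities above hold globally, not merely off $F$) and that the current carried by the Jacobian divisor of $\pi$ is correctly tracked in the Lelong--Poincar\'e step. The role of the factor $(\epsilon^2+|s_F|^2)^{n-1}$ in (\ref{use}) is precisely to absorb the discrepancy $K_{\widetilde{\overline{M}}}=\pi^*K_{\overline{M}}+(n-1)F$ created by the blow-up into a term that stays uniformly bounded above as $\epsilon\to0$, which is the content of the lemma.
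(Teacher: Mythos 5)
Your proof is correct and takes essentially the same route as the paper: differentiate the Monge--Amp\`ere equation (\ref{use}) to get a Ricci identity, then bound the leftover terms by $C\chi$ uniformly in $\epsilon$. The paper simply lists the four needed inequalities ($\pi^*\eta_1 \le C\chi$, $\Ric(\widetilde{\Omega}) \le C\chi$, $\Theta_{h_{\widetilde D}} \ge -C\chi$, $\sqrt{-1}\partial\overline{\partial}\log(\epsilon^2+|s_F|^2) \ge -C\chi$) and calls it a ``simple calculation,'' whereas you additionally carry out the exact cohomological cancellation $\pi^*\eta_1-\pi^*\Theta_D+\Ric(\widetilde{\Omega})=-(n-1)\Theta_F$ and, more usefully, actually prove the only nontrivial one of the four bounds --- the $\epsilon$-uniform lower bound $\sqrt{-1}\partial\overline{\partial}\log(\epsilon^2+|s_F|^2)\ge-\frac{|s_F|^2}{\epsilon^2+|s_F|^2}\Theta_F\ge -C_0\chi$ --- which the paper asserts without justification.
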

\begin{proof}
We observe some following consequences:
\begin{enumerate}
\item $\pi^*\eta_1\leq C\chi$,
\item Since $\widetilde{\Omega}$ is a smooth volume form, $\Ric(\widetilde{\Omega})\leq C\chi$,
\item $\Theta_{h_{\widetilde{D}}}\geq -C\chi$,
\item $\sqrt{-1}\partial\overline{\partial}\log (\epsilon^2+|s_F|^2)\geq -C\chi$.
\end{enumerate}
Thus by a simple calculation we get the Lemma.
\end{proof}
Set $\chi'=\chi-\sqrt{-1}\partial\overline{\partial}\log\log^2|s_{\widetilde{D}}|^2_{'}$, then by a calculation we have
$$
\chi'=\chi-2\frac{\sqrt{-1}\partial\overline{\partial}\log|s_{\widetilde{D}}|^2_{'}}{\log|s_{\widetilde{D}}|^2_{'}}
+2\frac{\sqrt{-1}\partial\log|s_{\widetilde{D}}|^2_{'}\wedge\overline{\partial}\log|s_{\widetilde{D}}|^2_{'}}{\log^2|s_{\widetilde{D}}|^2_{'}}.
$$
Take an appropriate hermitian metric $|\cdot|_{'}$, we can assume that
$$
\frac{1}{2}\chi\leq \frac{1}{2}\chi+2\frac{\sqrt{-1}\partial\log|s_{\widetilde{D}}|^2_{'}\wedge\overline{\partial}\log|s_{\widetilde{D}}|^2_{'}}{\log^2|s_{\widetilde{D}}|^2_{'}}\leq\chi'\leq 2\chi+2\frac{\sqrt{-1}\partial\log|s_{\widetilde{D}}|^2_{'}\wedge\overline{\partial}\log|s_{\widetilde{D}}|^2_{'}}{\log^2|s_{\widetilde{D}}|^2_{'}}.
$$
So, by Lemma \ref{Ric}, we have
$$
\Ric(\widetilde{\omega_{\epsilon}})\leq -\widetilde{\omega_{\epsilon}}+C\chi'.
$$
On the other hand, we can choose a sufficiently large $A'$, a sufficiently small $\alpha$ and a hermitian metric $|\cdot|$ such that
\begin{align*}
&A'\pi^*\eta_1-\Theta_F-\sum_ia_i\Theta_{\widetilde{E_i}}-A'\sqrt{-1}\partial\overline{\partial}\log\log^2|s_{\widetilde{D}}|^2\\
\geq &
3\chi-2A'\frac{\sqrt{-1}\partial\overline{\partial}\log|s_{\widetilde{D}}|^2}{\log|s_{\widetilde{D}}|^2}+2A'\frac{\sqrt{-1}\partial\log|s_{\widetilde{D}}|^2\wedge\overline{\partial}\log|s_{\widetilde{D}}|^2}{\log^2|s_{\widetilde{D}}|^2}\\
\geq & 2\chi+2A'\frac{\sqrt{-1}\partial\log|s_{\widetilde{D}}|^2\wedge\overline{\partial}\log|s_{\widetilde{D}}|^2}{\log^2|s_{\widetilde{D}}|^2}\geq \alpha \chi'.
\end{align*}
From now on we always assume that the hermitian metric $|\cdot|$ on $L_{\widetilde{D}}$ satisfy $A'\pi^*\eta_1-\Theta_F-\sum_ia_i\Theta_{\widetilde{E_i}}-A'\sqrt{-1}\partial\overline{\partial}\log\log^2|s_{\widetilde{D}}|^2\geq \alpha \chi'$.
\begin{lem}\label{C2}
There exist $C$ and $\lambda$ independent of $\epsilon$ such that
$$
\widetilde{\omega_{\epsilon}}\leq \frac{C\big(\log^2|s_{\widetilde{D}}|^2\big)^C}{|s_{\widetilde{D}}|^{2\lambda}\cdot|s_F|^{2\lambda^2}\cdot\prod_i|s_{\widetilde{E_i}}|^{2\lambda^2}}\chi'.
$$
\end{lem}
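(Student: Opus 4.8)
The plan is to run the Aubin--Yau/Chern--Lu second order estimate against a comparison cusp metric, together with the generalized maximum principle, keeping every constant independent of $\epsilon$; it is the exact analogue of the proof of Lemma \ref{L4.2}, now on the blow up and with the three divisors $\widetilde{D},F,\widetilde{E_i}$ to track. First I would fix $\epsilon$ and the small $\delta$ of Lemma \ref{c0}, and work on $\widetilde{\overline{M}}\setminus\widetilde{D}$, where $\widetilde{\omega_\epsilon}$ is a smooth complete K\"ahler metric with bounded geometry so that Yau's generalized maximum principle applies. As comparison metric I would take
$$
\widehat{\omega}:=A'\pi^*\eta_1-\Theta_F-\sum_ia_i\Theta_{\widetilde{E_i}}-A'\sqrt{-1}\partial\overline{\partial}\log\log^2|s_{\widetilde{D}}|^2 ,
$$
which by the standing assumption satisfies $\widehat{\omega}\geq\alpha\chi'$, and also $\widehat{\omega}\leq C\chi'$ since $A'\pi^*\eta_1,\Theta_F,\Theta_{\widetilde{E_i}}$ are bounded with respect to $\chi$ and $-A'\sqrt{-1}\partial\overline{\partial}\log\log^2|s_{\widetilde{D}}|^2=A'(\chi'-\chi)$; thus $\widehat{\omega}$ is a complete cusp metric with bounded geometry, uniformly equivalent to $\chi'$, and it is enough to bound $tr_{\widehat{\omega}}\widetilde{\omega_\epsilon}$.

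Writing $-\Theta_F=\sqrt{-1}\partial\overline{\partial}\log|s_F|^2$, $-\Theta_{\widetilde{E_i}}=\sqrt{-1}\partial\overline{\partial}\log|s_{\widetilde{E_i}}|^2$ and absorbing the smooth difference of the two hermitian metrics on $L_{\widetilde{D}}$ into a bounded function $b$, equation (\ref{use}) gives $\widetilde{\omega_\epsilon}=\frac1{A'}\widehat{\omega}+\epsilon\chi+\sqrt{-1}\partial\overline{\partial}\Psi_\epsilon$ with $\Psi_\epsilon:=\varphi_\epsilon-\frac1{A'}\log|s_F|^2-\frac1{A'}\sum_ia_i\log|s_{\widetilde{E_i}}|^2+b$, hence $\triangle_{\widetilde{\omega_\epsilon}}\Psi_\epsilon=n-\frac1{A'}tr_{\widetilde{\omega_\epsilon}}\widehat{\omega}-\epsilon\,tr_{\widetilde{\omega_\epsilon}}\chi$. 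With $-a$ a lower bound for the holomorphic bisectional curvature of $\widehat{\omega}$, the inequality already used for $\log tr_{\widetilde{\omega_l}}\omega_{l,s_i}$ in Section 3 together with $\Ric(\widetilde{\omega_\epsilon})\leq-\widetilde{\omega_\epsilon}+C\chi'\leq-\widetilde{\omega_\epsilon}+C\widehat{\omega}$ (Lemma \ref{Ric}) gives
$$
\triangle_{\widetilde{\omega_\epsilon}}\log tr_{\widehat{\omega}}\widetilde{\omega_\epsilon}\ \geq\ 1-\frac{Cn}{tr_{\widehat{\omega}}\widetilde{\omega_\epsilon}}-a\,tr_{\widetilde{\omega_\epsilon}}\widehat{\omega}.
$$
As $\triangle_{\widetilde{\omega_\epsilon}}\log|s_{\widetilde{D}}|^2=-tr_{\widetilde{\omega_\epsilon}}\Theta_{\widetilde{D}}$ and $\triangle_{\widetilde{\omega_\epsilon}}\log\log^2|s_{\widetilde{D}}|^2=tr_{\widetilde{\omega_\epsilon}}(\chi-\chi')$ are bounded in absolute value by $C\,tr_{\widetilde{\omega_\epsilon}}\widehat{\omega}$, I would set $H:=\log tr_{\widehat{\omega}}\widetilde{\omega_\epsilon}-\beta\Psi_\epsilon-C_1\log\log^2|s_{\widetilde{D}}|^2+\lambda\log|s_{\widetilde{D}}|^2$ and choose $\beta$ large (after $A',C_1,\lambda,a$ are fixed) so that $\frac\beta{A'}-a-(C_1+\lambda)C\geq\frac1{2A'}$; then $\triangle_{\widetilde{\omega_\epsilon}}H\geq\frac1{2A'}tr_{\widetilde{\omega_\epsilon}}\widehat{\omega}-\frac{Cn}{tr_{\widehat{\omega}}\widetilde{\omega_\epsilon}}-C_2$.

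Now apply the generalized maximum principle to $H$. The fixed--$\epsilon$ smoothness is used only to see $\sup H<\infty$: near $F$ and the $\widetilde{E_i}$ the terms of $-\beta\Psi_\epsilon$, the bound $\varphi_\epsilon\geq-C(\delta)+\delta\log|s_F|^2+\delta\sum_ia_i\log|s_{\widetilde{E_i}}|^2$ of Lemma \ref{c0}, and the choice $\delta<\frac1{A'}$ make $H\rightarrow-\infty$, while near $\widetilde{D}$ (after normalizing $|s_{\widetilde{D}}|\leq e^{-1}$) the term $-C_1\log\log^2|s_{\widetilde{D}}|^2$ with $C_1>1$ dominates $\log tr_{\widehat{\omega}}\widetilde{\omega_\epsilon}$ because $tr_{\widehat{\omega}}\widetilde{\omega_\epsilon}\leq C(tr_{\widetilde{\omega_\epsilon}}\widehat{\omega})^{n-1}\widetilde{\omega_\epsilon}^n/\widehat{\omega}^n$ and $\widetilde{\omega_\epsilon}^n/\widehat{\omega}^n\leq Ce^{\varphi_\epsilon}\leq C\log^2|s_{\widetilde{D}}|^2$ by (\ref{use}) and Lemma \ref{c0}. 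At a sequence $x_i$ with $H(x_i)\rightarrow\sup H$ and $\overline{\lim}_{i\rightarrow\infty}\triangle_{\widetilde{\omega_\epsilon}}H(x_i)\leq0$, one may assume $tr_{\widehat{\omega}}\widetilde{\omega_\epsilon}(x_i)\geq1$, so the differential inequality forces $tr_{\widetilde{\omega_\epsilon}}\widehat{\omega}(x_i)\leq C$ with $C$ independent of $\epsilon$, whence $tr_{\widehat{\omega}}\widetilde{\omega_\epsilon}(x_i)\leq C\log^2|s_{\widetilde{D}}|^2(x_i)$; together with $-\beta\Psi_\epsilon\leq C(\delta)+\beta(\frac1{A'}-\delta)(\log|s_F|^2+\sum_ia_i\log|s_{\widetilde{E_i}}|^2)\leq C(\delta)$ this gives $H(x_i)\leq C(\delta)$, hence $\sup H\leq C(\delta)$ on $\widetilde{\overline{M}}\setminus\widetilde{D}$. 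Putting $\varphi_\epsilon\leq C+\log\log^2|s_{\widetilde{D}}|^2$ back into $-\beta\Psi_\epsilon$, exponentiating $H\leq C(\delta)$, and using $\widehat{\omega}\leq C\chi'$ and $\widetilde{\omega_\epsilon}\leq(tr_{\chi'}\widetilde{\omega_\epsilon})\chi'$, one obtains $\widetilde{\omega_\epsilon}\leq C(\log^2|s_{\widetilde{D}}|^2)^{C}|s_{\widetilde{D}}|^{-2\lambda}|s_F|^{-2\lambda^2}\prod_i|s_{\widetilde{E_i}}|^{-2\lambda^2}\chi'$ after renaming the exponents, which is the claim.

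The genuinely delicate point is the bookkeeping that keeps every constant independent of $\epsilon$: $A'$ must be fixed first (large, so that $\widehat{\omega}\geq\alpha\chi'$), then $\delta<1/A'$, then $\beta$ large relative to $a,C_1,\lambda$; one must know that $\widehat{\omega}$ genuinely has bounded geometry (so that $a$ exists) although $\pi^*\eta_1$ is only semipositive before the cusp correction; and one must use the two--sided bound of Lemma \ref{c0} to see that $\widetilde{\omega_\epsilon}$ degenerates along $F$ and $\Supp\widetilde{E}$ at a rate controlled uniformly in $\epsilon$ precisely by the singular weights appearing in the statement.
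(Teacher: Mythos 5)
Your proposal is correct and follows essentially the same strategy as the paper's proof: a Chern--Lu trace inequality for $\widetilde{\omega_\epsilon}$ against a fixed cusp-type reference, a barrier function mixing $\log\mathrm{tr}$, the potential $\varphi_\epsilon$, and logarithmic weights in $s_{\widetilde{D}},s_F,s_{\widetilde{E_i}}$, the positivity $A'\pi^*\eta_1-\Theta_F-\sum_ia_i\Theta_{\widetilde{E_i}}-A'\sqrt{-1}\partial\overline\partial\log\log^2|s_{\widetilde{D}}|^2\geq\alpha\chi'$, and the generalized maximum principle together with the $C^0$ input of Lemma~\ref{c0} and the volume-form inequality relating $\mathrm{tr}_{\chi'}\widetilde{\omega_\epsilon}$ and $\mathrm{tr}_{\widetilde{\omega_\epsilon}}\chi'$. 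The only differences are cosmetic: you trace against $\widehat{\omega}$ (uniformly equivalent to $\chi'$, both with bounded cusp geometry) and package the singular weights into $\Psi_\epsilon$, whereas the paper computes $\mathrm{tr}_{\chi'}\widetilde{\omega_\epsilon}$ directly and carries the factors $|s_{\widetilde{D}}|^{2A}|s_F|^{2A^2}\prod_i|s_{\widetilde{E_i}}|^{2A^2}$ inside $H=\log(\cdot)-A^2A'\varphi_\epsilon$.
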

\begin{proof}
By Yau's Schwarz Lemma \cite{Y} and Lemma \ref{Ric}, we have
$$
\triangle_{\widetilde{\omega_{\epsilon}}}\log tr_{\chi'}\widetilde{\omega_{\epsilon}}\geq-Ctr_{\widetilde{\omega_{\epsilon}}}\chi'-\frac{C}{tr_{\chi'}\widetilde{\omega_{\epsilon}}}.
$$
There is a fact that is
$$
\triangle_{\widetilde{\omega_{\epsilon}}}\varphi_\epsilon\leq n-tr_{\widetilde{\omega_{\epsilon}}}(\pi^*\eta_1-\sqrt{-1}\partial\overline{\partial}\log\log^2|s_{\widetilde{D}}|^2).
$$
Let $H=\log(|s_{\widetilde{D}}|^{2A}\cdot|s_F|^{2A^2}\cdot\prod_i|s_{\widetilde{E_i}}|^{2A^2}\cdot tr_{\chi'}\widetilde{\omega_{\epsilon}})-A^2A'\varphi_\epsilon$, where $A'$ is chosen as above and $A$ is defined below. Then on $\widetilde{\overline{M}}\backslash (\widetilde{D}\cup F \cup \Supp E)$, we have
$$
\triangle_{\widetilde{\omega_{\epsilon}}}H\geq -Ctr_{\widetilde{\omega_{\epsilon}}}\chi'-\frac{C}{tr_{\chi'}\widetilde{\omega_{\epsilon}}}-A^2A'n+Atr_{\widetilde{\omega_{\epsilon}}}
(AA'\pi^*\eta_1-A\Theta_F-A\sum_ia_i\Theta_{\widetilde{E_i}}-\Theta_{\widetilde{D}}-AA'\sqrt{-1}\partial\overline{\partial}\log\log^2|s_{\widetilde{D}}|^2).
$$
When $A$ is sufficiently large we observed that
$$
Atr_{\widetilde{\omega_{\epsilon}}}(A(A'\pi^*\eta_1-\Theta_F-\sum_ia_i\Theta_{\widetilde{E_i}}-A'\sqrt{-1}\partial\overline{\partial}\log\log^2|s_{\widetilde{D}}|^2)-\Theta_{\widetilde{D}})
\geq (C+1)\chi'.
$$
Therefore
$$
\triangle_{\widetilde{\omega_{\epsilon}}}H\geq tr_{\widetilde{\omega_{\epsilon}}}\chi'-\frac{C}{tr_{\chi'}\widetilde{\omega_{\epsilon}}}-A^2A'n.
$$
By the generalized maximum principle, there exists a sequence $\{x_i\}$ such that $\lim_{i\rightarrow\infty} H(x_i)=\sup H$ and $\overline{\lim}_{i\rightarrow\infty}\sqrt{-1}\partial\overline{\partial}H(x_i)\leq 0$. Thus,
$$
\overline{\lim}_{i\rightarrow\infty}tr_{\chi'}\widetilde{\omega_{\epsilon}}\cdot(tr_{\widetilde{\omega_{\epsilon}}}\chi'-A^2A'n)(x_i)\leq C.
$$
Since
$$
\widetilde{\omega_\epsilon}^n=e^{\varphi_\epsilon}\frac{\widetilde{\Omega}}{|s_{\widetilde{D}}|^2\log^2|s_{\widetilde{D}}|^2}\leq C\log^2|s_{\widetilde{D}}|^2(\chi')^n,
$$
then we have
$$
\frac{1}{\log^2|s_{\widetilde{D}}|^2}(tr_{\chi'}\widetilde{\omega_\epsilon})^{\frac{1}{n-1}}\leq Ctr_{\widetilde{\omega_\epsilon}}\chi'.
$$
Furthermore,
\begin{equation}\label{euse}
\overline{\lim}_{i\rightarrow\infty}tr_{\chi'}\widetilde{\omega_{\epsilon}}\cdot \bigg(\frac{1}{C\log^2|s_{\widetilde{D}}|^2}(tr_{\chi'}\widetilde{\omega_\epsilon})^{\frac{1}{n-1}}-A^2A'n\bigg)(x_i)\leq C.
\end{equation}
If
$$
\overline{\lim}_{i\rightarrow\infty}(tr_{\chi'}\widetilde{\omega_\epsilon})^{\frac{1}{n-1}}(x_i)\leq \overline{\lim}_{i\rightarrow\infty}2A^2A'nC\log^2|s_{\widetilde{D}}|^2(x_i),
$$
then
$$
\overline{\lim}_{i\rightarrow\infty}(tr_{\chi'}\widetilde{\omega_\epsilon})(x_i)\leq \overline{\lim}_{i\rightarrow\infty}(2A^2A'nC)^{n-1}(\log^2|s_{\widetilde{D}}|^2)^{n-1}(x_i).
$$
Otherwise
$$
\overline{\lim}_{i\rightarrow\infty}(tr_{\chi'}\widetilde{\omega_\epsilon})^{\frac{1}{n-1}}(x_i)\geq \overline{\lim}_{i\rightarrow\infty}2A^2A'nC\log^2|s_{\widetilde{D}}|^2(x_i).
$$
From (\ref{euse}) we know
$$
\overline{\lim}_{i\rightarrow\infty}A^2A'ntr_{\chi'}\widetilde{\omega_\epsilon}(x_i)\leq C.
$$
In general we have
$$
\overline{\lim}_{i\rightarrow\infty}(tr_{\chi'}\widetilde{\omega_\epsilon})(x_i)\leq \overline{\lim}_{i\rightarrow\infty}C(\log^2|s_{\widetilde{D}}|^2)^{C}(x_i).
$$
By the definition of $H$ and Lemma \ref{c0} we have
\begin{align*}
H(x) & \leq \overline{\lim}_{i\rightarrow\infty}\bigg(\log\big(|s_{\widetilde{D}}|^{2A}\cdot|s_F|^{2A^2}\cdot\prod_i|s_{\widetilde{E_i}}|^{2A^2}\cdot C(\log^2|s_{\widetilde{D}}|^2)^C\big)\\
& \qquad +A^2A'C(\delta)-A^2A'\delta\log|s_F|^2-A^2A'\delta\sum_ia_i\log|s_{\widetilde{E_i}}|^2\bigg)(x_i)\leq C
\end{align*}
when choosing $A>> A'$ and sufficiently small $\delta$. So we get this Lemma from the upper bound of $\varphi_\epsilon$.
\end{proof}
Let $B$ be a disk in $\overline{M}\backslash D$ centered at $p$. Denote $f_1,f_2,\cdot\cdot\cdot, f_N$ by the defining functions of divisors $\widetilde{E_1},\widetilde{E_2},\cdot\cdot\cdot,\widetilde{E_N}$ on $\widetilde{B}=\pi^{-1}(B)$. From Lemma \ref{C2}, we obtain the following corollary.
\begin{cor}\label{Cor}
There exist $C$ and $\lambda$ independent of $\epsilon$ such that
$$
(tr_{\chi'}\widetilde{\omega_\epsilon})|_{\partial\widetilde{B}}\leq \frac{C}{\prod_i|f_i|^{2\lambda^2}}\bigg|_{\partial\widetilde{B}}.
$$
\end{cor}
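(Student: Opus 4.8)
The plan is to obtain the corollary as a direct restriction of the pointwise estimate of Lemma \ref{C2} to the compact hypersurface $\partial\widetilde{B}$, on which every singular factor other than those coming from the $\widetilde{E_i}$ is harmless. No new analytic input is needed; the work is purely to identify which factors stay bounded.

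First I would record the geometry of $\widetilde{B}=\pi^{-1}(B)$. Since $B$ is a disk in $\overline{M}\setminus D$ centered at $p$, the closure $\overline{B}$ is disjoint from $D$, so $\overline{\widetilde{B}}=\pi^{-1}(\overline{B})$ is disjoint from $\widetilde{D}=\pi^{-1}(D)$; hence $|s_{\widetilde{D}}|^2$ and $\log^2|s_{\widetilde{D}}|^2$ are bounded above and below by positive constants on $\overline{\widetilde{B}}$, and in particular on the compact set $\partial\widetilde{B}=\pi^{-1}(\partial B)$. Moreover the exceptional divisor $F=\pi^{-1}(p)$ lies over the center of $B$, hence in the interior of $\widetilde{B}$, so it is disjoint from $\partial\widetilde{B}$ and $|s_F|^2$ is bounded below by a positive constant there. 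Finally, since $h_{\widetilde{E_i}}$ is a smooth Hermitian metric and $f_i$ is a local defining function of $\widetilde{E_i}$ on $\widetilde{B}$, the quotient $|s_{\widetilde{E_i}}|^2/|f_i|^2$ extends to a positive smooth function on $\overline{\widetilde{B}}$, hence is bounded above and below there.

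Next I would take the trace with respect to $\chi'$ of the $(1,1)$-form inequality in Lemma \ref{C2}. Writing $\widetilde{\omega_\epsilon}\leq \Lambda_\epsilon\,\chi'$ with $\Lambda_\epsilon=\frac{C(\log^2|s_{\widetilde{D}}|^2)^C}{|s_{\widetilde{D}}|^{2\lambda}\,|s_F|^{2\lambda^2}\,\prod_i|s_{\widetilde{E_i}}|^{2\lambda^2}}$, one gets $tr_{\chi'}\widetilde{\omega_\epsilon}\leq n\Lambda_\epsilon$ with $C,\lambda$ independent of $\epsilon$. Restricting to $\partial\widetilde{B}$ and absorbing into a single new constant $C$ (still independent of $\epsilon$, since it depends only on $\widetilde{B}$, the fixed metrics, and the constants of Lemma \ref{C2}) all the quantities bounded in the previous step — namely $(\log^2|s_{\widetilde{D}}|^2)^C$, $|s_{\widetilde{D}}|^{-2\lambda}$, $|s_F|^{-2\lambda^2}$, and the smooth comparison factors between $|s_{\widetilde{E_i}}|^2$ and $|f_i|^2$ raised to the power $\lambda^2$ — yields
$$
(tr_{\chi'}\widetilde{\omega_\epsilon})|_{\partial\widetilde{B}}\leq \frac{C}{\prod_i|f_i|^{2\lambda^2}}\bigg|_{\partial\widetilde{B}},
$$
which is the assertion.

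There is essentially no obstacle here: the corollary is a bookkeeping consequence of Lemma \ref{C2} on a region where all singular factors except those along $\widetilde{E_i}$ are controlled. The only two points deserving care are (i) verifying that $\partial\widetilde{B}$ avoids both $\widetilde{D}$ and $F$, which is exactly why $B$ is taken to be a genuine coordinate disk about $p$ in $\overline{M}\setminus D$ (so $\widetilde{D}$ is pushed out entirely) and why $F$ sits over the single interior point $p$; and (ii) the $\epsilon$-uniformity of the comparison constants between $|s_{\widetilde{E_i}}|^2$ and $|f_i|^2$, which is automatic since the metrics $h_{\widetilde{E_i}}$ and the functions $f_i$ do not depend on $\epsilon$. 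If one wishes to make this transparent, one may shrink $B$ so that $\overline{\widetilde{B}}$ is covered by finitely many coordinate charts in which $\widetilde{D}$, $F$, and the $\widetilde{E_i}$ have explicit local equations, whence all the comparison constants are manifestly uniform.
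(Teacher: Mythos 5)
Your proof is correct and is exactly the route the paper intends: the paper offers no argument for this corollary beyond the phrase ``From Lemma~\ref{C2}, we obtain,'' and your reasoning—take the trace of the $(1,1)$-form inequality in Lemma~\ref{C2}, observe that on $\partial\widetilde{B}$ the factors $|s_{\widetilde{D}}|^{2}$, $\log^2|s_{\widetilde{D}}|^2$, $|s_F|^2$ are bounded above and below by positive $\epsilon$-independent constants (since $\partial\widetilde{B}$ is compact and disjoint from both $\widetilde{D}$ and $F$), and replace $|s_{\widetilde{E_i}}|^2$ by $|f_i|^2$ up to bounded smooth factors—is precisely the intended bookkeeping.
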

Let $\hat{\chi}$ be the pull back of the Euclidean metric $\sqrt{-1}\sum_jdz_j\wedge d\bar{z_j}$ on $B$. Then $\hat{\chi}$ is a smooth closed nonnegative $(1,1)$ form and is a K\"{a}hler metric on $\widetilde{B}\backslash F$. The following Lemma is due to Song \cite{So}.
\begin{lem}\label{L533}
There exist a constant $C>0$, a sufficiently small $\epsilon_0>0$ and a smooth hermitian metric $h'_F$ on $L_F$ such that on $\widetilde{B}$
$$
C^{-1}\hat{\chi}\leq \chi'\leq C\frac{\hat{\chi}}{|s_F|^2_{h'_F}}
$$
and
$$
\pi^*\eta_1-\sqrt{-1}\partial\overline{\partial}\log\log^2|s_{\widetilde{D}}|^2-\epsilon_0\Theta_{h'_F}>C^{-1}\chi'.
$$
\end{lem}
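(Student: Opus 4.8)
The plan is to localise the statement near the exceptional divisor $F$, write $h'_F$ down explicitly on the blow-up, and reduce the two displayed inequalities to a chart computation and a positivity argument for pulled-back Kähler classes; this is precisely the structure of Song's lemma, the only new ingredient being the cusp factor, which is harmless near $p$ because $p\notin D$.

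First I shrink $B$ so that $\overline{\widetilde B}\cap\widetilde D=\varnothing$. Then $s_{\widetilde D}$ is nowhere zero on a neighbourhood of $\overline{\widetilde B}$, so $\log\log^2|s_{\widetilde D}|^2$ (for the hermitian metrics used here) is a smooth function with all derivatives bounded there; hence $\sqrt{-1}\partial\overline{\partial}\log\log^2|s_{\widetilde D}|^2$ and $\chi-\chi'$ are bounded smooth $(1,1)$-forms, so $C_1^{-1}\chi\le\chi'\le C_1\chi$ on $\widetilde B$. It therefore suffices to produce a smooth hermitian metric $h'_F$ on $L_F$, a small $\epsilon_0>0$ and a constant $C$ such that, on $\widetilde B$,
$$
C^{-1}\hat\chi\le\chi\le C\,\frac{\hat\chi}{|s_F|^2_{h'_F}},\qquad
\pi^*\eta_1-\sqrt{-1}\partial\overline{\partial}\log\log^2|s_{\widetilde D}|^2-\epsilon_0\Theta_{h'_F}>C^{-1}\chi .
$$

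For $h'_F$: over a neighbourhood of $F$ the map $\pi$ is the blow-up of $\mathbb C^n$ at $0$ sitting in $\mathbb C^n\times\mathbb P^{n-1}$, and there $L_F=\mathcal O(F)$ is isomorphic to the pull-back $q^*\mathcal O_{\mathbb P^{n-1}}(-1)$ along the natural projection $q$ onto the exceptional $\mathbb P^{n-1}$. I take $h'_F$ to be a smooth hermitian metric on $L_F$ over $\widetilde{\overline M}$ which near $F$ coincides with $q^*$ of the natural (tautological) hermitian metric on $\mathcal O_{\mathbb P^{n-1}}(-1)$, whose curvature is $-\omega_{FS}$; then $|s_F|^2_{h'_F}$ equals, up to bounded factors, $\pi^*(|z_1|^2+\cdots+|z_n|^2)$ on $\widetilde B$, and near $F$ one has $\Theta_{h'_F}=-q^*\omega_{FS}$, so $-\epsilon_0\Theta_{h'_F}\ge 0$ near $F$ and is strictly positive in the directions tangent to $F$. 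The first inequality is then a chart computation: in a blow-up chart $(w_1,\dots,w_n)\mapsto(z_1,\dots,z_n)=(w_1,w_1w_2,\dots,w_1w_n)$ one has $dz_1=dw_1$ and $dz_j=w_j\,dw_1+w_1\,dw_j$ for $j\ge 2$, so $\hat\chi$ is uniformly equivalent to $|dw_1|^2+|w_1|^2\sum_{j\ge 2}|dw_j|^2$ on the relatively compact chart; since $|w_1|$ is bounded this gives $|w_1|^2\chi\le C\hat\chi$, equivalently $\chi\le C\hat\chi/|s_F|^2_{h'_F}$ because $|s_F|^2_{h'_F}\sim|w_1|^2$, while $\hat\chi\le C\chi$ is immediate since $\hat\chi$ is a smooth semipositive form and $\chi$ a Kähler metric on $\overline{\widetilde B}$.

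For the second inequality I would choose the blow-up centre $p\in\Supp E\setminus D$ at a point where $\eta_1$ is nondegenerate, possible by taking $E$ with $\Supp E\not\subseteq D\cup B_+(K_{\overline M}+D)$ (e.g. enlarging $E$ by a general divisor), since outside $B_+(K_{\overline M}+D)$ the morphism $\Phi$ is an immersion and $\eta_1=\frac{1}{K_0}\Phi^*\omega_{FS}$ is a genuine Kähler metric; and I would use the gauge freedom in the hermitian metric on $L_D$ (legitimate near $p$, where $p\notin D$) to arrange that $\eta_1-\sqrt{-1}\partial\overline{\partial}\log\log^2|s_D|^2_{h_D}$ is a Kähler metric on a neighbourhood of $p$. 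Then $\pi^*\eta_1-\sqrt{-1}\partial\overline{\partial}\log\log^2|s_{\widetilde D}|^2$ is the pull-back of a Kähler metric, and adding $\epsilon_0(-\Theta_{h'_F})$ — which near $F$ is $\ge 0$ and supplies exactly the positivity missing along $F$ — the same chart computation shows the sum is a Kähler metric on $\overline{\widetilde B}$ bounded below by $C^{-1}\chi$ once $\epsilon_0$ is small (away from $F$, where $\pi$ is biholomorphic, this is clear). The main obstacle is this second inequality, and in particular the demand that a single metric $h'_F$ vanish on $F$ to exactly the order of the degeneracy of $\hat\chi$ while having curvature of the right sign to restore positivity along $F$ — reconciled by the Fubini--Study choice — together with the point, absent in Song's cusp-free setting, that the cusp form must not overwhelm the positivity of $\pi^*\eta_1$ near $p$, which is handled by the judicious choices of $p$ and of the hermitian metric on $L_D$ above.
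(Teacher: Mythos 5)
The paper does not actually prove this lemma; it is cited to Song \cite{So}, so there is no in-text argument to compare against. Assessing your proposal on its own terms: the reduction $C^{-1}\chi\le\chi'\le C\chi$ on $\widetilde B$ (using $\overline{\widetilde B}\cap\widetilde D=\varnothing$), the chart computation giving $\hat\chi\sim|dw_1|^2+|w_1|^2\sum_{j\ge 2}|dw_j|^2$, and the resulting first displayed inequality are all fine; in fact the upper bound $\chi'\le C\hat\chi/|s_F|^2_{h'_F}$ holds for \emph{any} smooth $h'_F$, since a smooth metric on $L_F$ gives $|s_F|^2_{h'_F}\lesssim|w_1|^2$.

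The genuine gap is in the second inequality, and it is exactly where you flag the ``main obstacle.'' You propose to choose the blow-up centre $p$ outside $B_+(K_{\overline M}+D)$ and to take $\Theta_{h'_F}=-q^*\omega_{FS}$ near $F$, so that $\pi^*\eta_1$ supplies the transversal positivity and $-\epsilon_0\Theta_{h'_F}$ the tangential one. But the freedom to choose $p$ is not available: in the surjectivity argument of Subsection~5.4 the lemma (via Lemma~\ref{distance}) must be applied with $p$ being an arbitrary point $x'\in\overline D\setminus D$, and $\overline D\supseteq D\cup\mathcal S_{\overline M}=D\cup B_+(K_{\overline M}+D)$. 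So precisely the points $p\in B_+\setminus D$ where $\eta_1$ is degenerate are the ones the lemma is needed for, and at such $p$ the form $\pi^*\eta_1-\epsilon_0 q^*\omega_{FS}$ has a kernel transverse to $F$, so your construction fails. Enlarging $E$ by a general divisor does not help; it adds \emph{more} points where the lemma is needed and does not remove the ones in $B_+$.

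The correct mechanism is to choose $h'_F$ so that $-\Theta_{h'_F}$ is a \emph{full} K\"ahler form on a neighbourhood of $F$ (not merely positive on $TF$ and degenerate transversally). This is possible because a tubular neighbourhood of $F\cong\mathbb P^{n-1}$ is biholomorphic to a neighbourhood of the zero section in $\mathcal O_{\mathbb P^{n-1}}(-1)$, and the class $-c_1(L_F)=-[F]$ restricted there is K\"ahler; explicitly, $\lambda\hat\chi+q^*\omega_{FS}$ ($\lambda>0$) is K\"ahler near $F$ and lies in the class $-[F]$, since $\hat\chi$ has rank one in the $\partial_{w_1}$ direction at $F$ while $q^*\omega_{FS}$ has rank $n-1$ along $TF$, and these complement each other. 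Taking such an $h'_F$, on $\widetilde B$ one has $\pi^*\eta_1\ge 0$ (semi-positivity of $\eta_1$ suffices), $-\epsilon_0\Theta_{h'_F}\ge c\chi$, and the cusp term $-\sqrt{-1}\partial\overline\partial\log\log^2|s_{\widetilde D}|^2$ is a bounded form, so the second inequality follows without any nondegeneracy of $\eta_1$ at $p$. This makes the lemma uniform in $p\in\Supp E\setminus D$ as the later argument requires. The reliance on the tautological Fubini--Study metric in your proposal, which is positive only on $TF$, is what forces you into the unwarranted restriction on $p$.
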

\begin{lem}\label{distance}
There exist $0<\beta<1$, $C>0$ and $\Lambda>0$ independent of $\epsilon$ such that
$$
\widetilde{\omega_\epsilon}\leq \frac{C}{|s_F|^{2(1-\beta)}_{h'_F}\cdot \prod_i|f_i|^{2\Lambda}}\chi', \ \ in \ \ \widetilde{B}.
$$
Moreover, we have
$$
\pi^*\omega_1\leq \frac{C}{|s_F|^{2(1-\beta)}_{h'_F}\cdot \prod_i|f_i|^{2\Lambda}}\chi', \ \ in \ \ \widetilde{B},
$$
where $\omega_1=\eta_1-\sqrt{-1}\partial\overline{\partial}\log\log^2|s_D|^2+\sqrt{-1}\partial\overline{\partial}u_1$.
\end{lem}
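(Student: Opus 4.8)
The plan is to run a Yau--Schwarz (Chern--Lu) estimate for $\log tr_{\chi'}\widetilde{\omega_\epsilon}$ on $\widetilde{B}$ against a barrier $-A\varphi_\epsilon+(1-\beta)\log|s_F|^2_{h'_F}+\Lambda\sum_i\log|f_i|^2$, the decisive feature being that the strict positivity $\widetilde{\eta_1}-\epsilon_0\Theta_{h'_F}>C^{-1}\chi'$ supplied by Lemma \ref{L533} is exactly what lets the exponent on $|s_F|^2$ be pushed below $1$; then one lets $\epsilon\to0$. Throughout, $\widetilde{B}$ is disjoint from $\widetilde{D}$, so $\chi'$ restricts to an honest smooth K\"ahler metric of bounded geometry on the compact set $\overline{\widetilde{B}}$, on $\widetilde{B}$ each $|s_{\widetilde{E_i}}|$ is comparable to $|f_i|$, and $|s_F|$, $|f_i|$ are bounded above there. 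As in the proof of Lemma \ref{C2} (Yau's Schwarz Lemma \cite{Y} together with the upper Ricci bound of Lemma \ref{Ric}) one has
$$\triangle_{\widetilde{\omega_\epsilon}}\log tr_{\chi'}\widetilde{\omega_\epsilon}\ \ge\ -C_1\,tr_{\widetilde{\omega_\epsilon}}\chi'-\frac{C_1}{tr_{\chi'}\widetilde{\omega_\epsilon}},$$
with $C_1$ independent of $\epsilon$. One fixes, in this order: $\epsilon_0$ (and the constant $C_2$ with $\widetilde{\eta_1}-\epsilon_0\Theta_{h'_F}\ge C_2^{-1}\chi'$) from Lemma \ref{L533}, taken small enough that $A:=(C_1+1)C_2$ still satisfies $A\epsilon_0<1$; then $\beta:=1-A\epsilon_0\in(0,1)$; then $\delta>0$ small with $A\delta<1-\beta$, applied to Lemma \ref{c0}; and finally $\Lambda:=\lambda^2+A\delta\max_i a_i+1$ with $\lambda$ the exponent of Lemma \ref{C2}. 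All of $A,\beta,\delta,\Lambda$ are independent of $\epsilon$.

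Fix $\epsilon\in(0,1]$ and consider
$$H:=\log tr_{\chi'}\widetilde{\omega_\epsilon}-A\varphi_\epsilon+(1-\beta)\log|s_F|^2_{h'_F}+\Lambda\sum_i\log|f_i|^2$$
on $\widetilde{B}\setminus(F\cup\bigcup_i\widetilde{E_i})$. For this fixed $\epsilon>0$ the metric $\widetilde{\omega_\epsilon}$ is smooth and positive on $\overline{\widetilde{B}}$, so $\log tr_{\chi'}\widetilde{\omega_\epsilon}$ is bounded there; together with the lower bound $\varphi_\epsilon\ge-C(\delta)+\delta\log|s_F|^2+\delta\sum_i a_i\log|s_{\widetilde{E_i}}|^2$ of Lemma \ref{c0} and the choices $A\delta<1-\beta$, $\Lambda>A\delta\max_i a_i$, this forces $H\to-\infty$ along $F$ and along each $\widetilde{E_i}$. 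Hence $H$ attains its maximum over $\overline{\widetilde{B}}\setminus(F\cup\bigcup\widetilde{E_i})$ at an interior point $x_0$ or on $\partial\widetilde{B}$. At an interior maximum, where $\triangle_{\widetilde{\omega_\epsilon}}H\le0$, one combines the Schwarz estimate with $\triangle_{\widetilde{\omega_\epsilon}}\varphi_\epsilon=n-tr_{\widetilde{\omega_\epsilon}}\widetilde{\eta_1}-\epsilon\, tr_{\widetilde{\omega_\epsilon}}\chi$ (from equation (\ref{use})), the pluriharmonicity of $\log|f_i|^2$, the identity $\sqrt{-1}\partial\overline{\partial}\log|s_F|^2_{h'_F}=-\Theta_{h'_F}$, and the positivity $A\widetilde{\eta_1}-(1-\beta)\Theta_{h'_F}=A(\widetilde{\eta_1}-\epsilon_0\Theta_{h'_F})\ge AC_2^{-1}\chi'$ (using $1-\beta=A\epsilon_0$) to obtain
$$0\ \ge\ \triangle_{\widetilde{\omega_\epsilon}}H\ \ge\ (AC_2^{-1}-C_1)\,tr_{\widetilde{\omega_\epsilon}}\chi'-\frac{C_1}{tr_{\chi'}\widetilde{\omega_\epsilon}}-An\ =\ tr_{\widetilde{\omega_\epsilon}}\chi'-\frac{C_1}{tr_{\chi'}\widetilde{\omega_\epsilon}}-An.$$
Assuming harmlessly $tr_{\chi'}\widetilde{\omega_\epsilon}(x_0)\ge1$, this yields $tr_{\widetilde{\omega_\epsilon}}\chi'(x_0)\le An+C_1$; since $\widetilde{\omega_\epsilon}^n/(\chi')^n=e^{\varphi_\epsilon}(\epsilon^2+|s_F|^2)^{n-1}\widetilde{\Omega}/(|s_{\widetilde{D}}|^2\log^2|s_{\widetilde{D}}|^2\,(\chi')^n)\le C$ on $\widetilde{B}$ uniformly in $\epsilon$ (by the upper bound for $\varphi_\epsilon$ in Lemma \ref{c0} and boundedness of the remaining factors off $\widetilde{D}$), the elementary inequality $tr_{\chi'}\widetilde{\omega_\epsilon}\le\frac{1}{(n-1)!}(tr_{\widetilde{\omega_\epsilon}}\chi')^{n-1}\widetilde{\omega_\epsilon}^n/(\chi')^n$ bounds $tr_{\chi'}\widetilde{\omega_\epsilon}(x_0)$. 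Feeding this, $\varphi_\epsilon\le C$, the lower bound of $\varphi_\epsilon$, and $\Lambda>\lambda^2+A\delta\max a_i$ back into $H$ gives $H(x_0)\le C$ with $C$ independent of $\epsilon$.

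If the maximum lies on $\partial\widetilde{B}$ instead, one invokes Corollary \ref{Cor}, namely $tr_{\chi'}\widetilde{\omega_\epsilon}\le C\prod_i|f_i|^{-2\lambda^2}$ there: since $|s_F|$ is bounded away from $0$ on $\partial\widetilde{B}$ and $\Lambda\ge\lambda^2+A\delta\max a_i$ keeps the $|f_i|$- and $|s_{\widetilde{E_i}}|$-terms bounded above, $H\le C$ on $\partial\widetilde{B}$ as well. Hence $H\le C$ on all of $\widetilde{B}$, which with $\varphi_\epsilon\le C$ rearranges to $\widetilde{\omega_\epsilon}\le(tr_{\chi'}\widetilde{\omega_\epsilon})\chi'\le C|s_F|^{-2(1-\beta)}_{h'_F}\prod_i|f_i|^{-2\Lambda}\chi'$ on $\widetilde{B}$, with $C,\beta,\Lambda$ independent of $\epsilon$ --- the first assertion. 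For the ``moreover'' part, the $\epsilon=0$ form of (\ref{use}) is the $\pi$-pullback of the equation defining $u_1$, so by the interior estimates and uniqueness of $u_1$ one has $\varphi_\epsilon\to\pi^*u_1$ and $\widetilde{\omega_\epsilon}\to\pi^*\omega_1$ locally uniformly off $F\cup\bigcup\widetilde{E_i}$; passing to the limit in the bound just proved gives $\pi^*\omega_1\le C|s_F|^{-2(1-\beta)}_{h'_F}\prod_i|f_i|^{-2\Lambda}\chi'$ there, and as the right-hand side is locally integrable the inequality extends across $F\cup\bigcup\widetilde{E_i}$ in the sense of currents.

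The delicate step is the simultaneous calibration of constants: the exponent on $|s_F|^2$ comes out as $A\epsilon_0$, so one needs $A<\epsilon_0^{-1}$, whereas the Schwarz term forces $A\ge(C_1+1)C_2$; these are reconciled only by taking $\epsilon_0$ (equivalently the comparison constant in Lemma \ref{L533}) small enough, and one must verify that $C_1$, $C_2$, $C(\delta)$ and the uniform bounds on $\widetilde{\omega_\epsilon}^n/(\chi')^n$ and on $\varphi_\epsilon$ carry no hidden $\epsilon$-dependence, and that for fixed $\epsilon>0$ the barrier genuinely drives $H$ to $-\infty$ along $F$ and the $\widetilde{E_i}$, which is what licenses the ordinary maximum principle on $\overline{\widetilde{B}}$.
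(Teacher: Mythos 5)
There is a genuine gap in your calibration of the constants, and it is not one that can be repaired within your single–background–metric framework.

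You take the Chern–Lu inequality with the nondegenerate metric $\chi'$ as background, so the Schwarz term is $-C_1\,tr_{\widetilde{\omega_\epsilon}}\chi'$ with $C_1$ controlled by the bisectional curvature of $\chi'$ on $\overline{\widetilde{B}}$; $C_1$ is therefore a \emph{fixed} positive constant having nothing to do with $\epsilon_0$. You then want to choose $A$ so that simultaneously $A\,tr_{\widetilde{\omega_\epsilon}}(\widetilde{\eta_1}-\tfrac{1-\beta}{A}\Theta_{h'_F})\geq (C_1+1)\,tr_{\widetilde{\omega_\epsilon}}\chi'$ (to absorb the Schwarz term) and $1-\beta=A\epsilon_0<1$. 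The first demand forces $A\geq (C_1+1)C_2$ with $C_2$ the constant from Lemma~\ref{L533}. But $\epsilon_0$ and $C_2$ in Lemma~\ref{L533} are \emph{inversely coupled}: along $F$ both $\pi^*\eta_1$ and $\sqrt{-1}\partial\overline{\partial}\log\log^2|s_{\widetilde{D}}|^2=\pi^*(\cdot)$ annihilate the tangent directions of $F$, so the only source of positivity of $\widetilde{\eta_1}-\epsilon_0\Theta_{h'_F}$ in those directions is $-\epsilon_0\Theta_{h'_F}$, whence $C_2^{-1}\lesssim\epsilon_0$, i.e.\ $C_2\epsilon_0\geq c_0>0$. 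Therefore $(C_1+1)C_2\epsilon_0\geq (C_1+1)c_0$, which is $\geq1$ as soon as $C_1$ is not tiny; shrinking $\epsilon_0$ buys you nothing because it proportionally inflates $C_2$. So $\beta=1-A\epsilon_0$ cannot be made positive by your procedure, and the crucial subunit exponent on $|s_F|^2$ is never achieved.

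The paper gets around exactly this obstruction with a two–background–metric device. It first runs a Chern–Lu estimate against $\hat{\chi}$, the pullback of the flat Euclidean metric on $B$: because $\hat\chi$ is flat (where it is nondegenerate), the bisectional-curvature term drops out entirely, and one gets $\triangle_{\widetilde{\omega_\epsilon}}\log tr_{\hat{\chi}}\widetilde{\omega_\epsilon}\geq 1-\tfrac{C}{|s_F|^2\,tr_{\hat{\chi}}\widetilde{\omega_\epsilon}}$ with \emph{no} $-C\,tr_{\widetilde{\omega_\epsilon}}\hat{\chi}$ term; the price is only a manageable $|s_F|^{-2}$ factor coming from the degeneracy of $\hat{\chi}$ along $F$ (controlled by Lemma~\ref{L533}). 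The resulting barrier $H$ carries an exponent $1+r>1$ on $|s_F|^2$, and the strict positivity from Lemma~\ref{L533} produces a coefficient $c\cdot tr_{\widetilde{\omega_\epsilon}}\chi'$ with $c\sim\tfrac{1+r}{\epsilon_0 C_2}$ — bounded away from $0$ precisely because $\epsilon_0 C_2\sim1$. The paper then introduces the auxiliary function $G=H+\tfrac{c}{2C_1}\log\big(\prod_i|f_i|^{2\lambda^2+2}tr_{\chi'}\widetilde{\omega_\epsilon}\big)$, invoking the $\chi'$-Schwarz estimate only at this secondary stage so that the constant $C_1$ enters as a divisor in $c/(2C_1)$ rather than as a floor for $A$. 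After $G\leq C$ one obtains $(tr_{\chi'}\widetilde{\omega_\epsilon})^{1+c/(2C_1)}\lesssim |s_F|^{-2(1+r)}\prod_i|f_i|^{-\cdots}$, hence the final exponent $1-\beta=\tfrac{1+r}{1+c/(2C_1)}<1$ by taking $r$ small (the paper takes $r=c/(10C_1)$). This ratio structure is what decouples $\beta$ from the $C_2\epsilon_0$ product; there is no analogue of it in your argument. (Your ``moreover'' step, deducing $\pi^*\omega_1\leq C|s_F|^{-2(1-\beta)}\prod|f_i|^{-2\Lambda}\chi'$ by letting $\epsilon\to0$ in \eqref{use}, is fine conceptually and matches what the paper implicitly does.)
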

\begin{proof}
Let $H=\log\Big(|s_F|^{2(1+r)}_{h'_F}\cdot\prod_i|f_i|^{2\lambda^2}\cdot tr_{\hat{\chi}}\widetilde{\omega_\epsilon}\Big)-A\varphi_\epsilon$ for some sufficiently large $A$ and sufficiently small $r$. There are some facts on $\widetilde{B}\backslash (F\cup \Supp E)$:
\begin{enumerate}
\item $\triangle_{\widetilde{\omega_\epsilon}}\log|s_F|^2_{h'_F}=-tr_{\widetilde{\omega_\epsilon}}\Theta_{h'_F}$,
\item $\triangle_{\widetilde{\omega_\epsilon}}\log \prod_i|f_i|^{2\lambda^2}=0$,
\item $\triangle_{\widetilde{\omega_\epsilon}}\varphi_\epsilon=n-tr_{\widetilde{\omega_{\epsilon}}}(\pi^*\eta_1-\sqrt{-1}\partial\overline{\partial}\log\log^2|s_{\widetilde{D}}|^2)-\epsilon tr_{\widetilde{\omega_\epsilon}}\chi$,
\item $\triangle_{\widetilde{\omega_\epsilon}}\log tr_{\hat{\chi}}\widetilde{\omega_\epsilon}\geq -\frac{tr_{\hat{\chi}}(\Ric(\widetilde{\omega_\epsilon}))}{tr_{\hat{\chi}}\widetilde{\omega_\epsilon}}\geq 1-\frac{C}{|s_F|^2_{h'_F}tr_{\hat{\chi}}\widetilde{\omega_\epsilon}}$.
\end{enumerate}
Thus, on $\widetilde{B}\backslash (F\cup \Supp E)$, we have
\begin{align*}
 \triangle_{\widetilde{\omega_\epsilon}}H & \geq 1-\frac{C}{|s_F|^2_{h'_F}tr_{\hat{\chi}}\widetilde{\omega_\epsilon}}-An-(r+1)tr_{\widetilde{\omega_\epsilon}}\Theta_{h'_F}+Atr_{\widetilde{\omega_{\epsilon}}}(\pi^*\eta_1-\sqrt{-1}\partial\overline{\partial}\log\log^2|s_{\widetilde{D}}|^2)\\
 & \geq c tr_{\widetilde{\omega_\epsilon}}\chi'-\frac{C}{|s_F|^2_{h'_F}tr_{\hat{\chi}}\widetilde{\omega_\epsilon}}-C,
\end{align*}
where the last inequality bases on Lemma \ref{L533} and choosing sufficiently large $A$ and small $r$. By Yau's Schwarz Lemma \cite{Y},
$$
 \triangle_{\widetilde{\omega_\epsilon}}\log tr_{\chi'}\widetilde{\omega_\epsilon}\geq -C_1tr_{\widetilde{\omega_\epsilon}}\chi'-\frac{C_1}{tr_{\chi'}\widetilde{\omega_\epsilon}}.
$$
Let $G=H+\frac{c}{2C_1}\log\big(\prod_i|f_i|^{2\lambda^2+2}\cdot tr_{\chi'}\widetilde{\omega_\epsilon}\big)$. Then
\begin{align*}
\triangle_{\widetilde{\omega_\epsilon}}G & \geq c tr_{\widetilde{\omega_\epsilon}}\chi'-\frac{C}{|s_F|^2_{h'_F}tr_{\hat{\chi}}\widetilde{\omega_\epsilon}}-C-\frac{c}{2}tr_{\widetilde{\omega_\epsilon}}\chi'-\frac{c}{2tr_{\chi'}\widetilde{\omega_\epsilon}}\\
& \geq \frac{c}{2}tr_{\widetilde{\omega_\epsilon}}\chi'-\bar{C}-\frac{C}{|s_F|^2_{h'_F}tr_{\hat{\chi}}\widetilde{\omega_\epsilon}}.
\end{align*}
For a fixed sufficiently large $\lambda>0$, there exists a constant $C>0$ such that
$$
\sup_{\partial\widetilde{B}}G\leq C
$$
from the estimate in Corollary \ref{Cor} and Lemma \ref{c0}.

So we can assume that
$$
\sup_{\widetilde{B}}G=G(p_{max})
$$
for some $p_{max}\in \widetilde{B}\backslash (F\cup \Supp E)$. Then at $p_{max}$, we have
$$
|s_F|^2_{h'_F}tr_{\hat{\chi}}\widetilde{\omega_\epsilon}(c tr_{\widetilde{\omega_\epsilon}}\chi'-2\bar{C})(p_{max})\leq C.
$$
Note that
$$
\frac{1}{\log^2|s_{\widetilde{D}}|^2}(tr_{\chi'}\widetilde{\omega_\epsilon})^{\frac{1}{n-1}}\leq Ctr_{\widetilde{\omega_\epsilon}}\chi'.
$$
Then according to the boundedness of $\frac{1}{\log^2|s_{\widetilde{D}}|^2}$ in $\widetilde{B}$, we get
\begin{equation}\label{ee5}
|s_F|^2_{h'_F}tr_{\hat{\chi}}\widetilde{\omega_\epsilon}\big(C'(tr_{\chi'}\widetilde{\omega_\epsilon})^{\frac{1}{n-1}}-2\bar{C}\big)(p_{max})\leq C.
\end{equation}
If $(tr_{\chi'}\widetilde{\omega_\epsilon})^{\frac{1}{n-1}}(p_{max})\leq \frac{3\bar{C}}{C'}$, then $G$ is bounded from above by a uniform constant.

Otherwise $(tr_{\chi'}\widetilde{\omega_\epsilon})^{\frac{1}{n-1}}(p_{max})\geq\frac{3\bar{C}}{C'}$, i.e., $\bar{C}\leq \frac{C'}{3}(tr_{\chi'}\widetilde{\omega_\epsilon})^{\frac{1}{n-1}}(p_{max})$. Then by equation (\ref{ee5}) we get
$$
|s_F|^2_{h'_F}\cdot tr_{\hat{\chi}}\widetilde{\omega_\epsilon}\cdot \frac{C'}{3}(tr_{\chi'}\widetilde{\omega_\epsilon})^{\frac{1}{n-1}}(p_{max})\leq C,
$$
i.e.
$$
\log|s_F|^2_{h'_F}+\log tr_{\hat{\chi}}\widetilde{\omega_\epsilon}+\frac{1}{n-1}\log tr_{\chi'}\widetilde{\omega_\epsilon}(p_{max})\leq C.
$$
According to the definition of $G$, Lemma \ref{c0} and Lemma \ref{C2} we have $G\leq C$ when we choose large $C_1$.

In sum, in all cases, we have $G\leq C$. Then
$$
|s_F|^{2(1+r)}_{h'_F}\cdot\prod_i|f_i|^{2\lambda^2+\frac{c}{2C_1}(2\lambda^2+2)}\cdot tr_{\hat{\chi}}\widetilde{\omega_\epsilon}\cdot (tr_{\chi'}\widetilde{\omega_\epsilon})^{\frac{c}{2C_1}}\leq C.
$$
Note that $tr_{\hat{\chi}}\widetilde{\omega_{\epsilon}}\geq C^{-1}tr_{\chi'}\widetilde{\omega_{\epsilon}}$, then we observe
$$
(tr_{\chi'}\widetilde{\omega_\epsilon})^{1+\frac{c}{2C_1}}\leq\frac{C}{|s_F|^{2(1+r)}_{h'_F}\cdot\prod_i|f_i|^{2\lambda^2+\frac{c}{2C_1}(2\lambda^2+2)}}.
$$
If we choose $r=\frac{c}{10C_1}$, then $1-\beta=\frac{1+r}{1+\frac{c}{2c_1}}$ for some $\beta\in (0,1)$. Furthermore, there exists a constant $\Lambda>0$ such that
$$
\widetilde{\omega_\epsilon}\leq \frac{C}{|s_F|^{2(1-\beta)}_{h'_F}\cdot \prod_i|f_i|^{2\Lambda}}\chi'.
$$
\end{proof}

From now on we turn to the Gromov-Hausdorff convergence. Recall
$$
\overline{ D_{1}}:=\{x\in M_1|there \ exists \ x_i\in \overline{D} such \ that \ x_i\xrightarrow{d_{GH}} x\},
$$
where $\overline{D}$ is a divisor such that $D\cup \mathcal{S}_{\overline{M}}\subset \overline{D}$. By the Proposition \ref{Ana}, $(M_1\backslash \overline{ D_{1}},\overline{\omega_1})$ isometric to $(\overline{M}\backslash \overline{D},\omega_1)$.
\begin{lem}
$\Phi_{1}: M_1\setminus \overline{D_1}\rightarrow \Phi(\overline{M}\setminus \overline{D})$ is bijective.
\end{lem}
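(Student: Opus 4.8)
The plan is to break the assertion into injectivity and surjectivity, reducing both to facts already established. Injectivity is immediate: the preceding Proposition shows that $\Phi_1: M_1\to\Phi(\overline{M}\setminus D)$ is continuous and injective on all of $M_1$, so a fortiori its restriction to $M_1\setminus\overline{D_1}$ is injective. Hence the substantive point is to show that $\Phi_1$ maps $M_1\setminus\overline{D_1}$ \emph{into} and \emph{onto} $\Phi(\overline{M}\setminus\overline{D})$.

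For this I would invoke the isometry furnished by Proposition \ref{Ana}. Since $\overline{D}$ contains $D\cup\mathcal{S}_{\overline{M}}$, we have $\overline{M}\setminus\overline{D}\subseteq\overline{M}_{reg}\setminus D=\mathcal{R}$, and the proof of Proposition \ref{Ana} (via the argument of \cite{RZ}) exhibits the isometry $(M_1\setminus\overline{D_1},\overline{\omega_1})\cong(\overline{M}\setminus\overline{D},\omega_1)$ as the restriction of the canonical inclusion $\mathcal{R}\hookrightarrow M_1$. Because $\Phi_1$ was constructed as the continuous extension to $M_1$ of the holomorphic map $\Phi:\mathcal{R}\to\Phi(\overline{M}\setminus D)$, under this identification $\Phi_1$ restricted to $M_1\setminus\overline{D_1}$ coincides with $\Phi$ restricted to $\overline{M}\setminus\overline{D}$. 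Since $\overline{M}\setminus\overline{D}$ is disjoint from $Exc(\Phi)=\mathcal{S}_{\overline{M}}$ (the Corollary to Proposition \ref{p1}), the morphism $\Phi$ restricts on $\overline{M}\setminus\overline{D}$ to an isomorphism onto the open set $\Phi(\overline{M}\setminus\overline{D})$. Consequently $\Phi_1(M_1\setminus\overline{D_1})=\Phi(\overline{M}\setminus\overline{D})$: this at once shows that $\Phi_1$ does send $M_1\setminus\overline{D_1}$ into $\Phi(\overline{M}\setminus\overline{D})$ and that the map is surjective. Together with injectivity, this yields the claim.

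The only delicate point — which I expect to be the main, if mild, obstacle — is verifying the compatibility of the two descriptions of $M_1\setminus\overline{D_1}$: the one obtained abstractly from the Gromov--Hausdorff limit through Proposition \ref{Ana}, and the inclusion $\mathcal{R}\hookrightarrow M_1$ used to define $\Phi_1$. One must check that the homeomorphism produced in \cite{RZ} is the identity on the open dense locus where both pictures are available, so that the slogan ``$\Phi_1$ extends $\Phi$'' genuinely determines $\Phi_1$ on $M_1\setminus\overline{D_1}$; this is essentially bookkeeping already contained in the proof of Proposition \ref{Ana}. It is worth stressing that a priori $\Phi_1$ is only known to take values in the closure $\overline{\Phi(\overline{M}\setminus D)}=\Phi(\overline{M})$, and the argument above is precisely what upgrades this to the statement that points of $M_1\setminus\overline{D_1}$ land in $\Phi(\overline{M}\setminus\overline{D})$.
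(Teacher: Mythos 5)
Your proof is correct and, in substance, the same as the paper's: the paper's argument is simply that $M_1\setminus\overline{D_1}\subset\mathcal{R}=M_{reg}$ together with the fact that $\Phi|_{M_{reg}}$ is biholomorphic (onto its image, via the Corollary to Proposition \ref{p1}) already gives bijectivity, since under the isometry of Proposition \ref{Ana} the restriction of $\Phi_1$ is identified with $\Phi|_{\overline{M}\setminus\overline{D}}$. Your separate appeal to the global injectivity Proposition for the injectivity half is valid but redundant, as your own surjectivity argument via the biholomorphism already yields injectivity at the same time.
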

\begin{proof}
Note that $(M_1\setminus \overline{D_1})\subset \mathcal{R}=M_{reg}$ and $\Phi|_{M_{reg}}$ is biholomorphic, so $\Phi_1$ is bijective.
\end{proof}
\begin{lem}
$\Phi_{1}:\overline{D_1}\rightarrow \Phi(\overline{D}\backslash D)$ is surjective.
\end{lem}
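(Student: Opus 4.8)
The plan is to show that every $y\in\Phi(\overline D\setminus D)$ lies in $\Phi_1(\overline{D_1})$, where $\Phi_1\colon(M_1,d_1)\to(\Phi(\overline M\setminus D),\omega_{FS})$ is the continuous extension constructed in this section. Write $y=\Phi(p)$ with $p\in\overline D\setminus D$. The first step is to produce a point $x\in M_1$ with $\Phi_1(x)=y$; the second is to check that necessarily $x\in\overline{D_1}$. Note that $\overline D\setminus D$ is the disjoint union of $\overline D\setminus(D\cup\mathcal S_{\overline M})$ and $\mathcal S_{\overline M}\setminus D$, since $\mathcal S_{\overline M}\subset\overline D$.

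For the first step I distinguish these two cases. If $p\in\overline D\setminus(D\cup\mathcal S_{\overline M})\subset\overline M_{reg}\setminus D=\mathcal R\subset M_1$, take $x=p$; since $\Phi_1$ restricts to $\Phi$ on $\mathcal R$, $\Phi_1(x)=\Phi(p)=y$. If $p\in\mathcal S_{\overline M}\setminus D$, then $\mathcal S_{\overline M}=B_+(K_{\overline M}+D)\subset\Supp E$, so $p\in\Supp E\setminus D$ and the blow-up analysis of this subsection applies at $p$. By Lemma~\ref{distance}, $\pi^*\omega_1\le C\,|s_F|^{-2(1-\beta)}_{h'_F}\prod_i|f_i|^{-2\Lambda}\,\chi'$ on $\widetilde B$. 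Choosing $x_0\in F\setminus(\widetilde D\cup\Supp\widetilde E)$ and a small holomorphic disc $\gamma\colon\Delta\to\widetilde B$ with $\gamma(0)=x_0$, transverse to $F$, and with $\gamma(\Delta\setminus\{0\})$ disjoint from $F\cup\widetilde D\cup\Supp\widetilde E$: on $\gamma(\Delta)$ the metric $\chi'$ is smooth, $\prod_i|f_i|$ is bounded away from $0$, and $|s_F|_{h'_F}\circ\gamma$ vanishes to first order at $0$, so $\beta\in(0,1)$ makes $\gamma|_{(0,1]}$ a curve of finite $\pi^*\omega_1$-length. Pushing it down by $\pi$ (an isometry off $F$) gives a finite $\omega_1$-length curve in $\mathcal R$ whose endpoint in $\overline M$ is $p$; hence there is an $\omega_1$-Cauchy sequence $q_j\in\mathcal R$ with $q_j\to p$ in $\overline M$. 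By completeness of $(M_1,d_1)$, $q_j\to x$ for some $x\in M_1$, and continuity of $\Phi_1$ gives $\Phi_1(x)=\lim_j\Phi(q_j)=\Phi(p)=y$.

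For the second step, recall $M_1=\overline{D_1}\sqcup(M_1\setminus\overline{D_1})$, and by the preceding Lemma $\Phi_1$ carries $M_1\setminus\overline{D_1}$ bijectively onto $\Phi(\overline M\setminus\overline D)$. Assume $x\notin\overline{D_1}$; then $y=\Phi(q)$ for some $q\in\overline M\setminus\overline D\subset\overline M_{reg}$, and I claim $\Phi^{-1}(y)\subset\overline D$, which is the desired contradiction. Using the identity $\mathcal S_{\overline M}=\Phi^{-1}(\Phi(\overline M)_{sing})$ from the corollary following Proposition~\ref{p1}, the set $\mathcal S_{\overline M}$ is a union of $\Phi$-fibres: if $p\in\mathcal S_{\overline M}$ then $\Phi^{-1}(y)\subset\mathcal S_{\overline M}\subset\overline D$; if $p\in\overline D\setminus(D\cup\mathcal S_{\overline M})\subset\overline M_{reg}$ then $y\in\Phi(\overline M)_{reg}$, so $\Phi^{-1}(y)\subset\overline M_{reg}$, where $\Phi$ is biregular, forcing $\Phi^{-1}(y)=\{p\}\subset\overline D$. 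In either case $q\in\overline D$, contradicting $q\in\overline M\setminus\overline D$. Hence $x\in\overline{D_1}$ and $\Phi_1(x)=y$, proving surjectivity.

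I expect the main obstacle to be the finite-distance claim underpinning the second case: that $\omega_1$, although it degenerates along $\mathcal S_{\overline M}$, does so slowly enough that $\mathcal S_{\overline M}\setminus D$ sits at finite $d_1$-distance. This is exactly what the chain of estimates Lemmas~\ref{c0}--\ref{distance} on the blow-up $\pi$ is designed to deliver; once Lemma~\ref{distance} is granted, what remains is the elementary length estimate above together with the birational bookkeeping $\Phi^{-1}(y)\subset\overline D$ in the last paragraph.
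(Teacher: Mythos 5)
Your proof is correct and rests on the same core idea as the paper's: produce a curve of finite $\omega_1$-length terminating at $p$ by pushing down a holomorphic disc transverse to the exceptional divisor $F$ and invoking Lemma~\ref{distance}, then take its limit in the metric completion $M_1$ and observe that $\Phi_1$ of that limit equals $\Phi(p)$ by continuity. You are more careful than the paper in two respects. First, you separate the case $p\in\overline D\setminus(D\cup\mathcal S_{\overline M})$, where $\omega_1$ is smooth near $p$ and one can take $x=p\in\mathcal R$ directly, from the case $p\in\mathcal S_{\overline M}\setminus D$; since the chain Lemma~\ref{c0}--\ref{distance} is set up only near $\Supp E\supset\mathcal S_{\overline M}$, this split is the honest way to read the citation of Lemma~\ref{distance}, which the paper applies wholesale to $\overline D\setminus D$. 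Second, and more substantively, you explicitly verify that the limit $x$ lies in $\overline{D_1}$ and not in $M_1\setminus\overline{D_1}$, using that $\mathcal S_{\overline M}=\Phi^{-1}(\Phi(\overline M)_{sing})$ is a union of $\Phi$-fibres and that $\Phi$ is biregular on $\overline M_{reg}$, together with the previous lemma's bijection. The paper states $\Phi_1(x'')=\Phi(x')$ and immediately concludes surjectivity of $\Phi_1\colon\overline{D_1}\to\Phi(\overline D\setminus D)$, leaving this membership implicit; it does need checking, and your fibrewise argument (or, alternatively, the observation that $x''\in M_1\setminus\overline{D_1}$ would force $\gamma(t)$ to converge inside $\overline M\setminus\overline D$, contradicting $\gamma(t)\to x'\in\overline D$) closes that gap cleanly.
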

\begin{proof}
For any $x'\in \overline{D}\backslash D$, there exists a curve $\gamma: [0, 1]\rightarrow \overline{M}\backslash D$ with $\gamma(0)=x'$ and $\gamma((0, 1])\subset \overline{M}\backslash \overline{D}$ such that $\int_0^1|\dot{\gamma}|_{\omega_1}dt<\infty$ by Lemma \ref{distance}. The curve $\gamma(t)$ gives a curve $\bar{\gamma}(t)$ for $0<t\leq 1$ through an isometry from
$(\overline{M}\backslash \overline{D},\omega_1)$ to $(M_1\backslash \overline{ D_{1}},\overline{\omega_1})$. Hence there is a limit $x''=\lim_{t\rightarrow 0}\bar{\gamma}(t)$ in $M_1$. Then
$$
\Phi_1(x'')=\lim_{t\rightarrow 0}\Phi_1(\bar{\gamma}(t))=\lim_{t\rightarrow 0}\Phi(\gamma(t))=\Phi(x').
$$
Therefore, $\Phi_{1}$ is surjective.
\end{proof}
\begin{bibdiv}

\begin{biblist}
\bib {An}{article}{
    author={M. Anderson}
     title={Convergence and rigidity of manifolds under Ricci curvature bounds},
     publisher={},
     place={},
      journal={Invent. Math.},
            series={},
    volume={102},
    date={1990},
    number={ },
     pages={429-445},
}

\bib {BG}{article}{
    author={R. J. Berman }
    author={H. Guenancia}
     title={K\"{a}hler-Einstein merics on stable varieties and log canonical pairs},
     publisher={},
     place={},
      journal={Geom. Funct. Anal. },
            series={},
    volume={24},
    date={2014},
    number={},
     pages={1683¨C1730},
}
\bib {Bo}{article}{
    author={S.  Boucksom}
    author={A.  Broustet}
    author={G.  Pacienza}
     title={Uniruledness of stable base loci of adjoint linear systems via Mori theory},
     publisher={},
     place={},
      journal={Math. Z. },
            series={},
    volume={},
    date={2013},
    number={275},
     pages={499¨C507 },
}

\bib {Cheeger}{article}{
    author={J. Cheeger}
     title={Degeneration of Riemannian Metrics under Ricci Curvature Bounds},
     publisher={},
     place={},
      journal={Lezioni
Fermiane. Scuola Normale Superiore, Pisa},
            series={},
    volume={},
    date={2001},
    number={ },
     pages={},
}

\bib {ChCo2}{article}{
    author={J. Cheeger}
    author={T. H. Colding}
     title={On the structure of spaces with Ricci curvature bounded below II},
     publisher={},
     place={},
      journal={J. Diff. Geom.},
            series={},
    volume={54},
    date={2000},
    number={ },
     pages={13-35},
}
\bib {CDS1}{article}{
    author={X.X. Chen}
    author={S. Donaldson}
    author={S. Sun}
     title={K\"{a}hler-Einstein metric on Fano manifolds, I: approximation
of metrics with cone singularities},
     publisher={},
     place={},
      journal={J. Amer. Math. Soc.},
            series={},
    volume={28},
    date={2014},
    number={ },
     pages={183-197},
}

\bib {CDS2}{article}{
    author={X.X. Chen}
    author={S. Donaldson}
    author={S. Sun}
     title={K\"{a}hler-Einstein metric on Fano manifolds, II: Limits with cone angle less than
2$\pi$},
     publisher={},
     place={},
      journal={J. Amer. Math. Soc.},
            series={},
    volume={28},
    date={2015},
    number={1 },
     pages={199-234},
}
\bib {CDS3}{article}{
    author={X.X. Chen}
    author={S. Donaldson}
    author={S. Sun}
     title={K\"{a}hler-Einstein metric on Fano manifolds, III: Limits as cone angle approaches
2$\pi$ and completion of the main proof},
     publisher={},
     place={},
      journal={J. Amer. Math. Soc.},
            series={},
    volume={28},
    date={2015},
    number={1 },
     pages={235-278},
}
\bib {CY}{article}{
    author={S. Y. Cheng}
    author={S. T. Yau}
     title={On the existence of a complete K\"{a}hler metric on non-compact complex manifolds and the regularity of Fefferman's equation},
     publisher={},
     place={},
      journal={Communication on Pure and Applied Mathematics},
            series={},
    volume={XXXIII},
    date={1980},
    number={ },
     pages={507-544},
}
\bib {Co}{article}{
    author={T. H. Colding}
     title={Ricci curvature and volume convergece},
     publisher={},
     place={},
      journal={Annal. of Math.},
            series={},
    volume={145},
    date={1997},
    number={ },
     pages={477-501},
}
\bib {CN}{article}{
    author={T. H. Colding}
    author={A. Naber}
     title={Sharp H\"{o}lder continuity of tangent cones for spaces with a lower Ricci curvature bound and applications},
     publisher={},
     place={},
      journal={Annal. of Math.},
            series={},
    volume={176},
    date={2012},
    number={ },
     pages={1173-1229},
}
\bib {De}{book}{
    author={ J. P. Demailly},
     title={ Analytic methods in algebraic geometry},
     publisher={},
     place={},
      journal={Surveys of modern mathematics},
            series={},
    volume={},
    date={},
    number={ },
     pages={},
}
\bib {DS}{article}{
    author={S. Donaldson}
    author={S. Sun}
     title={Gromov-Hausdorff limits of K\"{a}jler manifolds and algebraic geometry},
     publisher={},
     place={},
      journal={Acta. math.},
            series={},
    volume={213},
    date={2014},
    number={ },
     pages={63-106},
}
\bib {GT}{book}{
    author={D. Gilbarg},
    author={N. S. Trudinger},
     title={Elliptic partial differential equations of second order},
     publisher={Springer},
     place={},
      journal={ },
            series={},
    volume={},
    date={1983},
    number={ },
     pages={},
}

\bib {Kw}{article}{
    author={Y. Kawamata}
    author={K. Matsuda}
    author={K. Matsuki}
     title={ An Introduction to the Minimal Model Problem},
     publisher={},
     place={},
      journal={ Advanced Studies in Pure Mathematics},
            series={},
    volume={10},
    date={1987},
    number={ },
     pages={ 283¨C360 },
}
\bib {Ko84}{article}{
    author={R. Kobayashi}
     title={K\"{a}hler-Einstein metric on an open algebraic manifold},
     publisher={},
     place={},
      journal={Osaka1. Math.},
            series={},
    volume={21},
    date={1984},
    number={ },
     pages={399-418},
}

\bib {KoMo}{book}{
    author={J. Kollar},
    author={F. Mori},
     title={Birational geometry of algebraic varieties},
     publisher={Cambridge University Press},
     place={},
      journal={ },
            series={Cambridge Tracts in Mathematics},
    volume={134},
    date={1998},
    number={ },
     pages={},
}
\bib {NT}{article}{
    author={G. La Nave}
    author={G. Tian}
     title={A continuity method to construct canonical metrics,},
     publisher={},
     place={},
      journal={},
            series={},
    volume={arXiv:1410.3157},
    date={},
    number={ },
     pages={},
}
\bib {NTZ}{article}{
    author={G. La Nave}
    author={G. Tian}
    author={Z. L. Zhang}
     title={Bounding diameter of singular K\"{a}hler metric,},
     publisher={},
     place={},
      journal={ },
            series={},
    volume={arXiv:1503.03159v1},
    date={},
    number={ },
     pages={},
}

\bib {RZ}{article}{
    author={X.C. Rong}
    author={Y.G. Zhang}
     title={Continuity of extremal transitions and flops for Calabi-Yau manifolds},
     publisher={},
     place={},
      journal={J. Diff. Geom.},
            series={},
    volume={89},
    date={2011},
    number={ },
     pages={233-269},
}

\bib {So}{article}{
    author={J. Song}
     title={Riemannian geometry of K\"{a}hler-Einstein currents,},
     publisher={},
     place={},
      journal={ },
            series={},
    volume={arXiv:1404.0445},
    date={},
    number={ },
     pages={},
}

\bib {Ti15}{article}{
    author={G. Tian}
     title={K-stability and K\"{a}hler-Einstein metrics},
     publisher={},
     place={},
      journal={Commun. Pure Appl. Math},
            series={},
    volume={LXVIII},
    date={2015},
    number={ },
     pages={1085-1156},
}

\bib {TY87}{article}{
    author={G. Tian}
    author={S. T. Yau}
     title={Existence of K\"{a}hler-Einstein mertics on complete K\"{a}hler manifolds and their applications to algebraic geometry},
     publisher={},
     place={},
      journal={Adv. Ser. Math. Phys. 1, },
            series={},
    volume={},
    date={1987},
    number={1},
     pages={574-628},
}
\bib {TZ2}{article}{
    author={G. Tian}
    author={Z. L.  Zhang}
     title={Convergence of K\"{a}hler Ricci flow on lower dimension algebraic manifold of general type},
     publisher={},
     place={},
      journal={ },
            series={},
    volume={arXiv: 1501.01038},
    date={},
    number={ },
     pages={},
}

\bib {Y}{article}{
    author={S. T. Yau}
     title={A general Schwarz lemma for K\"{a}hler manifolds},
     publisher={},
     place={},
      journal={Amer. J. of Math.},
            series={},
    volume={100},
    date={1978},
    number={ },
     pages={197-208},
}

\end{biblist}
\end{bibdiv}
\end{document}